\numberwithin{equation}{section}
\numberwithin{figure}{section}
\numberwithin{table}{section}
\theoremstyle{plain}
\newtheorem*{thm*}{\protect\theoremname}
\theoremstyle{plain}
\newtheorem{thm}{\protect\theoremname}[section]
\theoremstyle{definition}
\newtheorem{defn}[thm]{\protect\definitionname}
\theoremstyle{remark}
\newtheorem{rem}[thm]{\protect\remarkname}
\theoremstyle{plain}
\newtheorem{prop}[thm]{\protect\propositionname}
\theoremstyle{remark}
\newtheorem{notation}[thm]{\protect\notationname}
\theoremstyle{plain}
\newtheorem{lem}[thm]{\protect\lemmaname}
\theoremstyle{plain}
\newtheorem{cor}[thm]{\protect\corollaryname}
\theoremstyle{definition}
\newtheorem{example}[thm]{\protect\examplename}
\DeclareMathOperator{\Add}{\textup{Add}}
\DeclareMathOperator{\Aff}{\textup{Aff}}
\DeclareMathOperator{\Alg}{\textup{Alg}}
\DeclareMathOperator{\Ann}{\textup{Ann}}
\DeclareMathOperator{\Arr}{\textup{Arr}}
\DeclareMathOperator{\Art}{\textup{Art}}
\DeclareMathOperator{\Ass}{\textup{Ass}}
\DeclareMathOperator{\Aut}{\textup{Aut}}
\DeclareMathOperator{\Autsh}{\underline{\textup{Aut}}}
\DeclareMathOperator{\Bi}{\textup{B}}
\DeclareMathOperator{\CAdd}{\textup{CAdd}}
\DeclareMathOperator{\CAlg}{\textup{CAlg}}
\DeclareMathOperator{\CMon}{\textup{CMon}}
\DeclareMathOperator{\CPMon}{\textup{CPMon}}
\DeclareMathOperator{\CRings}{\textup{CRings}}
\DeclareMathOperator{\CSMon}{\textup{CSMon}}
\DeclareMathOperator{\CaCl}{\textup{CaCl}}
\DeclareMathOperator{\Cart}{\textup{Cart}}
\DeclareMathOperator{\Cl}{\textup{Cl}}
\DeclareMathOperator{\Coh}{\textup{Coh}}
\DeclareMathOperator{\Coker}{\textup{Coker}}
\DeclareMathOperator{\Cov}{\textup{Cov}}
\DeclareMathOperator{\Der}{\textup{Der}}
\DeclareMathOperator{\Div}{\textup{Div}}
\DeclareMathOperator{\End}{\textup{End}}
\DeclareMathOperator{\Endsh}{\underline{\textup{End}}}
\DeclareMathOperator{\Ext}{\textup{Ext}}
\DeclareMathOperator{\Extsh}{\underline{\textup{Ext}}}
\DeclareMathOperator{\FAdd}{\textup{FAdd}}
\DeclareMathOperator{\FCoh}{\textup{FCoh}}
\DeclareMathOperator{\FGrad}{\textup{FGrad}}
\DeclareMathOperator{\FLoc}{\textup{FLoc}}
\DeclareMathOperator{\FMod}{\textup{FMod}}
\DeclareMathOperator{\FPMon}{\textup{FPMon}}
\DeclareMathOperator{\FRep}{\textup{FRep}}
\DeclareMathOperator{\FSMon}{\textup{FSMon}}
\DeclareMathOperator{\FVect}{\textup{FVect}}
\DeclareMathOperator{\Fib}{\textup{Fib}}
\DeclareMathOperator{\Fibr}{\textup{Fibr}}
\DeclareMathOperator{\Fix}{\textup{Fix}}
\DeclareMathOperator{\Fl}{\textup{Fl}}
\DeclareMathOperator{\Fr}{\textup{Fr}}
\DeclareMathOperator{\Funct}{\textup{Funct}}
\DeclareMathOperator{\GAlg}{\textup{GAlg}}
\DeclareMathOperator{\GExt}{\textup{GExt}}
\DeclareMathOperator{\GHom}{\textup{GHom}}
\DeclareMathOperator{\GL}{\textup{GL}}
\DeclareMathOperator{\GMod}{\textup{GMod}}
\DeclareMathOperator{\GRis}{\textup{GRis}}
\DeclareMathOperator{\GRiv}{\textup{GRiv}}
\DeclareMathOperator{\Gal}{\textup{Gal}}
\DeclareMathOperator{\Gl}{\textup{Gl}}
\DeclareMathOperator{\Grad}{\textup{Grad}}
\DeclareMathOperator{\Hilb}{\textup{Hilb}}
\DeclareMathOperator{\Hl}{\textup{H}}
\DeclareMathOperator{\Hom}{\textup{Hom}}
\DeclareMathOperator{\Homsh}{\underline{\textup{Hom}}}
\DeclareMathOperator{\ISym}{\textup{Sym}^*}
\DeclareMathOperator{\Imm}{\textup{Im}}
\DeclareMathOperator{\Irr}{\textup{Irr}}
\DeclareMathOperator{\Iso}{\textup{Iso}}
\DeclareMathOperator{\Isosh}{\underline{\textup{Iso}}}
\DeclareMathOperator{\Ker}{\textup{Ker}}
\DeclareMathOperator{\LAdd}{\textup{LAdd}}
\DeclareMathOperator{\LAlg}{\textup{LAlg}}
\DeclareMathOperator{\LMon}{\textup{LMon}}
\DeclareMathOperator{\LPMon}{\textup{LPMon}}
\DeclareMathOperator{\LRings}{\textup{LRings}}
\DeclareMathOperator{\LSMon}{\textup{LSMon}}
\DeclareMathOperator{\Left}{\textup{L}}
\DeclareMathOperator{\Lex}{\textup{Lex}}
\DeclareMathOperator{\Loc}{\textup{Loc}}
\DeclareMathOperator{\M}{\textup{M}}
\DeclareMathOperator{\ML}{\textup{ML}}
\DeclareMathOperator{\MLex}{\textup{MLex}}
\DeclareMathOperator{\Map}{\textup{Map}}
\DeclareMathOperator{\Mod}{\textup{Mod}}
\DeclareMathOperator{\Mon}{\textup{Mon}}
\DeclareMathOperator{\Ob}{\textup{Ob}}
\DeclareMathOperator{\Obj}{\textup{Obj}}
\DeclareMathOperator{\PDiv}{\textup{PDiv}}
\DeclareMathOperator{\PGL}{\textup{PGL}}
\DeclareMathOperator{\PML}{\textup{PML}}
\DeclareMathOperator{\PMLex}{\textup{PMLex}}
\DeclareMathOperator{\PMon}{\textup{PMon}}
\DeclareMathOperator{\Pic}{\textup{Pic}}
\DeclareMathOperator{\Picsh}{\underline{\textup{Pic}}}
\DeclareMathOperator{\Pro}{\textup{Pro}}
\DeclareMathOperator{\Proj}{\textup{Proj}}
\DeclareMathOperator{\QAdd}{\textup{QAdd}}
\DeclareMathOperator{\QAlg}{\textup{QAlg}}
\DeclareMathOperator{\QCoh}{\textup{QCoh}}
\DeclareMathOperator{\QMon}{\textup{QMon}}
\DeclareMathOperator{\QPMon}{\textup{QPMon}}
\DeclareMathOperator{\QRings}{\textup{QRings}}
\DeclareMathOperator{\QSMon}{\textup{QSMon}}
\DeclareMathOperator{\Quot}{\textup{Quot}}
\DeclareMathOperator{\R}{\textup{R}}
\DeclareMathOperator{\Rep}{\textup{Rep}}
\DeclareMathOperator{\Rings}{\textup{Rings}}
\DeclareMathOperator{\Riv}{\textup{Riv}}
\DeclareMathOperator{\SFibr}{\textup{SFibr}}
\DeclareMathOperator{\SMLex}{\textup{SMLex}}
\DeclareMathOperator{\SMex}{\textup{SMex}}
\DeclareMathOperator{\SMon}{\textup{SMon}}
\DeclareMathOperator{\SchI}{\textup{SchI}}
\DeclareMathOperator{\Sh}{\textup{Sh}}
\DeclareMathOperator{\Soc}{\textup{Soc}}
\DeclareMathOperator{\Spec}{\textup{Spec}}
\DeclareMathOperator{\Specsh}{\underline{\textup{Spec}}}
\DeclareMathOperator{\Stab}{\textup{Stab}}
\DeclareMathOperator{\Supp}{\textup{Supp}}
\DeclareMathOperator{\Sym}{\textup{Sym}}
\DeclareMathOperator{\TMod}{\textup{TMod}}
\DeclareMathOperator{\Top}{\textup{Top}}
\DeclareMathOperator{\Tor}{\textup{Tor}}
\DeclareMathOperator{\Vect}{\textup{Vect}}
\DeclareMathOperator{\alt}{\textup{ht}}
\DeclareMathOperator{\car}{\textup{char}}
\DeclareMathOperator{\codim}{\textup{codim}}
\DeclareMathOperator{\degtr}{\textup{degtr}}
\DeclareMathOperator{\depth}{\textup{depth}}
\DeclareMathOperator{\divis}{\textup{div}}
\DeclareMathOperator{\et}{\textup{et}}
\DeclareMathOperator{\ffpSch}{\textup{ffpSch}}
\DeclareMathOperator{\h}{\textup{h}}
\DeclareMathOperator{\ilim}{\displaystyle{\lim_{\longrightarrow}}}
\DeclareMathOperator{\ind}{\textup{ind}}
\DeclareMathOperator{\indim}{\textup{inj dim}}
\DeclareMathOperator{\lf}{\textup{LF}}
\DeclareMathOperator{\op}{\textup{op}}
\DeclareMathOperator{\ord}{\textup{ord}}
\DeclareMathOperator{\pd}{\textup{pd}}
\DeclareMathOperator{\plim}{\displaystyle{\lim_{\longleftarrow}}}
\DeclareMathOperator{\pr}{\textup{pr}}
\DeclareMathOperator{\pt}{\textup{pt}}
\DeclareMathOperator{\rk}{\textup{rk}}
\DeclareMathOperator{\spec}{\textup{Spec}}
\DeclareMathOperator{\tr}{\textup{tr}}
\DeclareMathOperator{\type}{\textup{r}}
\DeclareMathOperator*{\colim}{\textup{colim}}
\providecommand{\corollaryname}{Corollary}
\providecommand{\definitionname}{Definition}
\providecommand{\examplename}{Example}
\providecommand{\lemmaname}{Lemma}
\providecommand{\notationname}{Notation}
\providecommand{\propositionname}{Proposition}
\providecommand{\remarkname}{Remark}
\providecommand{\theoremname}{Theorem}
\begin{document}
\title{Stack of $S_{3}$-covers}
\author{Fabio Tonini}
\address{Universitá degli Studi di Firenze, Dipartimento di Matematica e Informatica
\textquoteright Ulisse Dini\textquoteright , Viale Giovanni Battista
Morgagni, 67/A, 50134 Firenze, Italy}
\email{fabio.tonini@unifi.it}
\thanks{The author was supported by GNSAGA of INdAM}

\maketitle
\global\long\def\A{\mathbb{A}}%

\global\long\def\Ab{(\textup{Ab})}%

\global\long\def\C{\mathbb{C}}%

\global\long\def\Cat{(\textup{cat})}%

\global\long\def\Di#1{\textup{D}(#1)}%

\global\long\def\E{\mathcal{E}}%

\global\long\def\F{\mathbb{F}}%

\global\long\def\GCov{G\textup{-Cov}}%

\global\long\def\Gcat{(\textup{Galois cat})}%

\global\long\def\Gfsets#1{#1\textup{-fsets}}%

\global\long\def\Gm{\mathbb{G}_{m}}%

\global\long\def\GrCov#1{\textup{D}(#1)\textup{-Cov}}%

\global\long\def\Grp{(\textup{Grps})}%

\global\long\def\Gsets#1{(#1\textup{-sets})}%

\global\long\def\HCov{H\textup{-Cov}}%

\global\long\def\MCov{\textup{D}(M)\textup{-Cov}}%

\global\long\def\MHilb{M\textup{-Hilb}}%

\global\long\def\N{\mathbb{N}}%

\global\long\def\PGor{\textup{PGor}}%

\global\long\def\PGrp{(\textup{Profinite Grp})}%

\global\long\def\PP{\mathbb{P}}%

\global\long\def\Pj{\mathbb{P}}%

\global\long\def\Q{\mathbb{Q}}%

\global\long\def\RCov#1{#1\textup{-Cov}}%

\global\long\def\RR{\mathbb{R}}%

\global\long\def\Sch{\textup{Sch}}%

\global\long\def\WW{\textup{W}}%

\global\long\def\Z{\mathbb{Z}}%

\global\long\def\acts{\curvearrowright}%

\global\long\def\alA{\mathscr{A}}%

\global\long\def\alB{\mathscr{B}}%

\global\long\def\alC{\mathscr{C}}%

\global\long\def\alD{\mathscr{D}}%

\global\long\def\alE{\mathscr{E}}%

\global\long\def\alF{\mathscr{F}}%

\global\long\def\alG{\mathscr{G}}%

\global\long\def\alH{\mathscr{H}}%

\global\long\def\alI{\mathscr{I}}%

\global\long\def\alJ{\mathscr{J}}%

\global\long\def\alK{\mathscr{K}}%

\global\long\def\alL{\mathscr{L}}%

\global\long\def\alM{\mathscr{M}}%

\global\long\def\alN{\mathscr{N}}%

\global\long\def\alO{\mathscr{O}}%

\global\long\def\alP{\mathscr{P}}%

\global\long\def\alQ{\mathscr{Q}}%

\global\long\def\alR{\mathscr{R}}%

\global\long\def\alS{\mathscr{S}}%

\global\long\def\alT{\mathscr{T}}%

\global\long\def\alU{\mathscr{U}}%

\global\long\def\alV{\mathscr{V}}%

\global\long\def\alW{\mathscr{W}}%

\global\long\def\alX{\mathscr{X}}%

\global\long\def\alY{\mathscr{Y}}%

\global\long\def\alZ{\mathscr{Z}}%

\global\long\def\arr{\longrightarrow}%

\global\long\def\arrdi#1{\xlongrightarrow{#1}}%

\global\long\def\catC{\mathscr{C}}%

\global\long\def\catD{\mathscr{D}}%

\global\long\def\catF{\mathscr{F}}%

\global\long\def\catG{\mathscr{G}}%

\global\long\def\comma{,\ }%

\global\long\def\covU{\mathcal{U}}%

\global\long\def\covV{\mathcal{V}}%

\global\long\def\covW{\mathcal{W}}%

\global\long\def\duale#1{{#1}^{\vee}}%

\global\long\def\fasc#1{\widetilde{#1}}%

\global\long\def\fsets{(\textup{f-sets})}%

\global\long\def\iL{r\mathscr{L}}%

\global\long\def\id{\textup{id}}%

\global\long\def\la{\langle}%

\global\long\def\odi#1{\mathcal{O}_{#1}}%

\global\long\def\ra{\rangle}%

\global\long\def\rig{\mathbin{\!\!\pmb{\fatslash}}}%

\global\long\def\set{(\textup{Sets})}%

\global\long\def\sets{(\textup{Sets})}%

\global\long\def\shA{\mathcal{A}}%

\global\long\def\shB{\mathcal{B}}%

\global\long\def\shC{\mathcal{C}}%

\global\long\def\shD{\mathcal{D}}%

\global\long\def\shE{\mathcal{E}}%

\global\long\def\shF{\mathcal{F}}%

\global\long\def\shG{\mathcal{G}}%

\global\long\def\shH{\mathcal{H}}%

\global\long\def\shI{\mathcal{I}}%

\global\long\def\shJ{\mathcal{J}}%

\global\long\def\shK{\mathcal{K}}%

\global\long\def\shL{\mathcal{L}}%

\global\long\def\shM{\mathcal{M}}%

\global\long\def\shN{\mathcal{N}}%

\global\long\def\shO{\mathcal{O}}%

\global\long\def\shP{\mathcal{P}}%

\global\long\def\shQ{\mathcal{Q}}%

\global\long\def\shR{\mathcal{R}}%

\global\long\def\shS{\mathcal{S}}%

\global\long\def\shT{\mathcal{T}}%

\global\long\def\shU{\mathcal{U}}%

\global\long\def\shV{\mathcal{V}}%

\global\long\def\shW{\mathcal{W}}%

\global\long\def\shX{\mathcal{X}}%

\global\long\def\shY{\mathcal{Y}}%

\global\long\def\shZ{\mathcal{Z}}%

\global\long\def\st{\ | \ }%

\global\long\def\stA{\mathcal{A}}%

\global\long\def\stB{\mathcal{B}}%

\global\long\def\stC{\mathcal{C}}%

\global\long\def\stD{\mathcal{D}}%

\global\long\def\stE{\mathcal{E}}%

\global\long\def\stF{\mathcal{F}}%

\global\long\def\stG{\mathcal{G}}%

\global\long\def\stH{\mathcal{H}}%

\global\long\def\stI{\mathcal{I}}%

\global\long\def\stJ{\mathcal{J}}%

\global\long\def\stK{\mathcal{K}}%

\global\long\def\stL{\mathcal{L}}%

\global\long\def\stM{\mathcal{M}}%

\global\long\def\stN{\mathcal{N}}%

\global\long\def\stO{\mathcal{O}}%

\global\long\def\stP{\mathcal{P}}%

\global\long\def\stQ{\mathcal{Q}}%

\global\long\def\stR{\mathcal{R}}%

\global\long\def\stS{\mathcal{S}}%

\global\long\def\stT{\mathcal{T}}%

\global\long\def\stU{\mathcal{U}}%

\global\long\def\stV{\mathcal{V}}%

\global\long\def\stW{\mathcal{W}}%

\global\long\def\stX{\mathcal{X}}%

\global\long\def\stY{\mathcal{Y}}%

\global\long\def\stZ{\mathcal{Z}}%

\global\long\def\then{\ \Longrightarrow\ }%

\global\long\def\L{\textup{L}}%

\global\long\def\det{\textup{det}}%

\global\long\def\l{\textup{l}}%

\begin{abstract}
The aim of this paper is to study the geometry of the stack of $S_{3}$-covers.
We show that it has two irreducible components $\stZ_{S_{3}}$ and
$\stZ_{2}$ meeting in a ``degenerate'' point $\{0\}$, $\stZ_{2}-\{0\}\simeq\Bi\GL_{2}$,
while $(\stZ_{S_{3}}-\{0\})$, which contains $\Bi S_{3}$ as open
substack, is a smooth and universally closed algebraic stack. More
precisely we show that $\stZ_{S_{3}}-\{0\}\simeq[X/\GL_{2}]$, where
$X$ is an explicit smooth non degenerate projective surface inside
$\PP^{7}$ intersection of five quadrics.

All these results are based on the description of certain families
of $S_{3}$-covers in terms of ``building data''.
\end{abstract}

\section*{Introduction}

Covers, ramified or unramified, with or without a group action, appear
in many context of algebraic geometry (as well as other branches of
mathematics) and they are often used to construct new varieties as
total space of covers of a given variety. It is therefore useful to
have a ``recipe'' for building up covers from the geometry of the
base variety, in terms of divisors or vector bundles. The easiest
example is the correspondence between double covers and line bundles
together with a section of its second tensor power (provided the characteristic
is not $2$). Other classical results are \cite{Miranda1985}, \cite{Pardini1989}
about triple covers and \cite{Pardini1991} about abelian covers.
We also recall: \cite{Casnati1996,Casnati1996a} about Gorenstein
covers, \cite{Easton2008} about $S_{3}$-covers as in the present
paper, \cite{Tokunaga1994,Reimpell1999,Catanese2017} about dihedral
covers, \cite{Tokunaga2002} about $S_{4}$ and $A_{4}$-covers, \cite{Hahn1999}
about quadruple covers, \cite{Alexeev2011} about non-normal abelian
covers, \cite{Ahlqvist2020} about stacky covers, \cite{Biswas2017}
about tamely ramified covers.

Besides constructing varieties, covers have been used to define some
moduli spaces, especially moduli of covers of curves or surfaces,
for example Hurwitz spaces (see e.g. \cite{Bertin2013}). In this
context we recall also \cite{Pagani2013,Arsie2004,Bolognesi2009,Poma2015}.
In \cite{Tonini2013} I introduced the moduli stack $\GCov$ of $G$-covers
for a finite, flat and finitely presented group scheme $G$ over some
base $S$. Although $\GCov$ and the various moduli of covers of curves/surfaces
are both stacks parametrizing covers, we want to stress that they
are very different objects: the geometric points in the first case
are particular finite schemes (covers of a point) while in the second
case are covers of curves/surfaces.

The problem of studying the geometry of $\GCov$ is strictly linked
with the problem of finding a ``recipe'' or, using the language
of \cite{Pardini1991}, a ``building data'' for constructing $G$-covers,
with the difference that we need to describe covers of \emph{any }scheme,
with no geometric restriction on source or target (``classically''
the base scheme is often assumed to be integral).

The stack $\GCov$ is algebraic and finitely presented over $S$ and
contains $\Bi G$, the stack of $G$-torsors, as an open substack.
In particular it has a special component $\stZ_{G}$, called the \emph{main
irreducible component}, which is the schematic closure of $\Bi G\subseteq\GCov$.
In \cite{Tonini2013} I investigated the geometry of those stacks
in the abelian case, more precisely when $G$ is a diagonalizable
group scheme over $S=\Spec\Z$: it turns out that, except some cases
in small ranks, this geometry can be very ``wild'' as one may expect
from an Hilbert scheme. Nevertheless one can restrict the study to
certain loci of $\GCov$ and, for instance, describe the smooth locus
of $\stZ_{G}$.

Moving to the \emph{non-abelian }case the situation worsen, because
there are no simple description of $G$-covers and therefore no obvious
way to study the geometry of $\GCov$. In \cite{Tonini2015} I propose
an alternative interpretation of those covers as (non necessarily
strong) monoidal functor, leading to some information about the geometry
of $\GCov$, for instance its reducibility for linearly reductive
non-abelian groups $G$.

For simplicity let $k$ be an algebraically closed field and assume
that $G$ is linearly reductive over $k$. A $G$-cover $f\colon X\to Y$
is completely determined by a collection of vector bundles (determining
the module $f_{*}\odi X$), one for each irreducible representation
of $G$ and with equal rank, and a collection of maps between tensor
products of those bundles (determining the ring structure of $f_{*}\odi X$).
This data is very simple to describe, but it has to satisfy certain
compatibility conditions (corresponding to the commutativity and associativity
of $f_{*}\odi X$), which are expressed as commutative diagrams of
maps between vector bundles. The complexity of the non-abelian case
lies in the complexity and numerousness of those diagrams.

In this paper we consider the simplest non-abelian group $G=S_{3}$
for $\car k\neq2,3$. In this case the complexity we discussed can
be handled directly by listing all conditions and making sense of
them. This lead to a ``concrete'' set of data describing $S_{3}$-covers:
a line bundle $\shL$, a rank $2$ vector bundle $\shF$ and maps
$\alpha\colon\shL\otimes\shF\to\shF$, $\beta\colon\Sym^{2}\shF\to\shF$
and $\la-,-\ra\colon\det\shF\to\shL$ making $5$ diagrams commutes
(see \ref{thm:description S3 covers by diagrams}). From this description
one deduce that $S_{3}\textup{-Cov}\simeq[U/(\Gm\times\GL_{2})]$
for an explicit closed subscheme $U$ of $\A_{k}^{11}$ (see \ref{thm:GCov as quotient stack}).
We summarize the results obtained about the geometry of $S_{3}\textup{-Cov}$
in the following:
\begin{thm*}
(\ref{thm:geometry of GCov and components}, \ref{thm:exceptional irreducible component},
\ref{thm:description of the main component} and \ref{cor:the projective surface})
The stack $S_{3}\textup{-Cov}$ has two irreducible components $\stZ_{S_{3}}$
and $\stZ_{2}$ and they meet in a point $0\in S_{3}\textup{-Cov}$
(which corresponds to the ``degenerate'' $S_{3}$-cover). Moreover
the stack $S_{3}\textup{-Cov}-\{0\}$ (resp. $\stZ_{S_{3}}-\{0\}$)
is the smooth locus of $S_{3}\textup{-Cov}$ (resp. $\stZ_{S_{3}}$),
while $\stZ_{2}\simeq[\A^{1}/\Gm]\times\Bi\GL_{2}$. Finally $\stZ_{S_{3}}-\{0\}$
is universally closed, more precisely 
\[
\stZ_{S_{3}}-\{0\}\simeq[X/\GL_{2}]
\]
 for an explicit non degenerate smooth projective surface $X\subseteq\PP_{k}^{7}$
complete intersection of five quadrics.
\end{thm*}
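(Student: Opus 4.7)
The plan is to work directly with the presentation $S_{3}\textup{-Cov}\simeq[U/(\Gm\times\GL_{2})]$ established in \ref{thm:GCov as quotient stack}, where $U\subseteq\A_{k}^{11}$ is cut out by the polynomial equations translating the five commutative diagrams of \ref{thm:description S3 covers by diagrams}. Each part of the theorem will correspond to a concrete property of $U$ that can be verified by a direct, if tedious, calculation in coordinates, organized using the natural $\Gm$-weights on the eleven coordinates: $-1$ on the four entries of $\alpha$, $0$ on the six entries of $\beta$, and $+1$ on $\langle-,-\rangle$.

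I would first compute a primary decomposition of the defining ideal of $U$ to extract the two irreducible components $U_{S_{3}}$ and $U_{2}$, check that set-theoretically they meet only at the origin $0$, and identify $U_{2}$ as the closure of the locus $\{\beta=0,\langle-,-\rangle=0\}$. Imposing the remaining relations on this locus should reduce it to an $\A^{1}$ on which $\GL_{2}$ acts trivially and $\Gm$ acts by a single non-trivial character, yielding at once $\stZ_{2}\simeq[\A^{1}/\Gm]\times\Bi\GL_{2}$. Smoothness of $S_{3}\textup{-Cov}-\{0\}$ and of $\stZ_{S_{3}}-\{0\}$ is then verified by a Jacobian calculation on $U-\{0\}$ and $U_{S_{3}}-\{0\}$ respectively, organized by a case analysis on which of $\alpha,\beta,\langle-,-\rangle$ is non zero.

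The hardest step is the identification $\stZ_{S_{3}}-\{0\}\simeq[X/\GL_{2}]$ for a smooth non degenerate surface $X\subseteq\PP^{7}$ complete intersection of five quadrics. One must first show that the $\Gm$-action on $U_{S_{3}}-\{0\}$ is free, equivalently that the vanishing of $\alpha$ and of $\langle-,-\rangle$ on $U_{S_{3}}$ forces $\beta$ to vanish too, so that the only point where $\Gm$ has a positive-dimensional stabilizer is the origin. After an elementary reparametrization absorbing the negative weights (replacing each entry of $\alpha$ by its product with $\langle-,-\rangle$, so that all new generators of the invariant subring sit in weight $0$ or $+1$), the quotient $(U_{S_{3}}-\{0\})/\Gm$ is realized as a closed subscheme $X$ of a suitable $\PP^{7}$, and the five original relations translate into five homogeneous quadratic equations in the chosen eight coordinates. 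The main obstacle I anticipate is the combinatorial bookkeeping required to write down these eight homogeneous coordinates and the five quadrics explicitly and to verify that they form a genuine smooth irreducible complete intersection of the expected codimension, rather than cutting out $X$ with embedded or extraneous components; once this is accomplished, universal closedness of $[X/\GL_{2}]$ follows from the properness of the projective surface $X$.
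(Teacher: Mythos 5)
Your starting point (the presentation $S_{3}\textup{-Cov}\simeq[U/(\Gm\times\GL_{2})]$ and the extraction of two components meeting only at the origin) is the same as the paper's, and your case analysis for smoothness according to which of $\alpha,\beta,\la-,-\ra$ is nonzero mirrors the paper's covering of $\stZ_{S_{3}}-\{0\}$ by the three open loci $\stU_{\omega},\stU_{\alpha},\stU_{\beta}$. Two remarks on this part. First, the paper does not run a black-box primary decomposition: it exhibits the two primes explicitly and proves that the main one defines a domain by a chain of regular-sequence and localization lemmas, which is what makes the statement uniform over $\Z[1/6]$ rather than over one field; over a fixed algebraically closed $k$ your plan is workable. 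Second, your description of the exceptional component is off: $\stZ_{2}$ is \emph{not} the closure of $\{\beta=0,\la-,-\ra=0\}$, because that locus contains the larger piece $\{\beta=0,\omega=0,\tr\alpha=0\}$ (with $\alpha$ an arbitrary trace-zero endomorphism), which lies in $\stZ_{S_{3}}$; the correct characterization is $\{\beta=0,\la-,-\ra=0,\ \alpha=(\tr\alpha/2)\otimes\id\}$. Also, the paper proves smoothness not by a Jacobian computation but by writing down explicit smooth atlases (a vector bundle over $\Bi\GL_{2}$ for $\stU_{\omega}$, explicit maps $\A^{3}\to\stU_{\alpha},\stU_{\beta}$ obtained from local normal forms of the building data); a Jacobian argument could in principle work but you would first need the codimension, and this is where most of the hidden work sits.

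The genuine gap is in the $\Proj$ construction. The one-parameter subgroup you use (the $\Bi\Gm$ factor, with weights $-1,0,+1$ on $\alpha,\beta,\la-,-\ra$) does \emph{not} act freely, nor even with finite stabilizers, on $U_{S_{3}}-\{0\}$: the point with $\alpha=0$, $\omega=0$, $\beta(y^{2})=z$ and all other structure constants zero satisfies every relation of \ref{lem:local conditions for the map for Sthree}, is nonzero, and is fixed by your $\Gm$. So the claimed equivalence ``$\alpha=\la-,-\ra=0$ on $U_{S_{3}}$ forces $\beta=0$'' is false, and the quotient by this $\Gm$ is not a projective scheme. The reparametrization you propose (replacing the entries of $\alpha$ by their products with $\la-,-\ra$) collapses all information on the locus $\omega=0$, which contains most of $\stU_{\alpha}$ and $\stU_{\beta}$, so it cannot yield an embedding. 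The paper's key structural move, which you are missing, is to use instead the \emph{anti-diagonal} $\Gm\hookrightarrow\Gm\times\GL_{2}$, $\lambda\mapsto(\lambda,\lambda\cdot\id)$, i.e.\ the kernel of $(\lambda,M)\mapsto\lambda^{-1}M$, which classifies the single bundle $\shF\otimes\shL^{-1}$. Under this subgroup \emph{all} coordinates have the same weight (Lemma \ref{lem:The Proj action}), so the action on $U_{S_{3}}-\{0\}$ is the standard free scaling action, the quotient is literally $\Proj$ of the coordinate ring with its standard grading, and the residual group acting on it is exactly $\GL_{2}$; imposing $\tr\alpha=\tr\beta=0$ leaves the eight coordinates $a,b,c,e,A,B,C,\omega$ and the five quadrics of \ref{cor:the projective surface}.

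Finally, be warned that the verification you defer to the end (``a genuine smooth irreducible complete intersection of the expected codimension'') will not come out as you expect: on the affine chart $\omega\neq0$ the last three quadrics solve for $A,B,C$ in terms of $a,b,c,e$ and the first two become identities, so that chart is an $\A^{4}$; the five quadrics are therefore not a regular sequence and $X$ has dimension four, not two. You should check this point carefully before asserting the ``surface, complete intersection'' part of the statement.
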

The above result is obtained by studying the equations of $U\subseteq\A^{11}$,
describing certain loci of $S_{3}\textup{-Cov}$ and mixing these
points of view. The number of equations necessary to define $U\subseteq\A^{11}$
is proof of the complexity of the data associated with $S_{3}$-covers
and it suggests that the study of $G$-covers for general groups $G$
needs an alternative approach (see \ref{eq:loc com and ass conditions}).

We describe several open substacks of $S_{3}\textup{-Cov}$ (and of
$\stZ_{S_{3}}$), namely the locus $\stU_{\omega}$ where $\la-,-\ra$
is an isomorphism, which turns out to be equivalent to the stack of
triple covers (see \ref{thm:The-locus when omega is invertible}),
the locus $\stU_{\alpha}$ where $\alpha$ is nowhere a multiple of
the identity (see \ref{thm:not degenerate locus of S3}) and the locus
$\stU_{\beta}$ where $\beta$ is nowhere zero (see \ref{thm:the locus where beta is not zero}).
We show that $\stZ_{S_{3}}-\{0\}=\stU_{\omega}\cup\stU_{\alpha}\cup\stU_{\beta}$
and it is the smooth locus of $\stZ_{S_{3}}$ (see \ref{thm:description of the main component}).
For each of those open substacks and also for $\stZ_{S_{3}}$ itself
we determine simpler sets of ``data'' for describing its $S_{3}$-covers
(i.e. its objects).

As mentioned before, in \cite{Easton2008} the author develops a similar
theory of $S_{3}$-covers, describing, locally and globally, the algebra
defining $S_{3}$-covers. The data provided coincides with the one
we associate with covers in $\stZ_{S_{3}}$. The present paper recovers
the results of \cite{Easton2008} and expands them in several directions,
by studying the geometry of $S_{3}\textup{-Cov}$ and by describing
several families of $S_{3}$-covers.

For simplicity, in this introduction, we assumed to work over an algebraically
closed field, but all results actually hold in general over $\Z[1/6]$.
Moreover, instead of looking directly at the group $S_{3}$, we work
with $G=\mu_{3}\rtimes\Z/2\Z$ over the ring $\Z[1/2]$. This is because
the group $G$ has a simpler representation theory, which simplifies
the description of $G$-covers. Over $\Z[1/6]$ the groups $G$ and
$S_{3}$ are isomorphic only étale locally, nevertheless, via the
theory of bitorsors discussed in the Appendix, we show that there
is an equivalence $\GCov\simeq S_{3}\textup{-Cov}$ inducing $\stZ_{G}\simeq\stZ_{S_{3}}$
and $\Bi G\simeq\Bi S_{3}$ (see \ref{ex:SThree covers and the other group}).

This paper follows ideas from the last chapter of my Ph.D. thesis
\cite{Tonini2013a}, but it introduces some improvements, as the study
of the projective surface covering $\stZ_{S_{3}}-\{0\}$. In \cite{Tonini2013a}
it is also present a characterization of $S_{3}$-covers between regular
schemes and some applications to surfaces. We plan to discuss and
strengthen these results in a subsequent paper, applying also criteria
from \cite{Tonini2015} and \cite{Tonini2017b}.

This paper is divided as follows. In the first section we apply the
theory of bitorsors to the theory of $G$-covers. The second section
defines the global ``building data'' for $S_{3}$-covers, while
the third one studies the geometry of $S_{3}$-Cov. The fourth and
last section focuses instead on the geometry of $\stZ_{S_{3}}$. The
are two appendices, the first one about the theory of bitorsors, the
second one about some general results on vector bundles.

\section*{Acknowledgements}

I would like to thank Rita Pardini, Mattia Talpo and Angelo Vistoli
for the useful conversations I had with them and all the suggestions
they gave me.

\section*{Notation}

We denote by the letter $T$ a scheme (over the given base if this
is specified). It will be used as base for various algebro-geometric
objects.

By a locally free sheaf we mean a locally free sheaf of finite rank.

A cover of $T$ is a finite, flat and finitely presented morphism
$f\colon X\to T$. This is the same as an affine map $f\colon X\to T$
such that $f_{*}\odi X$ is a locally free sheaf.

If $G$ is a group scheme over a base $S$ we denote by $\Loc^{G}T$
(resp. $\QCoh^{G}T$) the category of locally free sheaves (resp.
quasi-coherent sheaves) over $T$ together with an action of $G$.
When $\pi\colon G\to S$ is affine, such an action is equivalent to
a coaction of the sheaf of Hopf algebras $\pi_{*}\odi G$.

If $\shF$ is a locally free sheaf over $T$ with a given basis $v_{1},\dots,v_{n}\in\shF$
we denote by $v_{1}^{*},\dots,v_{n}^{*}\in\shF^{\vee}$ its dual basis.

\section{Bitorsors and $G$-covers}

In this section $S$ is a base scheme and $G\to S$ is a finite, flat
and finitely presented group scheme. We recall various definitions
and properties about $G$-covers.
\begin{defn}
\cite[Def. 2.1]{Tonini2013}\cite[Def 1.2]{Tonini2015} A $G$-cover
over a scheme $T$ is a cover $f\colon X\to T$ together with an action
of $G$ on $X$ over $T$ such that $f_{*}\odi X$ is fppf locally
isomorphic to the regular representation $\odi T[G]$ as quasi-coherent
sheaves with an action of $G$ (but not necessarily preserving the
ring structure).

We denote by $\GCov$ the stack of $G$-covers over $S$.
\end{defn}

\begin{rem}
\cite[Prop 2.2, Thm 2.10]{Tonini2013} The stack $\GCov$ is algebraic
and finitely presented over $S$ and contains $\Bi G$ as an open
substack.
\end{rem}

\begin{defn}
\label{def:main irreducible component} \cite[Def 3.5]{Tonini2015}
The stack $\stZ_{G}$ is the schematic closure of the open immersion
$\Bi G\to\GCov$ and it is called the main irreducible component of
$\GCov$.
\end{defn}

We apply now the theory of bitorsors (see Appendix \ref{sec:Bitorsors})
to the theory of Galois covers.
\begin{thm}
\label{thm:bitorsors and GCov}Let $G$ and $H$ be flat, finite and
finitely presented group schemes over a base scheme $S$. If $P$
is an fpqc $(G,H)$-bitorsor over $S$ then the functor $\Lambda_{P}$
of \ref{prop:bitorsors and isomorphisms SHG - SHH} fits in a commutative
diagram   \[   \begin{tikzpicture}[xscale=1.9,yscale=-1.2]     \node (A0_0) at (0, 0) {$\Bi G$};     \node (A0_1) at (1, 0) {$\stZ_G$};     \node (A0_2) at (2, 0) {$\GCov $};     \node (A0_3) at (3, 0) {$\Sh_{\Sch/S}^G$};     
\node (A0_4) at (3.8, 0) {$X$};    
\node (A1_0) at (0, 1) {$\Bi H$};     \node (A1_1) at (1, 1) {$\stZ_H$};     \node (A1_2) at (2, 1) {$\HCov $};     \node (A1_3) at (3, 1) {$\Sh_{\Sch/S}^H$};     
\node (A1_4) at (3.8, 1) {$\frac{X\times P}{G}$};     
\path (A1_2) edge [->]node [auto] {$\scriptstyle{}$} (A1_3);     \path (A0_0) edge [->]node [auto] {$\scriptstyle{}$} (A0_1);     \path (A0_1) edge [->]node [auto] {$\scriptstyle{}$} (A0_2);     \path (A1_0) edge [->]node [auto] {$\scriptstyle{}$} (A1_1);     \path (A0_3) edge [->]node [auto] {$\scriptstyle{\Lambda_P}$} (A1_3);     \path (A0_2) edge [->]node [auto] {$\scriptstyle{}$} (A1_2);     \path (A0_4) edge [|->]node [auto] {$\scriptstyle{}$} (A1_4);     \path (A1_1) edge [->]node [auto] {$\scriptstyle{}$} (A1_2);     \path (A0_0) edge [->]node [auto] {$\scriptstyle{}$} (A1_0);     \path (A0_1) edge [->]node [auto] {$\scriptstyle{}$} (A1_1);     \path (A0_2) edge [->]node [auto] {$\scriptstyle{}$} (A0_3);   \end{tikzpicture}   \] where the vertical arrows are equivalences. Moreover if $Y$ is an
$S$-scheme and $p\in P(Y)$ the induced composition $G_{Y}\to P_{Y}\to H_{Y}$
is an isomorphism of groups and $X\to\Lambda_{P}(X)$ is a natural
isomorphism, equivariant with respect to $G_{Y}\to H_{Y}$, for $X\in\Sh_{\Sch/Y}^{G}$.
\end{thm}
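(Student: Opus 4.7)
The plan is to leverage the equivalence $\Lambda_P$ on the outer categories $\Sh_{\Sch/S}^G \simeq \Sh_{\Sch/S}^H$ (already provided by the cited proposition on bitorsors) and then check that it respects the three increasingly restrictive substacks in the diagram, reducing everything to the fpqc-locally trivial case. Recall that the explicit model of $\Lambda_P$ is the contracted product $X \mapsto (X \times P)/G$, with $G$ acting diagonally by its given action on $X$ and the right action on $P$, and $H$ acting on the quotient via its residual left action on $P$; this identifies the rightmost vertical arrow of the diagram.

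The core local computation is the following. Since $P$ is an fpqc $(G,H)$-bitorsor, there is an fpqc cover $Y \to S$ and a section $p \in P(Y)$. The maps $g \mapsto p \cdot g$ and $h \mapsto h \cdot p$ are isomorphisms $G_Y \to P_Y$ and $H_Y \to P_Y$, so their combination yields a group isomorphism $G_Y \to H_Y$. Moreover, for any $X \in \Sh_{\Sch/Y}^G$, the map $x \mapsto [(x,p)]$ defines a natural isomorphism $X_Y \to \Lambda_P(X)_Y$ which intertwines the two actions via this group isomorphism. This proves the ``moreover'' statement of the theorem and is the geometric content that drives the rest.

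With this local picture in hand, the remaining equivalences are fpqc descent arguments. The defining properties of a $G$-cover, namely finiteness, flatness, finite presentation, and the condition that $f_*\odi X$ be fppf-locally isomorphic to the regular representation, are all fpqc local on the base; combined with the local identification above, they transport $G$-covers to $H$-covers through the group isomorphism $G_Y \simeq H_Y$, so $\Lambda_P$ restricts to an equivalence $\GCov \to \HCov$. The same local argument shows $\Lambda_P(\Bi G) = \Bi H$, since being a torsor is precisely the condition that $X$ be fppf-locally the regular representation of $G$. Finally, since $\Lambda_P\colon \GCov \to \HCov$ is an equivalence of algebraic stacks sending the open substack $\Bi G$ to $\Bi H$, the schematic closures $\stZ_G$ and $\stZ_H$ correspond under it, yielding the equivalence in the middle column and the commutativity of the diagram.

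I expect the main obstacle to be purely notational: the functor $\Lambda_P$ is constructed in the appendix, and one must verify that the side conventions (left vs. right actions of $G$ and $H$ on $P$, and on the contracted product) are coherent with those used to define $G$-covers, so that a local trivialization $p \in P(Y)$ really produces the asserted group isomorphism $G_Y \to H_Y$ together with an equivariant isomorphism $X_Y \to \Lambda_P(X)_Y$. Once these conventions are fixed in the appendix, the theorem reduces to routine fpqc descent.
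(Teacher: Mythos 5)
Your proposal is correct and matches the paper's argument essentially step for step: both use the equivalence from the bitorsor proposition, reduce to the locally trivial case via a section of $P$ (the paper's Lemma \ref{rem:trivial bitorsor} is exactly your ``core local computation''), transport the cover and torsor conditions through the resulting group isomorphism $G_Y\to H_Y$, and deduce the statement for $\stZ_G$ formally from the compatibility of schematic closures. The only cosmetic difference is that the paper trivializes $P$ over an \emph{fppf} cover (using that fpqc torsors under a finite flat finitely presented group are fppf-locally trivial), which dovetails slightly more cleanly with the fppf-local definition of a $G$-cover than your fpqc cover does, but this does not affect the substance.
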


\begin{proof}
The functor $\Lambda_{P}\colon\Sh_{\Sch/S}^{G}\to\Sh_{\Sch/S}^{H}$
is an equivalence thanks to \ref{prop:bitorsors and isomorphisms SHG - SHH}.
Taking into account \ref{rem:trivial bitorsor}, it is enough to show
that $\Lambda_{P}(X)$ is an $H$-cover if and only if $X$ is a $G$-cover.
Indeed any equivalence $\GCov\to\HCov$ restricting to an equivalence
$\Bi G\to\Bi H$ has to induce an equivalence $\stZ_{G}\to\stZ_{H}$
of their schematic closures. Since being a $G$-cover or $H$-cover
is a fppf local property of $G$-sheaves and fpqc $G$-torsors are
fppf locally trivial, we can assume that $P$ has a global section.
The result then follows from \ref{rem:trivial bitorsor}.
\end{proof}
\begin{thm}
\label{ex:SThree covers and the other group} Let $G=\mu_{n}\rtimes(\Z/n\Z)^{*}$
and $H=\Z/n\Z\rtimes(\Z/n\Z)^{*}$ for $n\geq3$. The scheme 
\[
P=\mu_{n}\times\mu_{n}^{*}=\Spec(A_{P})\to\Spec\Z[1/n]\text{ where }A_{P}=\frac{\Z[1/n][x,y]}{(x^{n}-1,\Phi_{n}(y))}
\]
$\mu_{n}^{*}\subseteq\mu_{n}$ is the open and closed subscheme of
primitive $n$-th roots and $\Phi_{n}$ is the cyclotomic polynomial
of degree $n$, is a $(G,H)$-bitorsor with biaction 
\[
G\times P\times H\to P\comma(\zeta,l)\cdot(x,y)\cdot(i,m)=(\zeta x^{l}y^{li},y^{lm})
\]
In particular the functor $\Lambda_{P}$ of \ref{prop:bitorsors and isomorphisms SHG - SHH}
induces equivalences as in \ref{thm:bitorsors and GCov} for $S=\Spec\Z[1/n]$.
Moreover there is a canonical isomorphism 
\[
X/(\Z/n\Z)^{*}\simeq\Lambda(X)/(\Z/n\Z)^{*}\text{ for }X\in\Sh_{\Z[1/n]}^{G}
\]
For $X=\Spec\alA\in\GCov$ the $H$-cover $\Lambda_{P}(X)$ is the
spectrum of the sub-algebra 
\[
\alB=\left(\frac{\alA[x,y]}{(x^{n}-1,\Phi_{n}(y))}\right)^{G}\subseteq\frac{\alA[x,y]}{(x^{n}-1,\Phi_{n}(y))}=\alA\otimes A_{P}
\]
The (left) $H$-action on $\alA\otimes A_{P}$ is trivial on $\alA$
and given by $(i,m)x=xy^{i},(i,m)y=y^{m}$ on $A_{P}$. The (left)
$G$-action on $\alA\otimes A_{P}$ is the given one on $\alA$, while
on $A_{P}$ is generated by the $\mu_{n}$-action for which $\deg x=-1$,
$\deg y=0$, while $l\in(\Z/n\Z)^{*}$ acts as $x\mapsto x^{l'}$,
$y\mapsto y^{l'}$ where $l'=l^{-1}\in(\Z/n\Z)^{*}$.

If $Y=\mu_{n}^{*}=\Spec(\Z[1/n][w]/(\Phi_{n}(w))$ the section $(1,w)\in P(Y)$
induces the group isomorphism $\phi\colon H_{Y}\to G_{Y}$, $(i,m)\mapsto(w^{i},m)$
and, for $\alA\in G_{Y}\textup{-Cov}$, the map 
\[
\alB\subseteq\alA\otimes A_{P}\to\alA\comma x\mapsto1,y\mapsto w
\]
is an isomorphism, equivariant with respect to $\phi\colon H_{Y}\to G_{Y}$.
\end{thm}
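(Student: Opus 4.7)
The plan is to verify that the formula defines a commuting biaction of $G$ and $H$ on $P$, show $P$ is an fpqc $(G,H)$-bitorsor by trivializing it over the étale cover $Y=\mu_{n}^{*}$, apply \ref{thm:bitorsors and GCov} to obtain the diagram of equivalences, and finally unwind the general construction $\Lambda_{P}(X)=(X\times P)/G$ to extract the explicit algebraic description. First I would check on coordinate rings that $(\zeta x^{l}y^{li},y^{lm})$ lies in $P$: one has $(\zeta x^{l}y^{li})^{n}=1$ because $\zeta^{n}=x^{n}=1$ and $y^{n}=1$ (since $\Phi_{n}\mid y^{n}-1$), and $\Phi_{n}(y^{lm})=0$ because $lm\in(\Z/n\Z)^{*}$ and primitive $n$-th roots are stable under $(\Z/n\Z)^{*}$-exponentiation. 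Associativity of the two actions and their commutativity reduce to a direct substitution using the semidirect product multiplications in $G$ and $H$.

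Next I would show $P$ is a bitorsor. Since this property is fpqc local, it suffices to trivialize $P$ after an fpqc base change. The étale cover $Y=\mu_{n}^{*}\to\Spec\Z[1/n]$ does the job: the tautological section $(1,w)\in P(Y)$ produces
\[
(\zeta,l)\cdot(1,w)=(\zeta,w^{l})\comma(1,w)\cdot(i,m)=(w^{i},w^{m}),
\]
and the resulting maps $G_{Y}\to P_{Y}$ and $H_{Y}\to P_{Y}$ are isomorphisms because over $Y$ the scheme of primitive $n$-th roots splits as the disjoint union of the sections $\{w^{l}\}_{l\in(\Z/n\Z)^{*}}$. Once $P$ is known to be a bitorsor, the diagram of equivalences and the fact that $\Lambda_{P}$ is an equivalence come directly from \ref{thm:bitorsors and GCov}.

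For the explicit description, the Appendix gives $\Lambda_{P}(X)=(X\times P)/G$ with the diagonal $G$-action; when $X=\Spec\alA$ this is $\Spec((\alA\otimes A_{P})^{G})=\Spec\alB$. Transposing the left $G$-action $(\zeta,l)\cdot(x,y)=(\zeta x^{l},y^{l})$ to functions (passing through the inverse to obtain a genuine left action on algebras) produces exactly the stated rule: $\mu_{n}$ acts with $\deg x=-1$, $\deg y=0$, and $l\in(\Z/n\Z)^{*}$ sends $x\mapsto x^{l'}$, $y\mapsto y^{l'}$ with $l'=l^{-1}$. Transposing the right $H$-action $(x,y)\cdot(i,m)=(xy^{i},y^{m})$ by pullback gives directly a left action on $\alB$ with $(i,m)\cdot x=xy^{i}$, $(i,m)\cdot y=y^{m}$. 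The identification $X/(\Z/n\Z)^{*}\simeq\Lambda_{P}(X)/(\Z/n\Z)^{*}$ then follows by observing that the $(\Z/n\Z)^{*}$-quotient of $P$ trivializes, so that modulo this common quotient the residual $\mu_{n}$- and $\Z/n\Z$-actions on the two sides correspond. For the trivialization over $Y$, the section $(1,w)$ falls in the framework of \ref{rem:trivial bitorsor}, and solving $(\zeta,l)\cdot(1,w)=(1,w)\cdot(i,m)$ yields $\phi(i,m)=(w^{i},m)$, while the canonical equivariant isomorphism $\alB\to\alA$ of loc.~cit.~is the evaluation $x\mapsto1$, $y\mapsto w$. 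The main obstacle is the bookkeeping of which factor carries which action when passing between left actions on schemes, the diagonal action on $X\times P$, and left versus right coactions on algebras; once the sign and side conventions are fixed once and for all, every one of the explicit formulas becomes a short direct computation.
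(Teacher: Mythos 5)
Your proposal is correct in outline but takes a genuinely different route from the paper at the key step. The paper does not verify the biaction or the torsor property by hand: it invokes the general Proposition \ref{prop:bitorsors and semidirect products}, which says that for locally isomorphic sheaves of groups $G'$, $H'$ the sheaf $G'\times\Isosh(H',G')$ is canonically a $(G'\rtimes\Autsh G',\,H'\rtimes\Autsh H')$-bitorsor, and then reduces the whole first part of the statement to the scheme-theoretic identification $\Isosh(\Z/n\Z,\mu_{n})=\mu_{n}^{*}$ (argued via the open-and-closed decomposition $\mu_{n}^{*}=\mu_{n}-\cup_{d\mid n,\,d<n}\mu_{d}$ and the definition of $\Phi_{n}$). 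The explicit biaction formula is then read off from that proposition rather than checked. Your route --- direct verification that the formula lands in $P$ (using $\Phi_{n}(y)\mid\Phi_{n}(y^{lm})$ for $lm$ prime to $n$) followed by trivialization of both orbit maps over the \'etale cover $Y=\mu_{n}^{*}$ --- is perfectly valid and arguably more self-contained; what it costs you is everything that \ref{prop:bitorsors and semidirect products} delivers for free, and this shows in exactly one place.

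That place is the canonical isomorphism $X/(\Z/n\Z)^{*}\simeq\Lambda_{P}(X)/(\Z/n\Z)^{*}$. In the paper this is an explicit clause of \ref{prop:bitorsors and semidirect products}, proved there by writing down the concrete $G$-invariant, $\Autsh H$-invariant map $X\times G'\times\Isosh(H',G')\to X$, $(x,g,\phi)\mapsto x\cdot(g,\id)$, and checking it descends to the two quotients. Your justification --- ``the $(\Z/n\Z)^{*}$-quotient of $P$ trivializes, so modulo this common quotient the residual actions correspond'' --- names a true fact ($P/(\Z/n\Z)^{*}\simeq\mu_{n}$ as a $(\mu_{n},\Z/n\Z)$-bitorsor-like object) but does not actually produce a canonical morphism between the two quotient sheaves, and ``canonical'' is the point of the claim. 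You should either construct the descent map explicitly as above, or recognize your biaction as the one of \ref{prop:bitorsors and semidirect products} under $\Isosh(\Z/n\Z,\mu_{n})=\mu_{n}^{*}$ and quote that proposition. The remaining items (the trivialization $(1,w)$, the formula $\phi(i,m)=(w^{i},m)$ obtained by solving $(\zeta,l)\cdot(1,w)=(1,w)\cdot(i,m)$, and the evaluation $x\mapsto1$, $y\mapsto w$) are handled exactly as in the paper via \ref{rem:trivial bitorsor} and the last part of \ref{thm:bitorsors and GCov}, and your bookkeeping of the inverse in the diagonal right $G$-action on $X\times P$ reproduces the stated degrees $\deg x=-1$, $\deg y=0$ and the $l\mapsto l'$ twist correctly.
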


\begin{proof}
We apply \ref{prop:bitorsors and semidirect products}. Taking into
account that $\Autsh(\mu_{n})=\Autsh(\Z/n\Z)=(\Z/n\Z)^{*}$ and $\mu_{n},\Z/n\Z$
are étale locally isomorphic over $\Z[1/n]$, the scheme $P=\mu_{n}\times\Isosh(\Z/n\Z,\mu_{n})$
is a $(G,H)$-bitorsor. Part of the statement follows directly from
\ref{thm:bitorsors and GCov}. The remaining part consists in giving
a more precise description of $P$. We have that
\[
\Isosh(\Z/n\Z,\mu_{n})\to\Homsh(\Z/n\Z,\mu_{n})=\mu_{n}
\]
is the locus of $\omega\in\mu_{n}$ such that the induced map $\Z/n\Z\to\mu_{n}$
is an isomorphism. Just looking at the order of $\omega$, we see
that $\omega\in\mu_{n}-\mu_{d}$ for any $d\mid n$ with $d<n$. As
$\mu_{d}\subseteq\mu_{n}$ is an étale subgroup, it is an open and
closed subscheme. Therefore $\omega\in\mu_{n}^{*}=\mu_{n}-\cup_{d\mid n,d<n}\mu_{d}$,
which is an open and closed subscheme. Morever we also have $\mu_{n}^{*}=\Spec(\Z[1/n][y]/(\Phi_{n}(y))$
by definition of $\Phi_{n}$. The condition $\omega\in\mu_{n}^{*}$
means that $\omega$ is a primitive $n$-th root in all the residue
fields of its base. This easily implies the equality $\Isosh(\Z/n\Z,\mu_{n})=\mu_{n}^{*}$.
The description of $P$ in the statement follows, while the biaction
of $G$ and $H$ can be computed directly from the definition in \ref{prop:bitorsors and semidirect products}.

We are left with the second part of the statement, so let $X=\Spec\alA$
be a $G$-cover. By definition $\Lambda_{P}(X)=(X\times P)/G$, where
$G$ acts on the right: $(x,p)g=(xg,g^{-1}p)$. In particular $\Lambda_{P}(X)$
is the spectrum of $(\alA\otimes A_{P})^{G}\subseteq\alA\otimes A_{P}$.
Here the right $G$-action on $P$, $-\star g=g^{-1}-$ induces a
left $G$-action on (the functor associated with) $A_{P}$: $\zeta\in\mu_{n}$
acts as $x\mapsto\zeta^{-1}x$, $y\mapsto y$, while $(1,l)$, since
its inverse in $G$ is $(1,l'),$acts as $x\mapsto x^{l'}$, $y\mapsto y^{l'}$.
In particular, as $\mu_{n}$-comodule, $A_{p}$ satisfies $\deg x=-1$
and $\deg y=0$ and the total $G$ action on $\alA\otimes A_{P}$
is the diagonal one, as claimed in the statement. The $H$-action
on $\Lambda_{P}(X)=(X\times P)/G$ is non trivial only on $P$, from
which we deduce the $H$-action on $\alA\otimes A_{P}$ and its subalgebra.

The last part follows directly from the last part of Theorem \ref{thm:bitorsors and GCov}.
\end{proof}

\section{Data for \texorpdfstring{$(\mu_{3}\rtimes\Z/2\Z)$}{(mu3 rtimes Z/2Z)}-covers
or $S_{3}$-covers\label{sec:Data for covers}}

In this section we work over the ring $\shR$ of integers with $2$
inverted, that is $\stR=\Z[1/2]$ and with the symbol $G$ we will
always denote the group scheme $G=\mu_{3}\rtimes\Z/2\Z$ defined over
$\stR$, where the action of $\Z/2\Z$ on $\mu_{3}$ is given by the
inversion, that is $\Z/2\Z\simeq\Autsh(\mu_{3})$. Note that, in this
case, $\mu_{2}\simeq\Z/2\Z$. We denote by $\sigma\in\Z/2\Z(\stR)$
the non trivial generator of $\Z/2\Z$. We will also think of $\sigma$
as an element of $G(\stR)$.

\subsection{The group \texorpdfstring{$(\mu_{3}\rtimes\Z/2\Z)$}{(mu3 rtimes Z/2Z)}
and its representation theory.\label{subsec:representation theory}}

The group $G$ is a linearly reductive group over $\shR$ (see \cite[Prop 2.6, Thm 2.16]{Abramovich2007}).

Set $V_{0}=\shR,V_{1},V_{2}$ for the representations of $\mu_{3}$
corresponding to its characters in $\Z/3\Z$. Moreover consider the
set $I_{G}$ of $G$-representations

\[
\shR\comma A=V_{\chi}\comma V=\ind_{\mu_{3}}^{G}V_{1}
\]
where $\chi\colon G\arr\Gm$ is induced by the non trivial character
of $\Z/2\Z$. Since $2$ is invertible in $\shR$, the representations
in $I_{G}$ restricts over $\overline{\Q}$ to the irreducible representations
of $G\times\overline{\Q}\simeq S_{3}$. In other words we have proved
that:
\begin{prop}
The pair $(G,I_{G})$ is a good linearly reductive group over $\shR$
in the sense of \cite[Def 1.11]{Tonini2015}.
\end{prop}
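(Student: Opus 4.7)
The plan is to unwind \cite[Def 1.11]{Tonini2015}, which (in the usual formulation) asks for three things: (i) that $G$ be linearly reductive and affine, (ii) that $I_G$ be a finite set of locally free $G$-representations containing the trivial one, whose ranks are constant, and (iii) that the evaluation map $\bigoplus_{W\in I_G}W\otimes_{\shR}W^{\vee}\to\shR[G]$ (or the analogous Peter--Weyl/Artin--Wedderburn style statement) be an isomorphism of $G$-representations, so that every quasi-coherent $G$-representation decomposes canonically as a direct sum of sheaves tensored with the $W\in I_G$. Linear reductivity is already cited from \cite[Prop 2.6, Thm 2.16]{Abramovich2007}; the remaining task is to check the structural property of $I_G$.

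First I would record the basic invariants of the three representations. The trivial rep has rank $1$, the character $\chi\colon G\to\Gm$ obtained from $G\twoheadrightarrow\Z/2\Z$ gives $A=V_\chi$ of rank $1$, and the induction $V=\ind_{\mu_{3}}^{G}V_{1}$ has rank $[G:\mu_{3}]\cdot\dim V_{1}=2$. All three are finite locally free over $\shR$, and the rank identity $1^{2}+1^{2}+2^{2}=6=|G|$ already suggests that these are the complete list of irreducibles.

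Next I would verify the Peter--Weyl decomposition of the regular representation. The plan is to build an explicit $G$-equivariant map
\[
\Phi\colon\shR\oplus A\oplus(V\otimes V^{\vee})\arr\shR[G]
\]
using the unit, the character $\chi$, and the matrix-coefficient map for $V$, and check that $\Phi$ is an isomorphism. Since source and target are both locally free $\shR$-modules of rank $6$, it is enough to check that $\Phi$ becomes an isomorphism after base change to every geometric point of $\Spec\shR$; but the residue characteristics are precisely those where $2$ is invertible, and at each such geometric point $G$ becomes $S_{3}$ and the three representations become the sign, trivial, and standard $2$-dimensional irreducibles of $S_{3}$, whose matrix coefficients span the regular representation by the classical Peter--Weyl theorem. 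From this isomorphism one deduces that every locally free (resp.\ quasi-coherent) $G$-representation $M$ over any $T\to\Spec\shR$ decomposes canonically as $M\simeq M^{G}\oplus(M\otimes A^{\vee})^{G}\otimes A\oplus\Homsh_{G}(V,M)\otimes V$, which is the form of the condition that appears in \cite[Def 1.11]{Tonini2015}.

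The main obstacle is the integrality: over $\shR=\Z[1/2]$ one cannot invoke ordinary character theory directly, and in particular one must rule out any exotic behaviour in characteristic $3$, where $\mu_{3}$ is no longer \'etale. This is where the hypothesis ``$2$ invertible'' (not ``$6$ invertible'') is essential and must be used carefully: linear reductivity of $\mu_{3}\rtimes\Z/2\Z$ over $\Z[1/2]$ forces the Hopf algebra $\shR[G]$ to be semisimple in the appropriate fppf sense, so the fibrewise check above actually implies the global statement by Nakayama applied to the cokernel of $\Phi$. Once this is in place the proposition follows, and the rest of the paper can safely manipulate $G$-representations as direct sums indexed by $I_{G}$.
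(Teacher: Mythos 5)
Your overall strategy --- cite linear reductivity, then verify a Peter--Weyl decomposition $\bigoplus_{W\in I_G}W\otimes W^{\vee}\simeq\shR[G]$ fibrewise and globalize by Nakayama plus equality of ranks --- is a legitimate way to unwind the definition, and the rank count $1+1+4=6$ is the right sanity check. The paper itself is much terser: it records linear reductivity from \cite[Prop 2.6, Thm 2.16]{Abramovich2007} and then simply observes that the three representations restrict over $\overline{\Q}$ to the irreducible representations of $G\times\overline{\Q}\simeq S_{3}$, the definition of ``good'' being arranged so that this, together with linear reductivity, is what needs checking.

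The genuine gap in your argument is the fibre in characteristic $3$. You assert that ``at each such geometric point $G$ becomes $S_{3}$'': this is false over $\overline{\F}_{3}$, where $\mu_{3}$ is infinitesimal and $G=\mu_{3}\rtimes\Z/2\Z$ is not isomorphic to the constant group $S_{3}$ (indeed the whole point of working with $G$ over $\Z[1/2]$ rather than with $S_{3}$ is that they differ at $3$ and agree only \'etale-locally over $\Z[1/6]$). Consequently the classical Peter--Weyl theorem for the finite group $S_{3}$ does not apply at that fibre. You do flag the problem, but the proposed repair --- that linear reductivity makes $\shR[G]$ ``semisimple in the appropriate fppf sense'' so that the fibrewise check globalizes by Nakayama --- is circular: Nakayama only transports a statement you have already established on every fibre, and semisimplicity of $\Rep(G_{\overline{\F}_{3}})$ alone does not tell you that $\shR\otimes\overline{\F}_{3}$, $A\otimes\overline{\F}_{3}$ and $V\otimes\overline{\F}_{3}$ remain irreducible, pairwise non-isomorphic, with trivial endomorphism rings, and exhaustive. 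What actually closes the gap uniformly in every residue characteristic $\neq2$ is that $\mu_{3}$ is diagonalizable, so its representation category is that of $\Z/3\Z$-graded modules with irreducibles the characters $V_{0},V_{1},V_{2}$ in every characteristic, including $3$; Clifford--Mackey theory for the quotient $\Z/2\Z$ (this is where $2\in\shR^{*}$ is really used) then produces exactly $\shR$, $A$ and $V=\ind_{\mu_{3}}^{G}V_{1}$ as the full list of irreducibles. This $\Z/3\Z$-grading is precisely the structure the paper exploits immediately afterwards when computing the tensor products $A\otimes A$, $A\otimes V$ and $V\otimes V$.
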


We setup the following notation and we will use it throughout the
paper. We consider the following basis $1\in\shR$, $1_{A}\in A$
and $v_{1},v_{2}\in V=V_{1}\oplus V_{2}$ such that $v_{i}\in V_{i}$.
Moreover since $\sigma$ exchanges $V_{1}$ and $V_{2}$, we also
assume that $\sigma(v_{1})=v_{2},\sigma(v_{2})=v_{1}$. Now we describe
the tensor products of the representations in $I_{G}$. We have
\[
A\otimes A\simeq\stR\comma1_{A}\otimes1_{A}\arr1\text{ and }A\otimes V\simeq V\comma1_{A}\otimes v_{1}\arr-v_{1},1_{A}\otimes v_{2}\arr v_{2}
\]
and, if we set $v_{ij}=v_{i}\otimes v_{j}\in V\otimes V$, 
\[
\stR\oplus A\oplus V\simeq V\otimes V\comma1\arr v_{12}+v_{21},1_{A}\arr v_{12}-v_{21},v_{1}\arr v_{22},v_{2}\arr v_{11}
\]
Finally note that the $G$-equivariant projection $V\otimes V\arr\stR\comma v_{ij}\arr1-\delta_{ij}$,
where $\delta_{ij}$ is the Kronecker symbol, yields an isomorphism
\[
V\simeq\duale V\comma v_{1}\arr v_{2}^{*},v_{2}\arr v_{1}^{*}
\]
The above discussion allows us to conclude the following:

\subsection{\label{sec:Global description of Sthree covers}Global description
of \texorpdfstring{$(\mu_{3}\rtimes\Z/2\Z)$}{(mu3 rtimes Z/2Z)}-covers.}

In this section we want to describe the data needed to define a $G$-cover
over any $\stR$-scheme. We proceed in the following way. First we
introduce such data, then we will state the precise relationship with
$G$-covers and only after we will prove all the claims. We remark
here that the global description obtained, although with a different
notation, has already been introduced in \cite{Easton2008}.

We define the stack $\stY$ over $\stR$ whose objects over an $\shR$-scheme
$T$ are sequences $\chi=(\shL,\shF,\alpha,\beta,\la-,-\ra)$ where:
$\shL$ is an invertible sheaf, $\shF$ is a rank $2$ locally free
sheaf and $\alpha,\beta,\la-,-\ra$ are maps
\[
\shL\otimes\shF\arrdi{\alpha}\shF\comma\Sym^{2}\shF\arrdi{\beta}\shF\comma\det\shF\arrdi{\la-,-\ra}\shL
\]
With an object $\chi\in\stY$ as above we associate the map $(-,-)_{\chi}\colon\shF\otimes\shF\arr\odi T$
given by
\begin{equation}
(-,-)_{\chi}\colon\shF\otimes\shF\simeq\duale{\shF}\otimes\det\shF\otimes\shF\arrdi{\id\otimes\la-,-\ra\otimes\id}\duale{\shF}\otimes\shL\otimes\shF\arrdi{\id\otimes\alpha}\duale{\shF}\otimes\shF\arr\odi T\label{eq:symmetric bracket from alternating one}
\end{equation}
where we are using the canonical isomorphism $\shF\simeq\duale{\shF}\otimes\det\shF$.
Notice that, although we are using the symbol $(-,-)$ of a symmetric
product, $(-,-)_{\chi}$ is not necessarily symmetric. Moreover we
also associate with $\chi$ the maps $\gamma_{\chi},\gamma_{\chi}'\colon\shF\otimes\shF\arr\odi T\oplus\shL$
given by
\[
\gamma_{\chi}=(-,-)_{\chi}+\la-,-\ra\comma\gamma'_{\chi}=(-,-)_{\chi}-\la-,-\ra
\]
Finally we define (see \ref{not: trace of a map})
\begin{equation}
m_{\chi}=(1/2)\tr(\shL^{2}\otimes\shF\arrdi{\id_{\shL}\otimes\alpha}\shL\otimes\shF\arrdi{\alpha}\shF)\colon\shL^{2}\to\odi T\label{eq:m from alpha}
\end{equation}
and 
\[
\alA_{\chi}=\odi T\oplus\shL\oplus\shF_{1}\oplus\shF_{2}\text{ with }\shF_{1}=\shF_{2}=\shF
\]
For convenience we also set $\shL_{\chi}=\shL$, $\shF_{\chi}=\shF$,
$\alpha_{\chi}=\alpha$, $\beta_{\chi}=\beta$ and $\langle-,-\rangle_{\chi}=\langle-,-\rangle$:
given $\chi\in\stY$ we don't need to specify the whole sequence to
refer to one of its elements, for instance we could simply write $\beta_{\chi}=0$
and so on. On the other hand, when $\chi$ is given and there is no
possibility of confusion, we will omit the $-_{\chi}$ and simply
write $(-,-),\gamma,\gamma',m,\alA$ or $\chi=(\shL,\shF,m,\alpha,\beta,(-,-),\la-,-\ra)\in\stY$.
\begin{defn}
We denote by $\LRings_{\shR}^{G}$ the stack of locally free shaves
$\alA$ of finite rank with a coaction of $G$ and an equivariant
multiplication map $\alA\otimes\alA\to\alA$ (not necessarilly commutative
or associative) with an invariant unit $1\in\alA^{G}$.
\end{defn}

\begin{prop}
\label{prop:Achi is G-equivariant}Given $\chi\in\stY$ as above,
the sheaf $\alA_{\chi}$ has a unique $G$-comodule structure such
that $\shF_{0}=\odi T\oplus\shL,\shF_{1},\shF_{2}$ define the $\mu_{3}$-action
and $\sigma$ acts as $-\id_{\shL}$ on $\shL$ and induces $\id_{\shF}\colon\shF_{1}\arr\shF_{2}$.
\end{prop}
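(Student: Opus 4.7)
The plan is to reduce the existence and uniqueness of the $G$-action to separately analyzing the $\mu_3$- and $\sigma$-parts, using the semidirect product structure $G = \mu_3 \rtimes \Z/2\Z$. A $G$-comodule structure on $\alA_\chi$ is equivalent to a $\mu_3$-comodule structure together with an involution $\sigma$ satisfying the semidirect relation $\sigma \zeta \sigma^{-1} = \zeta^{-1}$ for all $\zeta \in \mu_3$. Since $\mu_3$ is diagonalizable, a $\mu_3$-comodule structure on $\alA_\chi$ is the same as a $\Z/3\Z$-grading; the condition that $\shF_0,\shF_1,\shF_2$ define the $\mu_3$-action thus pins down the $\mu_3$-comodule structure as the one placing $\shF_i$ in weight $i$, and this establishes uniqueness (and existence) for the $\mu_3$-part.

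For the $\sigma$-action, I would first note that the semidirect relation forces $\sigma$ to send the weight-$i$ eigensheaf to the weight-$(-i)$ eigensheaf. Hence $\sigma$ preserves $\shF_0$ and swaps $\shF_1$ with $\shF_2$. The description in the statement then completely specifies $\sigma$: it acts as the identity on the unit summand $\odi T \subseteq \shF_0$ (which is forced for comodule structures compatible with the unit), as $-\id_\shL$ on the $\shL$-summand, and as $\id_\shF \colon \shF_1 \to \shF_2$ on the third summand. Imposing $\sigma^2 = \id$ then determines $\sigma \colon \shF_2 \to \shF_1$ to be $\id_\shF$ as well, giving uniqueness.

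For existence it remains to check that the $\sigma$ defined this way actually satisfies $\sigma^2 = \id$ and the semidirect product relation. The first is immediate on each of the four summands. For the second, on $\shF_i$ ($i \in \{0,1,2\}$) the composition $\sigma \zeta \sigma^{-1}$ acts by the scalar $\zeta^{-i}$ since $\sigma^{-1}$ lands in $\shF_{-i}$ where $\zeta$ acts by $\zeta^{-i}$, and $\sigma$ brings the result back with no additional scalar; this is exactly how $\zeta^{-1}$ acts on $\shF_i$. There is no serious obstacle here: the entire content is matching weights, and the main verification is essentially bookkeeping on the decomposition $\alA_\chi = \odi T \oplus \shL \oplus \shF_1 \oplus \shF_2$.
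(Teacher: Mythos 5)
Your argument is correct, but it takes a genuinely different route from the paper's. The paper never verifies the comodule structure by hand: it identifies $\alA_{\chi}$ with $\alB_{\chi}=\bigoplus_{W\in I_{G}}W^{\vee}\otimes\Gamma_{W}$ via the explicit basis isomorphism $1\oplus x\oplus f_{1}\oplus f_{2}\mapsto(1^{*}\otimes1)\oplus(1_{A}^{*}\otimes x)\oplus[(v_{2}^{*}\otimes f_{1})\oplus(v_{1}^{*}\otimes f_{2})]$, so existence comes for free from the canonical $G$-structure on each $W^{\vee}\otimes\Gamma_{W}$, and the match with the description in the statement is simply declared clear. You instead build the structure directly from the semidirect product: a $\mu_{3}$-comodule is a $\Z/3\Z$-grading, the relation $\sigma\zeta\sigma^{-1}=\zeta^{-1}$ forces $\sigma$ to exchange the weight-$i$ and weight-$(-i)$ pieces, and $\sigma^{2}=\id$ then determines $\sigma$ on $\shF_{2}$ from its value on $\shF_{1}$. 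Your version is more elementary and self-contained, and it is the only one of the two that actually engages with uniqueness; note that uniqueness genuinely requires normalizing $\sigma$ on the summand $\odi T$ (an involution with $\sigma(1)=1+\ell$ for $\ell\in\shL$ would also preserve the grading, restrict to $-\id_{\shL}$ and square to the identity), so your appeal to compatibility with the unit is not a throwaway remark but a necessary extra condition that the statement leaves implicit. What the paper's detour through $\alB_{\chi}$ buys is reuse: the same isomorphism immediately yields the multiplication table (\ref{eq: crucial multiplication list-1}) and feeds into the proof of Theorem \ref{thm:global data for Sthree}, so the representation-theoretic machinery is doing work beyond this proposition.
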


This proposition will be proved later. We makes $\alA_{\chi}$ into
an object of $\LRings_{\shR}^{G}$ defining the following multiplication
map $\alA_{\chi}\otimes\alA_{\chi}\to\alA_{\chi}$:

\begin{gather}
\shL^{2}\arrdi m\odi T\comma\shL\otimes\shF_{1}\arrdi{\alpha}\shF_{1}\comma\shL\otimes\shF_{2}\arrdi{-\alpha}\shF_{2}\comma\shF_{1}\otimes\shL\arrdi{\hat{\alpha}}\shF_{1}\comma\shF_{2}\otimes\shL\arrdi{-\hat{\alpha}}\shF_{2}\label{eq: crucial multiplication list-1}\\
\shF_{1}\otimes\shF_{1}\arrdi{\beta}\shF_{2}\comma\shF_{2}\otimes\shF_{2}\arrdi{\beta}\shF_{1}\comma\shF_{1}\otimes\shF_{2}\arrdi{\eta_{1}+\eta_{2}}\odi T\oplus\shL\comma\shF_{2}\otimes\shF_{1}\arrdi{\eta_{1}-\eta_{2}}\odi T\oplus\shL\nonumber 
\end{gather}
where $\hat{\alpha}$ is obtained by $\alpha$ just swapping the factors
in the source and, for future reference, we set $\eta_{1}=(-,-)$
and $\eta_{2}=\la-,-\ra$. We are implicitly assuming that the maps
$\odi T\otimes\alA_{\chi},\alA_{\chi}\otimes\odi T\arr\alA_{\chi}$
are just the usual isomorphisms, or, in other words, that $1\in\odi T$
is the unity for $\alA_{\chi}$.

We want now to give a list of equations involving the maps $\alpha,\beta,\la-,-\ra$,
which we will show are the relationships needed for the associativity
of $\alA_{\chi}$. Such equations will be 'local' relations and therefore
we introduce the following notation:
\begin{notation}
\label{not: local notation for Sthree}When we fix a generator $t$
of $\shL$, the maps $m,\alpha,\beta,\la-,-\ra$ will be thought of
as: $m\in\odi T$, given by $m(t\otimes t)$; $\alpha\colon\shF\arr\shF$,
given by ``$\alpha(u)=\alpha(t\otimes u)$''; $\la-,-\ra\colon\det\shF\arr\odi S$,
given by ``$\la u,v\ra=\la u,v\ra t$''. When we will say that some
particular relation among the maps $m,\alpha,\beta,\la-,-\ra$ locally
holds, this will always mean that such relation holds as soon as basis
$t$ and $y,z$ of, respectively, $\shL$ and $\shF$ are given.
\end{notation}

For instance, the following relation holds locally (use \ref{rem:F is Fdual tensor det F}):
\begin{equation}
(u,v)=\la\alpha(v),u\ra\text{ for }u,v\in\shF\label{eq:ass alpha, gamma first}
\end{equation}
Consider the equations:

\begin{align}
\alpha^{2} & =m\id_{\shF}\label{eq:ass m,alpha}\\
\la\alpha(u),v\ra & =\la\alpha(v),u\ra & \text{ for }u,v\in\shF\label{eq:ass alpha,gamma}\\
\alpha(\beta(u\otimes v)) & =-\beta(u\otimes\alpha(v)) & \text{ for }u,v\in\shF\label{eq:ass alpha,beta}\\
\beta(\beta(u\otimes v)\otimes w) & =\la\alpha(w),v\ra u+\la v,w\ra\alpha(u) & \text{ for }u,v,w\in\shF\label{eq:ass gamma,beta}\\
\la u,\beta(v\otimes w)\ra & =\la w,\beta(u\otimes v)\ra & \text{ for }u,v,w\in\shF\label{eq:ass beta,beta}
\end{align}

\begin{thm}
\label{thm:global data for Sthree}The map of stacks   \[   \begin{tikzpicture}[xscale=3.7,yscale=-0.6]     \node (A0_0) at (0, 0) {$\stY$};     \node (A0_1) at (1, 0) {$\LRings^G_\stR$};     \node (A1_0) at (0, 1) {$\chi=(\shL,\shF,\alpha,\beta, \la-,-\ra)$};     \node (A1_1) at (1, 1) {$\alA_\chi$};     \path (A0_0) edge [->]node [auto] {$\scriptstyle{}$} (A0_1);     \path (A1_0) edge [|->,gray]node [auto] {$\scriptstyle{}$} (A1_1);   \end{tikzpicture}   \] is
well defined, fully faithful and induces an equivalence between the
substack of $\stY$ of objects that locally satisfy the relations
(\ref{eq:ass m,alpha}), (\ref{eq:ass alpha,gamma}), (\ref{eq:ass alpha,beta}),
(\ref{eq:ass gamma,beta}), (\ref{eq:ass beta,beta}) and $\GCov$
(where a cover is thought of as its corresponding sheaf of algebras).
\end{thm}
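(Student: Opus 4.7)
The plan is to verify in order the three parts of the statement: well-definedness, full faithfulness, and the characterization of the essential image as $\GCov$.

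For well-definedness I check that $\alA_\chi$ with the multiplication (2.1) is an object of $\LRings_\stR^G$. The $\mu_3$-grading is preserved since the grades add correctly in each of the listed products, and the $\sigma$-equivariance is built into the sign conventions of (2.1), because $\sigma$ acts as $-1$ on $\shL$ and swaps $\shF_1 \leftrightarrow \shF_2$: this forces the sign difference between $\shL \otimes \shF_1 \to \shF_1$ and $\shL \otimes \shF_2 \to \shF_2$, and similarly between $\shF_1 \otimes \shF_2 \to \odi T \oplus \shL$ and $\shF_2 \otimes \shF_1 \to \odi T \oplus \shL$; the unit in $\odi T$ is $G$-invariant. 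Full faithfulness follows because any morphism $\phi\colon\alA_\chi\to\alA_{\chi'}$ in $\LRings_\stR^G$ preserves the $G$-isotypic decomposition and the unit, and by $\sigma$-equivariance its components on $\shF_1$ and $\shF_2$ agree after identifying both with $\shF,\shF'$; hence $\phi$ is determined by a pair $(\psi_\shL\colon\shL\to\shL',\psi_\shF\colon\shF\to\shF')$. Compatibility with each product in (2.1) then translates into compatibility of $(\psi_\shL,\psi_\shF)$ with $\alpha,\beta,\la-,-\ra$, which is exactly the notion of morphism in $\stY$; compatibility with $m$ and $(-,-)$ is automatic since they are derived from $\alpha$ and $\la-,-\ra$ via (2.0) and (2.1').

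For the essential image, in the direction starting with $\chi\in\stY$ satisfying (2.2)--(2.6), the $G$-module structure of $\alA_\chi\simeq\odi T\oplus A\otimes\shL\oplus V\otimes\shF$ is locally isomorphic to the regular representation $\stR\oplus A\oplus V^{\oplus 2}$ by construction, so what remains is associativity and commutativity. Commutativity, apart from the cases automatic from $\beta$ factoring through $\Sym^2\shF$ and $\la-,-\ra$ through $\det\shF$, reduces to $uv=vu$ for $u\in\shF_1$, $v\in\shF_2$: unpacking (2.1) and using the alternation of $\la-,-\ra$, this becomes symmetry of $(-,-)$, which via (2.1') is exactly (2.3). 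Associativity is then checked triple by triple according to the $\mu_3$-grades of the factors: $(\shL,\shL,\shF)$ gives $\alpha^2=m\id_\shF$, i.e.\ (2.2); $(\shL,\shF,\shF)$ gives $\alpha\circ\beta=-\beta\circ(\id\otimes\alpha)$, i.e.\ (2.4); $(\shF_1,\shF_1,\shF_2)$ yields $\beta(\beta(u,v),w)=(v,w)u+\la v,w\ra\alpha(u)$, which via $(v,w)=\la\alpha(w),v\ra$ is (2.5); and $(\shF_1,\shF_1,\shF_1)$ splits into an $\shL$-component that is (2.6) and an $\odi T$-component derivable from (2.3), (2.4), (2.6). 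The remaining graded triples collapse to these using commutativity, the $\sigma$-swap, and the rank-$2$ identity $\la u,v\ra w-\la u,w\ra v+\la v,w\ra u=0$.

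For the converse direction, any $G$-cover $\alA$ decomposes via its $G$-isotypic parts; by the representation theory of Section \ref{subsec:representation theory} together with the local regularity, these have ranks $1,1,2,2$, corresponding respectively to the trivial representation, the sign character, and the two $\mu_3$-components of $V$ swapped by $\sigma$. The unit identifies the trivial part with $\odi T$, the sign part defines $\shL$, and $\sigma$ supplies a common $\shF$ with $\shF_1\simeq\shF_2\simeq\shF$. Decomposing the multiplication into its graded components extracts $\alpha,\beta,(-,-),\la-,-\ra$, and $\sigma$-equivariance forces $\la-,-\ra$ to be alternating, hence to factor through $\det\shF$. Associativity on $(\shL,\shL,\shF)$ forces $m=(1/2)\tr\alpha^2$, and on $(\shL,\shF_1,\shF_2)$ forces $(u,v)=\la\alpha(u),v\ra$; combined with (2.3), itself forced by commutativity, this shows $(-,-)$ and $m$ coincide with the derived expressions (2.0) and (2.1'). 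The resulting $\chi\in\stY$ satisfies the five relations by the previous paragraph, and $\alA\simeq\alA_\chi$ by construction. The main obstacle is the systematic bookkeeping of the associativity constraints: enumerating all graded triples, reducing them using commutativity, the $\sigma$-swap, and the rank-$2$ identity, and verifying that (2.2)--(2.6) is a minimal sufficient set, with the derived identities $m=(1/2)\tr\alpha^2$ and $(-,-)=\la\alpha(-),-\ra$ acting as the bridges that collapse apparently independent conditions into the same short list.
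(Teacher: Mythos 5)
Your proposal is correct, and its computational core --- translating commutativity into the symmetry of $(-,-)$ and associativity into the five relations via a triple-by-triple analysis of the $\mu_3$-graded pieces, then collapsing the remaining triples by commutativity and the $\sigma$-swap --- is essentially identical to the paper's. Where you genuinely diverge is in the categorical setup: the paper obtains well-definedness, full faithfulness, and the identification of $G$-covers with commutative associative objects by embedding $\stY$ into the stack $\stX$ of pseudo-monoidal $\shR$-linear functors $\Loc^G\shR\to\Loc T$ and invoking the equivalence $\alB_*$ of \cite[Thm A]{Tonini2015} and \cite[Thm 8.6]{Tonini2014}, whereas you argue directly with the $G$-isotypic decomposition of $\alA_\chi$ and of a given $G$-cover. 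Your route is more self-contained and elementary (it only needs that the multiplicity spaces $\Hom^G(W,\alA)$ are locally free of rank $\rk W$ because $\alA$ is locally the regular representation, plus the absence of $G$-maps between distinct elements of $I_G$), while the paper's route outsources exactly these points to general machinery and thereby gets the translation between the multiplication components and $(m,\alpha,\hat\alpha,\eta_1,\eta_2,\beta)$ for free. Two small imprecisions worth noting: the alternation of $\la-,-\ra$ in the converse direction is forced by $\sigma$-equivariance \emph{together with} commutativity (the $\sigma$-swap alone only relates the $\shF_1\otimes\shF_2$ and $\shF_2\otimes\shF_1$ components up to sign), and the rank-$2$ identity $\la u,v\ra w-\la u,w\ra v+\la v,w\ra u=0$ you invoke is not actually needed --- the paper's Remark \ref{lem:essential associative conditions} (two orderings of a triple imply all permutations) together with the $\sigma$-swap already accounts for all $27$ graded triples. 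Neither affects the validity of the argument.
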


\begin{notation}
Assuming Theorem \ref{thm:global data for Sthree}, we will think
of $\GCov$ as substack of $\stY$, using for instance expressions
like $\chi=(\shL,\shF,\alpha,\beta,\la-,-\ra)\in\GCov$.
\end{notation}

The aim of this section is to prove Theorem \ref{thm:global data for Sthree}.
We will often use results and notations from \cite{Tonini2014} and
\cite{Tonini2015}.

Denote by $\stX$ the stack whose fibers over a scheme $T$ is the
groupoid of pseudo-monoidal (see \cite[Def 2.21]{Tonini2014}) and
$\shR$-linear functors $\Gamma\colon\Loc^{G}\shR\arr\Loc T$ such
that $\rk\Gamma_{U}=\rk U$ for all $U\in\Loc^{G}\shR$, $\Gamma_{\shR}=\odi T$
and $1\in\Gamma_{\shR}$ is a unity. We are going to embed $\stY$
into $\stX$.

We use results and notations from \ref{subsec:representation theory}.
By \cite[Lemma 1.9]{Tonini2015} and \cite[Rmk 1.17]{Tonini2015}
there are isomorphisms 
\[
\bigoplus_{W\in I_{G}}\Hom^{G}(W,U)\otimes W\to U\text{ and }\bigoplus_{W\in I_{G}}\Hom^{G}(W,U)\otimes\Gamma_{W}\to\Gamma_{U}
\]
natural for $U\in\Loc^{G}\shR$ and for a $\shR$-linear functor $\Gamma\colon\Loc^{G}\shR\arr\Loc T$.
It follows that $\Gamma$ is completely determined by the collection
of locally free sheaves $(\Gamma_{W})_{W\in I_{G}}$. Moreover a pseudo-monoidal
structure on $\Gamma$ corresponds to a sequence of maps 
\[
\Gamma_{W}\otimes\Gamma_{W'}\to\Gamma_{W\otimes W'}\arrdi{\simeq}\bigoplus_{Z\in I_{G}}\Hom^{G}(Z,W\otimes W')\otimes\Gamma_{Z}\text{ for }W,W'\in I_{G}
\]
 Since in \ref{subsec:representation theory} we fixed basis for the
modules $\Hom^{G}(Z,W\otimes W')$ as above, it follows that an object
$\Gamma\in\stX(T)$ can be represented by a sequence $(\shL,\shF,m,\alpha,\hat{\alpha},\eta_{1},\eta_{2},\beta)$
where 
\[
\shL(=\Gamma_{A})\comma\shF(=\Gamma_{V})
\]
are an invertible sheaf and a rank $2$ sheaf on $T$ respectively
and 
\[
\shL\otimes\shL\arrdi m\odi T\comma\shL\otimes\shF\arrdi{\alpha}\shF\comma\shF\otimes\shL\arrdi{\hat{\alpha}}\shF\comma\shF\otimes\shF\arrdi{\eta_{1}\oplus\eta_{2}\oplus\beta}\odi T\oplus\shL\oplus\shF
\]
are maps. In particular $\stY$ can be embedded in $\stX$ by sending
$\chi=(\shL,\shF,\alpha,\beta,\la-,-\ra)\in\stY$ to the sequence
$(\shL,\shF,m_{\chi},\alpha,\hat{\alpha},(-,-)_{\chi},\la-,-\ra,\beta)$,
where $\hat{\alpha}$ is obtained from $\alpha$ exchanging the factors
in the source.

By \cite[Thm A]{Tonini2015}, \cite[Rmk 1.17]{Tonini2015} and \cite[Thm 8.6]{Tonini2014}
there is a fully faithful functor 
\[
\alB_{*}\colon\stX\to\LRings_{\shR}^{G}\comma\alB_{\Gamma}=\bigoplus_{W\in I_{G}}W^{\vee}\otimes\Gamma_{W}
\]
which restricts to an equivalence between the substack of $\stX$
of monoidal functors and $\GCov$. In particular a $G$-cover is (the
spectrum of) a $\alB_{\Gamma}$ (for some $\Gamma\in\stX$) which
is commutative and associative. Here the multiplication of $\alB_{\Gamma}$
is induced by the pseudo-monoidal structure on $\Gamma$.

Let us assume that $\Gamma$ is the functor associated with 
\[
\chi=(\shL,\shF,m,\alpha,\hat{\alpha},\eta_{1},\eta_{2},\beta)\in\stX(T)
\]
In this case we simply write $\alB_{\chi}=\alB_{\Gamma}$. The choice
of basis for the modules in $I_{G}$ in \ref{subsec:representation theory}
defines an isomorphism 
\[
\alA=\odi T\oplus\shL\oplus(\shF_{1}\oplus\shF_{2})\to(\duale{\shR}\otimes\odi T)\oplus(\duale A\otimes\shL)\oplus(\duale V\otimes\shF)=\alB_{\chi}
\]
\[
1\oplus x\oplus f_{1}\oplus f_{2}\mapsto(1^{*}\otimes1)\oplus(1_{A}^{*}\otimes x)\oplus[(v_{2}^{*}\otimes f_{1})\oplus(v_{1}^{*}\otimes f_{2})]
\]

\begin{lem}
The $G$-comodule structure and the multiplication induced on $\alA$
by $\alB_{\chi}$ are the ones described in \ref{prop:Achi is G-equivariant}
and (\ref{eq: crucial multiplication list-1}) respectively. In particular
$(\alB_{*})_{|\stY}$ is the map $\alA_{*}$ of Theorem \ref{thm:global data for Sthree},
which is well defined and fully faithful.
\end{lem}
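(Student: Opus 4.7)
The plan is to verify the lemma's two claims (that the $G$-comodule structure and the multiplication on $\alA$ induced by the iso $\alA \cong \alB_\chi$ agree with those in Proposition \ref{prop:Achi is G-equivariant} and in (\ref{eq: crucial multiplication list-1})) by a direct basis-level calculation, after which the last sentence about $(\alB_*)_{|\stY}$ being well-defined and fully faithful follows for free from the cited full faithfulness of $\alB_*\colon\stX\to\LRings_\stR^G$.

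For the $G$-comodule structure, on $\alB_\chi=\bigoplus_{W\in I_G} W^\vee\otimes\Gamma_W$ the action is carried entirely by the $W^\vee$ factors. Since $\mu_3$ acts on $v_i$ with weight $i$, it acts on $v_i^*$ with weight $-i$, so $\R^\vee\otimes\odi T$, $v_2^*\otimes\shF$ and $v_1^*\otimes\shF$ are the weight $0,1,2$ eigenspaces modulo $3$; under the iso these are exactly the summands $\odi T$, $\shF_1$, $\shF_2$, which is the prescribed $\mu_3$-action. For $\sigma$, the relations $\sigma\cdot 1_A=-1_A$ and $\sigma(v_1)=v_2,\sigma(v_2)=v_1$ dualize to $\sigma\cdot 1_A^*=-1_A^*$ and $\sigma(v_1^*)=v_2^*$, $\sigma(v_2^*)=v_1^*$. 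The first gives $\sigma=-\id_\shL$ on the $A^\vee\otimes\shL$ summand, and the second means that $\sigma$ identifies $\shF_1$ and $\shF_2$ through $\id_\shF$, which is precisely Proposition \ref{prop:Achi is G-equivariant}.

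For the multiplication I would compute $(w^*\otimes x)(w'^*\otimes y)$ for every pair of summands of $\alB_\chi$. By construction this equals $(w^*\otimes w'^*)\otimes\text{psd-mono}(x,y)$ in $(W\otimes W')^\vee\otimes\Gamma_{W\otimes W'}$, and I project into $\alB_\chi$ using the decomposition $W\otimes W'\simeq\bigoplus_Z Z$ fixed in Section \ref{subsec:representation theory}. Dualizing $A\otimes A\simeq\stR$ gives the map $m$ on $\shL\otimes\shL$. Dualizing $A\otimes V\simeq V$ (with $1_A\otimes v_1\mapsto -v_1$, $1_A\otimes v_2\mapsto v_2$) shows that $1_A^*\otimes v_2^*$ and $1_A^*\otimes v_1^*$ correspond to $v_2^*$ and $-v_1^*$ in $V^\vee$, producing the sign difference between $\shL\otimes\shF_1\to\shF_1$ (which becomes $\alpha$) and $\shL\otimes\shF_2\to\shF_2$ (which becomes $-\alpha$); the analogous calculation handles $\shF\otimes\shL$. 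Dualizing $V\otimes V\simeq\stR\oplus A\oplus V$ against the fixed basis extracts the three components $\eta_1,\eta_2,\beta$ of the pseudo-monoidal map $\shF\otimes\shF\to\Gamma_{V\otimes V}$, and reading off the coefficients of $v_{11}^*,v_{22}^*,v_{12}^*,v_{21}^*$ reproduces $\beta$ on the diagonal summands and the prescribed combinations of $\eta_1$ and $\eta_2$ on the off-diagonal ones.

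The one genuinely technical step is the sign bookkeeping attached to the dualizations of the three non-trivial $G$-module isomorphisms (particularly the $V\otimes V$ decomposition, which simultaneously outputs $\eta_1$, $\eta_2$ and $\beta$); otherwise the verification is entirely mechanical. Once both structures are identified we have $\alA_\chi=\alB_\chi$ as objects of $\LRings_\stR^G$ for every $\chi\in\stY$, so $\alA_*$ is exactly the restriction of $\alB_*$ along the embedding $\stY\hookrightarrow\stX$ described earlier, and hence is well-defined and inherits full faithfulness from $\alB_*$.
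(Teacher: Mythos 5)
Your proposal is correct and follows essentially the same route as the paper: the comodule structure is read off from the weights of the dual basis vectors (the paper simply calls this part "clear"), and the multiplication is translated by dualizing the three fixed $G$-module isomorphisms $A\otimes A\simeq\shR$, $A\otimes V\simeq V$ and $V\otimes V\simeq\shR\oplus A\oplus V$ and reading off components, exactly as the paper does via \cite[Rmk 1.17]{Tonini2015}. The sign bookkeeping you flag as the technical step is precisely what the paper records, e.g.\ $v_{12}^{*}\mapsto 1^{*}+1_{A}^{*}$ and $v_{21}^{*}\mapsto 1^{*}-1_{A}^{*}$, and the final claim is deduced from full faithfulness of $\alB_{*}$ just as you say.
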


\begin{proof}
The claim about the $G$-comodule structure is clear, so we just have
to translate the multiplication. This is possible using properties
$1)$ to $4)$ of \cite[Rmk 1.17]{Tonini2015}. We are going to discuss
in details only one case, while for the other ones we will just present
the relevant computations.

The sheaf $\odi T\oplus\shL$ is a $\Z/2\Z$-cover and we claim that
the induced multiplication $\shL\otimes\shL\to\odi T$ is just $m$.
By \cite[Rmk 1.17]{Tonini2015}, in particular point $4)$, there
is a commutative diagram    \[   \begin{tikzpicture}[xscale=4.1,yscale=-1.2]     \node (A0_0) at (0, 0) {$A^\vee\otimes\Gamma_A\otimes A^\vee\otimes\Gamma_A$};     \node (A0_1) at (1, 0) {$\alB_\chi\otimes \alB_\chi$};     \node (A0_2) at (2, 0) {$\alB_\chi$};     \node (A1_0) at (0, 1) {$(A\otimes A)^\vee \otimes \Gamma_A\otimes\Gamma_A$};     \node (A1_1) at (1, 1) {$(A\otimes A)^\vee \otimes \Gamma_{A\otimes A}$};     \node (A1_2) at (2, 1) {$\shR^\vee \otimes \Gamma_\shR$};     \path (A0_0) edge [->]node [auto] {$\scriptstyle{}$} (A0_1);     \path (A0_1) edge [->]node [auto] {$\scriptstyle{}$} (A0_2);     \path (A1_0) edge [->]node [auto] {$\scriptstyle{}$} (A1_1);     \path (A1_1) edge [->]node [auto] {$\scriptstyle{(\xi^\vee)^{-1}\otimes \Gamma_\xi}$} (A1_2);     \path (A0_0) edge [->]node [auto] {$\scriptstyle{}$} (A1_0);     \path (A1_2) edge [->]node [auto] {$\scriptstyle{}$} (A0_2);   \end{tikzpicture}   \] 
where $\xi\colon A\otimes A\to\shR$ is \emph{any }$G$-equivariant
isomorphism. On the other hand the composition $\shL\otimes\shL=\Gamma_{A}\otimes\Gamma_{A}\to\Gamma_{A\otimes A}\arrdi{\Gamma_{\xi}}\Gamma_{\shR}=\odi T$
is $m$ if and only if $\xi$ is the isomorphism chosen in section
\ref{subsec:representation theory}, i.e. such that $\xi(1_{A}\otimes1_{A})=1$,
because this is the way we defined the correspondence $\chi\leftrightarrow\Gamma$.
Since, in this case, $(\xi^{\vee})^{-1}((1_{A}\otimes1_{A})^{*})=1^{*}$,
by diagram chasing we see that the multiplication of $\alB_{\chi}$
maps
\[
(1_{A}^{*}\otimes x)\otimes(1_{A}^{*}\otimes y)\longmapsto1^{*}\otimes m(x\otimes y)\text{ for }x,y\in\shL=\Gamma_{A}
\]
This shows that the multiplication $\alA\otimes\alA\to\alA$ induced
by $\alB_{\chi}$ restricts to $m\colon\shL\otimes\shL\to\odi T$
as claimed.

Since $(A\otimes V)^{\vee}\to V^{\vee}$ maps $(1_{A}\otimes v_{1})^{*}\mapsto-v_{1}^{*}$,
($1_{A}\otimes v_{2})^{*}\mapsto v_{2}^{*}$ the induced map $\shL\otimes(\shF_{1}\oplus\shF_{2})\to\shF_{1}\oplus\shF_{2}$
splits into $\alpha$ and $-\alpha$. Similarly, the maps $(\shF_{1}\oplus\shF_{2})\otimes\shL\to\shF_{1}\oplus\shF_{2}$
splits as $\hat{\alpha}$ and $-\hat{\alpha}$. Finally one has
\[
(V\otimes V)^{\vee}\to\shR^{\vee}\oplus A^{\vee}\oplus V^{\vee}\comma v_{11}^{*}\mapsto v_{2}^{*}\comma v_{22}^{*}\mapsto v_{1}^{*}\comma v_{12}^{*}\mapsto1^{*}+1_{A}^{*}\comma v_{21}^{*}\mapsto1^{*}-1_{A}^{*}
\]
which implies that the induced multiplication on $\alA$ is given
by 
\[
\shF_{1}\otimes\shF_{1}\arrdi{\beta}\shF_{2}\comma\shF_{2}\otimes\shF_{2}\arrdi{\beta}\shF_{1}\comma\shF_{1}\otimes\shF_{2}\arrdi{\eta_{1}+\eta_{2}}\odi T\oplus\shL\comma\shF_{2}\otimes\shF_{1}\arrdi{\eta_{1}-\eta_{2}}\odi T\oplus\shL
\]
\end{proof}
In order to prove Theorem \ref{thm:global data for Sthree} we have
to show that $\alA$ is (the algebra of) a $G$-cover if and only
if $\chi\in\stY$ and it satisfies the properties listed in Theorem
\ref{thm:global data for Sthree}. Thus we have to translate commutativity
and associativity conditions of $\alA$.

\subsubsection{\textbf{Commutativity conditions. \label{subsec:comm conditions}}}

We claim that $\alA$ is commutative if and only if: $\beta\colon\shF\otimes\shF\to\shF$
is symmetric, $\hat{\alpha}\colon\shF\otimes\shL\arr\shF$ is obtained
from $\alpha\colon\shL\otimes\shF\to\shF$ swapping factors in the
source, $\eta_{1}$ is symmetric, $\eta_{2}$ is antisymmetric. Indeed
by \cite[Prop 2.25 and Prop 2.26]{Tonini2014} $\alA$ is commutative
if and only if $\Gamma$ is symmetric. Moreover this is equivalent
to: for $U,U'\in I_{G}$ the maps $\Gamma_{U}\otimes\Gamma_{U'}\to\Gamma_{U\otimes U'}$
and $\Gamma_{U'}\otimes\Gamma_{U}\to\Gamma_{U'\otimes U}\arrdi{\Gamma_{\text{swap}}}\Gamma_{U\otimes U'}$
differ by a swap of factors in the source. For $U=U'=A$ we see that
$m\colon\shL\otimes\shL\to\odi T$ is automatically symmetric because
$\shL$ has rank $1$. For $U=A$, $U'=V$ (or the converse) we obtain
the relation between $\alpha$ and $\hat{\alpha}$. For $U=U'=V$,
notice that the swap map on $V\otimes V\simeq\shR\oplus A\oplus V$
is the identity on $\shR$ and $V$ and minus the identity on $A$.
This translates in the symmetry of $\eta_{1}$ and $\beta$ and the
antisymmetry of $\eta_{2}$.

\subsubsection{\textbf{Associativity conditions.\label{subsec:ass conditions}}}

Let us assume that $\alA$ is commutative. Moreover we use the notation
$\eta_{1}=(-,-)$, $\eta_{2}=\langle-,-\rangle,$$\gamma=\eta_{1}+\eta_{2}$
and $\gamma'=\eta_{1}-\eta_{2}$. We now express some diagrams that
have to commute if $\alA$ is associative. We use the notation introduced
in \ref{not: local notation for Sthree}.
\begin{itemize}
\item  \begin{align} \label{Diagram A}
   \begin{tikzpicture}[xscale=2.8,yscale=-1.0,baseline=(current  bounding  box.center)]     \node (A0_0) at (0, 0) {$\shL\otimes\shL\otimes \shF_1$};     \node (A0_1) at (1, 0) {$\odi{S}\otimes \shF_1$};     \node (A1_0) at (0, 1) {$\shL\otimes \shF_1$};     \node (A1_1) at (1, 1) {$\shF_1$};     \path (A0_0) edge [->]node [auto] {$\scriptstyle{m\otimes\id}$} (A0_1);     \path (A0_0) edge [->]node [auto] {$\scriptstyle{\id\otimes\alpha}$} (A1_0);     \path (A0_1) edge [->]node [auto] {$\scriptstyle{\id}$} (A1_1);     \path (A1_0) edge [->]node [auto] {$\scriptstyle{\alpha}$} (A1_1);   \end{tikzpicture}   \end{align} Locally we obtain the condition (\ref{eq:ass m,alpha}).
\item   \begin{align} \label{Diagram B} 
\begin{tikzpicture}[xscale=3.2,yscale=-0.7,baseline=(current  bounding  box.center)]   
\node (A0_0) at (0, 0) {$\shF_1 \otimes \shF_2 \otimes \shL$};     
\node (A0_1) at (1, 0) {$(\odi{S}\oplus\shL)\otimes \shL$};     
\node (A1_1) at (1, 1) {$\shL\otimes\shL\oplus\shL$};     
\node[rotate=-90] (A1_3) at (1, 0.5) {$\simeq$};     
\node (A3_0) at (0, 3) {$\shF_1\otimes \shF_2$};     
\node (A3_1) at (1, 3) {$\odi{S}\oplus\shL$};     
\path (A0_0) edge [->]node [auto] {$\scriptstyle{\gamma\otimes \id}$} (A0_1);     \path (A0_0) edge [->]node [auto] {$\scriptstyle{\id\otimes -\hat \alpha}$} (A3_0);     \path (A3_0) edge [->]node [auto] {$\scriptstyle{\gamma}$} (A3_1);     \path (A1_1) edge [->]node [auto] {$\scriptstyle{m\oplus\id}$} (A3_1);   \end{tikzpicture}
\end{align} The commutativity of this diagram is locally equivalent to $(u,\alpha(v))=-m\la u,v\ra$,
$(u,v)=-\la u,\alpha(v)\ra$ and, assuming (\ref{eq:ass m,alpha}),
to (\ref{eq:ass alpha, gamma first}).
\item   \begin{align} \label{Diagram C}
   \begin{tikzpicture}[xscale=3.2,yscale=-1.2,baseline=(current  bounding  box.center)]     
\node (A0_0) at (0, 0) {$\shF_1\otimes\shF_1\otimes\shL$};     
\node (A0_1) at (1, 0) {$\shF_2\otimes\shL$};     
\node (A1_0) at (0, 1) {$\shF_1\otimes\shF_1$};     
\node (A1_1) at (1, 1) {$\shF_2$};     
\path (A0_0) edge [->]node [auto] {$\scriptstyle{\beta\otimes\id}$} (A0_1);     
\path (A0_0) edge [->]node [auto] {$\scriptstyle{\id\otimes\hat \alpha}$} (A1_0);     
\path (A0_1) edge [->]node [auto] {$\scriptstyle{-\hat \alpha}$} (A1_1);     
\path (A1_0) edge [->]node [auto] {$\scriptstyle{\beta}$} (A1_1);   
\end{tikzpicture}   \end{align}The commutativity of this diagram is locally equivalent to (\ref{eq:ass alpha,beta}).
\item   \begin{align} \label{Diagram D}    \begin{tikzpicture}[xscale=3.2,yscale=-0.7,baseline=(current  bounding  box.center)]     
\node (A0_0) at (0, 0) {$\shF_2 \otimes \shF_2\otimes\shF_1$};     
\node (A0_1) at (1, 0) {$\shF_1\otimes\shF_1$};     
\node[rotate=-90] (A1_2) at (0, 2.5) {$\simeq$};     
\node (A2_0) at (0, 2) {$\shF_2 \otimes (\odi{S}\oplus\shL)$};     
\node (A3_0) at (0, 3) {$\shF_2 \oplus \shF_2\otimes \shL$};     
\node (A3_1) at (1, 3) {$\shF_2$};     
\path (A0_0) edge [->]node [auto] {$\scriptstyle{\beta\otimes\id}$} (A0_1);     
\path (A3_0) edge [->]node [auto] {$\scriptstyle{\id\oplus(-\hat \alpha)}$} (A3_1);     
\path (A0_1) edge [->]node [auto] {$\scriptstyle{\beta}$} (A3_1);     
\path (A0_0) edge [->]node [auto] {$\scriptstyle{\id\otimes\gamma'}$} (A2_0);   
\end{tikzpicture}   \end{align}The commutativity of this diagram, assuming (\ref{eq:ass alpha, gamma first}),
is locally equivalent to (\ref{eq:ass gamma,beta}).
\item   \begin{align} \label{Diagram E}    \begin{tikzpicture}[xscale=3.2,yscale=-1.2,baseline=(current  bounding  box.center)]     \node (A0_0) at (0, 0) {$\shF_1\otimes\shF_1\otimes\shF_1$};     \node (A0_1) at (1, 0) {$\shF_1\otimes\shF_2$};     \node (A1_0) at (0, 1) {$\shF_2\otimes\shF_1$};     \node (A1_1) at (1, 1) {$\odi{S}\oplus\shL$};     \path (A0_0) edge [->]node [auto] {$\scriptstyle{\id\otimes\beta}$} (A0_1);     \path (A0_0) edge [->]node [auto] {$\scriptstyle{\beta\otimes\id}$} (A1_0);     \path (A0_1) edge [->]node [auto] {$\scriptstyle{\gamma}$} (A1_1);     \path (A1_0) edge [->]node [auto] {$\scriptstyle{\gamma'}$} (A1_1);   \end{tikzpicture}   \end{align} Since
$\gamma'(u\otimes v)=\gamma(v\otimes u)$, the commutativity of this
diagram is locally equivalent to (\ref{eq:ass beta,beta}) and the
analogous one for $(-,-)$, which however follows from (\ref{eq:ass alpha, gamma first}),
(\ref{eq:ass m,alpha}), (\ref{eq:ass alpha,beta}) and (\ref{eq:ass beta,beta}).
Indeed
\[
\begin{alignedat}{1}(u,\beta(v\otimes w)) & =\la\alpha(\beta(v\otimes w)),u\ra=-\la\beta(v\otimes\alpha(w)),u\ra=\la u,\beta(v\otimes\alpha(w))\ra\\
 & =\la\alpha(w),\beta(u\otimes v)\ra=(\beta(u\otimes v),w)=(w,\beta(u\otimes v))
\end{alignedat}
\]
\end{itemize}
\begin{rem}
\label{lem:essential associative conditions}Let $A$ be a commutative
(but not necessary associative) ring and $x,y,z\in A$. If 
\[
(xy)z=x(yz)\text{ and }(yx)z=y(xz)
\]
then all the permutations of $x,y,z$ satisfy associativity. Indeed
\[
y(zx)=(yx)z=x(yz)=(yz)x\comma z(xy)=(yx)z=y(xz)=(zx)y
\]
\[
(zy)x=x(yz)=(xy)z=z(yx)\comma(xz)y=y(zx)=(yz)x=x(zy)
\]
\end{rem}

\begin{proof}
(\emph{of Theorem }\ref{thm:global data for Sthree}) We have to show
that $\alA$ is commutative and associative if and only if $\chi\in\stY$
and it satisfies the properties listed in Theorem \ref{thm:global data for Sthree}.
The ``only if'' part is an easy consequence of the above discussion.
We just highlight some points. Condition (\ref{eq:ass m,alpha}) implies
that $m$ is obtained from $\alpha$ as in (\ref{eq:m from alpha}),
while condition (\ref{eq:ass alpha, gamma first}) implies that $(-,-)$
is obtained from $\la-,-\ra$ as in (\ref{eq:symmetric bracket from alternating one}):
in particular $\chi\in\stY$. Finally the symmetry of $(-,-)$ implies
that equation (\ref{eq:ass alpha,gamma}) holds.

We now focus on the converse. So assume $\chi\in\stY$ and that it
satisfies the properties listed in Theorem \ref{thm:global data for Sthree}.
By (\ref{eq:ass alpha, gamma first}) and (\ref{eq:ass alpha,gamma})
we obtain the symmetry of $(-,-)$ and therefore the commutativity
of $\alA$.

We need to show that $\alA$ is associative. Given $A,B,C\in\{\odi S,\shL,\shF_{1},\shF_{2}\}$
we will say that $(A,B,C)$ holds if $a(bc)=(ab)c$ for all $a\in A,b\in B,c\in C$.
Since $\sigma\in\Z/2\Z$ induces a ring automorphism of $\alA$, if
$(A,B,C)$ holds then $(\sigma(A),\sigma(B),\sigma(C))$ holds. Moreover,
by \ref{lem:essential associative conditions}, if also $(B,A,C)$
holds then all permutations of $(A,B,C)$ and $(\sigma(A),\sigma(B),\sigma(C))$
hold. Recall that $\sigma$ fix $\odi S$ and $\shL$ and exchanges
$\shF_{1}$ and $\shF_{2}$.

Clearly $(\shL,\shL,\shL)$ holds. Condition (\ref{eq:ass m,alpha})
insures that $(\shL,\shL,\shF_{1})$, $(\shL,\shL,\shF_{2})$ and
all their permutations hold. Conditions (\ref{eq:ass alpha, gamma first})
and (\ref{eq:ass m,alpha}) say that all the permutations of $(\shF_{1},\shF_{2},\shL)$
hold, while condition (\ref{eq:ass alpha,beta}) tells us that all
the permutations of $(\shF_{1},\shF_{1},\shL)$ and $(\shF_{2},\shF_{2},\shL)$
hold. The relation (\ref{eq:ass gamma,beta}) implies that $(\shF_{2},\shF_{2},\shF_{1})$,
$(\shF_{1},\shF_{1},\shF_{2})$ and all their permutations hold. Finally
(\ref{eq:ass beta,beta}) says that $(\shF_{1},\shF_{1},\shF_{1})$
and $(\shF_{2},\shF_{2},\shF_{2})$ hold. It is now easy to check
that we have obtained all the possible triples.
\end{proof}
\begin{thm}
\label{thm:description S3 covers by diagrams} The functor $\stY\to\LRings_{\shR}^{G}$
is an equivalence between the substack of $\stY$ of objects making
the following diagrams (\ref{Diagram A}), (\ref{Diagram C}), (\ref{Diagram D}), (\ref{Diagram E})
and   \[   \begin{tikzpicture}[xscale=2.6,yscale=-1.2]     \node (A0_0) at (0, 0) {$\shF\otimes \shL \otimes \shF$};     \node (A0_1) at (1, 0) {$\shF\otimes\shF$};     \node (A1_0) at (0, 1) {$\shF\otimes\shF$};     \node (A1_1) at (1, 1) {$\det \shF$};     \path (A0_0) edge [->]node [auto] {$\scriptstyle{\id\otimes \alpha}$} (A0_1);     \path (A0_0) edge [->]node [auto,swap] {$\scriptstyle{-\alpha\otimes\id}$} (A1_0);     \path (A0_1) edge [->]node [auto] {$\scriptstyle{\la-,-\ra}$} (A1_1);     \path (A1_0) edge [->]node [auto] {$\scriptstyle{\la-,-\ra}$} (A1_1);   \end{tikzpicture}   \] commutative
and $\GCov$.
\end{thm}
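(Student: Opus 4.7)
The plan is to reduce directly to Theorem~\ref{thm:global data for Sthree}, which already realizes $\GCov$ as the substack of $\stY$ cut out by the local equations (\ref{eq:ass m,alpha})--(\ref{eq:ass beta,beta}) (together with the built-in identity (\ref{eq:ass alpha, gamma first})). Since the functor $\alA_{*}\colon\stY\to\LRings^{G}_{\shR}$ is the same in both statements and has already been shown to be fully faithful, all I really need to do is match the five commutative diagrams appearing here with those equations.

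First I would observe that (\ref{eq:ass alpha, gamma first}), which was the content of Diagram~(\ref{Diagram B}) in the proof of Theorem~\ref{thm:global data for Sthree}, holds automatically for every object of $\stY$: the pairing $(-,-)_{\chi}$ is \emph{defined} from $\alpha$ and $\langle-,-\rangle$ through the composition~(\ref{eq:symmetric bracket from alternating one}), and tracing that composition through a local trivialization of $\shL$ and $\shF$ gives exactly $(u,v)_{\chi}=\langle\alpha(v),u\rangle$. Hence the omission of Diagram~(\ref{Diagram B}) from the statement costs nothing.

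Next I would invoke, essentially verbatim, the case-by-case translations carried out in the associativity subsection of the proof of Theorem~\ref{thm:global data for Sthree}: Diagrams~(\ref{Diagram A}), (\ref{Diagram C}), (\ref{Diagram D}), (\ref{Diagram E}) are locally equivalent to (\ref{eq:ass m,alpha}), (\ref{eq:ass alpha,beta}), (\ref{eq:ass gamma,beta}), (\ref{eq:ass beta,beta}) respectively, where the translation of~(\ref{Diagram D}) uses (\ref{eq:ass alpha, gamma first})---available here by the previous step.

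The only genuinely new thing is the final diagram, which needs to be matched with (\ref{eq:ass alpha,gamma}). Fixing a local generator $t$ of $\shL$ and using the notation of~(\ref{not: local notation for Sthree}), the top composite sends $u\otimes t\otimes v$ to $\langle u,\alpha(v)\rangle$, while the bottom composite sends it to $-\langle\alpha(u),v\rangle$ (the map labelled $-\alpha\otimes\id$ being $-\hat{\alpha}$ applied to the first two tensor factors). Commutativity reads $\langle u,\alpha(v)\rangle=-\langle\alpha(u),v\rangle$, and since $\langle-,-\rangle$ factors through $\det\shF$ and is therefore alternating, this is equivalent to $\langle\alpha(u),v\rangle=\langle\alpha(v),u\rangle$, which is exactly~(\ref{eq:ass alpha,gamma}). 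I expect this sign chase to be the only subtle step; once it is in place, combining the bullet-by-bullet matching with Theorem~\ref{thm:global data for Sthree} yields the claimed equivalence.
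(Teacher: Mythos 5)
Your proposal is correct and follows essentially the same route as the paper: the paper's own proof also reduces to Theorem \ref{thm:global data for Sthree} by observing that (\ref{eq:ass alpha, gamma first}) is automatic for objects of $\stY$, that the new diagram is locally equivalent to (\ref{eq:ass alpha,gamma}) (equivalently, to the symmetry of $(-,-)_{\chi}$ and hence to the commutativity of $\alA_{\chi}$), and that the remaining diagrams translate into the remaining equations exactly as in the bullet-by-bullet analysis of \ref{subsec:ass conditions}. Your sign chase for the fifth diagram is the same computation the paper leaves implicit, and it is carried out correctly.
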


\begin{proof}
Let $\chi\in\stY$. First of all notice that the commutativity of
the diagram in the statement is locally equivalent to (\ref{eq:ass alpha,gamma})
and, using (\ref{eq:ass alpha, gamma first}), to the symmetry of
$(-,-)_{\chi}$. By definition of $\stY$ and \ref{subsec:comm conditions},
this is also equivalent to the commutativity of $\alA_{\chi}$. Thus
the claim follows from \ref{thm:global data for Sthree} and \ref{subsec:ass conditions}.
\end{proof}

\subsection{Local analysis.}

Let $\chi=(\shL,\shF,\alpha,\beta,\la-,-\ra)\in\stY$ and assume that
$t\in\shL$ is a generator and that $y,z$ is a basis of $\shF$.
The aim of this subsection is to translate conditions (\ref{eq:ass m,alpha}),
(\ref{eq:ass alpha,gamma}), (\ref{eq:ass alpha,beta}), (\ref{eq:ass gamma,beta})
and (\ref{eq:ass beta,beta}), writing all the maps $\alpha,\beta,\la-,-\ra$
with respect to the given basis. In particular we will use notation
from \ref{not: local notation for Sthree}, so that $m\in\odi T$,
$\alpha$ is a map $\shF\arr\shF$ and $\la-,-\ra\colon\det\shF\arr\odi T$.
\begin{notation}
Write
\[
\beta(y^{2})=ay+bz\comma\beta(yz)=cy+dz\comma\beta(z^{2})=ey+fz\comma\la y,z\ra=\omega\comma\alpha=\left(\begin{array}{cc}
A & B\\
C & D
\end{array}\right)
\]
In particular:
\[
(y,y)=-C\omega\comma(y,z)=-D\omega\comma(z,y)=A\omega\comma(z,z)=B\omega\comma m=(A^{2}+D^{2})/2+BC
\]
\end{notation}

\begin{lem}
\label{lem:local conditions for the map for Sthree}The object $\chi=(\shL,\shF,\alpha,\beta,\la-,-\ra)\in\stY$
belongs to $\RCov G$ if and only if the following relations hold.
\begin{equation}
\begin{array}{ccc}
(\ref{eq:ass m,alpha}) & \iff & (A-D)(A+D)=B(A+D)=C(A+D)=0\\
(\ref{eq:ass alpha,gamma}) & \iff & \omega(A+D)=0\\
(\ref{eq:ass alpha,beta}) & \iff & \left\{ \begin{array}{c}
(2aA+bB+cC)=(2cA+dB+eC)=0\\
C(a+d)+b(A+D)=C(c+f)+d(A+D)=0\\
B(a+d)+c(A+D)=B(c+f)+e(A+D)=0\\
a(A+D)-D(a+d)=c(A+D)-D(c+f)=0
\end{array}\right.\\
(\ref{eq:ass gamma,beta}) & \iff & \left\{ \begin{array}{c}
a^{2}+bc=-\omega C\comma ac+be=\omega(A-D)\comma c^{2}+de=B\omega\\
(a-d)(a+d)=b(a+d)=c(a+d)=0\\
(c-f)(c+f)=d(c+f)=e(c+f)=0\\
a(a+d)+b(c+f)=e(a+d)+c(c+f)=0
\end{array}\right.\\
(\ref{eq:ass beta,beta}) & \iff & \omega(a+d)=\omega(c+f)=0
\end{array}\label{eq:loc com and ass conditions}
\end{equation}
\end{lem}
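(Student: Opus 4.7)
The plan is a direct coordinate evaluation of the five intrinsic identities (\ref{eq:ass m,alpha})--(\ref{eq:ass beta,beta}) using the bases $t\in\shL$ and $y,z\in\shF$. Each identity is multilinear in its tensor inputs, so the iff is tested on a basis of the relevant tensor product; this produces a finite system of scalar identities in $A,B,C,D,a,b,c,d,e,f,\omega$, and the content of the lemma is the elementary assertion that this system rewrites as the list in (\ref{eq:loc com and ass conditions}). Throughout I use $\alpha(y)=Ay+Cz$, $\alpha(z)=By+Dz$, $\langle y,z\rangle=\omega$ (so $\langle z,y\rangle=-\omega$ and $\langle u,u\rangle=0$), and $m=(A^{2}+D^{2})/2+BC$ as in (\ref{eq:m from alpha}).

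For (\ref{eq:ass m,alpha}), the matrix of $\alpha^{2}$ has diagonal entries $A^{2}+BC$ and $D^{2}+BC$ and off-diagonal entries $B(A+D)$ and $C(A+D)$, so $\alpha^{2}=m\id$ translates exactly to $(A-D)(A+D)=B(A+D)=C(A+D)=0$. For (\ref{eq:ass alpha,gamma}) the pairs $(y,y)$ and $(z,z)$ are tautological and $(y,z)$ gives $A\omega=-D\omega$, i.e.\ $\omega(A+D)=0$. For (\ref{eq:ass alpha,beta}) I evaluate on the four basis tensors of $\shF\otimes\shF$ and split each vector equation into $y$- and $z$-components, producing $8$ scalar equations; a short rearrangement (using the equations coming from $(y,y)$ and $(z,y)$ to simplify those coming from $(y,z)$ and $(z,z)$) rewrites them as the four displayed pairs.

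For (\ref{eq:ass gamma,beta}) the same procedure on the eight basis triples of $\shF^{\otimes 3}$ produces sixteen scalar equations. The three quadratic identities $a^{2}+bc=-\omega C$, $ac+be=\omega(A-D)$ and $c^{2}+de=B\omega$ appear as the $y$-components of the triples $(y,y,y)$, $(y,y,z)$ and $(y,z,z)$; the remaining components, together with the ``$z$-based'' triples, reorganise modulo the previous equations into the three families $(a-d)(a+d)=b(a+d)=c(a+d)=0$, $(c-f)(c+f)=d(c+f)=e(c+f)=0$ and $a(a+d)+b(c+f)=e(a+d)+c(c+f)=0$. For (\ref{eq:ass beta,beta}), four of the eight triples are tautologies, the pair $(y,y,z)$ and $(z,y,y)$ gives $\omega(a+d)=0$, and the pair $(y,z,z)$ and $(z,z,y)$ gives $\omega(c+f)=0$. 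The main obstacle is purely combinatorial bookkeeping: keeping careful track of signs from the antisymmetry of $\langle-,-\rangle$ and from the minus sign in (\ref{eq:ass alpha,beta}), identifying which triples produce redundant or tautological relations, and verifying that the raw list of scalar equations genuinely rewrites as (\ref{eq:loc com and ass conditions}) rather than being merely implied by it.
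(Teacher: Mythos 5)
Your proposal is correct and follows essentially the same route as the paper: the paper's proof is exactly this direct evaluation of the five identities on the basis $t,y,z$, listing the resulting expressions (e.g.\ $\alpha(\beta(y^2))+\beta(y\alpha(y))$, $\beta(\beta(y^2)z)$ versus $\Gamma(y,y,z)$, etc.) and reading off the scalar system. Your additional remarks on how the raw components reorganize modulo earlier equations into the displayed list (e.g.\ obtaining $a(A+D)-D(a+d)=0$ from the $z$-component of the $(y,z)$ case minus the $y$-component of the $(y,y)$ case, and $a(a+d)+b(c+f)=0$ as a sum of two raw equations) are accurate and merely make explicit what the paper leaves implicit.
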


\begin{proof}
The claims follow from the following relations, which can be computed
directly.
\[
\begin{array}{ll}
\la z,\beta(y^{2})\ra=-a\omega & \la y,\beta(zy)\ra=\la y,\beta(yz)\ra=d\omega\\
\la y,\beta(z^{2})\ra=f\omega & \la z,\beta(yz)\ra=\la z,\beta(zy)\ra=-c\omega
\end{array}
\]
\[
\begin{array}{rcl}
\alpha(\beta(y^{2}))+\beta(y\alpha(y)) & = & (2aA+bB+cC)y+(C(a+d)+b(A+D))z\\
\alpha(\beta(zy))+\beta(z\alpha(y)) & = & (2cA+dB+eC)y+(C(c+f)+d(A+D))z\\
\alpha(\beta(yz))+\beta(y\alpha(z)) & = & (B(a+d)+c(A+D))y+(2dD+bB+cC)z\\
\alpha(\beta(z^{2}))+\beta(z\alpha(z)) & = & (B(c+f)+e(A+D))y+(2fD+eC+dB)z
\end{array}
\]
If we set $\Gamma(u,v,w)=\la\alpha(w),v\ra u+\la v,w\ra\alpha(u)=(v,w)u+\langle v,w\rangle\alpha(u)$
we have
\[
\begin{array}{ll}
\beta(\beta(y^{2})y)=(a^{2}+bc)y+b(a+d)z & \Gamma(y,y,y)=-C\omega y\\
\beta(\beta(y^{2})z)=(ac+be)y+(ad+bf)z & \Gamma(y,y,z)=\omega(A-D)y+\omega Cz\\
\beta(\beta(z^{2})y)=(ea+fc)y+(eb+fd)z & \Gamma(z,z,y)=-B\omega y+\omega(A-D)z\\
\beta(\beta(z^{2})z)=e(c+f)y+(ed+f^{2})z & \Gamma(z,z,z)=B\omega z
\end{array}
\]
\[
\begin{array}{ll}
\beta(\beta(yz)y)=\beta(\beta(zy)y)=c(a+d)y+(cb+d^{2})z & \Gamma(y,z,y)=\Gamma(z,y,y)=-C\omega z\\
\beta(\beta(yz)z)=\beta(\beta(zy)z)=(c^{2}+de)y+d(c+f)z & \Gamma(y,z,z)=\Gamma(z,y,z)=B\omega y
\end{array}
\]
\end{proof}
\begin{notation}
\label{not: associated parameters for chi and Sthree}Given $\chi=(\shL,\shF,m,\alpha,\beta,\la-,-\ra)\in\stY$,
a basis $y,z$ of $\shF$ and a generator $t\in\shL$, we will denote
by 
\[
a_{\chi},b_{\chi},c_{\chi},d_{\chi},e_{\chi},f_{\chi},\omega_{\chi},A_{\chi},B_{\chi},C_{\chi},D_{\chi},m_{\chi}
\]
the data associated with $\chi$ as above. We will always omit the
$-_{\chi}$ if this will not lead to confusion.
\end{notation}

\subsection{From \texorpdfstring{$(\mu_{3}\rtimes\Z/2\Z)$}{(mu3 rtimes Z/2Z)}-covers
to $S_{3}$-covers}

In this section we compare $G$-covers and $S_{3}$-covers. We denote
by $\stR_{3}$ the ring of integers with $6$ inverted, that is $\stR_{3}=\Z[1/6]=\shR[1/3]$.
Theorem \ref{ex:SThree covers and the other group} for $n=3$ applies
to $G$ and $H=\Z/3\Z\rtimes(\Z/3\Z)^{*}$ over $\shR_{3}\supseteq\Z[1/3]$.
We fix the isomorphism $H\to S_{3}$ mapping $(1,1)\mapsto(123)$,
$(0,2)\mapsto(12)$.
\begin{prop}
\label{thm:from G covers to S3} Over $\shR_{3}$ the equivalence
$\GCov\to S_{3}\textup{-Cov}$ of \ref{ex:SThree covers and the other group}
maps a $G$-cover $\alA=\odi{}\oplus\shL\oplus(V^{\vee}\otimes\shF)$
(where $\shF_{1}=v_{2}^{*}\otimes\shF$ and $\shF_{2}=v_{1}^{*}\otimes\shF)$
to the sub algebra
\[
\alC=\odi{}\oplus\shL z\oplus[(v_{2}^{*}x+v_{1}^{*}x^{2})\otimes\shF]\oplus[(v_{2}^{*}zx-v_{1}^{*}zx^{2})\otimes\shF]\subseteq\alA[x,z]/(x^{3}-1,z^{2}+3)
\]
The left $S_{3}$-action on the above algebras is trivial on $\alA$
and satisfies
\[
(123)x=x(z-1)/2,(123)z=z\comma(12)x=x,(12)z=-z
\]
Over $Y=\Spec(\shR_{3}[w]/(w^{2}+3))$ the map $\alC\to\alA,$$x\mapsto1$,
$z\mapsto w$ is an isomorphism equivariant with respect to the isomorphism
$(S_{3})_{Y}\to G_{Y}$, $(123)\mapsto(w-1)/2$, $(12)\mapsto\sigma$.
\end{prop}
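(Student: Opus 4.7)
The plan is to transport Theorem~\ref{ex:SThree covers and the other group} (applied with $n=3$) into the more convenient coordinate $z = 2y+1$, exploiting the fact that $2$ is a unit in $\stR_{3}$, and then match the resulting subalgebra with $\alC$.

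First I would compute, from $\Phi_{3}(y) = y^{2}+y+1$, that $z := 2y+1$ satisfies $z^{2}+3 = 0$ and $\sigma(z) = -z$ (since $\sigma(y) = y^{2} = -y-1$), so that over $\stR_{3}$
\[
A_{P}\otimes\stR_{3} \;\cong\; \stR_{3}[x,z]/(x^{3}-1,\ z^{2}+3),
\]
with the $\mu_{3}$-degrees $\deg x = -1,\ \deg z = 0$ preserved and with $\sigma\colon x\mapsto x^{2},\ z\mapsto -z$. Plugging $y = (z-1)/2$ into the $H$-action $(i,m)\cdot x = xy^{i}$, $(i,m)\cdot y = y^{m}$ of Theorem~\ref{ex:SThree covers and the other group}, and applying the fixed isomorphism $H\simeq S_{3}$ given by $(1,1)\mapsto(123)$, $(0,2)\mapsto(12)$, reads off the stated $S_{3}$-action immediately.

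The second (and central) step is to extract the $G$-invariants of $\alA\otimes A_{P}$ in the new coordinates. By Section~\ref{sec:Global description of Sthree covers} and Proposition~\ref{prop:Achi is G-equivariant}, the summands $\odi{}$, $\shL$, $\shF_{1}=v_{2}^{*}\otimes\shF$ and $\shF_{2}=v_{1}^{*}\otimes\shF$ of $\alA$ sit in $\mu_{3}$-degrees $0,\ 0,\ -2\equiv 1$ and $-1\equiv 2$ respectively, while $\sigma$ acts as $-\id$ on $\shL$ and as the identity of $\shF$ exchanging $\shF_{1}\leftrightarrow\shF_{2}$. Matching total $\mu_{3}$-degree to zero gives
\[
(\odi{}\oplus\shL)\otimes\la 1,z\ra \;\oplus\; \shF_{1}\otimes\la x,xz\ra \;\oplus\; \shF_{2}\otimes\la x^{2},x^{2}z\ra
\]
as the $\mu_{3}$-invariant part; applying $\sigma$ (with $\sigma(x)=x^{2}$, $\sigma(xz)=-x^{2}z$) and retaining only fixed vectors isolates exactly $\odi{}$, $\shL z$, $(v_{2}^{*}x+v_{1}^{*}x^{2})\otimes\shF$ and $(v_{2}^{*}zx-v_{1}^{*}zx^{2})\otimes\shF$, which is $\alC$.

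Finally, for the trivialization over $Y$ I would transport the section $(1,w_{0})\in P(Y)$ of Theorem~\ref{ex:SThree covers and the other group} (with $w_{0}^{2}+w_{0}+1=0$) through the same substitution: setting $w := 2w_{0}+1$ yields $w^{2}+3=0$, and the section becomes $x\mapsto 1$, $z\mapsto w$. The induced group isomorphism $(i,m)\mapsto(w_{0}^{i},m)$ then reads $(123)\mapsto((w-1)/2,1)$ and $(12)\mapsto\sigma$, and the equivariant isomorphism $\alC\to\alA$ is the restriction of $\alA\otimes A_{P}\to\alA$, $x\mapsto 1$, $z\mapsto w$, supplied by the last part of Theorem~\ref{ex:SThree covers and the other group}. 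The main (mild) obstacle is purely bookkeeping: because $\alA$ is built from the \emph{duals} of the irreducibles of $G$, one must carefully keep $\shF_{1}=v_{2}^{*}\otimes\shF$ paired with $x$ and $\shF_{2}=v_{1}^{*}\otimes\shF$ paired with $x^{2}$, and not the reverse.
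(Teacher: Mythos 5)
Your proposal is correct and follows essentially the same route as the paper's own proof: both specialize Theorem \ref{ex:SThree covers and the other group} to $n=3$, pass to the coordinate $z=2y+1$ (so $K=\shR_{3}[z]/(z^{2}+3)$ with $\sigma(z)=-z$), compute the $\mu_{3}$-invariants via the grading $\deg x=-1$, $\deg z=0$, then the $\sigma$-invariants by averaging over $\lambda=1,z$, and finally read off the trivialization over $Y$ from the section $(1,w_{0})$ with $w=2w_{0}+1$. The degree bookkeeping pairing $\shF_{1}=v_{2}^{*}\otimes\shF$ with $x$ and $\shF_{2}=v_{1}^{*}\otimes\shF$ with $x^{2}$ is exactly as in the paper.
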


\begin{proof}
We are going to rewrite \ref{ex:SThree covers and the other group}
in this simplified situation. We have $\Phi_{3}(y)=1+y+y^{2}$ and,
over $\shR_{3}$,
\[
K=\shR_{3}[y]/(y^{2}+y+1)=\shR_{3}[z]/(z^{2}+3)=\shR_{3}\oplus\shR_{3}z\text{ where }z=1+2y,y=(z-1)/2
\]
Moreover $\mu_{3}^{*}=\spec K$ and the natural involution of $\mu_{3}^{*}$
acts as $y\mapsto y^{2}$ or $z\mapsto-z$. In particular $\alA\otimes A_{P}=\alA[x,z]/(x^{3}-1,z^{2}+3)$
and its $S_{3}$-action is 
\[
(123)\leftrightarrow(1,1)\colon x\mapsto xy=x(z-1)/2,z\mapsto z\comma(12)\leftrightarrow(0,2)\colon x\mapsto x,z\mapsto-z
\]
Instead $\sigma$ acts on $\alA\otimes A_{P}$ with the usual action
on $\alA$, while on $x,z$, since $G\ni\sigma\mapsto(1,2)\in\mu_{3}\rtimes(\Z/3\Z)^{*}$,
as $x\mapsto x^{2},z\mapsto-z$. The $\mu_{3}$-action on $\alA\otimes A_{P}$
is the usual one on $\alA$, while on $A_{P}$ we have $\deg x=-1=2$
and $\deg z=0$. In other words $A_{P}=K\oplus Kx^{2}\oplus Kx$ is
the graduation induced by $\mu_{3}$. We have
\[
(\alA\otimes A_{P})^{\mu_{3}}=[(\odi{}\oplus\shL)\otimes K]\oplus[v_{2}^{*}\otimes\shF\otimes Kx]\oplus[v_{1}^{*}\otimes\shF\otimes Kx^{2}]
\]
In particular $(\alA\otimes A_{P})^{G}=\odi{}\oplus\shL z\oplus W$
where $W$ is the $\sigma$-invariant of $[v_{2}^{*}\otimes\shF\otimes Kx]\oplus[v_{1}^{*}\otimes\shF\otimes Kx^{2}]$.
As $\sigma$ exchanges the two factors we have that $W$ is the sheaf
of elements of the form
\[
v_{2}^{*}\otimes f\otimes\lambda x+\sigma(v_{2}^{*}\otimes f\otimes\lambda x)=v_{2}^{*}\otimes f\otimes\lambda x+v_{1}^{*}\otimes f\otimes\sigma(\lambda)x^{2}\text{ for }f\in\shF,\lambda\in K
\]
 Since $K=\shR_{3}\oplus\shR_{3}z$, for $\lambda=1$ and $\lambda=z$
we obtain
\[
v_{2}^{*}\otimes f\otimes x+v_{1}^{*}\otimes f\otimes x^{2}=(v_{2}^{*}x+v_{1}^{*}x^{2})\otimes f\text{ and }v_{2}^{*}\otimes f\otimes zx+v_{1}^{*}\otimes f\otimes-zx^{2}=(v_{2}^{*}zx-v_{1}^{*}zx^{2})\otimes f
\]
The last statement follows directly from \ref{ex:SThree covers and the other group}.
\end{proof}
\begin{rem}
It is a classical result that $\textup{Et}_{3}\to\Bi S_{3}$, $Q\mapsto\Isosh(\Z/3\Z,Q)$,
where $\textup{Et}_{3}$ is the stack of degree $3$ étale covers
and $\Z/3\Z$ is thought of as a scheme, is an equivalence (see \cite[Prop 2.7]{Tonini2017}).
Thus over $\shR_{3}$ the $S_{3}$-torsor $P=\mu_{3}\times\mu_{3}^{*}$
of \ref{ex:SThree covers and the other group} is induced by a degree
$3$ étale cover over $\shR_{3}$: this is $\mu_{3}\to\Spec\shR_{3}$
itself. Indeed a direct check shows that 
\[
P=\mu_{3}\times\Isosh_{\text{groups}}(\Z/3\Z,\mu_{3})\to\Isosh_{\text{sets}}(\Z/3\Z,\mu_{3})\comma(g,\phi)\mapsto m_{g}\circ\phi
\]
where $m_{g}$ denotes the multiplication by an element of $g\in\mu_{3}$,
is an $S_{3}$-equivariant map of $S_{3}$-torsors, hence an isomorphism.
\end{rem}

\section{Geometry of \texorpdfstring{$(\mu_{3}\rtimes\Z/2\Z)$}{(mu3 rtimes Z/2Z)}-Cov
and $\protect\RCov{S_{3}}$}

We keep the notation from Section \ref{sec:Data for covers}. In particular
$\shR=\Z[1/2]$, $\shR_{3}=\Z[1/6]$ and $G=\mu_{3}\rtimes\Z/2\Z$.
The aim of this section is to describe the geometry of the stacks
$\GCov$ over $\shR$. In particular, over $\shR_{3}$, we obtain
a description of $S_{3}\textup{-Cov}\simeq\GCov$ thanks to \ref{thm:from G covers to S3}.

\subsection{A smooth atlas for \texorpdfstring{$(\mu_{3}\rtimes\Z/2\Z)$}{(mu3 rtimes Z/2Z)}-Cov}

Set
\[
P=\shR[a,b,c,d,e,f,A,B,C,D,\omega]/(\text{relations \ref{lem:local conditions for the map for Sthree}})
\]
By \ref{lem:local conditions for the map for Sthree} the scheme $\Spec P$
represents the functor $(\Sch/\shR)^{\op}\to\set$ which with any
$\shR$-scheme associates the setoid of $G$-covers $\chi=(\shL,\shF,m,\alpha,\beta,\la-,-\ra)$
with a given basis for $\shL$ and $\shF$. In other words $\Spec P$
is isomorphic to the fiber product (over $\shR$) of $(\shL,\shF)\colon\GCov\to(\Bi\Gm\times\Bi\GL_{2})$
and the trivial torsor $\Spec\shR\to\Bi\Gm\times\Bi\GL_{2}$. The
map $\Spec P\to\GCov$ corresponds to 
\[
(P,P^{2},\alpha,\beta,\langle-,-\rangle)\text{ where }\alpha=\left(\begin{array}{cc}
A & B\\
C & D
\end{array}\right)\comma\beta=\left(\begin{array}{ccc}
a & c & e\\
b & d & f
\end{array}\right)\comma\la-,-\ra=\omega
\]
Here we are using the canonical basis of $P$ and $P^{2}$. In particular
\begin{thm}
\label{thm:GCov as quotient stack} The map $\Spec P\to\GCov$ is
a $(\Gm\times\GL_{2})$-torsor and 
\[
\GCov\simeq[\Spec P/(\Gm\times\GL_{2})]
\]
\end{thm}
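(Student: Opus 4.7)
The plan is to realize $\Spec P$ as the pullback of the universal trivial torsor $\Spec \shR \to \Bi\Gm \times \Bi\GL_2$ along the classifying morphism $(\shL, \shF) \colon \GCov \to \Bi\Gm \times \Bi\GL_2$ that sends $\chi = (\shL, \shF, \alpha, \beta, \la-,-\ra)$ to the pair of underlying sheaves. Once this identification is in place, everything else follows formally.

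First I would verify the functor of points of $\Spec P$. For an $\shR$-scheme $T$, a $T$-point of $\Spec P$ is a tuple $(a, b, c, d, e, f, A, B, C, D, \omega) \in \Gamma(T, \odi T)^{11}$ satisfying the relations of Lemma \ref{lem:local conditions for the map for Sthree}. Using the canonical trivializations $\shL = \odi T$ and $\shF = \odi T^{\oplus 2}$ with their standard bases, such a tuple assembles into maps $\alpha$, $\beta$, $\la-,-\ra$ exactly as in the paragraph preceding the theorem. By Lemma \ref{lem:local conditions for the map for Sthree} the defining relations of $P$ are precisely the local translations of conditions (\ref{eq:ass m,alpha})--(\ref{eq:ass beta,beta}), which by Theorem \ref{thm:global data for Sthree} (or \ref{thm:description S3 covers by diagrams}) characterize objects of $\stY$ that lie in $\GCov$. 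Thus $\Spec P$ represents the functor of $G$-covers equipped with trivializations of $\shL$ and $\shF$, i.e.\ $\Spec P \simeq \GCov \times_{\Bi\Gm \times \Bi\GL_2} \Spec \shR$.

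Second, the morphism $\Spec \shR \to \Bi\Gm \times \Bi\GL_2$ classifying the trivial $(\Gm \times \GL_2)$-torsor is, by definition of the classifying stack, a $(\Gm \times \GL_2)$-torsor. Since the class of torsor morphisms is stable under base change, the projection $\Spec P \to \GCov$ inherits this property and is therefore itself a $(\Gm \times \GL_2)$-torsor. In particular it is smooth and surjective, so $\Spec P$ is a smooth atlas of $\GCov$, and the action of $\Gm \times \GL_2$ on $\Spec P$ is simply the action induced by rescaling the chosen trivializations of $\shL$ and $\shF$.

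Finally, from the torsor property the groupoid $(\Gm \times \GL_2) \times \Spec P \rightrightarrows \Spec P$ coincides with the atlas groupoid $\Spec P \times_{\GCov} \Spec P \rightrightarrows \Spec P$, which gives the canonical equivalence $\GCov \simeq [\Spec P/(\Gm \times \GL_2)]$. No substantial obstacle is expected here: the entire argument is a book-keeping repackaging of Lemma \ref{lem:local conditions for the map for Sthree} together with the fact that trivializing $\shL$ and $\shF$ is a $(\Gm \times \GL_2)$-torsor operation; the only thing to be slightly careful about is that the action on the structure constants $(\omega, A, B, C, D, a, \dots, f)$ by change of basis matches the natural action on the fiber product, which is immediate from how the maps $\alpha$, $\beta$, $\la-,-\ra$ are represented in coordinates.
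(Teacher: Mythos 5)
Your proposal is correct and follows essentially the same route as the paper: the text preceding the theorem identifies $\Spec P$, via Lemma \ref{lem:local conditions for the map for Sthree}, with the functor of $G$-covers equipped with trivializations of $\shL$ and $\shF$, i.e.\ with the fiber product of $(\shL,\shF)\colon\GCov\to\Bi\Gm\times\Bi\GL_{2}$ against the trivial torsor $\Spec\shR\to\Bi\Gm\times\Bi\GL_{2}$, and the theorem is then deduced exactly as you do, by base change of the torsor and the standard groupoid argument. No gaps.
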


We now want to discuss the ring $P$ more in details and, in particular,
prove the following result:
\begin{thm}
\label{thm:main algebra main structure} The ring $P$ is a flat $\shR$-algebra,
it has two minimal primes 
\[
Q_{1}=(a+c,d+f,A+D)\text{ and }Q_{2}=(a,b,c,d,e,f,B,C,A-D,\omega)
\]
$\Spec(P/Q_{1})\to\Spec\shR\text{ is flat and geometrically integral and }\Spec(P/Q_{2})\simeq\A_{\shR}^{1}$.

Moreover $Q_{1}+Q_{2}=(a,b,c,d,e,f,A,B,C,D,\omega)$, $Q_{1}\cap Q_{2}=(a+c,d+f)=\sqrt{0_{P}}$
and $\Spec(P)^{\textup{red}}\to\Spec\shR$ is flat and geometrically
reduced.
\end{thm}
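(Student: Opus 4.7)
The plan is to treat the two purported minimal primes $Q_1, Q_2$ separately—showing each gives a flat, geometrically integral quotient—and then connect them through a nilradical computation.

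The ideal $Q_2$ is the easy case: substituting $a = b = c = d = e = f = B = C = \omega = 0$ and $A = D$ collapses every relation of \eqref{eq:loc com and ass conditions} to $0 = 0$, so $P/Q_2 = \shR[A] \cong \A^1_\shR$, which is flat and geometrically integral, and in particular $Q_2$ is prime.

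For the main component—whose generators the subsequent computations force to be $(a+d,\, c+f,\, A+D)$—the substitution $d \mapsto -a$, $f \mapsto -c$, $D \mapsto -A$ annihilates every relation of \eqref{eq:loc com and ass conditions} containing $(a+d)$, $(c+f)$, or $(A+D)$ as a factor, leaving only the five quadrics
\begin{gather*}
R_1 = 2aA + bB + cC,\quad R_2 = 2cA - aB + eC,\\
R_3 = a^2 + bc + \omega C,\quad R_4 = ac + be - 2A\omega,\quad R_5 = c^2 - ae - B\omega
\end{gather*}
in $\shR[a,b,c,e,A,B,C,\omega]$. A direct check exhibits a syzygy $aR_1 + bR_2 = 2AR_3 + CR_4$, so $(R_1,\dots,R_5)$ is not a regular sequence and one cannot conclude from a generic dimension count. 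The cleanest route to geometric integrality is moduli-theoretic: $\Spec(P/Q_1)$ is an atlas for $\stZ_G$, which by Definition~\ref{def:main irreducible component} is the scheme-theoretic closure of the smooth, geometrically connected open substack $\Bi G$; this forces $\stZ_G$—hence its atlas $\Spec(P/Q_1)$—to be geometrically integral and flat over $\shR$. A more hands-on alternative is to work on the chart $4A^2 + BC \neq 0$, where $R_1 = R_2 = 0$ uniquely solves for $a, c$ in terms of the remaining variables and the residual relations coming from $R_3, R_4, R_5$ may be checked to cut out an irreducible variety over any field of characteristic $\neq 2$.

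For the nilradical, the inclusion $(a+d, c+f) \subseteq \sqrt{0_P}$ follows from $(a-d)(a+d) = 0$, $b(a+d) = 0$, and $a(a+d) + b(c+f) = 0$, which combine to yield $(a+d)^2 = 2a(a+d) = -2b(c+f)$ and hence $(a+d)^3 = -2(c+f) \cdot b(a+d) = 0$; the analogous computation with $e(c+f) = 0$ and $e(a+d) + c(c+f) = 0$ gives $(c+f)^3 = 0$. For the reverse inclusion $Q_1 \cap Q_2 \subseteq (a+d, c+f)$, any $x \in Q_1 \cap Q_2$ is congruent modulo $(a+d, c+f)$ to some $\gamma(A+D) \in Q_2$; since $A+D = 2A$ is a non-zero-divisor in the domain $P/Q_2 = \shR[A]$, this forces $\gamma \in Q_2$. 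The identities
\[
a(A+D) = D(a+d),\quad b(A+D) = -C(a+d),\quad c(A+D) = -B(a+d),\quad e(A+D) = -B(c+f),
\]
together with $B(A+D) = C(A+D) = \omega(A+D) = (A-D)(A+D) = 0$ and the derived $d(A+D), f(A+D) \in (a+d, c+f)$, show $g(A+D) \in (a+d, c+f)$ for every generator $g$ of $Q_2$, so $x \in (a+d, c+f)$. The sum $Q_1 + Q_2$ contains $A+D$ and $A-D$, hence $2A, 2D$, and since $2 \in \shR^*$ it contains $A, D$ and every remaining generator, yielding the stated form. Finally $P^{\textup{red}} = P/(Q_1 \cap Q_2) \hookrightarrow P/Q_1 \times P/Q_2$ inherits flatness and geometric reducedness from the two factors.

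The main obstacle is the geometric integrality of $\Spec(P/Q_1)$: because the five surviving quadrics admit a non-trivial syzygy they are not a regular sequence, so one cannot conclude via a routine complete-intersection dimension count and must appeal either to the moduli-theoretic closure description of $\stZ_G$ or to an explicit parametrization on an open chart.
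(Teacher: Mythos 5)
Most of your argument tracks the paper's own: the identification $P/Q_{2}\simeq\shR[A]$, the computation $(a+d)^{3}=(c+f)^{3}=0$, the reduction of $Q_{1}\cap Q_{2}\subseteq(a+d,c+f)$ to the identities $g(A+D)\in(a+d,c+f)$ for the generators $g$ of $Q_{2}$ (the paper phrases this as $Q_{1}'Q_{2}'=0$ in $P/(a+d,c+f)$), and the computation of $Q_{1}+Q_{2}$ are all correct and essentially the arguments given in the paper. You also correctly observe that the displayed generators of $Q_{1}$ must be read as $(a+d,c+f,A+D)$.

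The genuine gap sits exactly where you flag "the main obstacle": the geometric integrality of $P/Q_{1}$, which is also what makes $Q_{1}$ prime in the first place. Your moduli-theoretic route is circular: the statement that $\Spec(P/Q_{1})$ is an atlas for $\stZ_{G}$ is Theorem \ref{thm:geometry of GCov and components}, which the paper deduces \emph{from} the present theorem. A priori $\stZ_{G}$ is only the schematic closure of $\Bi G$, whose preimage in $\Spec P$ is the open subscheme $\Spec P_{\omega m}$; identifying its schematic closure with $V(Q_{1})$ amounts to showing $\ker(P\to P_{\omega m})=Q_{1}$, which already presupposes that $Q_{1}$ is prime and that $\omega m$ is a non-zero-divisor modulo $Q_{1}$. (Even granting the identification, geometric integrality of a schematic closure over $\Z[1/2]$ is not formal, since formation of the closure need not commute with passage to geometric fibres.) Your hands-on alternative is also not a proof: you do not show that the chart $4A^{2}+BC\neq0$ is dense in $\Spec(P/Q_{1})$, and "may be checked to cut out an irreducible variety" asserts precisely the hard content without establishing it. The paper does this work in Proposition \ref{prop:ZG result on atlas}: it presents $P/Q_{1}$, via the graded-kernel constructions of \ref{rem:graded kernel} and \ref{lem: primaty lemma 2}, as a subring of $D[\omega]_{\omega}$ with $D=R[a,b,c,e]$, and proves the required injectivity by the explicit regular-sequence argument of \ref{lem: primaty lemma 1}; this is the step your proposal is missing. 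Two smaller omissions: you never address the flatness of $P$ itself, which is part of the statement; and for geometric reducedness of $P^{\textup{red}}$ the injection into $P/Q_{1}\times P/Q_{2}$ must be shown to survive base change, which the paper obtains from the flatness of the cokernel $P/(Q_{1}+Q_{2})\simeq\shR$ rather than from the two factors alone.
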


\begin{defn}
\label{def:the zero section} We denote by $\{0\}$ the closed substack
of $\chi\in\GCov$ such that $\alpha_{\chi}=\beta_{\chi}=\la-,-\ra_{\chi}=0$.
\end{defn}

\begin{rem}
The inclusion $\{0\}\to\GCov$, over the atlas $\Spec P\to\GCov$,
corresponds to the map $\Spec\shR\to\Spec P$ mapping all variables
to $0$. In particular it follows that $\{0\}\simeq\Bi\Gm\times\Bi\GL_{2}$.
\end{rem}

A direct consequence of \ref{thm:GCov as quotient stack} and \ref{thm:main algebra main structure}
is the following.
\begin{thm}
\label{thm:geometry of GCov and components} The stack $\GCov$ is
a flat and finitely generated algebraic stack over $\shR$, the closed
substacks (see \ref{def:locally multiple identity})
\[
\stZ_{G}=\{\chi\st\tr\beta_{\chi}=\tr\alpha_{\chi}=0\}\comma\stZ_{2}=\{\chi\st\beta_{\chi}=\la-,-\ra_{\chi}=0,\alpha_{\chi}=((\tr\alpha_{\chi})/2)\otimes\id_{\shF_{\chi}}\}
\]
 are the irreducible components of $\GCov$ with their reduced structure
and they are flat and geometrically integral over $\shR$. Moreover
$\stZ_{G}\cap\stZ_{2}=\{0\}$, $(\GCov)^{\text{red}}=\{\chi\st\tr\beta_{\chi}=0\}$
and it is flat and geometrically reduced over $\shR$. Finally we
have a decomposition into open substacks
\begin{equation}
(\GCov-\{0\})=(\stZ_{G}-\{0\})\sqcup(\stZ_{2}-\{0\})\label{eq:decomposition of GCov minus zero}
\end{equation}
\end{thm}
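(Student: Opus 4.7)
The plan is to deduce Theorem \ref{thm:geometry of GCov and components} directly from the structural statements about the atlas in Theorem \ref{thm:main algebra main structure}, via the presentation $\GCov \simeq [\Spec P/(\Gm \times \GL_2)]$ of Theorem \ref{thm:GCov as quotient stack}. Algebraicity and finite presentation of $\GCov$ over $\shR$ are formal consequences of this quotient presentation (since $\Spec P$ is affine of finite type and $\Gm \times \GL_2$ is smooth and affine), and flatness over $\shR$ descends from the asserted flatness of $P$.

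My first real step would be to verify that the conditions defining $\stZ_G$ and $\stZ_2$ are genuine conditions on $\chi$, i.e.\ invariant under the $\Gm \times \GL_2$-action corresponding to a change of trivialization of $\shL$ and $\shF$. Indeed, $\tr\alpha$ is a global section of $\shL_\chi^\vee$ and $\tr\beta$ a global section of $\shF_\chi^\vee$, so their vanishing loci define closed substacks; the scalar condition in $\stZ_2$ is closed because $\alpha - ((\tr\alpha)/2)\id_{\shF_\chi}$ is a global section of $\shL_\chi^\vee \otimes \End(\shF_\chi)$. Pulling these conditions back to the atlas $\Spec P$ and reading off the generators in the coordinates of \ref{not: associated parameters for chi and Sthree}, one identifies the pullback of $\stZ_G$ with $V(Q_1)$ and of $\stZ_2$ with $V(Q_2)$.

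With these identifications in place, Theorem \ref{thm:main algebra main structure} supplies essentially everything else. Since $Q_1$ and $Q_2$ are the two minimal primes of $P$, the substacks $\stZ_G$ and $\stZ_2$ are the reduced irreducible components of $\GCov$; flatness and geometric integrality of $\Spec(P/Q_1)$ and of $\Spec(P/Q_2) \simeq \A^1_\shR$ transfer to them through the smooth atlas. The computation $Q_1 + Q_2 = (a,b,c,d,e,f,A,B,C,D,\omega)$ shows that $\stZ_G \cap \stZ_2$ is the locus where $\alpha$, $\beta$ and $\langle-,-\rangle$ all vanish, which is exactly $\{0\}$ by Definition \ref{def:the zero section}. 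Similarly the equality $\sqrt{0_P} = Q_1 \cap Q_2 = (a+c, d+f)$ translates into $(\GCov)^{\text{red}} = \{\chi \st \tr\beta_\chi = 0\}$, and geometric reducedness descends from that of $P^{\text{red}}$ over $\shR$.

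Finally, for the disjoint decomposition (\ref{eq:decomposition of GCov minus zero}), I would observe that on the underlying topological stack, $\stZ_G \cup \stZ_2$ already exhausts $\GCov$ (being the union of the irreducible components), and their scheme-theoretic intersection $\{0\}$ has been removed, so the two pieces are open and closed in $\GCov - \{0\}$ and visibly disjoint. The only serious input is Theorem \ref{thm:main algebra main structure}, which is granted; the main obstacle in the present statement lies in the bookkeeping check that the traceless and scalar conditions used to cut out $\stZ_G$ and $\stZ_2$ globally really pull back to the ideals $Q_1$ and $Q_2$ on the atlas, since this is where the correspondence between global geometric conditions and local generators must be matched exactly.
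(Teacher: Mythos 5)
Your proposal is correct and follows exactly the paper's route: the paper states this theorem as ``a direct consequence'' of Theorems \ref{thm:GCov as quotient stack} and \ref{thm:main algebra main structure}, and your argument simply supplies the bookkeeping (identifying the pullbacks of $\stZ_G$ and $\stZ_2$ to $\Spec P$ with $V(Q_1)$ and $V(Q_2)$, and descending flatness, integrality and the intersection/decomposition statements through the smooth atlas) that the paper leaves implicit. The only remark worth adding is that your translation $\tr\beta_\chi=0\leftrightarrow(a+d,c+f)$ is the correct one, consistent with the proof of \ref{thm:main algebra main structure}, so the generators $(a+c,d+f,\ldots)$ printed in that theorem's statement should be read as $(a+d,c+f,\ldots)$.
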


We collect some partial results before proving Theorem \ref{thm:main algebra main structure}.
\begin{lem}
\label{rem: cover with non zero trace} We have $(a+d)^{3}=(c+f)^{3}=0$
in $P$. On the other hand $a+d$ and $c+f$ are not zero in any geometric
fiber of $\Spec P\to\Spec\shR$.
\end{lem}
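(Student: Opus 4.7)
The plan is to split the claim into two independent halves and treat them separately. Setting $s := a+d$ and $t := c+f$, the nilpotence $s^{3} = t^{3} = 0$ will follow by isolating the right few identities from the long list (\ref{eq:loc com and ass conditions}); the non-vanishing of $s$ and $t$ on every geometric fibre of $\Spec P \to \Spec \shR$ will follow from a single explicit $\shR$-algebra map out of $P$.

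For $s^{3} = 0$ the three key relations, all taken from (\ref{eq:ass gamma,beta}), are $(a-d)s = 0$, $bs = 0$, and $as + bt = 0$. The first gives $as = ds$, whence $s^{2} = (a+d)s = 2as$. Multiplying $as + bt = 0$ by $s$ and using $bs = 0$ (which also kills $bts$) yields $as^{2} = 0$, so $s^{3} = s \cdot s^{2} = 2as^{2} = 0$. The argument for $t^{3} = 0$ is formally symmetric, using $(c-f)t = 0$, $dt = 0$, and $es + ct = 0$ in place of the three relations above.

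For the non-vanishing in fibres I would exhibit the $\shR$-algebra homomorphism
\[
\varphi \colon P \longrightarrow \shR[\epsilon]/(\epsilon^{2}), \qquad a,c,d,f \mapsto \epsilon/2, \qquad b,e,A,B,C,D,\omega \mapsto 0.
\]
Each generator of the ideal defining $P$ either contains a variable sent to $0$ or reduces to a sum of monomials of $\epsilon$-degree $\geq 2$, and hence vanishes in $\shR[\epsilon]/(\epsilon^{2})$ (here we use $1/2 \in \shR$); so $\varphi$ is well defined, with $\varphi(s) = \varphi(t) = \epsilon \neq 0$. Base changing along any geometric point $\Spec k \to \Spec \shR$ yields $P \otimes_{\shR} k \to k[\epsilon]/(\epsilon^{2})$ sending $s, t$ to $\epsilon$, forcing their non-vanishing in the fibre. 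The only real obstacle is bookkeeping: (\ref{eq:loc com and ass conditions}) contains more than a dozen relations and one must pick out the correct few for each step; once isolated, both halves follow by short direct manipulations.
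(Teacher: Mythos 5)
Your proof is correct and takes essentially the same route as the paper's: the nilpotence of $a+d$ and $c+f$ is extracted from the same few relations of (\ref{eq:ass gamma,beta}) by the same multiplication trick, and the non-vanishing on geometric fibres is witnessed by a map to the dual numbers that works because all defining relations are homogeneous of degree two (the paper's choice is to send every variable to $x$ in $k[x]/(x^{2})$). The only slip is a citation: to kill the cross term $est$ in the $t^{3}=0$ half you need $e(c+f)=0$ rather than $d(c+f)=0$, but both appear in (\ref{eq:loc com and ass conditions}), so nothing breaks.
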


\begin{proof}
Indeed, by (\ref{eq:loc com and ass conditions}), we have $a(a+d)=d(a+d$)
and 
\[
0=[a(a+d)+b(c+f)](a+d)=a(a+d)^{2}=d(a+d)^{2}\then(a+d)^{3}=0
\]
The expression $(c+f)^{3}=0$ is proved similarly.

For the second claim, if $k$ is an algebraically closed field over
$\shR$, the choice

\[
a=b=c=d=e=f=\omega=A=B=C=D=x
\]
defines a map $P\to k[x]/(x^{2})$ sending $a+d$ and $c+f$ to $x$.
\end{proof}
\begin{lem}
\label{lem:result about second prime} The map $P\to P_{A+D}$ has
kernel $Q_{2}$ of Theorem \ref{thm:main algebra main structure}
and factors as 
\[
P\twoheadrightarrow\shR[A]\hookrightarrow\shR[A]_{A}\comma a,b,c,d,e,f,\omega,B,C\mapsto0,D\mapsto A,A\mapsto A
\]
\end{lem}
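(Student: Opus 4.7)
The plan is to produce a concrete ring map $\pi\colon P\to\shR[A]$ implementing the claimed substitution, show $\ker\pi=Q_{2}$, and then observe that inverting $A+D$ in $P$ collapses onto the localization $\shR[A]_{A}$. Specifically, I would first check that the assignment $a,b,c,d,e,f,\omega,B,C\mapsto 0$, $D\mapsto A$, $A\mapsto A$ satisfies every relation in (\ref{eq:loc com and ass conditions}): all the right-hand sides become $0$ and the left-hand sides acquire at least one vanishing factor. This defines a surjection $\pi\colon P\twoheadrightarrow\shR[A]$ whose kernel manifestly contains $Q_{2}$; passing to $P/Q_{2}$, every generator except $A$ and $D$ is killed and $A=D$, so the induced map $P/Q_{2}\to\shR[A]$ is an isomorphism.

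The main content is then to show that $Q_{2}\cdot P_{A+D}=0$. The relations in (\ref{eq:ass m,alpha}) and (\ref{eq:ass alpha,gamma}) deliver $A-D=B=C=\omega=0$ in $P_{A+D}$ immediately upon inverting $A+D$. Combining this with the second block of (\ref{eq:ass alpha,beta}), the relation $C(a+d)+b(A+D)=0$ collapses to $b(A+D)=0$ so $b=0$ in $P_{A+D}$; the three companion relations yield $c,d,e=0$ identically the same way.

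The subtlest point, and the only real obstacle, is showing $a,f\in\ker(P\to P_{A+D})$. From $a(A+D)=D(a+d)$ with $d=0$ one only deduces $aA=0$, so one needs $A$ itself (not just $A+D$) to be a unit in $P_{A+D}$. Here the hypothesis $2\in\shR^{\times}$ enters crucially: since $A-D=0$ in $P_{A+D}$, we have $A+D=2A$, and as $A+D$ is invertible and $2$ is invertible in $\shR$, so is $A$. Hence $a=0$, and the symmetric relation $c(A+D)=D(c+f)$ gives $f=0$.

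To conclude, since $\pi$ factors as $P\twoheadrightarrow P/Q_{2}\cong\shR[A]$ and the image of $A+D$ is the regular element $2A$ in the domain $\shR[A]$, the universal property of localization gives $P_{A+D}\cong\shR[A]_{2A}=\shR[A]_{A}$, which is the claimed factorization. For the kernel identification, $Q_{2}\subseteq\ker(P\to P_{A+D})$ by the preceding paragraph; conversely, if $(A+D)^{n}p=0$ in $P$ then $(2A)^{n}\pi(p)=0$ in the domain $\shR[A]$, forcing $\pi(p)=0$ and hence $p\in Q_{2}$.
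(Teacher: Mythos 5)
Your proof is correct and follows essentially the same route as the paper: both establish $B=C=\omega=A-D=0$ in $P_{A+D}$ from the first two blocks of relations, use invertibility of $2$ to get $A$ itself invertible, and then kill $b,c,d,e$, then $a$, then $f$ using the relations from (\ref{eq:ass alpha,beta}). The only differences are cosmetic: you derive $a=0$ from $a(A+D)-D(a+d)=0$ rather than from $2aA+bB+cC=0$, and you spell out the kernel identification via the quotient map to the domain $\shR[A]$, which the paper leaves implicit.
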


\begin{proof}
Clearly $P/Q_{2}\simeq\shR[A]$, so it is enough to show that $a,b,c,d,e,f,\omega,B,C,A-D$
are zero in $P_{A+D}$. We makes use of (\ref{eq:loc com and ass conditions}).
We immediately have $B=C=\omega=0$ and $A=D$, which is therefore
invertible in $P_{A+D}$. From the local equations corresponding to
(\ref{eq:ass alpha,beta}) we first obtain $b=c=d=e=0$, then $2aA=0$
and therefore, as $A$ is invertible, $a=0$. Finally the last equation
yields $Df=Af=0$ and thus $f=0$.
\end{proof}
We now focus on the first prime ideal $Q_{1}$.
\begin{lem}
\label{lem: primaty lemma 1} Let $D$ be a ring and $a,b_{1},\dots,b_{m}$
be a regular sequence in $D$. Then $a$ is a non zero divisor in
$D[X_{1},\dots,X_{m}]/(aX_{i}-b_{i}\st i\leq m)$.
\end{lem}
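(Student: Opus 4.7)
The plan is to induct on $m$, with the base case $m=0$ being immediate since $a$ is a non-zero divisor in $D$ by hypothesis. For the inductive step, I set $R'=D[X_1,\dots,X_{m-1}]/(aX_i-b_i\st i<m)$ and $R=R'[X_m]/(aX_mX-b_m)$, and I will first verify that $a,b_m$ is a regular sequence in $R'$. Indeed, $a$ is a non-zero divisor in $R'$ by the inductive hypothesis, and there is a canonical isomorphism
\[
R'/(a)\simeq (D/(a,b_1,\dots,b_{m-1}))[X_1,\dots,X_{m-1}],
\]
so the regularity of $a,b_1,\dots,b_m$ in $D$ forces $b_m$ to be a non-zero divisor in $R'/(a)$.

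The main step is then to show that, under the assumption that $a,b_m$ is a regular sequence in $R'$, the element $a$ is a non-zero divisor in $R'[X_m]/(aX_m-b_m)$. Suppose $af=g\cdot(aX_m-b_m)$ in $R'[X_m]$, writing $f=\sum f_iX_m^i$ and $g=\sum g_iX_m^i$ with $f_i,g_i\in R'$. Comparing the coefficient of $X_m^0$ yields $af_0=-b_mg_0$, and comparing the coefficient of $X_m^i$ for $i\geq 1$ yields $af_i=ag_{i-1}-b_mg_i$.

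I would then extract, by induction on $i$, elements $h_i\in R'$ such that $g_i=ah_i$ and $f_i=ah_{i-1}-b_mh_i$ (with the convention $h_{-1}=0$). The existence of $h_0$ uses that $b_mg_0\in(a)$ together with the fact that $b_m$ is a non-zero divisor modulo $a$ in $R'$, producing $g_0=ah_0$, whence $f_0=-b_mh_0$ upon cancelling $a$. The same two ingredients feed the inductive step: rewrite $af_i+b_mg_i=ag_{i-1}=a^2h_{i-1}$ to obtain $b_mg_i\in (a)$, hence $g_i=ah_i$, and cancel $a$ to get $f_i=ah_{i-1}-b_mh_i$. Since $f$ and $g$ have finite degree, the $h_i$ vanish for $i$ large (using once more that $a$ is a non-zero divisor in $R'$), so $h(X_m)=\sum h_iX_m^i\in R'[X_m]$. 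A direct expansion shows $(aX_m-b_m)h(X_m)=f$, so $f\in (aX_m-b_m)$ and thus $f=0$ in $R$.

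The only real obstacle is checking that $a,b_m$ remains a regular sequence after passing from $D$ to $R'$, and this is precisely what the inductive hypothesis plus the polynomial-ring structure of $R'/(a)$ gives us; the rest is a routine coefficient-matching argument modelled on the proof that $aX-b$ generates a prime ideal when $a,b$ is regular.
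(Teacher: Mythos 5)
Your proof is correct and follows essentially the same route as the paper's: induction on $m$, reducing to the one-variable case after checking that $a,b_m$ stays a regular sequence in $R'$ via the identification $R'/(a)\simeq (D/(a,b_1,\dots,b_{m-1}))[X_1,\dots,X_{m-1}]$. Your coefficient-by-coefficient extraction of the $h_i$ is just a more explicit version of the paper's one-variable argument, which reduces $af=g(aX_1-b_1)$ modulo $a$ to conclude $b_1g\equiv 0$, hence $g=a\tilde g$, and then cancels $a$.
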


\begin{proof}
Denote by $S_{m}$ the last ring in the statement. We proceed by induction
on $m$.

\emph{Base case $m=1$. }So let $f\in D[X_{1}]$ such that $af=0$
in $S_{1}$, so that
\[
af(X_{1})=g(X_{1})(aX_{1}-b_{1})\text{ in }D[X_{1}]\text{ for some }g\in D[X_{1}]
\]
It follows that $b_{1}g(X_{1})=0$ in $(D/(a))[X_{1}]$ and, by hypothesis,
that $a\mid g(X_{1})$ in $D[X_{1}]$, say $g=a\tilde{g}.$ As $a$
is a non zero divisor in $D[X_{1}]$ we can conclude that $f=\tilde{g}(aX_{1}-b_{1})$,
that is $f=0$ in $S_{1}$.

\emph{Inductive step $m\then m+1$.} From the base case it is enough
to show that $a,b_{m+1}$ is a regular sequence in $S_{m}$. From
the case $m$ we know that $a$ is a non zero divisor in $S_{m}$.
On the other hand
\[
S_{m}/(a)=D[X_{1},\dots,X_{m}]/(a,b_{1},\dots,b_{m})=(D/(a,b_{1},\dots,b_{m}))[X_{1},\dots,X_{m}]
\]
which shows that $b_{m+1}$ is a non zero divisor on $S_{m}/(a)$.
\end{proof}
\begin{rem}
\label{rem:graded kernel} Let $D$ be a ring and $b_{1},\dots,b_{m}\in D$.
Then the kernel of the map
\[
D[X_{1},\dots,X_{m}]\to D[t]\comma X_{i}\longmapsto tb_{i}
\]
is the graded ideal generated by the homogeneous polynomials $f$
such that $f(b_{1},\dots,b_{m})=0$.
\end{rem}

\begin{lem}
\label{lem: primaty lemma 2}Let $D$ be a ring, $b_{1},\dots,b_{m}\in D$
and denote by $J$ the graded ideal of $D[X_{1},\dots X_{m}]$ generated
by the homogeneous polynomials $f$ such that $f(b_{1},\dots,b_{m})=0$.
Consider also the ring
\[
S=\frac{D[X_{1},\dots,X_{m},\omega]}{(J,\omega X_{i}-b_{i})}
\]
Then $S_{\omega}=D[\omega]_{\omega}$ and the map
\[
S\to D[\omega]_{\omega}\comma X_{i}\longmapsto b_{i}/\omega
\]
is well defined and injective.
\end{lem}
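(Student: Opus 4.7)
The plan is to argue in three stages: well-definedness of the candidate map, the localized isomorphism $S_\omega\simeq D[\omega]_\omega$, and finally injectivity of $\phi\colon S\to D[\omega]_\omega$, which is the main point. Well-definedness is routine: lift to the $D$-algebra map $\tilde\phi\colon D[X_1,\ldots,X_m,\omega]\to D[\omega]_\omega$ sending $X_i\mapsto b_i/\omega$ and $\omega\mapsto\omega$; every generator $\omega X_i-b_i$ maps to zero, and for a homogeneous $f\in J$ of degree $d$ one has $\tilde\phi(f)=f(b_1,\ldots,b_m)/\omega^d=0$. For the localization statement, $\omega X_i=b_i$ forces $X_i=b_i/\omega$ in $S_\omega$, so the obvious ring map $D[\omega]_\omega\to S_\omega$ is surjective; its composition with the localized $\phi_\omega$ is the identity on $D$ and on $\omega$, whence both maps are mutually inverse isomorphisms.

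The heart of the proof is injectivity of $\phi$, which I approach via a normal-form reduction: every element of $S$ can be represented by some $R(X_1,\ldots,X_m)+T(\omega)$ with $R(0)=0$ and $T\in D[\omega]$. Indeed, any monomial $X^\alpha\omega^k$ with $|\alpha|\geq 1$ and $k\geq 1$ satisfies $X^\alpha\omega^k\equiv b_i X^{\alpha-e_i}\omega^{k-1}\pmod{\omega X_i-b_i}$ for any index $i$ with $\alpha_i\geq 1$, and iterating drives either $|\alpha|$ or $k$ to zero, producing a monomial supported either purely in the $X$'s (with positive total degree) or purely in $\omega$. Summing these reductions of the individual monomials of a lift yields the claimed form.

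Now assume $\phi(R+T)=0$, and decompose $R=R_1+R_2+\cdots+R_N$ by homogeneous degree (no $R_0$ since $R(0)=0$). The image identity reads
\[
\sum_{d\geq 1}\frac{R_d(b_1,\ldots,b_m)}{\omega^d}+T(\omega)=0
\]
in $D[\omega,\omega^{-1}]$. Since $\{\omega^j\}_{j\in\Z}$ is a free $D$-basis of $D[\omega,\omega^{-1}]$, each coefficient vanishes, giving $T=0$ and $R_d(b_1,\ldots,b_m)=0$ for every $d\geq 1$. By the very definition of $J$, each homogeneous $R_d$ lies in $J$, so $R\in J$ and hence $R+T=0$ in $S$. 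The main obstacle is therefore the normal-form reduction together with its compatibility with coefficient-matching in the Laurent polynomial ring; both ingredients are essentially combinatorial, and the second reduces immediately to the description of $J$ as the ideal generated by homogeneous polynomials vanishing at $(b_1,\ldots,b_m)$, so no further hypothesis on $D$ is required.
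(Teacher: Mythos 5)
Your proof is correct, and its central step --- reducing every element of $S$ to a normal form $R(X_1,\dots,X_m)+T(\omega)$ with $R$ a $D$-coefficient polynomial without constant term and $T\in D[\omega]$, by repeatedly replacing $\omega X_i$ with $b_i$ --- is exactly the reduction the paper performs (there the normal form appears as $f=\sum_\alpha f_\alpha\underline{X}^\alpha$ with $f_0\in D[\omega]$ and $f_\alpha\in D$ for $|\alpha|>0$). Where you genuinely diverge is the endgame. The paper presents $D[\omega]_\omega$ as $D[\omega,t]/(\omega t-1)$, sends $X_i\mapsto tb_i$, first quotes the graded-kernel remark (\ref{rem:graded kernel}) to get injectivity of $D[X_1,\dots,X_m,\omega]/J\to D[\omega,t]$, and then extracts the conclusion from the identity $\sum_\alpha f_\alpha t^{|\alpha|}\underline{b}^\alpha=g(\omega,t)(\omega t-1)$ by a downward induction on the $t$-degree of the cofactor $g$. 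You instead evaluate directly in the Laurent ring $D[\omega]_\omega=D[\omega,\omega^{-1}]$ and observe that the homogeneous pieces $R_d$ land in the distinct negative powers $\omega^{-d}$ while $T$ occupies the nonnegative powers, so $D$-freeness of $\{\omega^j\}_{j\in\Z}$ forces $T=0$ and $R_d(b_1,\dots,b_m)=0$ for all $d$, whence $R_d\in J$ by the very definition of $J$. Your route is the more economical: it dispenses with the auxiliary variable $t$, with the injectivity statement for $D[X,\omega]/J\to D[\omega,t]$, and with the induction on $\deg_t g$, and it makes the degree-separation mechanism transparent; what the paper's detour buys is mainly uniformity with Proposition \ref{prop:ZG result on atlas}, where the map to $D[t]$ and Remark \ref{rem:graded kernel} are needed anyway. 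Both arguments work over an arbitrary commutative ring $D$.
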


\begin{proof}
The ring $S_{\omega}$ is the quotient of $D[\omega]_{\omega}$ by
the ideal generated by $f(\underline{b}/\omega)$ for $f\in J$, where
$\underline{b}=(b_{1},\dots,b_{m})$. Since $f(\underline{b}/\omega)=f(\underline{b})/\omega^{d}=0$
if $f\in D[X_{1},\dots,X_{m}]$ is an homogeneous polynomial of degree
$d$ and $f\in J$, we can conclude that $S_{\omega}=D[\omega]_{\omega}$.
Consider the map
\[
\phi\colon D[X_{1},\dots,X_{m},\omega]/J\to D[\omega,t]\comma X_{i}\longmapsto tb_{i}
\]
It is injective by \ref{rem:graded kernel}. We have to prove that
the kernel $\shK$ of the map
\[
D[X_{1},\dots,X_{m},\omega]/J\to D[\omega,t]\to D[\omega,t]/(\omega t-1)\simeq D[\omega]_{\omega}\comma X_{i}\longmapsto b_{i}/\omega
\]
is generated by the $\omega X_{i}-b_{i}$. Thus let $f\in\shK$, which
just means $\phi(f)\in(\omega t-1)$. Since $\phi(\omega X_{i}-b_{i})=0$
we can assume
\[
f=\sum_{\alpha\in\N^{m}}f_{\alpha}\underline{X}^{\alpha}\text{ with }f_{0}\in D[\omega]\text{ and }f_{\alpha}\in D\text{ if }|\alpha|>0
\]
By assumption
\[
\phi(f)=\sum_{\alpha\in\N^{m}}f_{\alpha}t^{|\alpha|}\underline{b}^{\alpha}=g(\omega,t)(\omega t-1)\then\sum_{\alpha\st|\alpha|=l}f_{\alpha}\underline{b}^{\alpha}=g_{l-1}\omega-g_{l}\text{ for }l\geq0
\]
where $g=\sum_{l\in\Z}g_{l}t^{l}\in D[\omega][t]$, that is with $g_{l}=0$
if $l<0$ or $l\gg0$. We claim that $g=0$ and, therefore, $f=0$.
Indeed, otherwise, if $q$ is the degree of $g$ with respect to $t$,
we would have
\[
g_{q}\omega=\sum_{\alpha\st|\alpha|=q+1}f_{\alpha}\underline{b}^{\alpha}\in D\then g_{q}=0
\]
\end{proof}
\begin{prop}
\label{prop:ZG result on atlas} Let $S=\shR[a,b,c,e,A,B,C,\omega]/I$
where $I$ is the ideal
\[
I=(2aA+bB+cC,2cA-aB+eC,2\omega A-(ac+be),\omega B-(c^{2}-ae),\omega C+a^{2}+bc)
\]
Then $\Spec S\to\Spec\shR$ is flat and geometrically integral.
\end{prop}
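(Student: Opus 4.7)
The plan is to exhibit $S$ as a sub-$\shR$-algebra of a polynomial-type ring over $\shR$, from which flatness over $\shR=\Z[1/2]$ follows because the ambient ring is $\shR$-torsion-free and torsion-free modules over a PID are flat. Moreover the entire argument will be characteristic-free modulo $2\in\shR^\times$, so it will run verbatim after base change to any algebraically closed field $k$ of characteristic $\ne 2$, giving geometric integrality.

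Concretely, set $D=\shR[a,b,c,e]$ and
\[
b_1 = ac+be,\quad b_2 = c^2-ae,\quad b_3 = -(a^2+bc).
\]
After the change of variables $X_1 = 2A$, $X_2 = B$, $X_3 = C$ (valid since $2\in\shR^\times$), I would identify $S$ with
\[
\frac{D[X_1,X_2,X_3,\omega]}{(L_1,\, L_2,\, \omega X_1 - b_1,\, \omega X_2 - b_2,\, \omega X_3 - b_3)},
\]
where $L_1 = aX_1+bX_2+cX_3$ and $L_2 = cX_1-aX_2+eX_3$. The central structural observation is that $b_1,-b_2,b_3$ are precisely the $2\times 2$ minors of
\[
M = \begin{pmatrix} a & b & c \\ c & -a & e \end{pmatrix},
\]
while $L_1,L_2$ are the rows of $M$ paired against $(X_1,X_2,X_3)$. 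A direct expansion verifies the Pl\"ucker-type identities $ab_1+bb_2+cb_3 = 0$ and $cb_1-ab_2+eb_3 = 0$, so that $X_i\mapsto b_i/\omega$ defines a well-posed ring map $\psi\colon S\to D[\omega]_\omega$ which localizes at $\omega$ to an isomorphism.

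The heart of the argument is to show $\psi$ is injective. Apply Lemma \ref{lem: primaty lemma 2} to $D$ and $(b_1,b_2,b_3)$ to produce the ring $T = D[X_1,X_2,X_3,\omega]/(J,\omega X_i - b_i)$ together with an injection $T\hookrightarrow D[\omega]_\omega$, where $J=\ker(D[X_1,X_2,X_3]\to D[t],\, X_i\mapsto tb_i)$ is the graded ideal of all polynomial relations among $(b_1,b_2,b_3)$. Since $L_1,L_2\in J$, there is a natural surjection $S\twoheadrightarrow T$ that factors $\psi$, so injectivity of $\psi$ is equivalent to $S\cong T$, and hence to the equality $J = (L_1,L_2)$ of ideals of $D[X_1,X_2,X_3]$. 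This is the statement that the ideal $I = (b_1,b_2,b_3)\subset D$ is of linear type, with linear syzygy module generated by the rows of $M$.

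The main obstacle is proving this linear-type identity. In degree $1$, Hilbert--Burch applies: a short case analysis (separately in the loci $a = 0$ and $a\ne 0$) shows $V(I)\subset\A^4_\shR$ is irreducible of dimension $2$, so $\mathrm{ht}(I) = 2$ is maximal, and the first syzygy module of $(b_1,b_2,b_3)$ is generated by the rows of $M$. Extending this to all degrees requires one additional input, namely the $F_1$ condition $\mu(I_P)\le\mathrm{ht}(P)$ for every prime $P\supset I$, which in our height-$2$ perfect situation is known to force the symmetric algebra of $I$ to coincide with its Rees algebra and hence $J$ to be generated in degree $1$. This condition is immediate at the smooth points of $V(I)$ (where the number of local generators is $2$); the only singular point, the origin of $\A^4_\shR$, requires a finite explicit check. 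A less structural but more direct alternative is to prove the equality of graded pieces $J_d = (L_1,L_2)_d$ by induction on $d$, using that $R = D[X_1,X_2,X_3]/(L_1,L_2)$ becomes free of rank one over $D[1/b_1]$ after inverting the minor $b_1$.

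Once $S\cong T$, the injection $\psi\colon S\hookrightarrow \shR[a,b,c,e,\omega,\omega^{-1}]$ realises $S$ as a sub-$\shR$-algebra of a localized polynomial ring, which is $\shR$-torsion-free; hence $S$ is $\shR$-torsion-free, and since $\shR=\Z[1/2]$ is a PID, $S$ is flat over $\shR$. For geometric integrality, running the same argument over any algebraically closed field $k$ of characteristic $\ne 2$ yields $S\otimes_\shR k\hookrightarrow k[a,b,c,e,\omega,\omega^{-1}]$, so each geometric fibre of $\Spec S\to\Spec\shR$ is a subring of a polynomial ring over a field, hence an integral domain.
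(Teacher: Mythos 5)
Your reduction is the same as the paper's up to the change of variables $X_{1}=2A$: both arguments pass through Remark \ref{rem:graded kernel} and Lemma \ref{lem: primaty lemma 2} and boil the statement down to the single identity $J=(L_{1},L_{2})$ in $D[X_{1},X_{2},X_{3}]$ with $D=\shR[a,b,c,e]$, i.e.\ to the assertion that the ideal $I=(b_{1},b_{2},b_{3})$ of $2\times2$ minors of $M$ is of linear type. Where you diverge is in how this identity is established. The paper proves it bare-handedly: it shows that $D[X_{1},X_{2},X_{3}]/(L_{1},L_{2})$ is a domain by applying the elementary regular-sequence Lemma \ref{lem: primaty lemma 1} twice, and then deduces injectivity of the map to $D[t]$ from the fact that it becomes an isomorphism after inverting $eb_{1}\neq0$. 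You instead invoke the structure theory of ideals of linear type: Hilbert--Burch to identify the linear syzygies with the rows of $M$ (this needs $\mathrm{ht}(I)=2$, which your case analysis on $a$ does give, since the locus $a=0$ contributes only the two lines $\{a=b=c=0\}$ and $\{a=c=e=0\}$, both lying in the closure of the two-dimensional chart $a\neq0$), and then the theorem of Huneke--Avramov--Herzog--Simis--Vasconcelos that a perfect grade-two ideal satisfying $F_{1}$ is of linear type. This is a legitimate and genuinely different route; it buys a structural explanation (it identifies $D[X_{1},X_{2},X_{3}]/(L_{1},L_{2})$ with the Rees algebra of $I$, and even recovers the paper's domain statement, since the criterion yields that this symmetric algebra is a Cohen--Macaulay domain), at the price of importing a nontrivial external theorem where the paper's argument is self-contained.

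Two caveats. First, your description of where $F_{1}$ must be checked is slightly off: at the origin the condition reads $\mu(I_{\mathfrak{m}})=3\leq4=\mathrm{ht}(\mathfrak{m})$ and is vacuous, so no ``finite explicit check'' is needed there; the binding constraint is at the unique minimal prime $P$ of $I$, of height $2$, where one needs $\mu(I_{P})\leq2$ --- this holds because $M$ has rank exactly one at the generic point of $V(I)$, so that two of the three minors generate $I_{P}$. Second, the crucial step is left at the level of a citation (and your ``more direct alternative'' by induction on the degree is only a one-sentence sketch, essentially equivalent to the $b_{1}$-torsion-freeness one is trying to prove); with a precise reference for the linear-type criterion, valid over the Cohen--Macaulay ring $D$ and over each fibre $k[a,b,c,e]$, the argument is complete, but as written it is a plan rather than a proof of the identity on which the paper spends most of its effort.
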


\begin{proof}
Flatness follows from geometric integrality. Indeed this is equivalent
to the torsion freeness of $S$, which, in case $S$ is a domain,
coincides with the injectivity of $\shR\to S$. This follows from
the existence of the morphism $S\to\shR$ mapping all variables to
$0$.

We now have to prove that, if we replace $\shR$ by any domain $R$,
the ring $S$ is a domain. Consider $b_{1}=(ac+be)/2$, $b_{2}=(c^{2}-ae)$
and $b_{3}=-a^{2}-bc$ and $D=R[a,b,c,e]$. By \ref{rem:graded kernel}
and \ref{lem: primaty lemma 2} we have to prove that, if
\[
J=(2aX_{1}+bX_{2}+cX_{3},2cX_{1}-aX_{2}+eX_{3})
\]
then $\phi\colon P=D[X_{1},X_{2},X_{3}]/J\to D[t]$, $X_{i}\mapsto tb_{i}$
is well defined and injective. It is well defined because
\[
\det\left(\begin{array}{ccc}
Y_{1} & Y_{2} & Y_{3}\\
2a & b & c\\
2c & -a & e
\end{array}\right)=2(Y_{1}b_{1}+Y_{2}b_{2}+Y_{3}b_{3})
\]
Thus we focus on the injectivity. Over $D_{e}$ we have
\[
P_{e}=D_{e}[X_{1},X_{2},X_{3}]/J=D_{e}[X_{1},X_{2}]/(b_{1}X_{2}-b_{2}X_{1})
\]
It follows that $\phi$ is an isomorphism over $D_{eb_{1}}$. Since
$eb_{1}\neq0$, the claim holds if we show that $P$ is a domain.
We apply \ref{lem: primaty lemma 1} twice. We have that
\[
P_{1}=D[a,b,c,X_{1},X_{2}][X_{3}]/(cX_{3}-(-2aX_{1}-bX_{2}))
\]
is a subdomain of $(P_{1})_{c}=D[a,b,c,X_{1},X_{2}]_{c}$ because
$c,2aX_{1}+bX_{2}$ is a regular sequence in $D[a,b,c,X_{1},X_{2}]$.
Now consider
\[
P=P_{1}[e]/(X_{3}e-(aX_{2}-2cX_{1}))
\]
We prove that $f=aX_{2}-2cX_{1}$ is a non zero divisor in $P_{1}/(X_{3})$,
so that $P$ would be a subdomain of $P_{X_{3}}=(P_{1})_{X_{3}}=D[a,b,X_{1},X_{2},X_{3}]_{X_{3}}$.
Set $g=2aX_{1}+bX_{2}$. The map
\[
P_{1}/(X_{3})=(D[a,b,X_{1},X_{2}]/(g))[c]\to(P_{1})_{c}/(X_{3})=(P_{1})_{c}/(g)
\]
is injective, thus we have to exclude that $f$ is a zero divisor
in the bigger ring. Consider the change of variable 
\[
\overline{X}_{1}=-f/(2c)=X_{1}-(a/(2c))X_{2}\then g=2a\overline{X}_{1}-b_{3}X_{2}\comma(P_{1})_{c}=D[a,b,c,\overline{X}_{1},X_{2}]_{c}
\]
If $f$, and so $\overline{X}_{1}$, is a zero divisor in $(P_{1})_{c}/(g)$
we have
\[
\overline{X}_{1}p=gq\text{ in }(P_{1})_{c}\then gq=b_{3}X_{2}q=0\text{ in }(P_{1})_{c}/(\overline{X}_{1})=D[a,b,c,X_{2}]_{c}\then q=\tilde{q}\overline{X}_{1}
\]
and thus $p=\tilde{q}g$, that is $p=0$ in $(P_{1})_{c}/(g)$ .
\end{proof}
\begin{proof}
[Proof of Theorem \ref{thm:main algebra main structure}] From \ref{lem:result about second prime}
we know that $Q_{2}$ is a prime ideal, $P/Q_{2}\simeq\shR[A]$, $\Spec(P/Q_{2})$
is the schematic closure of the open immersion $\Spec P_{A+D}\to\Spec P$
and therefore that $Q_{2}$ is a minimal prime. From \ref{prop:ZG result on atlas},
since $S/I=P/Q_{1}$, we deduce that $Q_{1}$ is a prime and $\Spec(P/Q_{1})\to\Spec\shR$
is flat and geometrically integral. Since $Q_{1}$ goes to zero along
the map $P\to P_{\omega}$, it has to be the kernel and therefore
a minimal prime.

From \ref{rem: cover with non zero trace} we have $(a+d,c+f)\subseteq\sqrt{0_{P}}\subseteq Q_{1}\cap Q_{2}$.
In order to check the equality, we can work over $P'=P/(a+d,c+f)$.
Denote by $Q_{1}'$ $Q_{2}'$ the images of $Q_{1}$, $Q_{2}$ respectively.
In particular $Q_{1}'=(A+D)$ and $Q_{1}'Q_{2}'=0$. If $u\in Q_{1}'\cap Q_{2}'$
in $P'$, we have $u=(A+D)v\in Q_{2}'$. Since $A+D\notin Q_{2}'$
it follows that $v\in Q_{2}'$. Hence $u\in Q_{1}'Q_{2}'=0$. In conclusion
$Q_{1}\cap Q_{2}=(a+d,c+f)=\sqrt{0_{P}}$ and $Q_{1}$, $Q_{2}$ are
the only minimal primes of $P$. Moreover, since $Q_{1}+Q_{2}$ is
the kernel of the ``zero map'' $P\to\shR$, there is an exact sequence
of $\shR$-modules
\[
0\to P/(Q_{1}\cap Q_{2})\to P/Q_{1}\times P/Q_{2}\to(P/Q_{1}+Q_{2})\simeq\shR\to0
\]
It follows that $Q_{1}\cap Q_{2}$ commutes with arbitrary base changes
of $\shR$ and, since $\Spec P/Q_{1}$ and $\Spec P/Q_{2}$ are geometrically
integral over $\shR$, we can conclude that $\Spec(P/(Q_{1}\cap Q_{2}))=(\Spec P)^{\textup{red}}\to\Spec\shR$
is geometrically reduced. From the above sequence, since $P/Q_{1}$
and $P/Q_{2}$ are $\shR$-flat, we can also conclude that $P/(Q_{1}\cap Q_{2})$
is $\shR$-flat. This also implies that, if $x\in P$ is a non-zero
divisor as $\shR$-module, then $x\in Q_{1}\cap Q_{2}=\sqrt{0_{P}}$,
that is $x$ is nilpotent in $P$. Applying the ``zero map'' $P\to\shR$
it follows that $x$ is nilpotent in $\shR$ and thus $x=0$. In conclusion
$P$ is $\shR$-torsion free and therefore $\shR$-flat.
\end{proof}
\begin{lem}
\label{lem:The Proj action} The action $\Spec P\times\Gm\to\Spec P$
obtained restricting the $\GL_{2}\times\Gm$-action on $P$ via $j\colon\Gm\to\GL_{2}\times\Gm$,
$j(\lambda)=(\lambda\id,\lambda)$ is given by $(\chi,\lambda)\mapsto\lambda^{-1}\chi$.
\end{lem}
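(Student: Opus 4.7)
The approach is to unwind the modular description of the atlas $\Spec P \to \GCov$ from Theorem~\ref{thm:GCov as quotient stack}: a $T$-point of $\Spec P$ is a $G$-cover $\chi = (\shL, \shF, \alpha, \beta, \langle-,-\rangle)$ together with a generator $t$ of $\shL$ and a basis $(y,z)$ of $\shF$, and the eleven coordinates $a,b,c,d,e,f,A,B,C,D,\omega$ are read off via Notation~\ref{not: associated parameters for chi and Sthree}. The $\GL_2 \times \Gm$-action on $\Spec P$ is change of basis. The restriction along $j$ thus corresponds to simultaneously rescaling $t$, $y$, $z$ by a single parameter, and the claim reduces to a bookkeeping computation on the eleven coordinates.

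The first step is to fix the direction convention: since the $\GL_2$-torsor $\Isosh(\odi{T}^{\oplus 2}, \shF)$ carries the left action $M \cdot \phi = \phi \circ M^{-1}$, the element $j(\lambda) = (\lambda\,\id, \lambda)$ takes $(t, y, z)$ to the new bases $t' = \lambda^{-1} t$, $y' = \lambda^{-1} y$, $z' = \lambda^{-1} z$. The second step is to evaluate each of the coordinates in the primed basis. For $\omega$, one computes
\[
\langle y', z'\rangle = \lambda^{-2}\langle y,z\rangle = \lambda^{-2}\omega\, t = \lambda^{-1}\omega\, t',
\]
so $\omega' = \lambda^{-1}\omega$. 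For the matrix entries of $\alpha$, using $\alpha(t \otimes y) = Ay + Cz$ and the fact that $\alpha: \shL \otimes \shF \to \shF$ depends $\shL$-linearly in its first slot,
\[
\alpha(t' \otimes y') = \lambda^{-2}\alpha(t\otimes y) = \lambda^{-2}(Ay + Cz) = \lambda^{-1}(A y' + C z'),
\]
yielding $A' = \lambda^{-1}A$ and $C' = \lambda^{-1}C$; similarly $B' = \lambda^{-1}B$, $D' = \lambda^{-1}D$ from $\alpha(t'\otimes z')$. For the entries of $\beta$, since $\beta: \Sym^2\shF \to \shF$ has no $\shL$-twist,
\[
\beta(y'^{2}) = \lambda^{-2}\beta(y^{2}) = \lambda^{-2}(ay + bz) = \lambda^{-1}(a y' + b z'),
\]
so $a' = \lambda^{-1}a$, $b' = \lambda^{-1}b$, and the analogous identities for $\beta(y'z')$ and $\beta(z'^{2})$ give $c',d',e',f' = \lambda^{-1}(c,d,e,f)$.

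Every one of the eleven generators of $P$ is thus multiplied by $\lambda^{-1}$. Under the closed embedding $\Spec P \hookrightarrow \A^{11}_{\shR}$, this is precisely the diagonal scalar action $(\chi, \lambda) \mapsto \lambda^{-1}\chi$. The only real obstacle is a consistent sign/direction convention for the $\GL_2 \times \Gm$-action on the torsor; once that is fixed, there is no conceptual subtlety, because each coordinate is monomial of the same degree one in the rescalings of the bases, making the transformation manifestly diagonal.
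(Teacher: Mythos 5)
Your proof is correct and takes essentially the same route as the paper: the paper also unwinds the modular description of $\Spec P$ and imposes that $(M,\lambda)=(\lambda\id,\lambda)$ be a morphism $\chi\to\chi'$ in $\GCov(T)$, which gives $M\alpha=\alpha'(\lambda\otimes M)$, $M\beta=\beta'\Sym^{2}M$, $\lambda\la-,-\ra=\la-,-\ra'\det M$ and hence $\alpha'=\lambda^{-1}\alpha$, $\beta'=\lambda^{-1}\beta$, $\la-,-\ra'=\lambda^{-1}\la-,-\ra$ in one stroke, where you carry out the same computation coordinate by coordinate after rescaling the bases. Your direction convention on the frame torsor agrees with the paper's implicit one, so the conclusion $(\chi,\lambda)\mapsto\lambda^{-1}\chi$ matches.
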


\begin{proof}
We think of $F=\Spec P$ as the functor $(\Sch/\shR)^{\op}\to\sets$
such that $F(T)$ is the set of sequences $\chi=(\odi T,\odi T^{2},\alpha,\beta,\la-,-\ra)$
belonging to $\GCov(T)$. If $\chi=(\odi T,\odi T^{2},\alpha,\beta,\la-,-\ra)\in F(T)$
and $(M,\lambda)\in\GL_{2}\times\Gm(T)$ then $\chi\cdot(M,\lambda)=(\odi T,\odi T^{2},\alpha',\beta',\la-,-\ra')\in F(T)$
is the unique element such that $(M,\lambda)$ defines a morphism
$\chi\to\chi\cdot(M,\lambda)$ in $\GCov(T)$. In other words we must
have: $M\alpha=\alpha'(\lambda\otimes M)$, $M\beta=\beta'\Sym^{2}M$
and $\lambda\la-,-\ra=\la-,-\ra'\det M$. If $M=\lambda\id$ a simple
computation shows that $\alpha'=\lambda^{-1}\alpha$, $\beta'=\lambda^{-1}\beta$
and $\la-,-\ra'=\lambda^{-1}\la-,-\ra$, which implies the claim.
\end{proof}
\begin{thm}
\label{thm:The Proj atlases} The $\GL_{2}$-torsor induced by $\shF\otimes\shL^{-1}\colon(\GCov-\{0\})\to\Bi\GL_{2}$
is of the form $\Proj P\to(\GCov-\{0\})$, where $P$ has the natural
grading of a polynomial algebra, and it splits accordingly to (\ref{eq:decomposition of GCov minus zero})
as $\Proj P=\Proj P_{1}\sqcup\Spec\shR$, where $P_{1}$ is the algebra
of \ref{prop:ZG result on atlas}. In particular we have presentations
\[
(\GCov-\{0\})\simeq[\Proj P/\GL_{2}]\comma(\stZ_{G}-\{0\})\simeq[\Proj(P_{1})/\GL_{2}]\comma(\stZ_{2}-\{0\})\simeq\Bi\GL_{2}
\]
\end{thm}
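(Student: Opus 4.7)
The strategy is to exhibit the $\GL_2$-torsor of the theorem as a descent of the atlas $\Spec P \to \GCov$ (Theorem \ref{thm:GCov as quotient stack}) along the subgroup $\Gm \hookrightarrow \Gm \times \GL_2$ whose action on $\Spec P$ is described in Lemma \ref{lem:The Proj action}. The morphism $\shF \otimes \shL^{-1}\colon \GCov \to \Bi\GL_2$ factors as $\GCov \to \Bi\Gm \times \Bi\GL_2 \to \Bi\GL_2$, with the second arrow induced by the group homomorphism $\phi\colon \Gm \times \GL_2 \to \GL_2$, $(\lambda, M) \mapsto \lambda^{-1}M$. The kernel of $\phi$ is the one-parameter subgroup $\lambda \mapsto (\lambda, \lambda\id)$, so the $\GL_2$-torsor on $\GCov$ classified by $\shF \otimes \shL^{-1}$ is the quotient $\Spec P / \ker\phi$ on the locus where $\ker\phi$ acts freely.

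By Lemma \ref{lem:The Proj action}, $\ker\phi \cong \Gm$ acts on $\Spec P$ as $\chi \mapsto \lambda^{-1}\chi$, which at the level of coordinates scales each of the eleven generators of $P$ by $\lambda^{-1}$. Inspection of the relations in Lemma \ref{lem:local conditions for the map for Sthree} shows they are all homogeneous of degree $2$ in these generators, so $P$ is a graded $\shR$-algebra with all generators in degree $1$ and the $\Gm$-action is precisely the standard one attached to this grading. The unique fixed point on $\Spec P$ is the vertex where all generators vanish, and its image in $\GCov$ is the substack $\{0\}$ (see \ref{def:the zero section}). Hence $\Gm$ acts freely on $\Spec P$ minus its vertex and the geometric quotient is $\Proj P$. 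Combined with the remaining free $\GL_2$-action this yields $(\GCov - \{0\}) \simeq [\Proj P / \GL_2]$.

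To describe $\Proj P$ itself, recall from Theorem \ref{thm:main algebra main structure} that $P$ has exactly two minimal primes $Q_1$ and $Q_2$, both generated by homogeneous elements of degree $1$, with $Q_1 + Q_2 = (a,b,c,d,e,f,A,B,C,D,\omega)$ equal to the irrelevant ideal and $Q_1 \cap Q_2 = \sqrt{0_P}$. The first equality implies that the closed subschemes $V_+(Q_1)$ and $V_+(Q_2)$ of $\Proj P$ are disjoint, and the second that their union covers $\Proj P$. Applying Lemma \ref{lem:result about second prime}, which identifies $P/Q_2 \simeq \shR[A]$ with $A$ in degree $1$, gives $\Proj(P/Q_2) \simeq \Spec\shR$, while Proposition \ref{prop:ZG result on atlas} gives $P/Q_1 \simeq P_1$ and hence $\Proj(P/Q_1) \simeq \Proj(P_1)$. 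Matching this decomposition with the splitting of $(\GCov-\{0\})$ into its irreducible components from Theorem \ref{thm:geometry of GCov and components} yields the presentations $(\stZ_G - \{0\}) \simeq [\Proj(P_1)/\GL_2]$ and $(\stZ_2 - \{0\}) \simeq [\Spec\shR/\GL_2] \simeq \Bi\GL_2$.

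The main technical point I expect to verify carefully is the matching between the $\Gm$-action of Lemma \ref{lem:The Proj action} and the standard polynomial grading on $P$, together with the observation that $Q_1 + Q_2$ is exactly the irrelevant ideal so that the two components of $\Proj P$ are disjoint. Once these bookkeeping items are confirmed, the rest of the argument reduces to standard facts about $\Proj$ of graded rings and the formation of quotient stacks.
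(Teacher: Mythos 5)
Your proposal is correct and follows essentially the same route as the paper: both factor $\shF\otimes\shL^{-1}$ through $\Bi(\Gm\times\GL_{2})\to\Bi\GL_{2}$ via $\phi(M,\lambda)=\lambda^{-1}M$ with kernel $j(\lambda)=(\lambda\id,\lambda)$, use Lemma \ref{lem:The Proj action} to identify the induced $\Gm$-action on $\Spec P$ with the (inverse of the) grading action so that $[\Spec P-\{0\}/\Gm]\simeq\Proj P$, and then split $\Proj P$ using the graded minimal primes $Q_{1},Q_{2}$ of Theorem \ref{thm:main algebra main structure}. Your explicit verification that $Q_{1}+Q_{2}$ is the irrelevant ideal (giving disjointness) and $Q_{1}\cap Q_{2}=\sqrt{0_{P}}$ (giving the covering) only fills in a step the paper leaves implicit.
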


\begin{proof}
Consider the Cartesian diagrams   \[   \begin{tikzpicture}[xscale=3.4,yscale=-1.2]     \node (A0_1) at (1, 0) {$\Spec P-\{0\}$};     \node (A0_2) at (2, 0) {$\Spec \shR$};     \node (A1_1) at (1, 1) {$V$};     \node (A1_2) at (2, 1) {$\Bi \Gm$};     \node (A1_3) at (3, 1) {$\Spec \shR$};     \node (A2_1) at (1, 2) {$\GCov-\{0\}$};     \node (A2_2) at (2, 2) {$\Bi(\GL_2\times\Gm)$};     \node (A2_3) at (3, 2) {$\Bi(\GL_2)$};     \path (A0_1) edge [->]node [auto] {$\scriptstyle{}$} (A1_1);     \path (A1_3) edge [->]node [auto] {$\scriptstyle{}$} (A2_3);     \path (A0_1) edge [->]node [auto] {$\scriptstyle{}$} (A0_2);     \path (A1_1) edge [->]node [auto] {$\scriptstyle{}$} (A1_2);     \path (A2_2) edge [->]node [auto] {$\scriptstyle{\Bi(\phi)}$} (A2_3);     \path (A0_2) edge [->]node [auto] {$\scriptstyle{}$} (A1_2);     \path (A1_1) edge [->]node [auto] {$\scriptstyle{}$} (A2_1);     \path (A1_2) edge [->]node [auto] {$\scriptstyle{}$} (A1_3);     \path (A2_1) edge [->]node [auto] {$\scriptstyle{(\shF,\shL)}$} (A2_2);     \path (A1_2) edge [->]node [auto] {$\scriptstyle{\Bi(j)}$} (A2_2);   \end{tikzpicture}   \] 
Here $\{0\}\subseteq\Spec P$ is the closed subscheme defined by the
graded irrelevant ideal. The map $\phi\colon\GL_{2}\times\Gm\to\GL_{2}$
is $\phi(M,\lambda)=\lambda^{-1}M$, which is a surjective group homomorphism
whose kernel is $j\colon\Gm\to\GL_{2}\times\Gm$, $j(\lambda)=(\lambda\id,\lambda)$.
The bottom right diagram is Cartesian thanks to \cite[Cor 1.21]{Hochenegger2020}.
The horizontal arrow $\GCov-\{0\}\to\Bi\GL_{2}$ at the bottom is
exactly induced by $\shF\otimes\shL^{-1}$. It follows that $V\simeq[\Spec P-\{0\}/\Gm]$
where the action of $\Gm$ is the restriction along $j$ of the action
of $\GL_{2}\times\Gm$ on $\Spec P$. By \ref{lem:The Proj action}
we can conclude that $[\spec P-\{0\}/\Gm]\simeq\Proj P$. For the
last part notice that $P/Q_{1}=P_{1}$ and $P/Q_{2}=\shR[A]$, so
that $\Proj(P/Q_{2})=\Spec\shR$ (see \ref{thm:main algebra main structure}).
As $Q_{1}$ and $Q_{2}$ are graded ideals also the last remaining
claims holds.
\end{proof}
\begin{cor}
The stacks $(\GCov-\{0\})\to\Spec\shR$ and $(\stZ_{G}-\{0\})\to\Spec\shR$
are universally closed morphisms of stacks.
\end{cor}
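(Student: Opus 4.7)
The plan is to leverage the explicit presentations from Theorem \ref{thm:The Proj atlases}, which already express both stacks as quotients of projective schemes by a linear group action. Once we know the relevant $\Proj$'s are proper over $\Spec\shR$, universal closedness of the quotient stacks will be essentially formal.

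The first step is to observe that the ring $P$ of Theorem \ref{thm:main algebra main structure} is generated in degree $1$ over $\shR$: all eleven generators $a,b,c,d,e,f,A,B,C,D,\omega$ sit in degree $1$, and a direct inspection of the relations listed in Lemma \ref{lem:local conditions for the map for Sthree} shows that every defining relation is homogeneous of degree $2$. The same verification applies to the ring $P_{1}$ of Proposition \ref{prop:ZG result on atlas}, whose defining ideal is generated by polynomials of degree $2$ together with the linear forms $a+c,d+f,A+D$, all compatible with the standard grading. Hence $\Proj P$ and $\Proj P_{1}$ realize as closed subschemes of the projective space $\PP^{10}_{\shR}$, and in particular both structure morphisms $\Proj P\to\Spec\shR$ and $\Proj P_{1}\to\Spec\shR$ are projective, hence proper, hence universally closed.

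Next I would invoke the standard fact that universal closedness descends along a surjective morphism of stacks. Concretely: if $X\to S$ is universally closed and $\GL_{2}$ acts on $X$ over $S$, then the atlas $X\to[X/\GL_{2}]$ is a $\GL_{2}$-torsor, hence faithfully flat and in particular surjective; for any base change $S'\to S$, the map $[X/\GL_{2}]_{S'}\to S'$ has the same set-theoretic image as $X_{S'}\to S'$, and a subset of $[X/\GL_{2}]_{S'}$ is closed if and only if its preimage in $X_{S'}$ is, so universal closedness transfers from $X$ to the stack quotient. Applying this with $X=\Proj P$ yields universal closedness of $(\GCov-\{0\})\simeq[\Proj P/\GL_{2}]$, and applying it with $X=\Proj P_{1}$ yields universal closedness of $(\stZ_{G}-\{0\})\simeq[\Proj P_{1}/\GL_{2}]$.

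There is no serious obstacle here; the substance of the result is already contained in Theorem \ref{thm:The Proj atlases}. The only point worth writing out carefully is the homogeneity check for the relations, which is immediate once one notes that every polynomial appearing in (\ref{eq:loc com and ass conditions}) and in the ideal $I$ of Proposition \ref{prop:ZG result on atlas} is degree $2$ in the eleven linear generators.
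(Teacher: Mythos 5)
Your proof is correct and follows essentially the same route as the paper's: Theorem \ref{thm:The Proj atlases} provides atlases $\Proj P$ and $\Proj P_{1}$ that are projective (hence universally closed) over $\Spec\shR$, and universal closedness then descends along the surjective atlas maps to the quotient stacks. The paper states this in one line, and your write-up merely fills in the homogeneity check and the descent argument that the paper leaves implicit.
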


\begin{proof}
Both stacks $f\colon\stX\to\Spec\shR$ admits a smooth atlas $U\to\stX$
such that $U\to\Spec\shR$ is universally closed thanks to \ref{thm:The Proj atlases}.
It follows easily that $f\colon\stX\to\Spec\shR$ is universally closed
as well.
\end{proof}

\subsection{Exceptional irreducible component of \texorpdfstring{$(\mu_{3}\rtimes\Z/2\Z)$}{(mu3 rtimes Z/2Z)}-Cov}

In this section we describe the irreducible component $\stZ_{2}=\{\chi\st\beta_{\chi}=\la-,-\ra_{\chi}=0,\alpha_{\chi}=((\tr\alpha_{\chi})/2)\otimes\id_{\shF_{\chi}}\}$
of $\GCov$ (see \ref{thm:geometry of GCov and components}).
\begin{thm}
\label{thm:exceptional irreducible component}Let $\stZ_{2}'$ be
the stack over $\shR$ of triples $(\shL,\shF,\mu)$ where $\shL$
is an invertible sheaf, $\shF$ is a rank $2$ locally free sheaf
and $\mu\colon\shL\to\odi{}$ is a map. Then $\stZ_{2}'\simeq[\A_{\shR}^{1}/\Gm]\times\Bi\GL_{2}$
and we have an equivalence
\[
\stZ_{2}'\to\stZ_{2}\comma(\shL,\shF,\mu)\longmapsto(\shL,\shF,\mu\otimes\id_{\shF},0,0)
\]
Moreover $\stZ_{2}-\{0\}$ is an open substack of $\GCov$ and it
is equivalent to $\Bi\GL_{2}$.
\end{thm}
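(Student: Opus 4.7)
The plan is to unwind the definition of $\stZ_2'$ into a product of two stacks, construct the functor to $\stZ_2$ together with an inverse, and then cut out the zero locus. First I would observe that the data $(\shL, \shF, \mu)$ in $\stZ_2'$ carries no compatibility conditions tying the three pieces together, so the stack splits as the product of the stack of pairs $(\shL, \mu)$ and the stack of rank $2$ locally free sheaves. Since a pair $(\shL, \mu)$ with $\mu\colon \shL\to\odi{}$ is the same as a section of the dual line bundle $\shL^\vee$ (equivalently a point of $\underline{\Spec}\Sym\shL$), this first factor is $[\A_\shR^1/\Gm]$ with the standard scaling action, while the second factor is obviously $\Bi\GL_2$. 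This gives $\stZ_2'\simeq[\A_\shR^1/\Gm]\times\Bi\GL_2$.

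Next I would verify that the assignment $(\shL,\shF,\mu)\mapsto(\shL,\shF,\mu\otimes\id_\shF,0,0)$ indeed lands in $\stZ_2$. Using \ref{lem:local conditions for the map for Sthree} with local bases $t\in\shL$ and $y,z\in\shF$, the triple $\alpha=\mu(t)\cdot\id_\shF$, $\beta=0$, $\la-,-\ra=0$ translates into $A=D=\mu(t)$, $B=C=0$ and $a=b=c=d=e=f=\omega=0$. Every equation in \eqref{eq:loc com and ass conditions} then becomes $0=0$, so $\chi$ is a $G$-cover. The trace of a scalar endomorphism of a rank $2$ bundle is twice the scalar, so $\tr\alpha=2\mu$ and hence $\alpha=((\tr\alpha)/2)\otimes\id_\shF$, which shows $\chi\in\stZ_2$ by the definition in \ref{thm:geometry of GCov and components}.

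For the inverse, I would observe that the defining conditions of $\stZ_2$ force $\beta_\chi=0$, $\la-,-\ra_\chi=0$, and $\alpha_\chi=\mu\otimes\id_{\shF_\chi}$ for the uniquely determined map $\mu:=(\tr\alpha_\chi)/2\colon\shL_\chi\to\odi{}$. Thus sending $\chi\mapsto(\shL_\chi,\shF_\chi,(\tr\alpha_\chi)/2)$ defines a functor $\stZ_2\to\stZ_2'$ which, by construction, is inverse to the one above. Together with the product decomposition this settles the first claim.

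Finally, I would identify the substack $\{0\}\subseteq\stZ_2$: under the equivalence it corresponds to $\mu=0$, i.e. to the closed substack $\{0\}\times\Bi\GL_2$ of $[\A^1/\Gm]\times\Bi\GL_2$. Removing it leaves $([\A^1-\{0\}]/\Gm)\times\Bi\GL_2=[\Gm/\Gm]\times\Bi\GL_2\simeq\Bi\GL_2$, which gives the stated description of $\stZ_2-\{0\}$. Openness of $\stZ_2-\{0\}$ inside $\GCov$ is immediate from the decomposition \eqref{eq:decomposition of GCov minus zero} established in \ref{thm:geometry of GCov and components}. The only place where a genuine check is required is the verification of the local relations in step two, but since all parameters except $A,D$ vanish, this reduces to inspection rather than computation, so no real obstacle is expected.
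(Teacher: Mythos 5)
Your proposal is correct and follows essentially the same route as the paper: identify $\stZ_2'$ with $[\A^1/\Gm]\times\Bi\GL_2$ via the dictionary between line bundles with a section and $[\A^1/\Gm]$, check the functor lands in $\stZ_2$ by the local relations of \ref{lem:local conditions for the map for Sthree}, invert it using the fact that $\alpha_\chi$ is determined by $(\tr\alpha_\chi)/2$ (the content of \ref{lem:multiple identity}, which the paper cites), and deduce the last claim from the decomposition \eqref{eq:decomposition of GCov minus zero} (the paper instead cites \ref{thm:The Proj atlases}, which carries the same information). You merely spell out the local verifications that the paper leaves as ``easy to check locally.''
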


\begin{proof}
The last claim is included in \ref{thm:The Proj atlases}. Since $\Bi\GL_{2}$
is the stack of rank $2$ locally free sheaves and $[\A^{1}/\Gm]$
is the stack of invertible sheaves with a section, it follows that
$\stZ_{2}'\simeq[\A_{\shR}^{1}/\Gm]\times\Bi\GL_{2}$. It is easy
to check locally that the functor $\stZ_{2}'\to\stZ_{2}$ is well
defined and, using \ref{lem:multiple identity}, that it is an equivalence.
\end{proof}

\subsection{Smooth locus of \texorpdfstring{$(\mu_{3}\rtimes\Z/2\Z)$}{(mu3 rtimes Z/2Z)}-Cov}
\begin{thm}
\label{thm:smooth locus of GCov} We have
\[
\GCov-\{0\}\simeq\Bi\GL_{2}\sqcup(\stZ_{G}-\{0\})
\]
and it is the smooth locus of $\GCov\to\Spec\shR$.
\end{thm}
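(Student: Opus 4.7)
The plan is to first establish the claimed decomposition, then prove smoothness on each piece, and finally show that $\{0\}$ itself fails to be smooth.

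For the decomposition, I would directly combine the two already-proved facts: Theorem \ref{thm:geometry of GCov and components} gives the disjoint open decomposition $\GCov-\{0\}=(\stZ_G-\{0\})\sqcup(\stZ_2-\{0\})$, and Theorem \ref{thm:exceptional irreducible component} identifies $\stZ_2-\{0\}$ with $\Bi\GL_2$. Substituting yields the claimed equivalence, and since the two pieces are disjoint (closed-open) in $\GCov-\{0\}$, smoothness of the whole will follow from smoothness of each piece.

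Next I would verify that both components are smooth over $\shR$. The stack $\Bi\GL_2$ is smooth over $\shR$ because $\GL_2$ is a smooth $\shR$-group scheme. For $\stZ_G-\{0\}$, Theorem \ref{thm:The Proj atlases} provides the smooth atlas $\Proj P_1\to\stZ_G-\{0\}$ with $\GL_2$-action, so it suffices to check that $\Proj P_1$ is smooth over $\shR$, where
\[
P_1=\shR[a,b,c,e,A,B,C,\omega]/I
\]
and $I$ is the ideal of five quadrics written in Proposition \ref{prop:ZG result on atlas}. The check amounts to verifying that at every point of $\Proj P_1$ the Jacobian matrix of the five generators of $I$ has rank $5$, i.e.\ the scheme is a smooth complete intersection of codimension $5$ in $\PP^7$ (this is the surface $X$ mentioned in the introduction). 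I would carry out this Jacobian check on each standard affine chart $D_+(v)$ for $v\in\{a,b,c,e,A,B,C,\omega\}$, exploiting the $\GL_2$-equivariance to reduce the number of charts to handle, and using the relations in $I$ to simplify.

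Finally I would show that $\{0\}$ is not in the smooth locus. By Theorem \ref{thm:geometry of GCov and components} the point $\{0\}$ is the scheme-theoretic intersection of the two distinct irreducible components $\stZ_G$ and $\stZ_2$, so the local ring of $\GCov$ at (a point lying over) $\{0\}$ has at least two minimal primes and cannot be regular. Equivalently, via the atlas $\Spec P\to\GCov$ of Theorem \ref{thm:GCov as quotient stack}, the origin in $\Spec P$ lies on both irreducible components $V(Q_1)$ and $V(Q_2)$ of Theorem \ref{thm:main algebra main structure}, so the atlas is not smooth over $\shR$ there; hence $\GCov$ is not smooth at $\{0\}$. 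Combined with the smoothness on the complement, this shows $\GCov-\{0\}$ is exactly the smooth locus. The one genuinely computational step is the Jacobian verification for $\Proj P_1$, and that is the main obstacle; everything else reassembles results already in the excerpt.
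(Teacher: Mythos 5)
Your overall architecture is right, and two of your three steps are fine: the decomposition is exactly the paper's (it is recorded in Theorem \ref{thm:The Proj atlases} together with (\ref{eq:decomposition of GCov minus zero})), and your argument that $\{0\}$ is not in the smooth locus --- the origin of the atlas $\Spec P$ lies on both minimal primes $Q_1\neq Q_2$, whose sum is the maximal ideal of the origin, so the local ring there is not a domain, hence not regular, hence not smooth over $\Z[1/2]$ --- is valid and is in fact a cleaner alternative to the paper's Lemma \ref{lem:zero not smooth}, which instead observes that the Jacobian ideal of a system of quadrics vanishes at the origin. (The paper needs its version because it also asserts non-smoothness of the \emph{irreducible} stack $\stZ_G$ at $\{0\}$, where your two-components argument does not apply; for the present theorem your argument suffices.)

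The gap is in the step carrying all the content: smoothness of $\stZ_G-\{0\}$. You defer this entirely to a Jacobian computation on $\Proj P_1$ that you do not carry out, and the computation as you set it up cannot succeed, because $\Proj P_1$ is not a complete intersection of codimension $5$ in $\PP^7$. Indeed $\Proj P_1\to\stZ_G-\{0\}$ is a $\GL_2$-torsor and $\stZ_G-\{0\}$ contains the $0$-dimensional open substack $\Bi G$ as a dense open, so $\Proj P_1$ has dimension $4$, i.e.\ codimension $3$; equivalently, the proof of Proposition \ref{prop:ZG result on atlas} exhibits $(P_1)_\omega\simeq\shR[a,b,c,e,\omega]_\omega$, so the five quadrics cut the dimension down by only three. (The ``surface, complete intersection of five quadrics'' phrasing you are relying on is a slip in the paper's own abstract and Corollary \ref{cor:the projective surface}.) Consequently the Jacobian of the five generators has rank at most $3$ at every point of $\Proj P_1$, your proposed rank-$5$ test fails identically, and the correct test --- rank exactly $3$ at every point of every geometric fiber --- is a genuinely nontrivial verification for a non-complete-intersection, which is precisely the computation the paper avoids. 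The paper's actual route (Theorem \ref{thm:description of the main component}) is to cover $\stZ_G-\{0\}$ by the three open substacks $\stU_\omega$, $\stU_\alpha$, $\stU_\beta$, each shown smooth by an explicit parametrization ($\stU_\omega\simeq\shC_3$ is a vector bundle over $\Bi\GL_2$, while $\stU_\alpha$ and $\stU_\beta$ admit smooth surjections from $\A^3$), and then to check that the complement of their union in $\stZ_G$ is exactly $\{0\}$. To repair your proof you should either import that covering argument or actually perform the rank-$3$ Jacobian analysis.
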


We state here the above theorem to be consistent with the section
argument. Anyway we will prove it in Section \ref{subsec:smooth locus of ZG},
after showing the smoothness of $\stZ_{G}-\{0\}$ over $\shR$. Here
we show that $\{0\}$ cannot be smooth:
\begin{lem}
\label{lem:zero not smooth} The closed substack $\{0\}$ does not
meet the smooth locus of any of $\stZ_{G}$, $\GCov$ or $(\GCov)^{\text{red}}$
(see \ref{thm:geometry of GCov and components}).
\end{lem}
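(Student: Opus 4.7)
The plan is to test smoothness on atlases. By \ref{thm:GCov as quotient stack} the map $\Spec P\to\GCov$ is a smooth atlas, under which the closed substack $\{0\}$ pulls back to the origin section $\Spec\shR\hookrightarrow\Spec P$ cut out by the ideal $\mathfrak{m}_{0}=(a,b,c,d,e,f,A,B,C,D,\omega)$. Since $\shR=\Z[1/2]$ is regular and $P$ is $\shR$-flat by \ref{thm:main algebra main structure}, smoothness of $\GCov$ over $\shR$ at $\{0\}$ would force smoothness of $\Spec P\to\Spec\shR$ at $\mathfrak{m}_{0}$, and hence regularity of the local ring $P_{\mathfrak{m}_{0}}$. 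The analogous reduction works for $\stZ_{G}$ via the atlas $\Spec S=\Spec(P/Q_{1})\to\stZ_{G}$ from \ref{prop:ZG result on atlas} and for $(\GCov)^{\textup{red}}$ via $\Spec(P/\sqrt{0_{P}})=\Spec(P/(a+d,c+f))$.

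For $\GCov$ and its reduction the conclusion is immediate from the minimal prime decomposition. By \ref{thm:main algebra main structure} both minimal primes $Q_{1},Q_{2}$ of $P$ are contained in $\mathfrak{m}_{0}$, because $Q_{1}+Q_{2}=\mathfrak{m}_{0}$. Hence $P_{\mathfrak{m}_{0}}$ has two distinct minimal primes, in particular is not a domain, and therefore is not regular. For the reduced atlas, the images of $Q_{1}$ and $Q_{2}$ in $P/(a+d,c+f)$ remain distinct minimal primes contained in the image of $\mathfrak{m}_{0}$, so exactly the same argument applies.

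For $\stZ_{G}$ the algebra $S$ is geometrically integral by \ref{prop:ZG result on atlas}, so no reducibility obstruction is present and I would argue instead by comparing the Zariski cotangent space at the origin with the Krull dimension of a fiber. All five relations defining $S$ listed in \ref{prop:ZG result on atlas} are polynomials of total degree $\geq 2$ in the eight variables $a,b,c,e,A,B,C,\omega$, so they lie in $\mathfrak{m}_{0}^{2}$; consequently, for every $p\in\Spec\shR$, the cotangent space of $\Spec S\otimes_{\shR}\kappa(p)$ at the origin is a $\kappa(p)$-vector space of dimension $8$. On the other hand, the isomorphism $S_{\omega}\simeq\shR[a,b,c,e,\omega]_{\omega}$ that appears inside the proof of \ref{prop:ZG result on atlas} shows that $\Spec S\to\Spec\shR$ has relative dimension $5$, and by geometric integrality every fiber has Krull dimension $5$ at each closed point. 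Since $8>5$ the fiber is not regular at the origin, so $\Spec S$ is not smooth over $\shR$ there, and $\{0\}$ is not in the smooth locus of $\stZ_{G}$.

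The main computational burden is in the $\stZ_{G}$ case: one must simultaneously verify that every defining relation of $S$ lies in $\mathfrak{m}_{0}^{2}$ (yielding a maximal-rank cotangent space) and pin down that the generic relative dimension coming from the $S_{\omega}$ description controls the local Krull dimension at the origin. Both are manageable, so the proof reduces to carefully chaining \ref{thm:GCov as quotient stack}, \ref{thm:main algebra main structure}, and \ref{prop:ZG result on atlas}.
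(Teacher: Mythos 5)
Your proposal is correct. The paper's own proof is shorter and uniform: for each of the three stacks it takes the atlas $\Spec S$ with $S=\shR[y_1,\dots,y_m]/(q_1,\dots,q_l)$, observes that all the $q_i$ are quadratic and $l\geq 1$, and concludes that the Jacobian ideal lies in $(y_1,\dots,y_m)$, so the origin cannot be a smooth point. Your treatment of $\stZ_G$ is essentially that argument made explicit: relations in $\mathfrak{m}_0^2$ give an $8$-dimensional cotangent space against a $5$-dimensional integral fiber. Where you genuinely diverge is in the $\GCov$ and $(\GCov)^{\textup{red}}$ cases: instead of the Jacobian you use that $Q_1,Q_2\subseteq\mathfrak{m}_0$ forces $P_{\mathfrak{m}_0}$ to have two distinct minimal primes, hence to fail to be a domain, hence to fail to be regular, contradicting smoothness over the regular base $\shR$. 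This buys an argument independent of the degrees of the defining equations (it would work even if some relation had a linear term), at the cost of not applying to the irreducible $\stZ_G$, which is why you need the separate cotangent-space computation there. One small point worth making explicit in both halves: $\{0\}$ pulls back to all of $V(\mathfrak{m}_0)\simeq\Spec\shR$, so to rule out every point of it from non-regularity at the single prime $\mathfrak{m}_0$ you should remark that the smooth locus is open, hence stable under generization, so a smooth point of $V(\mathfrak{m}_0)$ would force $\mathfrak{m}_0$ itself to be a smooth point; your fiberwise computation for $\stZ_G$ already covers all points directly.
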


\begin{proof}
By \ref{thm:main algebra main structure} any of these stacks $\stX$
has a smooth atlas $f\colon\Spec S\to\stX$ of the form $S=\shR[y_{1},\dots,y_{m}]/(q_{1},\dots,q_{l})$,
with $q_{i}$ quadratic polynomials, and such that $f^{-1}\{0\}=V(y_{1},\dots,y_{m})$,
the zero locus of the $y_{i}$. Moreover $l\geq1$. It follows that
the Jacobian ideal in $S$ is contained in $(y_{1},\dots,y_{m})$,
which proves the claim.
\end{proof}

\section{Geometry of the main irreducible components \texorpdfstring{$\stZ_{(\mu_{3}\rtimes\Z/2\Z)}$}{of (mu3 rtimes Z/2Z)-Cov}
and \texorpdfstring{$\stZ_{S_3}$}{of S3-Cov}}

We keep the notation from Section \ref{sec:Data for covers}. In particular
$\shR=\Z[1/2]$, $\shR_{3}=\Z[1/6]$ and $G=\mu_{3}\rtimes\Z/2\Z$.
The aim of this section is to describe the geometry of the irreducible
component $\stZ_{G}$ of $\GCov$ over $\shR$. In particular, over
$\shR_{3}$, we obtain a description of $\stZ_{S_{3}}\simeq\stZ_{G}$
thanks to \ref{thm:from G covers to S3}. We proceed by looking at
particular open substacks of $\stZ_{G}$ and $\GCov$.

\subsection{Triple covers and the locus where \texorpdfstring{$\la-,-\ra\colon\det\shF\arr\shL$}{<-,->}
is an isomorphism.}

The aim of this section is to study the following locus.
\begin{defn}
We denote by $\stU_{\omega}$ the (open) subastack of $\chi\in\GCov$
such that $\la-,-\ra_{\chi}\colon\det\shF_{\chi}\arr\shL_{\chi}$
is an isomorphism.
\end{defn}

\begin{defn}
\label{def:definition of Cthree}Define $\shC_{3}$ as the stack whose
objects are pairs $(\shF,\delta)$ where $\shF$ is a locally free
sheaf of rank $2$ and $\delta$ is a map
\[
\delta\colon\Sym^{3}\shF\arr\det\shF
\]
Denote by $\Cov_{3}$ the stack of degree $3$ covers, or, equivalently,
the stack of locally free sheaves of algebras of rank $3$.
\end{defn}

Notice that $\stC_{3}$ is a smooth stack over $\shR$ because it
is a vector bundle over $\Bi\Gl_{2}$, which is the stack of rank
$2$ locally free sheaves. We are going to define a functor $\shC_{3}\to\stU_{\omega}$
and prove that it is an isomorphism. This also explains the reason
of the section name: it is a classical result (see \cite{Miranda1985,Bolognesi2009,Pardini1989})
that, over $\stR_{3}$, the stack $\shC_{3}$ is isomorphic to the
stack $\Cov_{3}$. We will show that, in this case, the map $\Cov_{3}\simeq\shC_{3}\arr\stU_{\omega}$
is a section of the map $\GCov\arr\Cov_{3}$, obtained taking invariants
by $\sigma\in\Z/2\Z\subseteq G$.

In the appendix \ref{subsec:appendix delta Miranda} we discussed
several constructions obtained from an object $(\shF,\delta)\in\shC_{3}$.
By \ref{prop:Miranda correspondence} $\shC_{3}$ can also be described
as the stack of pairs $(\shF,\beta)$ where $\shF$ is a locally free
sheaf of rank $2$ and $\beta\colon\Sym^{2}\shF\arr\shF$ is a map
such that $\tr\beta=0$. We follow notation from \ref{nota:Miranda},
in particular the definition of $\delta_{\beta},\beta_{\delta},\eta_{\delta},\alpha_{\delta},m_{\delta}$.
\begin{thm}
\label{thm:The-locus when omega is invertible}The maps of stacks
  \[   \begin{tikzpicture}[xscale=4.0,yscale=-0.6]     
\node (A0_0) at (0, 0) {$(\shF,\delta)$};
\node (A0_1) at (1, 0) {$(\det \shF,\shF,\alpha_\delta,\beta_\delta,\id_{\det \shF})$};     
\node (A1_0) at (0, 1) {$\shC_3$};     
\node (A1_1) at (1, 1) {$\stU_\omega$};     
\node (A2_0) at (0, 2) {$(\shF,\delta_\beta)$};     
\node (A2_1) at (1, 2) {$(\shL,\shF,\alpha,\beta,\la-,-\ra)$};     
\path (A0_0) edge [|->,gray]node [auto] {$\scriptstyle{}$} (A0_1);          
\path (A1_0) edge [->]node [auto] {$\scriptstyle{\Lambda}$} (A1_1);     
\path (A2_1) edge [|->,gray]node [auto] {$\scriptstyle{}$} (A2_0);   
\end{tikzpicture}   \] are well defined and they are quasi-inverses of each other. In particular
$\stU_{\omega}$ is a smooth open substack of $\GCov$. Moreover,
over $\stR_{3}$, the composition $\Cov_{3}\simeq\shC_{3}\arrdi{\Lambda}\GCov$
is a section of the map $\GCov\arr\Cov_{3}$ obtained by taking invariants
by $\sigma\in\Z/2\Z$ and the same result hold if we replace $\GCov$
by $\RCov{S_{3}}$.
\end{thm}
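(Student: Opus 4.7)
The plan is to work locally, fixing trivializations of $\shL$ and $\shF$ and exploiting the characterization of $\GCov$ inside $\stY$ from Lemma \ref{lem:local conditions for the map for Sthree}, together with the correspondence between $(\shF,\delta)$ and $(\shF,\beta)$ of Proposition \ref{prop:Miranda correspondence} in the appendix. The crucial observation is that when $\omega$ is invertible the equations (\ref{eq:loc com and ass conditions}) become very rigid: from (\ref{eq:ass beta,beta}) the relations $\omega(a+d) = \omega(c+f) = 0$ force $\tr\beta = 0$; from (\ref{eq:ass alpha,gamma}) the relation $\omega(A+D) = 0$ forces $\tr\alpha = 0$; and the relations in (\ref{eq:ass gamma,beta}) then determine $A,B,C,D$ explicitly in terms of $a,b,c,e$ and $\omega^{-1}$, namely $A = (ac+be)/(2\omega)$, $D=-A$, $B=(c^2+de)/\omega$, $C=-(a^2+bc)/\omega$. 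In particular, $\alpha$ is uniquely determined by $\beta$ and $\omega$, and these are precisely the local formulas for $\alpha_\delta$ coming from the appendix under the identification $\shL\simeq\det\shF$ via $\omega$.

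To verify that $\Lambda$ is well-defined, I would start from $(\shF,\delta)\in\shC_3$ and check the relations of Lemma \ref{lem:local conditions for the map for Sthree} for the tuple $(\det\shF,\shF,\alpha_\delta,\beta_\delta,\id_{\det\shF})$. Since $\omega=1$, $\tr\beta_\delta=0$ and $\tr\alpha_\delta=0$ by construction, the bulk of the equations collapses to trivial identities, and what remains is precisely the content of Proposition \ref{prop:Miranda correspondence}. For the reverse assignment, well-definedness of $\delta_\beta$ requires only $\tr\beta=0$, which holds on $\stU_\omega$ by the first observation above.

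For the quasi-inverse property, the composition $\shC_3\to\stU_\omega\to\shC_3$ returns $(\shF,\delta_{\beta_\delta})=(\shF,\delta)$ directly from Proposition \ref{prop:Miranda correspondence}. Conversely, starting from $\chi=(\shL,\shF,\alpha,\beta,\omega)\in\stU_\omega$ and applying the two functors produces $(\det\shF,\shF,\alpha_{\delta_\beta},\beta_{\delta_\beta},\id_{\det\shF})$; composing with the isomorphism $\omega\colon\det\shF\to\shL$ gives $(\shL,\shF,\alpha',\beta,\omega)$ for some $\alpha'$, and the local uniqueness of $\alpha$ in terms of $\beta$ and $\omega$ noted above forces $\alpha'=\alpha$. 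Smoothness of $\stU_\omega$ is then immediate from smoothness of $\shC_3$, which is the total space of the vector bundle $\Homsh(\Sym^3\shF,\det\shF)$ over $\Bi\GL_2$.

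For the section statement over $\shR_3$, the image of $(\shF,\delta)$ under $\Cov_3\simeq\shC_3\xrightarrow{\Lambda}\GCov$ has algebra $\alA_\chi=\odi{}\oplus\det\shF\oplus\shF_1\oplus\shF_2$, with $\sigma$ acting as $-\id$ on $\det\shF$ and exchanging $\shF_1\leftrightarrow\shF_2$ by Proposition \ref{prop:Achi is G-equivariant}. The $\sigma$-invariants therefore equal $\odi{}\oplus\shF_{\textup{diag}}\simeq\odi{}\oplus\shF$, and from the multiplication rules (\ref{eq: crucial multiplication list-1}) the induced ring structure is controlled exactly by $\beta_\delta$, matching the triple cover associated with $(\shF,\delta)$ via the Miranda correspondence. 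The analogous claim for $S_3$-covers is obtained by transport along the equivalence $\GCov\simeq S_3\textup{-Cov}$ of Proposition \ref{thm:from G covers to S3}, which preserves the $\sigma$-invariants functor. The main obstacle I expect is a careful bookkeeping of the explicit formulas for $\alpha_\delta,\beta_\delta,\delta_\beta$ in the appendix under the normalization $\omega=\id$; the relations needed are polynomial identities in $a,b,c,e$ which are routine but need to be matched precisely, and one must also check that all isomorphisms constructed are natural in $(\shF,\delta)$ and $\chi$.
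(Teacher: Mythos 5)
Your proposal is correct and follows essentially the same route as the paper: both reduce to the local relations of Lemma \ref{lem:local conditions for the map for Sthree}, observe that invertibility of $\omega$ forces $\tr\alpha=\tr\beta=0$ and determines $\alpha$ (hence $A,B,C,D$) from $a,b,c,e,\omega$, match these with the formulas for $\alpha_\delta$ from the appendix, and deduce smoothness from $\shC_3$ being a vector bundle over $\Bi\GL_2$ and the section statement from the $\sigma$-invariants description. The only cosmetic difference is that the paper packages the quasi-inverse verification as the isomorphism of atlases $P_\omega\to S=\shR[a,b,c,e,\omega]_\omega$ rather than arguing object-by-object, and it does carry out the "routine polynomial identities" you defer (only two of which are not automatic).
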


We will prove this theorem after collecting some preliminary remarks.
\begin{rem}
\label{rem:etadelta and symmetric product} By \ref{rem:local Miranda},
if $\chi\in\GCov$ and $\tr\beta_{\chi}=0$, so that $\beta_{\chi}=\beta_{\delta}$
(with $\delta=\delta_{\beta_{\chi}})$, then, by (\ref{eq:ass alpha, gamma first})
and (\ref{eq:loc com and ass conditions}), we have
\begin{equation}
\eta_{\delta}=2(-,-)_{\chi}\label{eq:eta delta is two times symmetric product}
\end{equation}
\end{rem}

We now want to show the relationship between $\shC_{3}$ and $\Cov_{3}$.
The reader can refer to \cite{Miranda1985,Bolognesi2009} for details
and proofs.
\begin{rem}
\label{rem:triple covers and invariants by sigma}If $\Phi=(\shF,\delta)\in\shC_{3}$
and we set $\alA_{\Phi}=\odi T\oplus\shF$, we can endow $\alA_{\Phi}$
by a structure of $\odi T$-algebras given by
\[
\Sym^{2}\shF\arrdi{\eta_{\delta}+\beta_{\delta}}\alA_{\Phi}
\]
This association defines a map of stacks $\shC_{3}\arr\Cov_{3}$.
This map is an isomorphism if $3$ is inverted in the base scheme.
Indeed, over $\shR_{3}$, if $\alA\in\Cov_{3}$, the trace map $\tr_{\alA/\odi T}\colon\alA\arr\odi T$
is surjective and we can write $\alA=\odi S\oplus\shF$, where $\shF=\ker\tr_{\alA}$.
The multiplication of $\alA$ induces a map $\beta\colon\Sym^{2}\shF\arr\shF$
such that $\tr\beta=0$ and therefore a $\delta\colon\Sym^{3}\shF\arr\det\shF$
such that $\beta_{\delta}=\beta$.

Now let $\chi=(\shL,\shF,m,\alpha,\beta,\la-,-\ra)\in\GCov$. It's
easy to see that 
\[
\alA_{\chi}^{\sigma}=\{a\oplus0\oplus x_{1}\oplus x_{2}\st a\in\odi T\comma x_{1}=x_{2}\in\shF\}
\]
where $\sigma\in\Z/2\Z\subseteq G$. The map   \[   \begin{tikzpicture}[xscale=3.0,yscale=-0.5]     \node (A0_0) at (0, 0) {$\odi{T}\oplus\shF$};     \node (A0_1) at (1, 0) {$\alA^\sigma$};     \node (A1_0) at (0, 1) {$a\oplus x$};     \node (A1_1) at (1, 1) {$a\oplus 0\oplus x\oplus x$};     \path (A0_0) edge [->]node [auto] {$\scriptstyle{}$} (A0_1);     \path (A1_0) edge [|->,gray]node [auto] {$\scriptstyle{}$} (A1_1);   \end{tikzpicture}   \] 
is an isomorphism of $\odi S$-modules and the induced algebra structure
on $\odi T\oplus\shF$ is given by
\[
\beta\colon\Sym^{2}\shF\arr\shF\text{ and }2(-,-)\colon\Sym^{2}\shF\arr\odi T
\]
Moreover $(\tr_{\alA^{\sigma}})_{|\odi{}}=3\id$ and $(\tr_{\alA^{\sigma}})_{|\shF}=\tr\beta$.
Over $\shR_{3}$ this means $\shF=\ker\tr_{\alA^{\sigma}}$ if and
only if $\tr\beta=0$. In general we obtain a map of stacks $\{\tr\beta=0\}=(\GCov)^{\text{red}}\to\shC_{3}$
(see \ref{thm:geometry of GCov and components}) and, by \ref{rem:etadelta and symmetric product},
$(\GCov)^{\text{red}}\to\shC_{3}\to\Cov_{3}$ consists in taking invariants
by $\sigma$.
\end{rem}

\begin{rem}
\label{rem:invariants by sigma for Sthree and GCov}Theorem \ref{ex:SThree covers and the other group}
tells us that, over $\stR_{3}$, the isomorphism $\GCov\simeq\RCov{S_{3}}$
preserves the quotient by $\sigma\in\Z/2\Z$, that is we have a commutative
diagram   \[   \begin{tikzpicture}[xscale=0.9,yscale=-1.0]     \node (A0_0) at (0, 0) {$\GCov$};     \node (A0_2) at (2, 0) {$\RCov{S_3}$};     \node (A0_4) at (4, 0) {$X$};     \node (A0_5) at (5, 0) {$\alA$};     \node (A1_1) at (1, 1) {$\Cov_3$};     \node (A1_4) at (4, 1) {$X/\sigma$};     \node (A1_5) at (5, 1) {$\alA^\sigma$};     \path (A0_4) edge [|->,gray]node [auto] {$\scriptstyle{}$} (A1_4);     \path (A0_0) edge [->]node [auto] {$\scriptstyle{}$} (A1_1);     \path (A0_5) edge [|->,gray]node [auto] {$\scriptstyle{}$} (A1_5);     \path (A0_2) edge [->]node [auto] {$\scriptstyle{}$} (A1_1);     \path (A0_0) edge [->]node [auto] {$\scriptstyle{\simeq}$} (A0_2);   \end{tikzpicture}   \] 
\end{rem}

\begin{proof}
[Proof of Theorem \ref{thm:The-locus when omega is invertible}] We
need to prove that $\Lambda$ is well defined. Let $\Phi=(\shF,\delta)\in\shC_{3}$.
We have that $\chi=\Lambda(\Phi)\in\stY$ and we have to prove that
$\chi$ satisfies the conditions of \ref{lem:local conditions for the map for Sthree}.
We can therefore work locally and fix a basis $y,z$ of $\shF$. By
(\ref{eq:expression of delta}), the parameters associated to $\chi$
(see \ref{not: associated parameters for chi and Sthree}) are 
\[
a,b,c,d=-a,e,f=-c,\omega=1,A=-D=(ac+be)/2\comma B=c^{2}-ae\comma C=-a^{2}-bc
\]
It is easy to check that all the conditions in \ref{lem:local conditions for the map for Sthree}
are satisfied. So $\Lambda(\Phi)\in\GCov$ and, by definition, $\Lambda(\Phi)\in\stU_{\omega}$.

Following notation from \ref{thm:GCov as quotient stack} we have
Cartesian diagrams   \[   \begin{tikzpicture}[xscale=2.4,yscale=-1.2]     \node (A0_0) at (0, 0) {$\Spec S$};     \node (A0_1) at (1, 0) {$\Spec P_\omega$};     \node (A0_2) at (2, 0) {$\Spec P$};     \node (A0_3) at (3, 0) {$\Spec \shR$};     \node (A1_0) at (0, 1) {$\shC_3$};     \node (A1_1) at (1, 1) {$\stU_\omega$};     \node (A1_2) at (2, 1) {$\GCov$};     \node (A1_3) at (3, 1) {$\Bi \GL_2 \times \Bi \Gm$};     \path (A0_1) edge [->]node [auto] {$\scriptstyle{}$} (A1_1);     \path (A0_0) edge [->]node [auto] {$\scriptstyle{}$} (A0_1);     \path (A0_2) edge [->]node [auto] {$\scriptstyle{}$} (A0_3);     \path (A1_0) edge [->]node [auto] {$\scriptstyle{\Lambda}$} (A1_1);     \path (A0_3) edge [->]node [auto] {$\scriptstyle{}$} (A1_3);     \path (A1_1) edge [->]node [auto] {$\scriptstyle{}$} (A1_2);     \path (A0_2) edge [->]node [auto] {$\scriptstyle{}$} (A1_2);     \path (A0_0) edge [->]node [auto] {$\scriptstyle{}$} (A1_0);     \path (A0_1) edge [->]node [auto] {$\scriptstyle{}$} (A0_2);     \path (A1_2) edge [->]node [auto] {$\scriptstyle{}$} (A1_3);   \end{tikzpicture}   \] 
By definition of $\shC_{3}$ one have $S=\shR[a,b,c,e,\omega]_{\omega}$
and $P\to S$ is defined as above. In order to conclude that $\Lambda$
is an equivalence it is enough to notice that $P_{\omega}\to S$ is
an isomorphism. This holds because, if $\omega$ is invertible, then
$a=-d$, $c=-f$, $A=-D$ and $B$ and $C$ are functions of the $a,b,c,e$.
By a direct check, the only two non trivial relations missing are
automatically satisfied.

Now assume we are over $\stR_{3}$. The map $\GCov\arr\Cov_{3}\simeq\shC_{3}$
extends the map $\stU_{\omega}\arr\shC_{3}$ defined in the statement.
Therefore $\Cov_{3}\simeq\shC_{3}\arr\stU_{\omega}\subseteq\GCov$
is a section of such map. The claim about $S_{3}$ follows from \ref{rem:invariants by sigma for Sthree and GCov}.
\end{proof}
\begin{cor}
Set $\shF=\stR^{2}$ with basis $e_{1},e_{2}$ and consider $\delta\colon\Sym^{3}\shF\arr\det\shF$
given by $\delta(e_{2}^{3})=-\delta(e_{1}^{3})=1$ and $\delta(e_{1}e_{2}^{2})=\delta(e_{1}^{2}e_{2})=0$.
Then
\[
G\simeq\Autsh_{\shC_{3}}(\shF,\delta)
\]
Over $\shR_{3}$, the map $\Bi G\arr\Cov_{3}$ obtained by taking
invariants by $\sigma\in\Z/2\Z$, is an equivalence onto the locus
$\text{Et}_{3}$ of étale degree $3$ covers. Moreover
\[
G\simeq\Autsh_{\Cov_{3}}(\shR_{3}[t]/(t^{3}-1))
\]
\end{cor}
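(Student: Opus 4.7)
The plan is to treat the three isomorphisms in order: (1) by direct computation of the stabilizer in $\GL_{2}$, (2) by combining earlier equivalences, and (3) by reducing to (1) via the equivalence $\shC_{3}\simeq\Cov_{3}$ over $\shR_{3}$.

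For $G\simeq\Autsh_{\shC_{3}}(\shF,\delta)$, I would compute the stabilizer of $\delta$ directly. An automorphism over a test scheme $T$ is a matrix $M=\left(\begin{smallmatrix}a & b\\c & d\end{smallmatrix}\right)\in\GL_{2}(T)$ satisfying $\delta\circ\Sym^{3}M=(\det M)\cdot\delta$. Evaluating this identity on the four monomials $e_{1}^{3},e_{1}^{2}e_{2},e_{1}e_{2}^{2},e_{2}^{3}$ and using $\delta(e_{1}^{3})=-1,\delta(e_{2}^{3})=1,\delta(e_{1}^{2}e_{2})=\delta(e_{1}e_{2}^{2})=0$ yields the polynomial relations
\[
c^{3}-a^{3}=bc-ad,\quad d^{3}-b^{3}=ad-bc,\quad c^{2}d=a^{2}b,\quad cd^{2}=ab^{2}.
\]
A short case analysis shows: if $a$ and $c$ were both invertible, multiplying the last two relations by $d$ and $c$ respectively forces $ad=bc$, contradicting $M\in\GL_{2}$. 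Hence $a=0$ or $c=0$. In the case $c=0$, the third relation forces $b=0$, after which the first two equations reduce to $a^{3}=1$ with $d=a^{-1}$; symmetrically, the case $a=0$ forces $d=0$, $bc=1$, and $c^{3}=1$. The solution set is therefore the closed subgroup of $\GL_{2}$ consisting of $\textup{diag}(\zeta,\zeta^{-1})$ and $\left(\begin{smallmatrix}0 & \zeta^{-1}\\\zeta & 0\end{smallmatrix}\right)$ with $\zeta^{3}=1$. Sending $\zeta\in\mu_{3}$ to the diagonal matrix and $\sigma$ to the swap matrix $\tau=\left(\begin{smallmatrix}0 & 1\\1 & 0\end{smallmatrix}\right)$ gives a group scheme isomorphism $G\simeq\Autsh_{\shC_{3}}(\shF,\delta)$, the semidirect product structure being recovered from $\tau\,\textup{diag}(\zeta,\zeta^{-1})\tau^{-1}=\textup{diag}(\zeta^{-1},\zeta)$.

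For the second assertion, over $\shR_{3}$ the composition $\stU_{\omega}\xrightarrow{\simeq}\shC_{3}\xrightarrow{\simeq}\Cov_{3}$ from Theorem \ref{thm:The-locus when omega is invertible} and Remark \ref{rem:triple covers and invariants by sigma} is, by the last part of that theorem, the $\sigma$-invariants functor. It thus suffices to establish $\Bi G\subseteq\stU_{\omega}$ and that the induced equivalence sends $\Bi G$ onto $\textup{Et}_{3}$. The inclusion is fppf local and reduces to checking $\omega$ is invertible for the trivial $G$-cover $\Spec\shR[G]$, which follows from a direct computation in the regular representation. For the matching of étale loci, one direction is immediate: if $\alA_{\chi}$ is étale then so is $\alA_{\chi}^{\sigma}$, since $|\sigma|=2$ is invertible in $\shR_{3}$. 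The reverse direction I would handle at geometric points: if $\alA_{\chi}^{\sigma}$ is étale over an algebraically closed field and $\omega$ is invertible, then $\chi$ must be the trivial $G$-torsor (every $G$-torsor over an algebraically closed field is trivial, and the trivial one produces an étale $\alA_{\chi}$); since $\Bi G$ and $\textup{Et}_{3}$ are open in their ambient stacks, this pointwise check yields the global equivalence $\Bi G\simeq\textup{Et}_{3}$.

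For the third isomorphism, I would identify the image of $\shR_{3}[t]/(t^{3}-1)$ under $\Cov_{3}\simeq\shC_{3}$ with $(\shF,\delta)$. Taking $\shF'=\ker\tr$ with basis $e_{1}'=t,e_{2}'=t^{2}$, the multiplication induces $\beta'(e_{1}'^{2})=e_{2}',\beta'(e_{1}'e_{2}')=0,\beta'(e_{2}'^{2})=e_{1}'$ (with $\tr\beta'=0$ as required). The unique $\delta'$ with $\beta_{\delta'}=\beta'$, computable via the Miranda symmetrization $\delta_{\beta'}(xyz)=\tfrac{1}{3}\bigl(\beta'(xy)\wedge z+\beta'(xz)\wedge y+\beta'(yz)\wedge x\bigr)$ (available over $\shR_{3}$), gives $\delta'(e_{1}'^{3})=-1$, $\delta'(e_{2}'^{3})=1$, and $\delta'(e_{1}'^{2}e_{2}')=\delta'(e_{1}'e_{2}'^{2})=0$, matching the $\delta$ of the statement. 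Hence $\Autsh_{\Cov_{3}}(\shR_{3}[t]/(t^{3}-1))\simeq\Autsh_{\shC_{3}}(\shF,\delta)\simeq G$ by part~(1). The main obstacles are the case analysis in part~(1) (care not to drop solutions or assume generic invertibility) and, in part~(2), cleanly arguing the converse implication that étaleness of the $\sigma$-quotient forces étaleness of the full $G$-cover.
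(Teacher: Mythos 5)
Your computation in part (3) and the overall architecture agree with the paper, but part (1) as written has a genuine gap. The object $\Autsh_{\shC_{3}}(\shF,\delta)$ is a group \emph{scheme} over $\shR=\Z[1/2]$, where $3$ is not inverted, so it must be identified functorially on arbitrary test rings; your case analysis (``if $a$ and $c$ were both invertible\dots hence $a=0$ or $c=0$'') is only valid over fields or local rings and cannot be run over a general test scheme $T$, where ``not both invertible'' does not imply ``one vanishes''. Moreover field-valued points do not determine a finite flat group scheme here: over $\overline{\F}_{3}$ your four equations have exactly two solutions in $\GL_{2}(\overline{\F}_{3})$ (the identity and the swap matrix), while the answer is supposed to be $\mu_{3}\rtimes\Z/2\Z$, which is non-reduced of order $6$ in characteristic $3$. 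To salvage the computation you would need to show that the closed subgroup scheme of $\GL_{2}$ cut out by those equations is scheme-theoretically equal to the image of the evident monomorphism from $G$, which is substantially more delicate than a pointwise case analysis. The paper sidesteps this entirely: by \cite[Thm A]{Tonini2015} the trivial $G$-torsor has automorphism group scheme $G$ inside $\GCov$, one computes that its associated $\chi$ lies in $\stU_{\omega}$ and corresponds to $(\shF,\delta)$ under the equivalence $\stU_{\omega}\simeq\shC_{3}$, and full faithfulness transports the automorphism group scheme. This also makes your part (3) immediate rather than requiring a separate matching computation.

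Secondly, in part (2) the ``reverse direction'' is asserted rather than proved: the parenthetical justification (``every $G$-torsor over an algebraically closed field is trivial, and the trivial one produces an \'etale $\alA_{\chi}$'') only re-establishes the forward implication. The argument you actually need is that over an algebraically closed field $k$ of characteristic $\neq 3$ every \'etale degree $3$ algebra is isomorphic to $k^{3}\simeq k[t]/(t^{3}-1)$, which is the image of the trivial torsor; hence the two open substacks $\Phi(\Bi G)$ and $\textup{Et}_{3}$ of $\Cov_{3}$ have the same geometric points and therefore coincide. This is essentially the paper's argument, which describes the essential image of $\Bi G$ as the locus of covers fppf-locally isomorphic to $\Phi(\chi)=\shR_{3}[t]/(t^{3}-1)$. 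With these two repairs your proof goes through.
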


\begin{proof}
By \cite[Thm A]{Tonini2015}, the trivial $G$-torsor $G\to\Spec\shR$
is associated with the forgetful functor $\Omega\colon\Loc^{G}\stR\arr\Loc\stR$.
In particular, taking into account section \ref{subsec:representation theory},
the sequence $\chi=(\shL,\shF,m,\alpha,\beta,\la-,-\ra)\in\GCov(\stR)$
associated with $\Omega$ (and the trivial $G$-torsor) is given by
\[
\shL=A,\shF=V;\begin{array}{l}
\alpha(1_{A}\otimes v_{1})=-v_{1}\\
\alpha(1_{A}\otimes v_{2})=v_{2}
\end{array};\begin{array}{l}
\beta(v_{1}^{2})=v_{2}\comma\beta(v_{1}v_{2})=0\comma\beta(v_{2}^{2})=v_{1}\\
\la v_{1},v_{2}\ra=(1/2)1_{A}\comma m(1_{A}\otimes1_{A})=1
\end{array}
\]
In particular $\chi\in\stU_{\omega}$ and, by \ref{thm:The-locus when omega is invertible},
the image of $\chi$ via the equivalence $\Phi\colon\stU_{\omega}\to\shC_{3}$
is $(\shF,\delta_{\beta})$. Moreover $G\simeq\Autsh_{\GCov}\chi\simeq\Autsh_{\shC_{3}}(\shF,\delta_{\beta})$
and, by (\ref{iso:delta beta}), $\delta_{\beta}=\delta$.

Assume now that the base ring is $\shR_{3}$, so that $\Phi\colon\stU_{\omega}\to\shC_{3}\simeq\Cov_{3}$
consists in taking invariants by $\sigma$. By \ref{rem:triple covers and invariants by sigma}
the algebra $\Phi(\chi)$ associated with $\chi$ or $(\shF,\delta)$
is $S=\shR_{3}\oplus\shF$ with multiplication $2(-,-)\oplus\beta\colon\Sym^{2}\shF\to\shR_{3}\oplus\shF$.
A direct computation shows that $v_{1}^{2}=v_{2}$ and $v_{1}^{3}=1$.
It follows that $S=\shR_{3}[t]/(t^{3}-1)$ and, in particular, the
last isomorphism. Moreover $\Phi_{|\Bi G}\colon\Bi G\to\Cov_{3}$
is an equivalence onto the substack $E$ of $\Cov_{3}$ of objects
which are locally isomorphic to $\Phi(\chi)=S$. Since $\Spec S\to\Spec\shR_{3}$
is an étale cover of degree $3$, it follows that $E=\text{Et}_{3}$.
\end{proof}

\subsection{The locus where \texorpdfstring{$\alpha\colon\shL\otimes\shF\arr\shF$}{alpha}
is nowhere a multiple of the identity.}
\begin{defn}
We denote by $\stU_{\alpha}$ the full substack of $\chi\in\GCov$
such that $\alpha_{\chi}\colon\shL_{\chi}\otimes\shF_{\chi}\arr\shF_{\chi}$
is nowhere a multiple of the identity, i.e. it is not a multiple of
the identity over all geometric points of the base (see also \ref{def:locally multiple identity}).
\end{defn}

\begin{thm}
\label{thm:not degenerate locus of S3}Let $R=\stR[m,a,b]$. Then
\[
(R,R^{2},\alpha,\beta,\la-,-\ra)\text{ where }\alpha=\left(\begin{array}{cc}
0 & m\\
1 & 0
\end{array}\right)\comma\beta=\left(\begin{array}{ccc}
a & -mb & ma\\
b & -a & mb
\end{array}\right)\comma\la-,-\ra=mb^{2}-a^{2}
\]
is an object of $\RCov G(R)$. The induced map $\A^{3}\arr\RCov G$
is a smooth Zariski epimorphism onto $\stU_{\alpha}$. In particular
$\stU_{\alpha}$ is a smooth open substack of $\GCov$.
\end{thm}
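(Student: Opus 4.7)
The plan is in four steps: verify the cover relations, observe the image lies in $\stU_\alpha$, establish Zariski surjectivity by a basis-choice argument, and deduce smoothness by identifying the atlas fiber product with an open in a relative affine space.

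First I would read off from the given matrices the local parameters in Notation \ref{not: associated parameters for chi and Sthree}: $A = D = 0$, $B = m$, $C = 1$, $a$ and $b$ free, $c = -mb$, $d = -a$, $e = am$, $f = mb$, $\omega = mb^2 - a^2$. Substituting into each relation in (\ref{eq:loc com and ass conditions}) reduces it to a polynomial identity in $\stR[m,a,b]$; the list is long but each check is short. This shows the data is an object of $\RCov G(R)$. Since $C = 1$ is a unit and $A = D = 0$, the matrix $\alpha$ is nowhere a scalar multiple of the identity, hence the induced $\A^3 \to \GCov$ factors through $\stU_\alpha$.

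For Zariski surjectivity, given $\chi = (\shL, \shF, \alpha, \beta, \langle-,-\rangle) \in \stU_\alpha(T)$, I would show that Zariski-locally on $T$ there exist $(m,a,b) \in \odi T(T)^3$ with $\chi \simeq \chi_{(m,a,b)}$. After shrinking $T$, choose a generator $t$ of $\shL$ and a section $v$ of $\shF$ such that $\{v,\,\alpha(t\otimes v)\}$ is an $\odi T$-basis of $\shF$; such $v$ exists Zariski-locally precisely because $\alpha$ is nowhere scalar, so the locus of ``good'' vectors is a nonempty Zariski open hitting every geometric fibre. In the basis $(y,z) = (v,\alpha(t\otimes v))$ one has $C=1$, $A=0$, and (\ref{eq:ass m,alpha}) forces $B = m_\chi$ and $D = 0$. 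The remaining relations in Lemma \ref{lem:local conditions for the map for Sthree} then collapse exactly to $d = -a$, $c = -mb$, $e = am$, $f = mb$, $\omega = mb^2 - a^2$ with $a,b$ free parameters, realising $\chi$ as a pullback of $\chi_{(m,a,b)}$.

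For smoothness, I would base change along the atlas $U' \to \stU_\alpha$, where $U' \subseteq \Spec P$ is the non-scalar locus coming from Theorem \ref{thm:GCov as quotient stack}; this is a $G_0 := (\Gm\times\GL_2)$-torsor. Since the pulled-back torsor $V := \A^3 \times_{\stU_\alpha} U'$ is trivialised by the canonical section, $V \simeq \A^3 \times G_0$, and smoothness of $\A^3 \to \stU_\alpha$ is equivalent to smoothness of the orbit morphism $\Phi\colon V \to U'$. The key step is to construct a $U'$-isomorphism $V \simeq W$, where
\[
W \;=\; \{(\chi',\lambda,v) \in U' \times \Gm \times \A^2 \st v \wedge \alpha_{\chi'}(v) \in (\odi{U'})^\times\}
\]
is open in the vector bundle $U' \times \Gm \times \A^2 \to U'$. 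The map sends $(\chi',\lambda,v)$ to the $\A^3 \times G_0$-datum produced by Step 3 from the basis $(v, \alpha_{\chi'}(v))$ and generator $\lambda$; the inverse sends $((m,a,b),(\lambda,M))$ to $(\Phi(m,a,b,\lambda,M),\lambda,M\cdot e_1)$. Since $W \to U'$ is smooth of relative dimension $3$, so is $\Phi$, and combined with Step 3's Zariski surjectivity this yields the desired smooth Zariski epimorphism.

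The main obstacle is the construction and verification of the isomorphism $V \simeq W$: one has to carefully track how the $G_0$-action on $U'$ interacts with the ``standard basis'' parametrisation of Step 3, in particular that the matrix $M$ sending the standard basis to $(v,\alpha_{\chi'}v)$ is exactly the group element that puts $\chi'$ in standard form. The cover verification in the first step, while routine, is the other laborious piece.
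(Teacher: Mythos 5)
Your proposal is correct and follows essentially the same route as the paper: verify the relations of Lemma \ref{lem:local conditions for the map for Sthree} by direct substitution, use the basis $(y,\alpha(t\otimes y))$ (available Zariski-locally exactly when $\alpha$ is nowhere scalar) to normalize the parameters and get the Zariski epimorphism, and prove smoothness by identifying the fiber product of $\A^3\to\stU_\alpha$ with a test object as an open subscheme of a relative $\A^3$. The only cosmetic difference is that you run the smoothness check over the atlas $U'\subseteq\Spec P$ via the $(\Gm\times\GL_2)$-torsor structure, whereas the paper takes an arbitrary $T\to\stU_\alpha$ and computes the fiber product $Z=\Spec(\odi T[X,Y,W]_{X(Y^2-mW^2)})$ directly after localizing; both amount to the same computation.
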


Before proving this Theorem we need two lemmas.
\begin{lem}
\label{lem:the not degenerate locus is open}Let $\shF$ be a locally
free sheaf of rank $2$, $\shL$ be an invertible sheaf, both over
a scheme $T$ and $\alpha\colon\shL\otimes\shF\arr\shF$ be a map.
Let also $k$ be a field, $\Spec k\arr T$ be a map and $p\in T$
the induced point. If $\alpha\otimes k$ is not a multiple of the
identity, then there exists a Zariski open neighborhood $V$ of $p$
in $T$ and $y\in\shF_{|V}$ such that $\shL_{|V}=\odi Vt$ and $y,\alpha(t\otimes y)$
is a basis of $\shF_{|V}$.
\end{lem}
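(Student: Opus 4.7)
The plan is to reduce to the affine, trivialized setting and then use the elementary fact that a non-scalar $2\times 2$ matrix always admits a cyclic vector.

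First I would shrink $T$ to an affine open neighborhood $U$ of $p$ on which both $\shL$ and $\shF$ are free. Pick a trivializing section $t \in \shL(U)$; then $\alpha$ corresponds to the endomorphism $\tilde\alpha\colon\shF_{|U}\to\shF_{|U}$ defined by $\tilde\alpha(s) = \alpha(t\otimes s)$, and the conclusion we want is equivalent to producing $y\in\shF(V)$ on a smaller open $V\ni p$ such that $y,\tilde\alpha(y)$ is a basis of $\shF_{|V}$.

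Next I would work in the fiber at $p$. The hypothesis says that $\tilde\alpha_p \in \End_k(\shF_p)$ is not a scalar multiple of the identity. For any non-scalar $M$ acting on a $2$-dimensional $k$-vector space, there exists $v$ with $v,Mv$ linearly independent: otherwise every nonzero vector would be an eigenvector of $M$, and then for linearly independent $v_1,v_2$ with eigenvalues $\lambda_1,\lambda_2$ the equality $M(v_1+v_2)=\mu(v_1+v_2)$ forces $\lambda_1=\lambda_2=\mu$, contradicting $M$ non-scalar. Pick such a $v=y_p\in\shF_p$.

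Now I would lift $y_p$ to a global section $y\in\shF(U)$ after possibly shrinking $U$ further: since $\shF_{|U}$ is free, this is immediate by choosing coordinates and lifting the scalars to $\odi T(U)$. Consider the map
\[
\phi\colon\odi{U}^{\oplus 2}\longrightarrow\shF_{|U},\qquad (a,b)\longmapsto ay+b\tilde\alpha(y).
\]
Its fiber at $p$ is an isomorphism by construction, so $\det\phi$ is a unit at $p$; the locus $V\subseteq U$ where $\det\phi$ is invertible is a Zariski open containing $p$, and on $V$ the pair $y,\alpha(t\otimes y)=\tilde\alpha(y)$ is a basis of $\shF_{|V}$, as required. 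There is no serious obstacle: the only nontrivial point is the elementary linear-algebra fact about cyclic vectors for non-scalar $2\times 2$ matrices, which works over any (possibly finite) field and thus applies to the residue field $\kappa(p)$.
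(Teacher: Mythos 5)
Your proposal is correct and follows essentially the same route as the paper: the paper reduces to the fiber at $p$ via Nakayama's lemma and then invokes the same linear-algebra fact that a non-scalar endomorphism of a two-dimensional vector space admits a cyclic vector; your write-up merely spells out the trivialization, the lifting of the cyclic vector, and the open locus where the resulting determinant is a unit. The only implicit step worth noting is the passage from the hypothesis on $\alpha\otimes k$ to the residue field $\kappa(p)$, which is immediate since being a multiple of the identity is preserved under field extension.
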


\begin{proof}
By Nakayama's lemma we can assume $T=\Spec k$. By contradiction assume
that a basis as in the statement does not exist. It is easy to deduce
that any vector of $\shF$ is an eigenvector for $\alpha$. By standard
linear algebra we can conclude that $\alpha$ is a multiple of the
identity.
\end{proof}
\begin{lem}
\label{lem:associated parameters for alpha nowhere multiple of the identity}Let
$\chi=(\shL,\shF,\alpha,\beta,\la-,-\ra)\in\stY$ and $y\in\shF$
be such that $\shL=\odi T$ and $y,z=\alpha(y)$ is a basis of $\shF$.
Then $\chi\in\GCov$ if and only if the associated parameters (see
\ref{not: associated parameters for chi and Sthree}) of $\chi$ with
respect to the basis $y,z$ are 
\[
a,b,c=-mb,d=-a,e=ma,f=mb,\omega=mb^{2}-a^{2},A=D=0,B=m,C=1
\]
In this case $\chi\in\stU_{\alpha}$.
\end{lem}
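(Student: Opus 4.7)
The strategy is to translate the hypothesis that $y$, $z=\alpha(y)$ form a basis of $\shF$ into explicit constraints on the matrix of $\alpha$, and then use the criterion \ref{lem:local conditions for the map for Sthree} to read off the remaining parameters. First I would normalize $\shL=\odi T$ with generator $1$, and observe that in the basis $y,z$ the equality $\alpha(y)=z$ forces $A=0$, $C=1$. Then by (\ref{eq:ass m,alpha}), namely $\alpha^{2}=m\id_{\shF}$, we get $\alpha(z)=\alpha^{2}(y)=my$, so $B=m$ and $D=0$. Thus the matrix part of $\alpha$ is completely determined, and relations (\ref{eq:ass m,alpha}) and (\ref{eq:ass alpha,gamma}) are automatic.

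Next I would substitute $A=D=0$, $B=m$, $C=1$ into the group of equations (\ref{eq:ass alpha,beta}) listed in \ref{lem:local conditions for the map for Sthree}. The first two lines collapse to $c=-mb$ and $e=-md$, the third line to $a+d=0$ and $c+f=0$, which then yield $d=-a$, $f=mb$ and $e=ma$. Plugging these values into the first line of (\ref{eq:ass gamma,beta}) gives $\omega=mb^{2}-a^{2}$, and one checks that the remaining equations of (\ref{eq:ass gamma,beta}) and (\ref{eq:ass beta,beta}) become identities (for instance $c^{2}+de=m^{2}b^{2}-ma^{2}=m\omega=B\omega$, etc.). This proves that these are exactly the parameter values forced by $\chi\in\GCov$, and conversely that any $\chi\in\stY$ with these parameters satisfies every relation of \ref{lem:local conditions for the map for Sthree}, hence belongs to $\GCov$.

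Finally, for the assertion $\chi\in\stU_{\alpha}$, I would argue that $\alpha$ being a multiple of the identity in some geometric fiber means its matrix is scalar there, which would force $C=0$; but $C=1$ globally, so this never happens and $\chi$ lies in $\stU_{\alpha}$ by definition. The computation is largely a matter of bookkeeping; the only genuine step is recognizing that the choice of basis $y,\alpha(y)$ together with $\alpha^{2}=m\id_{\shF}$ fixes the $\alpha$-block, after which all the other parameters cascade out of the associativity equations.
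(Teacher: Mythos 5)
Your proposal is correct and follows essentially the same route as the paper: read off $A=0$, $C=1$ from the choice of basis, deduce $D=0$ and $B=m$ (the paper gets these from $C(A+D)=0$ and the definition $m=(A^{2}+D^{2})/2+BC$ rather than from $\alpha(z)=\alpha^{2}(y)=my$, but this is the same computation), and then cascade the remaining parameters out of the equations of \ref{lem:local conditions for the map for Sthree}, checking conversely that these values satisfy all relations. The argument for $\chi\in\stU_{\alpha}$ via $C=1$ in every geometric fiber also matches the paper's (implicit) reasoning.
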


\begin{proof}
First of all note that, if the associated parameters of $\chi$ are
as above, then they satisfy equations (\ref{eq:loc com and ass conditions}).
Therefore $\chi\in\GCov$ and, by definition, $\alpha$ is nowhere
a multiple of the identity, i.e. $\chi\in\stU_{\alpha}$. Consider
now the converse implication and denote by $a,b,c,d,e,f,\omega,A,B,C,D,m$
the parameters associated with $\chi\in\GCov$ with respect to the
basis $y,z$ of $\shF$. In particular equations (\ref{eq:loc com and ass conditions})
hold true. By definition of $y,z$ we have $A=0$, $C=1$ and therefore
\[
C(A+D)=0\then D=0\comma m=(A^{2}+D^{2})/2+BC=B
\]
\[
b(A+D)+C(a+d)=d(A+D)+C(c+f)=0\then d=-a\comma f=-c
\]
\[
(2aA+bB+cC)=(2cA+dB+eC)=0\then c=-mb,e=ma
\]
\[
a^{2}+bc=-\omega C\then\omega=mb^{2}-a^{2}
\]
\end{proof}

\begin{proof}
(\emph{of theorem }\ref{thm:not degenerate locus of S3}). By \ref{lem:the not degenerate locus is open}
and \ref{lem:associated parameters for alpha nowhere multiple of the identity},
$\stU_{\alpha}$ is an open substack of $\GCov$, $\chi\in\stU_{\alpha}(R)$
and the induced map $\pi\colon\A^{3}\arr\stU_{\alpha}$ is a Zariski
epimorphism.

It remains to prove that $\pi$ is smooth. The scheme $\A^{3}$ represents
the functor $(\Sch/\shR)^{\op}\to\sets$ associating with a scheme
$T$ the setoid of tuples $(\chi,t,y)$ where $\chi=(\shL,\shF,\alpha,\beta,\la-,-\ra)\in\GCov(T)$,
$t\in\shL$ is a generator and $y\in\shF$ is an element such that
$y,\alpha(t\otimes y)$ is a basis for $\shF$. In particular, given
a map $\chi=(\shL,\shF,\alpha,\beta,\la-,-\ra)\colon T\to\stU_{\alpha}$
the fiber product $Z=T\times_{\stU_{\alpha}}\A^{3}$ represents the
functor $(\Sch/T)^{\op}\to\sets$ of pairs $(t,y)$ where $t\in\shL$
is a generator and $y,\alpha(t\otimes y)$ is a basis of $\shF$.

The smoothness of $Z\to T$ is local on $T$, therefore we can assume
$\chi=\pi(m,a,b)$ for $m,a,b\in\odi T$. If $y=(u,v)\in\shF=\odi T^{2}$
then $\alpha(y)=(mv,u)$ and $y,\alpha(y)$ is a basis of $\shF$
if and only if $u^{2}-mv^{2}$ is invertible. This observation allows
us to conclude that
\[
Z=\Spec(\odi T[X,Y,W]_{X(Y^{2}-mW^{2})})\subseteq\A_{T}^{3}
\]
is open, hence smooth.
\end{proof}

\subsection{The locus where \texorpdfstring{$\beta\colon\Sym^{2}\shF\arr\shF$}{beta}
is nowhere zero.}

Given an object $\chi\in\GCov$ or a morphism $\beta\colon\Sym^{2}\shF\to\shF$
(where $\shF$ is a rank $2$ locally free sheaf) we define the map
\[
d_{\beta}\colon\shF\to\det\shF\comma x\longmapsto x\wedge\beta(x^{2})
\]
Notice that this is not a map of quasi-coherent sheaves because it
is not $\odi{}$-linear, it is just a map of sheaves of sets. We remark
that its formation commutes with arbitrary base changes of the base.
\begin{defn}
We define $\stU_{\beta}$ as the full substack of objects $\chi\in\GCov$
such that $d_{\beta_{\chi}}\colon\shF_{\chi}\to\det\shF_{\chi}$ is
nowhere zero, i.e. $d_{\beta_{\chi}}$ is not zero on all geometric
points of the base.
\end{defn}

\begin{thm}
\label{thm:the locus where beta is not zero}Let $R=\stR[\omega,A,C]$.
Then 
\[
(R,R^{2},\alpha,\beta,\la-,-\ra)\text{ where }\alpha=\left(\begin{array}{cc}
A & \omega C^{2}\\
C & -A
\end{array}\right)\comma\beta=\left(\begin{array}{ccc}
0 & -\omega C & 2\omega A\\
1 & 0 & \omega C
\end{array}\right)\comma\la-,-\ra=\omega
\]
is an object of $\RCov G(R)$. The associated map $\pi\colon\A^{3}\arr\RCov G$
is smooth and its image is $\stU_{\beta}$, which is therefore a smooth
open substack of $\GCov$.

Over $\shR_{3}$, $\stU_{\beta}$ coincides with the full substack
of $\chi\in\GCov$ such that $\beta_{\chi}$ is nowhere zero.
\end{thm}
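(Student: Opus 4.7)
The plan is to mirror the strategy used for $\stU_\alpha$ in Theorem \ref{thm:not degenerate locus of S3}. First I would verify by direct substitution into Lemma \ref{lem:local conditions for the map for Sthree} that the explicit tuple is indeed in $\RCov G(R)$: in the standard basis of $R^2$, the associated parameters read $a=0$, $b=1$, $c=-\omega C$, $d=0$, $e=2\omega A$, $f=\omega C$, $D=-A$, $B=\omega C^2$ and $m=A^2+\omega C^3$, and all five blocks of relations collapse to elementary polynomial identities. Since $d_\beta(e_1)=e_1\wedge\beta(e_1^2)=e_1\wedge e_2$ is a nowhere zero section of $\det R^2$, the tuple lies in $\stU_\beta(R)$, so $\pi$ has image inside $\stU_\beta$.

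Next I would establish the local normal form which is the real engine of the proof: if $\chi\in\GCov(T)$ and $d_{\beta_\chi}$ is nonzero at a geometric point $p$, then by Nakayama there exist a Zariski neighbourhood $V$ of $p$, a generator $t\in\shL_{|V}$, and $y\in\shF_{|V}$ such that $y$, $z:=\beta_\chi(t\otimes y^2)$ is a basis of $\shF_{|V}$. This simultaneously gives openness of $\stU_\beta$, since in a frame $y,z$ the map $d_\beta$ expands as a cubic in the coefficients of a generic section, and its vanishing is cut out by four equations on the base. In the basis $(y,z)$ one reads off $a=0$ and $b=1$, and the relations of Lemma \ref{lem:local conditions for the map for Sthree} then force, in order, $d=0$ (from $b(a+d)=0$), $D=-A$ (from $C(a+d)+b(A+D)=0$), $f=-c$ (from $a(a+d)+b(c+f)=0$), $B=-cC$ (from $2aA+bB+cC=0$), $c=-\omega C$ (from $a^2+bc=-\omega C$) and $e=2\omega A$ (from $ac+be=\omega(A-D)$); the remaining equations then hold automatically. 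This matches the parameters of $\pi(\omega,A,C)$, so $\pi$ is a Zariski epimorphism onto $\stU_\beta$.

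Smoothness of $\pi$ I would handle exactly as in the proof of Theorem \ref{thm:not degenerate locus of S3}: for $\chi\colon T\to\stU_\beta$ the fibre product $T\times_{\stU_\beta}\A^3$ represents pairs $(t,y)$ with $t$ a local generator of $\shL$ and $y\in\shF$ such that $y,\beta(y^2)$ is a basis. Working locally where $\chi=\pi(\omega_0,A_0,C_0)$ and writing $y=(u,v)\in\odi T^2$, this basis condition amounts to the invertibility of an explicit polynomial in $u,v$ with coefficients in $\omega_0,A_0,C_0$, realising the fibre product as an open subscheme of $\A^3_T$, hence smooth. Combined with the smoothness of $\A^3$ this gives smoothness of $\stU_\beta$.

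The last assertion, the coincidence of $\stU_\beta$ with the locus $\{\beta_\chi\text{ nowhere zero}\}$ over $\shR_3$, is the point where I expect the work to concentrate. I would reduce it to the following algebraic fact over an algebraically closed field of characteristic $\neq 2,3$: if $d_\beta\equiv 0$ then $\beta=0$. Fixing a basis $y,z$ of $\shF$ and expanding, one obtains
\[
 d_\beta(y+\lambda z)=\bigl(b+(2d-a)\lambda+(f-2c)\lambda^2-e\lambda^3\bigr)(y\wedge z),
\]
so $d_\beta=0$ forces $b=0$, $e=0$, $a=2d$ and $f=2c$. Substituting into the associativity relations $a(a+d)+b(c+f)=0$ and $e(a+d)+c(c+f)=0$ from (\ref{eq:loc com and ass conditions}) yields $6d^2=0$ and $3c^2=0$; since $6$ is invertible over $\shR_3$, this gives $c=d=0$ and hence $\beta=0$. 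The main obstacle is purely bookkeeping in this last calculation — making sure the associativity relations chosen really produce the factors of $6$ and $3$ that let the characteristic hypothesis do its work; once this is in place, openness, surjectivity and smoothness follow by the routine template of Theorem \ref{thm:not degenerate locus of S3}.
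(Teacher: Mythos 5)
Your proposal is correct and follows essentially the same route as the paper: the same normal form in a local basis $y,\beta(y^{2})$ (the paper's Lemmas \ref{lem:local basis for beta nowhere zero} and \ref{lem:associated parameters for beta nowhere zero}), the same cubic expansion of $d_{\beta}$ to get openness, and the same representability argument identifying $T\times_{\stU_{\beta}}\A^{3}$ with an open subscheme of $\A^{3}_{T}$ for smoothness. The only (harmless) divergence is in the final claim over $\shR_{3}$: the paper invokes $\tr\beta_{\chi}=0$ on geometric points from Theorem \ref{thm:geometry of GCov and components} to reduce the ideal $(b,e,2d-a,f-2c)$ to $(a,b,c,e)$ up to the unit $3$, whereas you extract $c=d=0$ directly from the associativity relations $a(a+d)+b(c+f)=0$ and $e(a+d)+c(c+f)=0$ --- both computations are correct and of comparable length (and note the small slip that $z$ should be $\beta_{\chi}(y^{2})$, since $\beta_{\chi}$ does not take the generator $t$ of $\shL$ as an argument).
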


We prove the above result at the end of the section.
\begin{lem}
\label{lem:local basis for beta nowhere zero} Let $T$ be an $\shR$-scheme,
$\shF$ a free sheaf of rank $2$ and
\[
\beta=\left(\begin{array}{ccc}
a & c & e\\
b & d & f
\end{array}\right)\colon\Sym^{2}\shF\to\shF
\]
 a morphism. Then the locus $U$ where $d_{\beta}\colon\shF\to\det\shF$
is nowhere zero is open and its complement is defined by the ideal
$(b,e,2d-a,f-2c)$. In particular if $T$ is an $\shR_{3}$-scheme
and $\tr\beta=0$ then $d_{\beta}$ is nowhere zero if and only if
$\beta$ is nowhere zero.

Moreover for any $u\in U$ there exists an open neighborhood $V$
of $u$ inside $U$ and $y\in\shF(V)$ such that $y,\beta(y^{2})$
is a basis of $\shF_{|V}$.
\end{lem}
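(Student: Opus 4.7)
The plan is to reduce the whole lemma to a single explicit local identity. Locally on $T$, I would fix a basis $y, z$ of $\shF$ with structure constants $\beta(y^2) = ay + bz$, $\beta(yz) = cy + dz$, $\beta(z^2) = ey + fz$, and then expand $x \wedge \beta(x^2)$ for the generic section $x = \lambda y + \mu z$. A direct computation using $x^2 = \lambda^2 y^2 + 2\lambda\mu\,yz + \mu^2 z^2$ in $\Sym^2\shF$ together with the $2\times 2$ determinant formula for the wedge yields
\[
d_\beta(\lambda y + \mu z) \;=\; \bigl(b\lambda^3 + (2d-a)\lambda^2\mu + (f-2c)\lambda\mu^2 - e\mu^3\bigr)\,(y\wedge z),
\]
so locally $d_\beta$ is a homogeneous cubic map of sets $\shF \to \det\shF$ whose four coefficients are $b, 2d-a, f-2c, -e$. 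Everything else is extracted from this formula.

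From this I would read off the first claim: by definition $u \notin U$ iff $d_\beta \otimes k(u)$ is identically zero as a set map on $\shF \otimes k(u)$; over a field this is equivalent to the whole cubic form being identically zero, which happens iff all four coefficients vanish at $u$. Hence the complement of $U$ is locally cut out by the ideal $(b, e, 2d-a, f-2c)$; in particular it is closed and $U$ is open.

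For the $\shR_3$ refinement, $\tr\beta = 0$ locally means $a+d = 0$ and $c+f = 0$, so $2d-a = -3a$ and $f - 2c = -3c$; over $\shR_3$ the factor $3$ is a unit, so the defining ideal collapses to $(a, b, c, e) = (a,b,c,d,e,f)$, which is the vanishing ideal of $\beta$ itself. Thus over $\shR_3$ with $\tr\beta = 0$ the locus $U$ coincides with the locus where $\beta$ is nowhere zero.

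Finally, to produce the local section at a point $u \in U$, I would pick $(\lambda_0,\mu_0) \in k(u)^2$ at which the cubic form is nonzero, lift to $\lambda, \mu \in \odi{T,u}$, and set $y_0 = \lambda y + \mu z$. Then $y_0 \wedge \beta(y_0^2)$ reduces modulo $\mathfrak m_u$ to a nonzero element of $\det\shF \otimes k(u)$, so by Nakayama it generates the invertible sheaf $\det\shF$ on some Zariski open neighborhood $V$ of $u$; this forces $y_0, \beta(y_0^2)$ to be a basis of $\shF_{|V}$. The only substantive step in the whole argument is the cubic-form identity above; no other real obstacle is expected.
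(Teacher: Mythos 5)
Your proposal is correct and follows essentially the same route as the paper: both expand $d_\beta(\lambda y+\mu z)$ into the cubic form $\bigl(b\lambda^3+(2d-a)\lambda^2\mu+(f-2c)\lambda\mu^2-e\mu^3\bigr)(y\wedge z)$, identify the complement of $U$ with the vanishing of the four coefficients (the paper does this by evaluating at $e_1,e_2,e_1\pm e_2$, which is the same computation), and conclude the last claim by lifting a point where the form is nonzero and applying Nakayama. No gaps.
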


\begin{proof}
Set $\shF=\odi T^{2}$ with basis $e_{1},e_{2}$. Given $y=ue_{1}+ve_{2}$
a direct computation shows that
\[
d_{\beta}(y)=y\wedge\beta(y^{2})=(u^{3}b+u^{2}v(2d-a)+uv^{2}(f-2c)-v^{3}e)e_{1}\wedge e_{2}
\]
Choosing $y=e_{1},e_{2},e_{1}+e_{2},e_{1}-e_{2}$ we see that
\[
d_{\beta}=0\iff b=e=2d-a=f-2c=0
\]
which proves the first claims. For the last one, by Nakayama there
exist a neighborhood $u\in V$, $y\in\shF$ and $q\in\odi V$ not
zero in $u$ such that $d_{\beta}(y)=q(e_{1}\wedge e_{2})\in\det\shF_{|V}$.
Inverting $q$ in $V$ we find the desired neighborhood\@.
\end{proof}
\begin{lem}
\label{lem:associated parameters for beta nowhere zero}Let $\chi=(\shL,\shF,\alpha,\beta,\la-,-\ra)\in\stY$
and $y\in\shF$ be such that $\shL=\odi T$ and $y,z=\beta(y^{2})$
is a basis of $\shF$. Then $\chi\in\GCov$ if and only if the associated
parameters (see \ref{not: associated parameters for chi and Sthree})
of $\chi$ with respect to the basis $y,z$ are
\[
a=0,b=1,c=-\omega C,d=0,e=2\omega A,f=\omega C,\omega,A,B=\omega C^{2},C,D=-A
\]
In this case $\chi\in\stU_{\beta}$. 
\end{lem}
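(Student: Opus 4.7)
The plan is a direct calculation: substitute the starting data $z = \beta(y^{2})$ into the system of equations \eqref{eq:loc com and ass conditions} from Lemma \ref{lem:local conditions for the map for Sthree} and solve, then verify the remaining equations are automatically satisfied.

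First, the identity $z = \beta(y^{2})$ immediately forces $\beta(y^{2}) = 0 \cdot y + 1 \cdot z$, i.e.\ $a=0$ and $b=1$. The remaining parameters $c,d,e,f,A,B,C,D,\omega$ are then unknowns constrained by \eqref{eq:loc com and ass conditions}. From $b(a+d) = 0$ I get $d = 0$ (so $a+d = 0$); from $b(A+D) + C(a+d) = 0$ I get $A+D = 0$, hence $D = -A$. Then $a^{2} + bc = -\omega C$ gives $c = -\omega C$, while $b(c+f) + a(a+d) = 0$ gives $f = -c = \omega C$. Next, $ac + be = \omega(A-D) = 2\omega A$ yields $e = 2\omega A$, and finally $2aA + bB + cC = 0$ becomes $B - \omega C^{2} = 0$, so $B = \omega C^{2}$. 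This recovers exactly the listed parameters.

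For the converse direction one has to check that with the prescribed values every equation in \eqref{eq:loc com and ass conditions} holds. All terms involving $A+D$ or $a+d$ vanish for trivial reasons; the equations $c^{2} + de = B\omega$ and $2cA + dB + eC = 0$ become $\omega^{2}C^{2} = \omega^{2}C^{2}$ and $-2\omega C A + 2\omega A C = 0$, while the remaining relations (involving $c+f=0$, $a+d=0$, or terms killed by $d=a=0$) are visibly satisfied. So $\chi \in \GCov$.

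Finally, since $b = 1$, the ideal $(b,e,2d-a,f-2c)$ is the unit ideal everywhere, so by Lemma \ref{lem:local basis for beta nowhere zero} the map $d_{\beta_{\chi}}$ is nowhere zero, i.e.\ $\chi \in \stU_{\beta}$. I expect no real obstacle here: the argument is essentially bookkeeping, with the only mild subtlety being that one must not divide by $\omega$ (which may vanish) when extracting $B$ from $c^{2} + de = B\omega$ — this is precisely why one uses the equation $2aA + bB + cC = 0$ instead to solve for $B$.
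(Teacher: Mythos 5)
Your proof is correct and follows essentially the same route as the paper's: read off $a=0,b=1$ from the choice of basis, solve the equations of \eqref{eq:loc com and ass conditions} in the same order to extract the remaining parameters, and verify the converse by direct substitution. The only cosmetic difference is that you justify $\chi\in\stU_{\beta}$ via the ideal criterion of \ref{lem:local basis for beta nowhere zero} (noting $b=1$) rather than observing directly that $d_{\beta}(y)=y\wedge z$ generates $\det\shF$; both are valid.
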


\begin{proof}
First of all, it is easy to check that, if the associated parameters
of $\chi$ are the ones listed in the statement, then they satisfy
equations (\ref{eq:loc com and ass conditions}). Therefore $\chi\in\GCov$
and, since $\beta(y^{2})\neq0$ after all base changes, $\chi\in\stU_{\beta}$.

Assume now that $\chi\in\GCov$. By definition of the basis $y,z$,
we have $a=0$ and $b=1$. Using relations (\ref{eq:loc com and ass conditions}),
we also have
\[
b(a+d)=a(a+d)+b(c+f)=0\then d=-a=0\comma f=-c
\]
\[
b(A+D)+C(a+d)=0\then D=-A
\]
\[
a^{2}+bc=-\omega C\comma ac+be=2\omega A\then c=-\omega C\comma e=2\omega A
\]
\[
2aA+bB+cC=0\then B=\omega C^{2}
\]
\end{proof}

\begin{proof}
[Proof of Theorem \ref{thm:the locus where beta is not zero}] From
\ref{lem:local basis for beta nowhere zero} and \ref{lem:associated parameters for beta nowhere zero}
we see that $(R,R^{2},\alpha,\beta,\la-,-\ra)\in\stU_{\beta}(R)$,
$\pi\colon\A^{3}\arr\stU_{\beta}$ is a Zariski epimorphism and $\stU_{\beta}$
is open. The last claim follows from \ref{lem:local basis for beta nowhere zero}
and the fact that if $\chi\in\GCov(k)$ for a field $k$ then $\tr\beta_{\chi}=0$
by \ref{thm:geometry of GCov and components}.

It remains to prove that $\pi$ is smooth. By \ref{lem:associated parameters for beta nowhere zero}
the scheme $\A^{3}$ represents the functor $(\Sch/\shR)^{\op}\to\sets$
associating with a scheme $T$ the setoid of tuples $(\chi,t,y)$
where $\chi=(\shL,\shF,\alpha,\beta,\la-,-\ra)\in\GCov(T)$, $t\in\shL$
is a generator and $y\in\shF$ is an element such that $y,\beta(y^{2})$
is a basis for $\shF$. In particular, given a map $\chi=(\shL,\shF,\alpha,\beta,\la-,-\ra)\colon T\to\stU_{\alpha}$
the fiber product $Z=T\times_{\stU_{\alpha}}\A^{3}$ represents the
functor $(\Sch/T)^{\op}\to\sets$ of pairs $(t,y)$ where $t\in\shL$
is a generator and $y,\beta(y^{2})$ is a basis of $\shF$.

The smoothness of $Z\to T$ can be checked locally on $T$ and therefore
we can assume $\shL=\odi T$ and $\shF=\odi T^{2}$, so that $Z\subseteq\A_{T}^{3}$.
If $y=(u,v)\in\shF$ then $\beta(y^{2})=(f(u,v),g(u,v))$ for some
$f,g\in\odi T[X,Y]$ and $y,\beta(y^{2})$ is a basis of $\shF$ if
and only if $ug(u,v)-vf(u,v)$ is invertible in the base. This implies
that
\[
Z=\Spec\odi T[W,X,Y]_{W(Xg-Yf)}\subseteq\A_{T}^{3}
\]
is open, hence smooth.
\end{proof}

\subsection{The stack of torsors \texorpdfstring{$\Bi (\mu_{3}\rtimes\Z/2\Z)$}{B(mu3 rtimes Z/2Z)}.}

In this section we want to describe the stack $\Bi G$ of $G$-torsors.
\begin{defn}
\label{def:discriminant maps for Sthree} Given $\chi=(\shF,\delta)\in\shC_{3}$
(see \ref{thm:The-locus when omega is invertible}) (resp. $\chi=(\shL,\shF,\alpha,\beta,\la-,-\ra)\in\GCov$)
we define the discriminant map $\Delta_{\Phi}\colon(\det\shF)^{2}\arr\odi T$
as the map obtained from $\shF\otimes\shF\to\Sym^{2}\shF\arrdi{\eta_{\delta}}\odi T$
(resp. $\shF\otimes\shF\to\Sym^{2}\shF\arrdi{(-,-)_{\chi}}\odi T$)
as in \ref{rem:Lambda and duals}.
\end{defn}

\begin{rem}
\label{rem:general discriminant}For $\chi=(\shL,\shF,\alpha,\beta,\la-,-\ra)\in\GCov$
the map $\Delta_{\chi}$ coincides with
\[
(\det\shF)^{2}\arrdi{\la-,-\ra^{\otimes2}}\shL^{2}\arrdi{-m_{\chi}}\odi T
\]
Indeed locally one has $\Delta_{\chi}=(y,y)(z,z)-(y,z)^{2}=-BC\omega^{2}-A^{2}\omega^{2}=-\omega^{2}m$.

Moreover, if $\tr\beta=0$, then $\Delta_{(\shF,\delta_{\beta})}=4\Delta_{\chi}$
thanks to \ref{eq:eta delta is two times symmetric product}.
\end{rem}

\begin{rem}
\label{rem: fake and true discriminant}Let $\Phi=(\shF,\delta)\in\shC_{3}$
and $\alA_{\Phi}=\odi T\oplus\shF$ be the algebra associated with
$\Phi$ (see \ref{rem:triple covers and invariants by sigma}). By
\ref{rem:discriminant} the discriminant of $\alA$ is $\Delta_{\alA_{\Phi}}=3^{3}\Delta_{\Phi}$
and, over $\shR_{3}$, the algebra $\alA_{\Phi}$ is étale over $T$
if and only if $\Delta_{\Phi}$ is an isomorphism.
\end{rem}

\begin{thm}
\label{thm:description of Sthree torsors}An object $\chi=(\shL,\shF,m,\alpha,\beta,\la-,-\ra)\in\GCov$
corresponds to a $G$-torsor if and only if the maps
\[
m\colon\shL^{2}\arr\odi T\comma\la-,-\ra\colon\det\shF\arr\shL
\]
are isomorphisms, or, equivalently, $\Delta_{\chi}\colon(\det\shF)^{2}\arr\odi T$
is an isomorphism. In this case the maps 
\[
\alpha\colon\shL\otimes\shF\to\shF\text{ and }(-,-)\oplus\langle-,-\rangle\oplus\beta\colon\shF\otimes\shF\to\odi{}\oplus\shL\oplus\shF
\]
are isomorphisms and $\tr\beta=\tr\alpha=0$. Moreover $\Bi G\subseteq\stU_{\omega},\stU_{\alpha},\stU_{\beta}$.

Finally the map $\Lambda\colon\shC_{3}\to\GCov$ of Theorem \ref{thm:The-locus when omega is invertible}
restricts to an equivalence between the full substack of $\shC_{3}$
of objects $\Phi$ such that $\Delta_{\Phi}$ is an isomorphism and
$\Bi G$.
\end{thm}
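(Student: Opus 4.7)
The equivalence between ``$m$ and $\langle-,-\rangle$ both isomorphisms'' and ``$\Delta_\chi$ an isomorphism'' is immediate from Remark~\ref{rem:general discriminant}, which expresses $\Delta_\chi = -m_\chi \circ \langle-,-\rangle^{\otimes 2}$ as a composition of two maps between invertible sheaves; such a composite is an isomorphism if and only if each factor is.

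For the implication that any $\chi \in \Bi G$ satisfies the stated conditions (isomorphism of $\alpha$, of the combined multiplication $\shF\otimes\shF\to\odi{}\oplus\shL\oplus\shF$, vanishing of traces, and the containments $\Bi G\subseteq\stU_\omega\cap\stU_\alpha\cap\stU_\beta$), I would observe that $G$-torsors are fppf-locally trivial and each listed property is fppf-local on the base, so it suffices to verify them on the trivial $G$-torsor $\chi_0$. Its data is recorded explicitly in the corollary following~\ref{thm:The-locus when omega is invertible}: $\shL = A$, $\shF = V$, $m_0=1$, $\langle v_1,v_2\rangle_0 = \tfrac12 1_A$, $\alpha_0 = \mathrm{diag}(-1,1)$, $\beta_0(v_1^2)=v_2$, $\beta_0(v_2^2)=v_1$, $\beta_0(v_1 v_2)=0$. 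A direct inspection verifies that $m_0$, $\langle-,-\rangle_0$, $\alpha_0$ and the combined multiplication are isomorphisms; that $\alpha_0$ has distinct eigenvalues $\pm 1$ and is nowhere a scalar (placing $\chi_0 \in \stU_\alpha$); that $d_{\beta_0}(v_1)=v_1\wedge v_2\neq 0$ (placing $\chi_0\in\stU_\beta$); and that $\tr\alpha_0 = \tr\beta_0 = 0$.

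Both the converse and the final statement follow once one establishes that $\Lambda\colon\shC_3\to\stU_\omega$ restricts to an equivalence $\shC_3':=\{\Phi\st\Delta_\Phi\textrm{ iso}\}\simeq\Bi G$. Indeed, given $\chi\in\GCov$ with $\Delta_\chi$ iso we have $\chi\in\stU_\omega$ and so $\chi=\Lambda(\Phi)$. By the previous paragraph $\Bi G$ is a non-empty open of the irreducible stack $\stU_\omega\simeq\shC_3$ (whose atlas $\A^4_\shR$ is integral), hence dense, so $\stU_\omega\subseteq\overline{\Bi G}=\stZ_G$. Consequently $\tr\beta_\chi=0$, and by Remark~\ref{rem:general discriminant} $\Delta_\Phi=4\Delta_\chi$ is iso, so $\Phi\in\shC_3'$ and $\chi\in\Bi G$.

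To prove $\shC_3'\simeq\Bi G$, one first checks by direct computation from the definition of $\delta_0$ that $\Phi_0=(\shF_0,\delta_0)$ has $\Delta_{\Phi_0}$ iso; together with $\Aut(\Phi_0)=G$ from the same corollary, this exhibits $\Bi G$ as an open substack of $\shC_3'$, both being smooth open substacks of $\shC_3=[\A^4/\GL_2]$. Equality is then reduced to showing that on geometric fibers over $\Spec\shR$ the $\GL_2$-action on the open $U=\{\Delta_\Phi\neq 0\}\subseteq\A^4$ is transitive with stabilizer scheme-theoretically equal to $G$. The main obstacle is to establish this uniformly over $\shR=\Z[1/2]$, including at residue characteristic $3$ where $G$ is no longer \'etale and classical results on nondegenerate binary cubics do not apply verbatim. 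One route is to bypass $\shC_3$ altogether and work directly on the atlas $\Spec P\to\GCov$ of~\ref{thm:GCov as quotient stack}: the local relations in~\ref{lem:local conditions for the map for Sthree} collapse when $m$ and $\omega$ are units, expressing $c,d,f,A,B,C,D$ as explicit functions of $a,b,e,\omega$, and the resulting presentation can be compared with a trivialization of the universal $(\Gm\times\GL_2)/G$-torsor over $\Bi G$.
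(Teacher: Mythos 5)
Your reduction of the first two parts to the trivial torsor is fine: the listed properties are all fppf-local, $G$-torsors are fppf-locally trivial, and the explicit data of the trivial torsor (recorded in the corollary after Theorem~\ref{thm:The-locus when omega is invertible}) does verify all of them. The equivalence of the two characterizations via $\Delta_{\chi}=-m_{\chi}\circ\la-,-\ra^{\otimes2}$ is also correct, as is the density argument giving $\stU_{\omega}\subseteq\stZ_{G}$ and hence $\tr\beta=0$ there.

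The genuine gap is in the converse, which is the heart of the theorem: you must show that $\Delta_{\chi}$ invertible forces $\chi$ to \emph{be} a torsor, and you reduce this to the claim that $\Lambda$ carries $\shC_{3}'=\{\Delta_{\Phi}\text{ iso}\}$ into $\Bi G$, which you in turn reduce to transitivity of the $\GL_2$-action on nondegenerate binary cubics with scheme-theoretic stabilizer $G$ on all geometric fibers. You yourself flag that this fails to follow from classical results in residue characteristic $3$ (exactly where $G=\mu_3\rtimes\Z/2\Z$ is non-\'etale), and the alternative route you sketch (collapsing the relations of Lemma~\ref{lem:local conditions for the map for Sthree} and comparing with a trivialization of a $(\Gm\times\GL_2)/G$-torsor) is not carried out; even granting fiberwise transitivity, identifying the open substack with $\Bi G$ over $\Z[1/2]$ requires an additional flatness/smoothness argument for the orbit map that you do not supply. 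The paper sidesteps all of this: by \cite[Thm A]{Tonini2015} an object of $\GCov$ is a torsor precisely when the associated functor is \emph{strong} monoidal, i.e.\ when the three structure maps $m$, $\alpha$ and $(-,-)\oplus\la-,-\ra\oplus\beta$ are isomorphisms. Granting $m$ and $\la-,-\ra$ invertible, $\alpha$ is invertible since $\alpha^{2}=m\,\id$ locally, one checks $\chi\in\stU_{\alpha}$, and then on the explicit atlas of Theorem~\ref{thm:not degenerate locus of S3} the $4\times4$ matrix $M$ of the remaining map has $\det M=-4m\omega^{3}$, a unit — a computation valid uniformly over $\Z[1/2]$, including characteristic $3$. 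Without either this monoidal criterion or a completed orbit argument, your proof of the converse (and hence of the final equivalence $\shC_{3}'\simeq\Bi G$) is incomplete.
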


\begin{proof}
Let $\Gamma$ be the functor associated with $\chi$. By \cite[Thm A]{Tonini2015}
$\chi$ corresponds to a $G$-torsor if and only if $\Gamma$ is strong
monoidal, that is all maps $\Gamma_{U}\otimes\Gamma_{W}\to\Gamma_{U\otimes W}$
are isomorphisms for $U,W\in\Loc^{G}\shR$. Equivalently: the maps
\[
\Gamma_{A}\otimes\Gamma_{A}\to\Gamma_{A\otimes A}\simeq\Gamma_{\shR}\comma\Gamma_{A}\otimes\Gamma_{V}\to\Gamma_{A\otimes V}\simeq\Gamma_{V}\comma\Gamma_{V}\otimes\Gamma_{V}\to\Gamma_{V\otimes V}\simeq\Gamma_{\shR}\oplus\Gamma_{A}\oplus\Gamma_{V}
\]
which are exactly $m,\alpha$ and $(-,-)\oplus\langle-,-\rangle\oplus\beta$,
are isomorphisms. If this is the case then $\langle-,-\rangle\colon\det\shF\to\shL$
is surjective, hence an isomorphism. In particular $\Bi G\subseteq\stU_{\omega},\stU_{\beta}$,
$\tr\beta=0$, and, thanks to \ref{rem:general discriminant}, the
last claim follows from the first ones.

Conversely assume that $m$ and $\langle-,-\rangle$ are isomorphisms.
The map $\alpha$ is an isomorphism because $\alpha^{2}=m\id$ locally.
Moreover $\chi\in\stU_{\omega}$ implies that $\tr\alpha=0$, as one
can check locally. If by contradiction $\chi\notin\stU_{\alpha}$
then, on some geometric point, we would have that $\alpha$ is a multiple
of the identity and therefore $\alpha=(\tr\alpha/2)\id=0$, which
is not the case. So $\chi\in\stU_{\alpha}$.

It remains to prove that $\chi\in\Bi G$. Since this is a local statement,
we can assume $\chi=\pi(m,a,b)$, where $\pi\colon\A^{3}\to\stU_{\alpha}$
is the map defined in \ref{thm:not degenerate locus of S3}. We have
to prove that $(-,-)\oplus\langle-,-\rangle\oplus\beta\colon\shF\otimes\shF\to\odi{}\oplus\shL\oplus\shF$
is an isomorphism. If $M$ is its attached matrix we have to show
that $\det M$ is invertible. We have
\[
M=\left(\begin{array}{cccc}
-\omega & 0 & 0 & m\omega\\
0 & -\omega & \omega & 0\\
a & -mb & -mb & ma\\
b & -a & -a & mb
\end{array}\right)\then N=\left(\begin{array}{cccc}
-\omega & 0 & 0 & 2m\omega\\
0 & -\omega & 2\omega & 0\\
a & -mb & 0 & 0\\
b & -a & 0 & 0
\end{array}\right)
\]
The matrix $N$ is obtained replacing the third and fourth columns
of $M$ by $M^{3}-M^{2}$ and $M^{4}-mM^{1}$ respectively, where
$M^{j}$ denotes the $j$-th column. Thus
\[
\det M=\det N=-2m\omega\cdot2\omega\cdot(mb^{2}-a^{2})=-4m\omega^{3}\text{ is invertible}
\]
\end{proof}

\subsection{The main irreducible component \texorpdfstring{$\stZ_{(\mu_{3}\rtimes\Z/2\Z)}$}{of (mu3 rtimes Z/2Z)-Cov}
\label{subsec:smooth locus of ZG}}

In this subsection we want to give a more precise description of the
irreducible component $\stZ_{G}$ of $\GCov$ and, because of \ref{ex:SThree covers and the other group},
of $\stZ_{S_{3}}\subseteq\RCov{S_{3}}$ over $\stR_{3}$ (see \ref{def:main irreducible component}).
We are going to use results and notation from section \ref{subsec:Trace zero alpha}.
In particular we use notation \ref{not: associated check map}.

Define the stack $\stZ$ whose objects are tuples $(\shM,\shF,\delta,\zeta,\omega)$
where $\shM$ is an invertible sheaf, $\shF$ is a locally free sheaf
of rank $2$, $\omega$ is a section of $\shM$ and $\delta,\zeta$
are maps
\[
\delta\colon\Sym^{3}\shF\arr\det\shF\comma\zeta\colon(\det\shF)^{2}\otimes\shM\arr\Sym^{2}\shF
\]
satisfying the following conditions:
\begin{enumerate}
\item the composition
\begin{equation}
(\det\shF)^{2}\otimes\shM\otimes\shF\arrdi{\zeta\otimes\id}\Sym^{2}\shF\otimes\shF\arr\Sym^{3}\shF\arrdi{\delta}\det\shF\label{eq:first condition on M,F,delta,zeta,omega}
\end{equation}
is zero;
\item the composition 
\begin{equation}
\Sym^{2}\shF\arrdi{\check{\zeta}}\shM^{-1}\arrdi{\duale{\omega}}\odi{}\label{eq:second condition on M,F,delta,zeta,omega}
\end{equation}
 coincides with $\eta_{\delta}$ (see \ref{not: associated check map}
and (\ref{eq:etabeta})).
\end{enumerate}
We define a functor 
\[
\stZ\to\stY\comma\Omega=(\shM,\shF,\delta,\zeta,\omega)\longmapsto(\shL_{\Omega},\shF_{\Omega},\alpha_{\Omega},\beta_{\Omega},\la-,-\ra_{\Omega})
\]
setting $\shL_{\Omega}=\shM\otimes\det\shF$, $\shF_{\Omega}=\shF$,
$\alpha_{\Omega}\colon\shL_{\Omega}\otimes\shF\arr\shF$ the trace
$0$ map obtained from $\zeta$ via \ref{lem:Trace zero map alpha description},
$\beta_{\Omega}=\beta_{\delta}\colon\Sym^{2}\shF\arr\shF$ (see (\ref{iso:delta beta}))
and finally $\la-,-\ra_{\Omega}=\omega\otimes\id_{\det\shF}\colon\det\shF\arr\shM\otimes\det\shF=\shL_{\Omega}$.

Remembering the notation introduced in \ref{not: trace of beta},
we want to prove the following Theorem.
\begin{thm}
\label{thm:description of the main component}We have 
\[
\stZ_{G}-\{0\}=\stU_{\omega}\cup\stU_{\alpha}\cup\stU_{\beta}
\]
(see \ref{def:the zero section}) and it is the smooth locus of $\stZ_{G}\to\Spec\shR$.

Moreover we have an equivalence of stacks.   \[   \begin{tikzpicture}[xscale=4.5,yscale=-0.6]     \node (A0_0) at (0, 0) {$\stZ$};     \node (A0_1) at (1, 0) {$\stZ_G$};     \node (A1_0) at (0, 1) {$\Omega=(\shM,\shF,\delta,\zeta,\omega)$};     \node (A1_1) at (1, 1) {$(\shL_{\Omega},\shF_{\Omega},\alpha_{\Omega},\beta_{\Omega},\la-,-\ra_{\Omega})$};     \path (A0_0) edge [->]node [auto] {$\scriptstyle{}$} (A0_1);     \path (A1_0) edge [|->,gray]node [auto] {$\scriptstyle{}$} (A1_1);   \end{tikzpicture}   \] 
\end{thm}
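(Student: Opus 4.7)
The plan is to split the theorem into two parts. For the first half $\stZ_{G}-\{0\}=\stU_{\omega}\cup\stU_{\alpha}\cup\stU_{\beta}$ being the smooth locus, I first verify the three opens lie inside $\stZ_{G}$: from their explicit presentations in Theorems \ref{thm:The-locus when omega is invertible}, \ref{thm:not degenerate locus of S3}, \ref{thm:the locus where beta is not zero}, the associated local parameters each satisfy $A+D=a+d=c+f=0$. The harder direction is to show that a geometric point $\chi\in\stZ_{G}$ lying outside all three opens is forced to lie in $\{0\}$: such a $\chi$ has $\omega=0$, $\alpha$ a scalar, and $d_{\beta}=0$. Since $\tr\alpha=0$ and $2$ is invertible, $\alpha=0$. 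Lemma \ref{lem:local basis for beta nowhere zero} combined with $\tr\beta=0$ gives $b=e=0$, $a=2d$, $f=2c$; feeding these into the relations $a^{2}+bc=c^{2}+de=0$ from \eqref{eq:loc com and ass conditions} together with $a+d=c+f=0$ forces every parameter of $\beta$ to vanish, so $\chi\in\{0\}$. Each $\stU_{*}$ being smooth over $\shR$, so is their union; combined with \ref{lem:zero not smooth}, this pinpoints $\stZ_{G}-\{0\}$ as the smooth locus.

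For the equivalence $\stZ\to\stZ_{G}$ the first task is well-definedness, i.e.\ checking that $\chi_{\Omega}$ satisfies Lemma \ref{lem:local conditions for the map for Sthree}. By construction of the trace-zero correspondence one has $\tr\alpha_{\Omega}=0$, and by Proposition \ref{prop:Miranda correspondence} one has $\tr\beta_{\delta}=0$. These automatically kill every condition of \eqref{eq:loc com and ass conditions} involving the factors $A+D$, $a+d$ or $c+f$, leaving only the five generators of the ideal $I$ of \ref{prop:ZG result on atlas}. A direct local calculation (trivialize $\shM$ and $\shF$, write $\delta,\zeta,\omega$ in coordinates, apply the trace-zero correspondence to read off $A,B,C$ from $\zeta$ and the formulas of \ref{nota:Miranda} to read off $a,b,c,e$ from $\delta$) translates condition \eqref{eq:first condition on M,F,delta,zeta,omega} into the two equations $2aA+bB+cC=0$ and $2cA-aB+eC=0$; using the identity $\eta_{\delta}=2(-,-)_{\chi}$ of \eqref{eq:eta delta is two times symmetric product}, condition \eqref{eq:second condition on M,F,delta,zeta,omega} translates into the remaining three equations $\omega C+a^{2}+bc=0$, $2\omega A-(ac+be)=0$, $\omega B-(c^{2}-ae)=0$.

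To conclude the functor is an equivalence I compare atlases. The smooth atlas for $\stZ_{G}$ is $\Spec P_{1}\to\stZ_{G}$, a $(\Gm\times\GL_{2})$-torsor obtained by trivializing $\shL$ and $\shF$, by Theorem \ref{thm:The Proj atlases} combined with \ref{prop:ZG result on atlas}. Trivializing $\shM$ and $\shF$ in $\stZ$ yields an affine atlas $Q$ whose coordinate ring, by the previous paragraph, is presented by exactly the same eight generators and five relations as $P_{1}$. The functor $\stZ\to\stZ_{G}$ lifts to a morphism $Q\to\Spec P_{1}$ that is equivariant with respect to the group automorphism $(\lambda,M)\mapsto(\lambda\det M,M)$ of $\Gm\times\GL_{2}$ reflecting the identification $\shL_{\Omega}=\shM\otimes\det\shF$; the identification of equations above shows this is an isomorphism of $\shR$-schemes, and passing to quotients gives the equivalence. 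The main obstacle I anticipate is the bookkeeping in the local computation of step two: carefully tracking signs, dualities and normalizations in the trace-zero correspondence $\zeta\leftrightarrow\alpha_{\Omega}$ and in the passage from $\delta$ to $\eta_{\delta}$ so that conditions \eqref{eq:first condition on M,F,delta,zeta,omega} and \eqref{eq:second condition on M,F,delta,zeta,omega} match the five generators of $I$ on the nose.
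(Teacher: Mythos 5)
Your proposal is correct and follows essentially the same route as the paper: the paper likewise reduces the set-theoretic identity to a geometric-point computation ($\omega=0$, $\alpha=\lambda\id$ with $\tr\alpha=0$, and $d_{\beta}=0$ forcing $\chi\in\{0\}$ via \ref{lem:local basis for beta nowhere zero} and (\ref{eq:loc com and ass conditions})) combined with \ref{lem:zero not smooth}, and it establishes the equivalence by the very same local translation of conditions (\ref{eq:first condition on M,F,delta,zeta,omega}) and (\ref{eq:second condition on M,F,delta,zeta,omega}) into the five generators of $I$, which the paper isolates as Lemma \ref{lem:for the description of Z Sthree}. Your concluding atlas comparison is only a repackaging of the paper's observation that this local correspondence is bijective onto the locus $\{\tr\alpha=\tr\beta=0\}=\stZ_{G}$.
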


\begin{cor}
\label{cor:the projective surface} The scheme $X=\Proj(\shR[a,b,c,e,A,B,C,\omega]/I)\to\Spec\shR$,
where
\[
I=(2aA+bB+cC,2cA-aB+eC,2\omega A-(ac+be),\omega B-(c^{2}-ae),\omega C+a^{2}+bc)
\]
is smooth and projective and its geometric fibers are non degenerated
surfaces in $\PP^{7}$. Moreover $(\stZ_{G}-\{0\})\simeq[X/\GL_{2}]$.
\end{cor}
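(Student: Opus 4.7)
The plan is to recognize that all the substance of the corollary has already been assembled in earlier results; what remains is to identify the scheme $X$ with an object from the previous section and then check a few routine dimension-theoretic facts. First, observe that the ring $\shR[a,b,c,e,A,B,C,\omega]/I$ in the statement is literally the ring $P_1$ of Proposition \ref{prop:ZG result on atlas}, given the canonical grading in which all eight generators have degree $1$. Hence $X = \Proj P_1$, so the quotient presentation $(\stZ_G - \{0\}) \simeq [X/\GL_2]$ is precisely the statement of Theorem \ref{thm:The Proj atlases} restricted to the main component.

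Projectivity of $X \to \Spec\shR$ is immediate because $P_1$ is a graded quotient of a polynomial ring over $\shR$ with all generators in degree $1$. For smoothness, the strategy is to transfer smoothness along the torsor: by Theorem \ref{thm:description of the main component}, the stack $\stZ_G - \{0\}$ is smooth over $\Spec\shR$. The projection $X \to [X/\GL_2] = \stZ_G - \{0\}$ is a $\GL_2$-torsor, hence a smooth surjection, and $\GL_2$ is smooth over $\shR$, so $X$ itself is smooth over $\shR$.

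The fibers being surfaces in $\PP^7$ breaks into two checks. For the dimension, Proposition \ref{prop:ZG result on atlas} already gives that $\Spec P_1 \to \Spec\shR$ is flat and geometrically integral; combined with the explicit analysis in that proposition's proof (which exhibits $P_1$ generically as $D[t]$ with $D$ a polynomial ring in four variables, giving relative dimension $3$), one concludes that $\Proj P_1$ has relative dimension $2$, i.e.\ each geometric fiber is a surface. For the ambient $\PP^7$ and non-degeneracy, note that $I$ is generated by five homogeneous elements of degree $2$, so $I$ contributes nothing in degree $1$; therefore $(P_1)_1$ is a free $\shR$-module of rank $8$ and the natural map $X \hookrightarrow \PP^7_\shR$ is an embedding by a complete linear system whose image is not contained in any hyperplane, i.e.\ is non-degenerate.

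The only steps requiring genuine work are the relative dimension count and the claim of geometric integrality of fibers, both of which were the content of Proposition \ref{prop:ZG result on atlas}; once those are in hand, the corollary is pure assembly. The remark about the five quadrics cutting $X$ out as a complete intersection (alluded to in the introduction) follows from comparing the expected codimension $5$ in $\PP^7$ with the actual relative dimension $2$ of $X$, so the five defining quadrics automatically form a regular sequence on the homogeneous coordinate ring, consistent with the Cohen--Macaulay property implied by smoothness.
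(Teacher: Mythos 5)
Your overall assembly coincides with the paper's: the quotient presentation, projectivity and smoothness are read off from Theorem \ref{thm:The Proj atlases} and Theorem \ref{thm:description of the main component}, exactly as in the printed proof, and that part is fine (your non-degeneracy argument, via the absence of linear forms in $I$ together with the geometric integrality of $P_1$, is also sound). The genuine gap is the dimension count. You invoke the proof of Proposition \ref{prop:ZG result on atlas}, which identifies $P_1$ on the locus $\omega\neq 0$ with $D[\omega]_{\omega}$ for $D=\shR[a,b,c,e]$ a polynomial ring in \emph{four} variables; that gives $\Spec P_1$ relative dimension $4+1=5$ over $\shR$, not $3$, and hence $\Proj P_1$ relative dimension $4$, not $2$. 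This is not a repairable slip in your write-up: on the affine chart $\omega\neq 0$ of $X$ the last three quadrics of $I$ solve for $A,B,C$ in terms of $a,b,c,e$ and the first two then become identities, so that chart is $\A^{4}$ over every geometric point. Equivalently, $\Bi G$ is a dense open $0$-dimensional substack of $\stZ_{G}-\{0\}\simeq[X/\GL_{2}]$, and $X\to[X/\GL_{2}]$ is a $\GL_{2}$-torsor, of relative dimension $\dim\GL_{2}=4$; either way $\dim X=4$. Consequently your closing argument --- that the five quadrics form a regular sequence because the ``actual'' dimension $2$ matches the expected codimension $5$ --- collapses: the actual codimension is $3$.

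You should be aware that the difficulty is inherited from the statement and from the paper's own proof, not created by you: the printed argument asserts that $f\colon X\to(\stZ_{G}-\{0\})$ has relative dimension $2$, whereas $f$ is by construction a $\GL_{2}$-torsor. Your route to the dimension (through the explicit ring $P_1$) differs from the paper's (through the fibration over $\stZ_{G}-\{0\}$ and the density of $\Bi G$), but each arrives at ``$2$'' only via an arithmetic error, and both routes, carried out correctly, give $4$. If you want to salvage a correct statement, prove instead that the geometric fibers of $X$ are smooth non-degenerate fourfolds in $\PP^{7}$; the complete-intersection claim must then be dropped, since five quadrics cutting out a codimension-$3$ subvariety cannot form a regular sequence.
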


\begin{proof}
The global claims follows from \ref{thm:The Proj atlases} and \ref{thm:description of the main component}.
For the geometry of the fibers, we base change to a geometric point
of the base. Since $f\colon X\to(\stZ_{G}-\{0\})$ has relative dimension
$2$ and $\Bi G$ is a $0$-dimensional open substack of $\stZ_{G}-\{0\}$,
it follows that $\dim X=\dim f^{-1}(\Bi G)=2$.
\end{proof}
We will prove Theorem \ref{thm:description of the main component}
after the following lemma.
\begin{lem}
\label{lem:for the description of Z Sthree}Let $\chi=(\shL,\shF,\alpha,\beta,\la-,-\ra)\in\stY(T)$
be such that $\tr\alpha=\tr\beta=0$ and set $\shM=\shL\otimes\det\shF^{-1}$.
Let also $\zeta\colon\shM\otimes(\det\shF)^{2}\arr\Sym^{2}\shF$ be
the map associated with $\alpha$ via \ref{lem:Trace zero map alpha description}
and $\delta=\delta_{\beta}\colon\Sym^{3}\shF\arr\det\shF$ (see (\ref{iso:delta beta})).
If $\shL=\odi T$, $y,z$ is a basis of $\shF$ and we use notation
from \ref{not: associated parameters for chi and Sthree}, we have
equivalences
\[
\text{the map (}\ref{eq:first condition on M,F,delta,zeta,omega}\text{) is zero}\iff\beta\circ\zeta=0\iff\left\{ \begin{array}{c}
2aA+bB+cC=0\\
2cA+eC-aB=0
\end{array}\right.
\]
\[
\text{the map (}\ref{eq:second condition on M,F,delta,zeta,omega}\text{) coincides with }\eta_{\delta}\iff\left\{ \begin{array}{c}
a^{2}+bc=-\omega C\\
ac+be=2\omega A\\
c^{2}-ae=B\omega
\end{array}\right.
\]
\end{lem}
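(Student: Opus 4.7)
The proof is a direct local computation. Fix a local basis $y,z$ of $\shF$ and a generator $t$ of $\shL$; these induce the generator $y\wedge z$ of $\det\shF$ and the generator $\overline{t}:=t\otimes(y\wedge z)^{-1}$ of $\shM$. The traceless hypotheses force $D=-A$, $d=-a$, $f=-c$, and the section $\langle-,-\rangle$ of $\shL=\shM\otimes\det\shF$ corresponds to $\omega\,\overline{t}\in\shM$. By Lemma \ref{lem:Trace zero map alpha description}, the quadratic form corresponding to the traceless map $\alpha$ is
\[
\zeta\bigl(\overline{t}\otimes(y\wedge z)^{2}\bigr)=B\,y^{2}-2A\,yz-C\,z^{2}.
\]

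For the first equivalence, the map (\ref{eq:first condition on M,F,delta,zeta,omega}) factors through multiplication $\Sym^{2}\shF\otimes\shF\to\Sym^{3}\shF$, and the defining identity $\delta_{\beta}(uvw)=u\wedge\beta(vw)$ (from (\ref{iso:delta beta})) shows that, after fixing the above generator of $\shM\otimes(\det\shF)^{2}$, the composition (\ref{eq:first condition on M,F,delta,zeta,omega}) becomes the map $\shF\to\det\shF$ sending $u\mapsto u\wedge\beta(\zeta(\cdot))$. Since wedge multiplication by a fixed element of the rank-two bundle $\shF$ vanishes identically precisely when that element is zero, this composition vanishes iff $\beta\circ\zeta=0$. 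Expanding $\beta$ on the quadratic form above and using $d=-a$, $f=-c$ yields
\[
\beta(B\,y^{2}-2A\,yz-C\,z^{2})=-(2cA+eC-aB)\,y+(2aA+bB+cC)\,z,
\]
so the vanishing of this vector is precisely the two stated equations.

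For the second equivalence, compute $\check{\zeta}$ on the basis $y^{2},yz,z^{2}$ of $\Sym^{2}\shF$ using the natural pairing $\Sym^{2}\shF\otimes\Sym^{2}\shF\to(\det\shF)^{\otimes 2}$, defined by $(u_{1}u_{2},v_{1}v_{2})=\tfrac{1}{2}[(u_{1}\wedge v_{1})(u_{2}\wedge v_{2})+(u_{1}\wedge v_{2})(u_{2}\wedge v_{1})]$, whose Gram matrix on this basis is
\[
\bigl(\begin{smallmatrix}0&0&1\\ 0&-1/2&0\\ 1&0&0\end{smallmatrix}\bigr)\,(y\wedge z)^{2}.
\]
Pairing the quadratic form with the basis elements and extracting the resulting element of $\shM^{-1}$ gives $\check{\zeta}(y^{2})=-C\,\overline{t}^{-1}$, $\check{\zeta}(yz)=A\,\overline{t}^{-1}$, $\check{\zeta}(z^{2})=B\,\overline{t}^{-1}$. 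Composition with $\omega^{\vee}$, which evaluates $\overline{t}^{-1}$ at $\omega\,\overline{t}$ to produce $\omega$, yields $-\omega C,\omega A,\omega B$. On the other hand, formula (\ref{eq:etabeta}) for $\eta_{\delta_{\beta}}$, combined with $d=-a$ and $f=-c$, gives $\eta_{\delta}(y^{2})=a^{2}+bc$, $\eta_{\delta}(yz)=(ac+be)/2$, $\eta_{\delta}(z^{2})=c^{2}-ae$. Matching coefficients of $y^{2},yz,z^{2}$ produces the three stated equations.

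The main obstacle is bookkeeping: the canonical identifications $\shF^{\vee}\simeq\shF\otimes(\det\shF)^{-1}$ and its $\Sym^{2}$-analogue, together with the factor of $2$ on the diagonal of the Gram matrix (responsible for the coefficient in $ac+be=2\omega A$), must be made consistent with the sign conventions used in Section~\ref{subsec:Trace zero alpha} to produce $\zeta$ and in formula~(\ref{eq:etabeta}) to produce $\eta_{\delta}$. Once the explicit form of $\zeta$ is established, each of the five equations reduces to matching a single coefficient in a short symbolic expansion.
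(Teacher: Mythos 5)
Your overall strategy is the same as the paper's: reduce everything to the local parameters, compute $\zeta=By^{2}-2Ayz-Cz^{2}$, and match coefficients. The first equivalence is handled exactly as in the paper and is correct. In the second equivalence, however, both of your intermediate computations disagree with the paper's own normalizations by a factor of $2$, and your final equations come out right only because the two discrepancies cancel. Concretely: the isomorphism (\ref{iso:terzo alpha}) sends $(u,v,w)$ to $(2w,-v,2u)$, so $\check{\zeta}=-2C(y^{2})^{*}+2A(yz)^{*}+2B(z^{2})^{*}$, not $(-C,A,B)$; the pairing you use, with Gram matrix $\left(\begin{smallmatrix}0&0&1\\ 0&-1/2&0\\ 1&0&0\end{smallmatrix}\right)(y\wedge z)^{2}$, is one half of the pairing actually induced by the composite defining (\ref{iso:terzo alpha}), whose middle map sends $(y^{*})^{2}\mapsto 2(y^{2})^{*}$. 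Likewise, a direct evaluation of (\ref{eq:etabeta}) --- or simply reading (\ref{eq:expression of eta delta}) --- gives $\eta_{\delta}(y^{2})=2(a^{2}+bc)$, $\eta_{\delta}(yz)=ac+be$, $\eta_{\delta}(z^{2})=2(c^{2}-ae)$, which is twice each of your claimed values. Since both sides of the equality being tested are scaled by the same unit $1/2$, the resulting system of three equations is unaffected and the lemma's conclusion still follows; but as written your proof asserts two local formulas that are false under the paper's conventions. You should either redo the computation with the paper's normalizations (as the paper does, comparing $\omega\check{\zeta}=(-2C\omega,\,2A\omega,\,2B\omega)$ with $(2(a^{2}+bc),\,ac+be,\,2(c^{2}-ae))$), or explicitly note that your pairing computes $\check{\zeta}$ only up to the global unit $2$ and that the same unit appears on the $\eta_{\delta}$ side, so the equivalence is unchanged.
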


\begin{proof}
The conditions $\tr\alpha=\tr\beta=0$ means that $a+d=c+f=A+D=0$.
We have expressions 
\[
\zeta=By^{2}-2Ayz-Cz^{2}\comma\check{\zeta}=-2C(y^{2})^{*}+2A(yz)^{*}+2B(z^{2})^{*}
\]
thanks to (\ref{iso:primo alpha}) and (\ref{iso:terzo alpha}). In
particular
\[
\beta(\zeta)=(aB-2cA-eC)y+(2aA+bB+cC)z
\]
By definition of $\delta_{\beta}$, the composition \ref{eq:first condition on M,F,delta,zeta,omega}
is $\shF\ni x\mapsto x\wedge\beta(\zeta)\in\det\shF$. The first equivalence
follows from expressions
\[
y\wedge\beta(\zeta)=(2aA+bB+cC)y\wedge z\text{ and }z\wedge\beta(\zeta)=-(aB-2cA-eC)y\wedge z
\]
The second one instead follows from the expression of $\eta_{\delta}$
given in \ref{eq:expression of eta delta} and the fact that the map
\ref{eq:second condition on M,F,delta,zeta,omega} is just $\omega\check{\zeta}$.
\end{proof}
\begin{proof}
[Proof of Theorem \ref{thm:description of the main component}] From
\ref{lem:for the description of Z Sthree} we see that $\stZ\to\GCov$
is well defined and is an equivalence onto the locus $\{\tr\alpha=\tr\beta=0\}$,
which coincides with $\stZ_{G}$ thanks to \ref{thm:geometry of GCov and components}.

For the first claim we use Theorems \ref{thm:geometry of GCov and components},
\ref{thm:The-locus when omega is invertible}, \ref{thm:not degenerate locus of S3}
and \ref{thm:the locus where beta is not zero}. They tell us that
$\stU_{\omega}\cup\stU_{\alpha}\cup\stU_{\beta}\subseteq\stZ_{G}$
is an open substack and it is smooth and geometrically integral over
$\shR$. By \ref{lem:zero not smooth} it remains to show that, topologically,
$\stZ_{G}-(\stU_{\omega}\cup\stU_{\alpha}\cup\stU_{\beta})=\{0\}$.
If $k$ is an algebraically closed field and $\chi\in\stZ_{G}(k)$
is represented by local parameters as usual then $\chi\notin(\stU_{\omega}\cup\stU_{\alpha}\cup\stU_{\beta})$
if and only if $\omega=0$, $d_{\beta}=0$ and $\alpha=\lambda\id$.
As $\tr\alpha=0$ it follows that $\alpha=0$. By \ref{lem:local basis for beta nowhere zero}
the condition $d_{\beta}=0$ implies $b=e=0$. By \ref{eq:loc com and ass conditions}
we also have $a^{2}=c^{2}=0$ which tells us that $a=c=0$ and, since
$\tr\beta=0$, $d=f=0$. In conclusion $\chi\notin(\stU_{\omega}\cup\stU_{\alpha}\cup\stU_{\beta})$
if and only if $\chi\in\{0\}$.
\end{proof}
\begin{proof}
[Proof of Theorem \ref{thm:smooth locus of GCov}] The first equivalence
is \ref{thm:The Proj atlases}. Moreover $\GCov$ is smooth outside
$\{0\}$ by \ref{thm:description of the main component}. The result
then follows from \ref{lem:zero not smooth}.
\end{proof}
\begin{rem}
Given $\chi\in\stZ_{G}$ and denoted by $(\shM,\shF,\delta,\zeta,\omega)$
the corresponding object via \ref{thm:description of the main component},
one can check locally that the composition 
\[
\shM^{2}\otimes(\det\shF)^{2}\arrdi{\id_{\shM}\otimes\zeta}\shM\otimes\Sym^{2}\shF\arrdi{\id_{\shM}\otimes\check{\zeta}}\shM\otimes\shM^{-1}\simeq\odi T
\]
is $-4m_{\chi}$, where as usual $m_{\chi}\colon\shL_{\chi}^{2}\simeq\shM^{2}\otimes(\det\shF)^{2}\arr\odi S$.
\end{rem}

\appendix

\section{\label{sec:Bitorsors}Bitorsors.}

In this section we recall some properties of bitorsors. One can compare
definitions and results with \cite[Chapter III, Section 1.5]{Giraud1971}.
Let us fix a site $\catC$ and two sheaves of groups $G,H\colon\catC^{\op}\to\Grp$.

If $F\colon\catC^{\op}\arr\sets$ is a functor and $S\in\catC$ we
denote by $F_{S}$ the composition $(\catC/S)^{\op}\arr\catC^{\op}\arr\sets$
and we (improperly) call it the restriction to $S$. If $F$ is a
sheaf (of groups) then $F_{S}$ is a sheaf (of groups). If $F\colon(\catC/S)^{\op}\arr\sets$
is a functor an action of $G$ on $F$ is an action of $G_{S}$ on
$F$.

In this section
\begin{defn}
\label{def:bitorsors} We denote by $\Sh^{G}(\catC)$ (resp. $\Sh_{\catC}^{G}$)
the category (resp. fibered category) of sheaves over $\catC$ with
a right $G$-action. In particular $\Sh_{\catC}^{G}(S)=\Sh^{G_{S}}(\catC/S)$
for $S\in\catC$ and the pullbacks are the restrictions. 

The left (resp. right) regular action of $G$ on itself is the left
(resp. right) action given by 
\[
G\times G\arr G\comma(g,h)\arr gh
\]
A left (resp. right) $G$-torsor over $\catC$ is a sheaf $P\colon\catC^{op}\arr\set$
with a left action $G\times P\arr P$ (resp. right action $P\times G\arr P$)
such that, for all $T\in\catC$, $P_{T}$ is locally isomorphic to
$G_{T}$ endowed with the left (resp. right) regular action. A left
(resp. right) $G$-torsor over $S\in\catC$ is a left (resp. right)
$G_{S}$-torsor over $\catC/S$.

A $(G,H)$-biaction on a sheaf $P\colon\catC^{op}\arr\set$ is a pair
$(G\times P\arrdi uP,P\times H\arrdi vP)$ where $u$ and $v$ are,
respectively, a left $G$-action and a right $H$-action on $P$,
such that the following diagram is commutative.   \[   \begin{tikzpicture}[xscale=3.0,yscale=-1.2]     \node (A0_0) at (0, 0) {$G\times P \times H$};     \node (A0_1) at (1, 0) {$P\times H$};     \node (A1_0) at (0, 1) {$G\times P$};     \node (A1_1) at (1, 1) {$P$};     \path (A0_0) edge [->]node [auto] {$\scriptstyle{u\times \id_{H}}$} (A0_1);     \path (A0_0) edge [->]node [auto] {$\scriptstyle{\id_{G} \times v}$} (A1_0);     \path (A0_1) edge [->]node [auto] {$\scriptstyle{v}$} (A1_1);     \path (A1_0) edge [->]node [auto] {$\scriptstyle{u}$} (A1_1);   \end{tikzpicture}   \] 
A $(G,H)$-bitorsor over $\catC$ (resp. $S\in\catC$) is a sheaf
$P\colon\catC{}^{op}\arr\set$ (resp. $P\colon(\catC/S)^{op}\arr\set$)
with a $(G,H)$-biaction for which $P$ is both a left $G$-torsor
and a right $H$-torsor. Denote by $\Bi(G,H)$ the fibered category
over $\catC$ of $(G,H)$-bitorsors. Notice that a $(G,H)$ torsor
over $\catC$ corresponds to a section $\catC\arr\Bi(G,H)$.
\end{defn}

\begin{rem}
The fibered category $\Bi(G,H)$ is a stack over $\catC$. This is
easy to prove directly, using the fact that $\Sh_{\catC}$, the fibered
category of sheaves of sets over $\catC$, is a stack (see \cite[Part 1, Example 4.11]{Fantechi2007}).
\end{rem}

\begin{rem}
Given a left $G$-action $u\colon G\times P\arr P$ and a right $H$-action
$v\colon P\times H\arr P$ , the pair $(u,v)$ is a $(G,H)$-biaction
if and only if the homomorphism $G\arr\Autsh P$ induced by $u$ factors
through $\Autsh^{H}P$, that is if $G$ acts through $H$-equivariant
isomorphisms.
\end{rem}

\begin{lem}
\label{lem:criterion bitorsor} Let $P$ be a sheaf with a $(G,H)$-biaction.
Then $P$ is a $(G,H)$-bitorsor if and only if $P$ is a right $H$-torsor
and $G\to\Autsh^{H}(P)$ is an isomorphism (or $P$ is a left $G$-torsor
and $H\to\Autsh^{G}(P)$ is an isomorphism).
\end{lem}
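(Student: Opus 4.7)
The plan is to reduce to the locally trivial situation and then carry out the key computation $\Autsh^{H}(H)\simeq H$, where $H$ is viewed as its own right regular torsor and the isomorphism is by left multiplication. This is essentially the only non-formal ingredient: once it is in place, both implications of the lemma follow.

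First I would record the following computation. If we regard $H$ as a right $H$-torsor via the right regular action, then the homomorphism
\[
H\longrightarrow\Autsh^{H}(H)\comma h\longmapsto(x\mapsto hx)
\]
is an isomorphism of sheaves of groups: an $H$-equivariant endomorphism $\phi$ of $H$ is determined by $\phi(1)$, and associativity of $H$ implies $\phi(x)=\phi(1)x$. This computation globalizes to any right $H$-torsor $P$: if $P$ has a section $s\in P(T)$, then the induced isomorphism $H_{T}\to P_{T}$, $h\mapsto sh$, transports the above identification to $\Autsh^{H}(P_{T})\simeq H_{T}$, where $g\in H_{T}$ corresponds to the automorphism sending $sh$ to $sgh$. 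In particular $\Autsh^{H}(P)$ is a (twisted) form of $H$, and since right $H$-torsors are locally trivial, the isomorphism class of $\Autsh^{H}(P)$ can be checked locally.

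For the $(\Leftarrow)$ direction, suppose $P$ is a right $H$-torsor and $G\to\Autsh^{H}(P)$ is an isomorphism. Passing to a cover where $P$ admits a section, the preceding paragraph gives isomorphisms $P\simeq H$ (as right $H$-torsors) and $\Autsh^{H}(P)\simeq H$ (by left multiplication), hence an induced isomorphism $G\to H$ under which the left $G$-action on $P$ corresponds to left multiplication of $H$ on itself. But the latter is the regular left $H$-torsor structure, so $P$ is locally isomorphic, as a sheaf with left $G$-action, to $G$ with its left regular action, i.e.\ $P$ is a left $G$-torsor. Combined with the hypothesis that $P$ is a right $H$-torsor, we conclude that $P$ is a $(G,H)$-bitorsor.

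For the $(\Rightarrow)$ direction, if $P$ is a $(G,H)$-bitorsor the compatibility of the two actions means precisely that the left $G$-action factors through $G\to\Autsh^{H}(P)$. To show this map is an isomorphism, I work locally on a cover trivializing $P$ simultaneously as a right $H$-torsor and as a left $G$-torsor. On such a cover $P\simeq H$ (right regular), so $\Autsh^{H}(P)\simeq H$ by the key computation, and the induced map $G\to H$ classifies the left $G$-torsor structure on $P\simeq H$. Since this left $G$-action makes $P$ into a left $G$-torsor and $H$ is a torsor over itself via left multiplication, $G\to H$ must be an isomorphism of sheaves of groups. Being a local isomorphism of sheaves, $G\to\Autsh^{H}(P)$ is an isomorphism. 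The parenthetical variant (swapping the roles of $G$ and $H$) is proved in the same way. The main (and only) technical point is the identification $\Autsh^{H}(H)\simeq H$; everything else is a local-to-global descent argument using that torsors are locally trivial.
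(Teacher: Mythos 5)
Your proof is correct and follows essentially the same route as the paper: both reduce to the locally trivial case $P=H$, use the identification $\Autsh^{H}(H)\simeq H$ by left multiplication, and observe that the resulting map $\phi\colon G\to H$ is precisely the orbit map at the global section $1$, so that $P$ is a left $G$-torsor exactly when $\phi$ is an isomorphism. The paper merely handles both implications in one stroke via this last equivalence, whereas you spell out the two directions separately; the content is the same.
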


\begin{proof}
We prove only the first claim. In particular we can assume that $P$
is an $H$-torsor. Moreover, as the claim is local, we can also assume
$P=H$. We therefore have a map
\[
\phi\colon G\to\Autsh^{H}(H)\simeq H\text{ such that }gh=\phi(g)h
\]
As $1$ is a global section of $P=H$, $P$ is a $G$-torsor (that
is $P$ is a $(G,H)$-bitorsor) if and only if the orbit map $G\to H$,
$g\mapsto g\cdot1$ is an isomorphism. Since this last map is $\phi$
we get the claim.
\end{proof}
We want to define a functor
\[
\Lambda\colon\Bi(G,H)\to\Homsh(\Sh_{\catC}^{G},\Sh_{\catC}^{H})
\]
As usual, we define this only over the global sections. If $P$ is
a $(G,H)$-torsor over $\catC$ and $X\in\Sh^{G}(\catC)$, then $X\times P$
has a right free action of $G$ given by $(x,p)g=(xg,g^{-1}p)$. Its
quotient $\Lambda_{P}(X)=(X\times P)/G$ has a right $H$-action induced
by the one of $P$, so that $\Lambda_{P}(X)\in\Sh^{H}(\catC)$. Notice
that, since $G$ acts freely on $X\times P$, the quotient $(X\times P)/G$
can be defined avoiding to sheafify the naive quotient (and thus avoiding
the corresponding set theoretic problems): $(X\times P)/G\colon\shC\to\sets$
maps an object $S\in\catC$ to the set of $G$-torsors $Q\to S$ together
with a $G$-equivariant map $Q\to X\times P$, in other words the
quotient stack $[X\times P/G]$ is actually equivalent to a sheaf.

\begin{lem}
\label{rem:trivial bitorsor} Let $P$ be a $(G,H)$-torsor over $S\in\catC$
with a section $p_{0}\in P(S)$. Then:
\begin{enumerate}
\item the orbit maps $G_{S}\to P,g\mapsto gp_{0}$ and $H_{S}\to P,h\mapsto p_{0}h$
are isomorphisms and the induced map $\phi\colon G_{S}\to H_{S}$
is an isomorphism of groups such that $gp_{0}=p_{0}\phi(g)$: in other
words $P$ is isomorphic to the $(G,H)$-torsor $H$ with left $G$-action
$gh=\phi(g)h$;
\item the composition $\Sh_{\catC/S}^{G_{S}}\arrdi{\Lambda_{P}}\Sh_{\catC/S}^{H_{S}}\to\Sh_{\catC/S}^{G_{S}}$,
where the second map is induced by $\phi$, is isomorphic to the identity.
\end{enumerate}
\end{lem}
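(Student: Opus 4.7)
The plan is to handle the two items in order. For part (1), the section $p_{0}$ trivializes $P$ both as a left $G_{S}$-torsor and as a right $H_{S}$-torsor, so the two orbit maps $\mu_{G}\colon G_{S}\to P$, $g\mapsto gp_{0}$ and $\mu_{H}\colon H_{S}\to P$, $h\mapsto p_{0}h$ are isomorphisms of sheaves, by the standard characterization of torsors having a global section. Setting $\phi=\mu_{H}^{-1}\circ\mu_{G}$ produces a map $\phi\colon G_{S}\to H_{S}$ satisfying $gp_{0}=p_{0}\phi(g)$ by construction. To check that $\phi$ respects the group laws I would apply the bitorsor compatibility $g(p_{0}h)=(gp_{0})h$ with $h=\phi(g')$, obtaining
\[
p_{0}\phi(gg')=(gg')p_{0}=g(p_{0}\phi(g'))=(gp_{0})\phi(g')=p_{0}\phi(g)\phi(g'),
\]
and then cancelling $p_{0}$ using the injectivity of $\mu_{H}$. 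The isomorphism of $(G_{S},H_{S})$-bitorsors between $P$ and $H_{S}$ equipped with the stated twisted left $G_{S}$-action is then simply $\mu_{H}$, which is equivariant by the defining property of $\phi$.

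For part (2), I would define the natural transformation
\[
\eta_{X}\colon X\longrightarrow\Lambda_{P}(X),\qquad x\longmapsto[x,p_{0}]
\]
for each $X\in\Sh_{\catC/S}^{G_{S}}$. The right $G_{S}$-action on $\Lambda_{P}(X)$ obtained by pulling the right $H_{S}$-action $[x,p]\cdot h=[x,ph]$ back along $\phi$ is $[x,p]\cdot g=[x,p\phi(g)]$, so the required equivariance $\eta_{X}(xg)=\eta_{X}(x)\cdot g$ follows from the chain
\[
[xg,p_{0}]=[x,gp_{0}]=[x,p_{0}\phi(g)]=[x,p_{0}]\cdot\phi(g),
\]
in which the first equality uses the free $G_{S}$-action $(x,p)g=(xg,g^{-1}p)$ on $X\times P$ applied with $g^{-1}$, the second is the defining relation for $\phi$, and the third is the $H_{S}$-action on $\Lambda_{P}(X)$. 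Bijectivity of $\eta_{X}$ is then verified by means of $\mu_{G}^{-1}$: any representative $(x,p)\in X\times P$ is $G_{S}$-equivalent to $(x\cdot\mu_{G}^{-1}(p),p_{0})$, which yields surjectivity, while injectivity reduces to the triviality of the $G_{S}$-stabilizer of $p_{0}$, immediate from $\mu_{G}$ being an isomorphism. Naturality in $X$ is transparent from the formula.

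The only mild obstacle is the careful bookkeeping of the two sides of the biaction together with the quotient $(X\times P)/G_{S}$; however, since $G_{S}$ acts freely on $P$, and hence on $X\times P$, this quotient is a genuine sheaf (as noted in the paragraph defining $\Lambda_{P}$), so manipulations with representatives $[x,p]$ introduce no sheafification issue and the above computations make sense section-wise.
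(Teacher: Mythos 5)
Your proposal is correct and follows essentially the same route as the paper: the orbit maps trivialize $P$ on both sides, $\phi$ is defined by $gp_{0}=p_{0}\phi(g)$, and the mutually inverse maps $x\mapsto[x,p_{0}]$ and $[x,gp_{0}]\mapsto xg$ give the identification of the composite with the identity. You merely spell out the details (the homomorphism property of $\phi$ via the biaction compatibility, and the equivariance and bijectivity checks) that the paper leaves implicit.
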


\begin{proof}
Since $P$ is a left $G$-torsor and right $H$-torsor the orbit maps
are isomorphisms. Thus the map $\phi\colon G_{S}\to H_{S}$ is well
defined: $\phi(g)$ is the unique element such that $gp_{0}=p_{0}\phi(g)$.
This also allows to prove that $\phi$ is an isomorphism of groups.
For the second part, it is enough to notice that the maps
\[
X\to\Lambda_{P}(X),x\mapsto(x,p_{0})\comma\Lambda_{P}(X)\to X,(x,gp_{0})\to xg
\]
are well defined, inverses of each other and $G$-equivariant.
\end{proof}
\begin{rem}
Any $(G,H)$-biaction on a sheaf $P$ induces an $(H,G)$-biaction
on $P$ by the rule: $g\star p\star h=h^{-1}pg^{-1}$. Moreover a
$(G,H)$-torsor $P$ is naturally also an $(H,G)$-torsor, which we
denote by $P^{\star}$, and this operation defines an isomorphism
of stacks $\Bi(G,H)\to\Bi(H,G).$
\end{rem}

\begin{lem}
\label{lem:LambdaP equivalence} If $P$ is a $(G,H)$-torsor and
$X\in\Sh^{G}(\catC)$ there is a canonical $G$-equivariant isomorphism
of sheaves $\Lambda_{P^{\star}}(\Lambda_{P}(X))\to X$. In particular
the functor $\Lambda_{P}\colon\Sh_{\catC}^{G}\to\Sh_{\catC}^{H}$
is an equivalence of stacks.
\end{lem}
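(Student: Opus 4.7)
The plan is to build an explicit natural $G$-equivariant morphism $\Phi_X\colon \Lambda_{P^\star}(\Lambda_P(X))\to X$ out of the intrinsic ``division'' pairing of the bitorsor $P$, and then check that it is an isomorphism by an fppf-local reduction to the trivial case handled in Lemma \ref{rem:trivial bitorsor}. Since $P$ is a left $G$-torsor, there is a unique morphism of sheaves $\mu\colon P\times P\to G$ characterized by the identity $p_1 = \mu(p_1,p_2)\cdot p_2$ for local sections $p_1,p_2$ of $P$. The commutativity of the left $G$- and right $H$-actions on $P$ yields $\mu(p_1h,p_2h)=\mu(p_1,p_2)$, and from the defining identity one reads off $\mu(g^{-1}p_1,p_2)=g^{-1}\mu(p_1,p_2)$ and $\mu(p_1,g^{-1}p_2)=\mu(p_1,p_2)g$.

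With these three identities in hand, I define
\[
(X\times P)\times P^\star \longrightarrow X,\qquad (x,p_1,p_2)\longmapsto x\cdot \mu(p_1,p_2).
\]
Using the explicit description of the two quotients $(X\times P)/G$ and $\Lambda_{P^\star}(\Lambda_P(X)) = (\Lambda_P(X)\times P^\star)/H$ together with the conventions $(x,p)g=(xg,g^{-1}p)$ and $g\star p\star h=h^{-1}pg^{-1}$ defining $P^\star$, the $G$-quotient invariance follows from the second identity, the $H$-quotient invariance from the first, and $G$-equivariance (for the residual right $G$-action induced by $P^\star$) from the third. Thus the displayed map descends to a well-defined $G$-equivariant morphism $\Phi_X\colon \Lambda_{P^\star}(\Lambda_P(X))\to X$, natural in $X\in\Sh^G(\catC)$.

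To show $\Phi_X$ is an isomorphism, I invoke that this is an fppf-local property. After passing to an fppf cover on which $P$ admits a section $p_0$, Lemma \ref{rem:trivial bitorsor} trivializes $P$ as a bitorsor via the group isomorphism $\phi\colon G\to H$ defined by $gp_0=p_0\phi(g)$, and $P^\star$ is trivialized analogously by the same section viewed in $P^\star$. In this trivialized setting one computes directly that $\Phi_X$ coincides, under the natural isomorphisms, with the identity of $X$, or equivalently identifies $\Phi_X$ with the composite of the two isomorphisms furnished by Lemma \ref{rem:trivial bitorsor}(2), applied to $P$ and then to $P^\star$. Hence $\Phi_X$ is an isomorphism globally. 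Finally, applying the first claim with $P$ replaced by $P^\star$ and using $(P^\star)^\star=P$ yields a natural isomorphism $\Lambda_P\circ\Lambda_{P^\star}\simeq\id$, so $\Lambda_{P^\star}$ is quasi-inverse to $\Lambda_P$ fiberwise; the construction is plainly compatible with restriction along $\catC$, so $\Lambda_P$ upgrades to an equivalence of stacks.

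The main obstacle I anticipate is purely bookkeeping: the conventions about right $G$-actions on $X$, the formula for the $G$-quotient, and the definition of $P^\star$ all flip signs and sides in slightly different ways, so keeping the three equivariance identities for $\mu$ aligned with the correct quotients and with the residual action on $P^\star$ requires care. Once $\mu$ is in place and the orientation of actions is fixed, both the descent/equivariance verification and the local triviality argument reduce to direct checks.
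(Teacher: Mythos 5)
Your proof is correct and follows essentially the same route as the paper: the pairing $\mu$ is exactly the paper's $\psi\colon P\times P^{\star}\to G$ determined by $p=\psi(p,q)q$, the equivariance identities and the descent of $(x,p,q)\mapsto x\psi(p,q)$ are the same, and the isomorphism is likewise checked by passing to a cover where $P$ has a section and invoking Lemma \ref{rem:trivial bitorsor}. The concluding step, applying the statement to $P^{\star}$ to obtain a quasi-inverse, is also the intended argument.
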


\begin{proof}
There is a map $\psi\colon P\times P^{\star}\to G$ determined by
the rule $p=\psi(p,q)q$ for $p,q\in P$. A simple computation shows
that $\psi(up,vq)=u\psi(p,q)v^{-1}$ for $u,v\in G$ and $\psi(ph,qh)=\psi(p,q)$
for $h\in H$. Using those expressions it is easy to show that the
map
\[
X\times P\times P^{\star}\to X\comma(x,p,q)\longmapsto x\psi(p,q)
\]
factors through a $G$-equivariant map $\Lambda_{P^{\star}}(\Lambda_{P}(X))\to X$.
Going locally where $P$ has a section and using \ref{rem:trivial bitorsor}
it is easy to show that this map is an isomorphism.
\end{proof}
\begin{prop}
\label{prop:bitorsors and isomorphisms SHG - SHH}The functor $\Lambda\colon\Bi(G,H)\to\Homsh(\Sh_{\catC}^{G},\Sh_{\catC}^{H})$
is an equivalence onto the full substack of functors which are fully
faithful and maps $G$ to an $H$-torsor. Moreover $\Lambda_{P}$
is an equivalence for $P\in\Bi(G,H)$ and restricts to an equivalence
$\Lambda_{P}\colon\Bi G\to\Bi H$. The induced functor $\Lambda\colon\Bi(G,H)\to\Homsh(\Bi G,\Bi H)$
is an equivalence onto the full substack of functors which are fully
faithful.
\end{prop}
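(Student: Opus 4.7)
The plan is to leverage Lemma \ref{lem:LambdaP equivalence} (which already gives that $\Lambda_P$ is an equivalence with quasi-inverse $\Lambda_{P^\star}$) and the criterion Lemma \ref{lem:criterion bitorsor} for recognizing bitorsors. I will proceed in four steps.

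First, I will check that $\Lambda_P$ restricts to an equivalence $\Bi G \to \Bi H$. The key computation is $\Lambda_P(G) = (G\times P)/G \simeq P$ where $G$ carries the right regular action, which is an $H$-torsor by assumption. Since being a torsor is local on $\catC$ and $P$ admits sections locally, Lemma \ref{rem:trivial bitorsor} reduces the claim to noting that the induced group isomorphism $G_S \to H_S$ on a section produces an equivalence $\Bi G_S \to \Bi H_S$.

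Second, I will show essential surjectivity of $\Lambda$ onto the described subcategory. Given $F\colon\Sh^G_\catC \to \Sh^H_\catC$ fully faithful with $P := F(G)$ an $H$-torsor, I need to reconstruct the $(G,H)$-bitorsor structure on $P$. The left regular action of $G$ on itself commutes with the right one and therefore defines an isomorphism $G\simeq\Autsh^G_{\Sh^G}(G)$ of sheaves of groups. Full faithfulness of $F$ transports this to $G\simeq\Autsh^G_{\Sh^G}(G)\simeq\Autsh^H_{\Sh^H}(P)$, giving a left $G$-action on $P$ by $H$-equivariant automorphisms, hence a $(G,H)$-biaction; by Lemma \ref{lem:criterion bitorsor} the sheaf $P$ is a $(G,H)$-bitorsor. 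Then $\Lambda_P$ and $F$ agree on $G$ as $H$-sheaves, and one checks they agree on all of $\Sh^G_\catC$ as follows: $\Lambda_{P^\star}\circ F\colon\Sh^G_\catC \to \Sh^G_\catC$ is fully faithful, sends $G$ to $G$ and realizes every automorphism of $G$ on itself, so (after going locally where $P$ is trivial and reducing via Lemma \ref{rem:trivial bitorsor} to a functor of the form induced by a group isomorphism) it is isomorphic to the identity. Composing with $\Lambda_P$ gives $F\simeq\Lambda_P$.

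Third, I will verify that $\Lambda$ is fully faithful as a functor of stacks: a natural transformation $\Lambda_P \to \Lambda_Q$ evaluated on $G$ yields a morphism $P\to Q$ which one checks is automatically $G$- and $H$-equivariant (the $H$-action comes from the construction, the $G$-equivariance from naturality applied to the automorphism $G\to G$ given by left translation). This produces a quasi-inverse to the map $\Hom_{\Bi(G,H)}(P,Q)\to \mathrm{Nat}(\Lambda_P,\Lambda_Q)$.

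Finally, the induced statement $\Bi(G,H)\to\Homsh(\Bi G,\Bi H)$ follows by repeating the same argument restricted to torsors: the reconstruction of $P=F(G)$ and its $G$-action uses only the object $G\in\Bi G$ and its automorphisms $\Autsh^G_{\Bi G}(G)=G$, so the whole machinery carries over verbatim. I expect the main obstacle to be the rigorous justification in step two that $F\simeq\Lambda_P$ as functors on the entirety of $\Sh^G_\catC$ and not merely on the object $G$: this is where local triviality of $P$, the identification from Lemma \ref{rem:trivial bitorsor}, and some amount of stack-theoretic descent have to be combined carefully.
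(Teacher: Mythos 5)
Your overall strategy is the same as the paper's: use Lemma \ref{lem:LambdaP equivalence} to see that $\Lambda$ lands in the stated substack, recover the bitorsor from a fully faithful $F$ as $P=F(G)$ with left $G$-action coming from $G\simeq\Autsh^{G}(G)\to\Autsh^{H}(F(G))$ and Lemma \ref{lem:criterion bitorsor}, and then identify $F$ with $\Lambda_{P}$. The paper packages this as a pair of natural isomorphisms $Y\circ\Lambda\simeq\id$ and $\Lambda\circ Y\simeq\id$ for the evaluation functor $Y=(\,\text{ev at }G\,)$, which subsumes your separate full-faithfulness step, but that is only a difference of bookkeeping.

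There is, however, a genuine gap exactly where you flagged one. Your argument that $\Lambda_{P^{\star}}\circ F\simeq\id$ runs: it is fully faithful, fixes $G$, and ``after going locally where $P$ is trivial and reducing via Lemma \ref{rem:trivial bitorsor} to a functor of the form induced by a group isomorphism, it is isomorphic to the identity.'' But Lemma \ref{rem:trivial bitorsor} only controls functors of the form $\Lambda_{Q}$; it says nothing about an abstract fully faithful endofunctor $\Omega$ of $\Sh_{\catC}^{G}$, and such a functor is not determined by its value on the single object $G$ together with the induced map on $\Autsh^{G}(G)$ — you still need to produce a \emph{natural} isomorphism $\Omega(X)\simeq X$ for every $X$. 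The paper supplies precisely this missing ingredient: using $X\simeq\Homsh^{G}(G,X)$ one defines
\[
X\times\Omega(G)\to\Omega(X)\comma(x,e)\longmapsto\Omega(u_{x})(e)
\]
where $u_{x}\colon G\to X$ is the orbit map $u_{x}(g)=xg$; one checks this is $H$-equivariant and $G$-invariant, hence descends to $\Lambda_{\Omega(G)}(X)\to\Omega(X)$, and only \emph{then} does one go locally (where $\Omega(G)$ is trivial) and verify, via the identity $u_{xg}=u_{x}\circ m_{g}$ and $\Omega(m_{g})(1)=g$, that the resulting map $\delta(x)=\Omega(u_{x})(1)$ is a $G$-equivariant isomorphism. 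Without this Yoneda-style construction your step two does not close, so you should either carry it out or cite it explicitly rather than assert that ``one checks they agree on all of $\Sh_{\catC}^{G}$.''
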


\begin{proof}
By \ref{rem:trivial bitorsor}, we see that if $P\in\Bi(G,H)$ and
$X\in\Bi G$ then $\Lambda_{P}(X)\in\Bi H$. Thus the functor $\Lambda\colon\Bi(G,H)\to\Homsh(\Bi G,\Bi H)$
is well defined and $\Lambda_{P}(G)\in\Bi H$. We prove both statement
at the same time. For this set either $\underline{H}=\Homsh(\Sh_{\catC}^{G},\Sh_{\catC}^{H})$
or $\underline{H}=\Homsh(\Bi G,\Bi H)$ and denote by $E$ the full
substack considered in the statement.

By \ref{lem:LambdaP equivalence} the functor $\Lambda\colon\Bi(G,H)\to\underline{H}$
has values in $E$. On the other hand if $\Omega\in E$ then $\Omega(G)$
is an $H$-torsor and the map
\[
G\arrdi{\simeq}\Autsh^{G}(G)\to\Autsh^{H}(\Omega(G))
\]
is an isomorphism because $\Omega$ is fully faithful. Thus $\Omega(G)$
is a $(G,H)$-bitorsor thanks to \ref{lem:criterion bitorsor}. Evaluation
in $G$ therefore defines a functor $Y\colon E\to\Bi(G,H)$. We first
show that $Y\circ\Lambda\simeq\id_{\Bi(G,H)}$. The following maps
\[
P\to\Lambda_{P}(G),p\mapsto(1,p)\comma\Lambda_{P}(G)\to P,(g,p)\mapsto gp\text{ for }P\in\Bi(G,H)
\]
 are quasi-inverses of each other and $H$-equivariant. Moreover we
can check that they also preserve the $G$-action.

We now prove that $\Lambda\circ Y\simeq\id_{E}$. Let $\Omega\in E$.
Using the isomorphism $X\simeq\Homsh^{G}(G,X)$ we obtain natural
morphisms
\[
X\times\Omega(G)\simeq\Homsh^{G}(G,X)\times\Omega(G)\to\Homsh^{H}(\Omega(G),\Omega(X))\times\Omega(G)\to\Omega(X)\comma(x,e)\longmapsto\Omega(u_{x})(e)
\]
Here, for $(x,e)\in X(S)\times\Omega(G)(S)$ the map $u_{x}\colon G_{S}\to X_{S}$
is the orbit map $u_{x}(g)=xg$. By going through the definitions
one can check that the above map is $H$-equivariant and $G$-invariant.
Thus it induces a natural morphism morphism $\Lambda_{\Omega(G)}(X)\to\Omega(X)$
that we claim is an isomorphism.

Since this is a local claim and $\Omega(G)$ is an $H$-torsor, we
can assume that $\Omega(G)$ has a section. Taking into account \ref{rem:trivial bitorsor},
we can assume $G=H$ and $\Omega(G)=H$ with left and right actions
given by multiplication. We now prove that $\Omega\simeq\id$, so
that, in particular, the morphism $\Lambda_{\Omega(G)}\to\Omega$
will be an isomorphism. We have a natural isomorphism of sheaves of
sets
\[
\delta\colon X\simeq\Homsh^{G}(G,X)\simeq\Homsh^{H}(\Omega(G),\Omega(X))=\Homsh^{H}(H,\Omega(X))\simeq\Omega(X)\comma\delta(x)=\Omega(u_{x})(1)
\]
We just have to show that it is $G$-equivariant. On the other hand
$u_{xg}=u_{x}\circ m_{g}$, where $m_{g}\colon G\to G$ is the left
multiplication by $g$ and, by construction, $\Omega(m_{g})(1)=g\cdot1=g$,
because the left action of $G$ on $\Omega(G)=H$ is just the left
multiplication for $G=H$. In particular
\[
\delta(xg)=\Omega(u_{xg})(1)=\Omega(u_{x})(\Omega(m_{g})(1))=\Omega(u_{x})(g)=\Omega(u_{x})(1\cdot g)=\Omega(u_{x})(1)g=\delta(x)g
\]
as required.
\end{proof}
\begin{cor}
We have $\Bi G\simeq\Bi H$ if and only if there exists an $H$-torsor
$P$ over $\catC$ with an isomorphism $G\simeq\Autsh^{H}P$.
\end{cor}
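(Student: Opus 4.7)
The plan is to deduce this corollary directly from Proposition \ref{prop:bitorsors and isomorphisms SHG - SHH} together with the bitorsor criterion in Lemma \ref{lem:criterion bitorsor}; no new construction is needed, only an unpacking of the two implications.

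For the $(\Leftarrow)$ direction, I would start from an $H$-torsor $P$ with a chosen isomorphism $\phi\colon G\xrightarrow{\sim}\Autsh^{H}P$. The homomorphism $\phi$ defines a left $G$-action on $P$ through $H$-equivariant automorphisms, hence a $(G,H)$-biaction. By Lemma \ref{lem:criterion bitorsor} (using the variant where $P$ is an $H$-torsor and $G\to\Autsh^{H}P$ is an isomorphism) the sheaf $P$ is a $(G,H)$-bitorsor. Proposition \ref{prop:bitorsors and isomorphisms SHG - SHH} then yields an equivalence $\Lambda_{P}\colon\Bi G\xrightarrow{\sim}\Bi H$.

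For the $(\Rightarrow)$ direction, suppose we are given an equivalence $\Omega\colon\Bi G\xrightarrow{\sim}\Bi H$. Being an equivalence of stacks, $\Omega$ is in particular fully faithful, so it lies in the essential image of the functor $\Lambda\colon\Bi(G,H)\to\Homsh(\Bi G,\Bi H)$ described in the last sentence of Proposition \ref{prop:bitorsors and isomorphisms SHG - SHH}. Thus there exists a $(G,H)$-bitorsor $P$ with $\Omega\simeq\Lambda_{P}$. In particular $P$ is an $H$-torsor, and from the definition of a $(G,H)$-biaction the left action of $G$ factors through $\Autsh^{H}P$; Lemma \ref{lem:criterion bitorsor} tells us that the resulting morphism $G\to\Autsh^{H}P$ is an isomorphism, giving the required data.

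No step is really an obstacle: both directions are quick consequences of results already proved in the appendix. The only thing to be careful about is invoking the correct form of Lemma \ref{lem:criterion bitorsor} (the parenthetical version) in the forward direction, and noting that the backward direction uses only the restricted equivalence $\Lambda_{P}\colon\Bi G\to\Bi H$ part of Proposition \ref{prop:bitorsors and isomorphisms SHG - SHH}, not the fully faithful characterisation.
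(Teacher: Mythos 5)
Your proof is correct and is exactly the unpacking the paper intends: the corollary is stated without proof as an immediate consequence of Proposition \ref{prop:bitorsors and isomorphisms SHG - SHH} together with Lemma \ref{lem:criterion bitorsor}, and both of your directions invoke precisely those results in the expected way.
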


We now want to describe two examples of non trivial bitorsors.
\begin{example}
Set $P=\Isosh(G,H)$. The maps
\[
\Autsh G\times P\arr P\comma P\times\Autsh(H)\arr P\text{, both given by }(\phi,\psi)\longmapsto\phi\circ\psi
\]
induce a $(\Autsh G,\Autsh H)$-action on $\Isosh(G,H)$ and, if $G$
and $H$ are locally isomorphic, then $\Isosh(G,H)$ is a $(\Autsh G,\Autsh H)$-bitorsor.
In particular, in this case, we obtain an isomorphism
\[
\Bi\Autsh(G)\simeq\Bi\Autsh(H)
\]
\end{example}

The second bitorsor we want to describe is a refinement of the previous
one.
\begin{prop}
\label{prop:bitorsors and semidirect products} Set $P=G\times\Isosh(H,G)$.
The maps
\[
P\times(H\rtimes\Autsh H)\arr P\comma(G\rtimes\Autsh G)\times P\arr P\text{, both given by }(x,\phi)\cdot(y,\psi)=(x\phi(y),\phi\psi)
\]
define a $((G\rtimes\Autsh G),(H\rtimes\Autsh H))$-action on $P$
and, if $G$ and $H$ are locally isomorphic, then $P$ is a $((G\rtimes\Autsh G),(H\rtimes\Autsh H))$-bitorsor.
In particular, in this case, we have an isomorphism
\[
\Bi(G\rtimes\Autsh G)\simeq\Bi(H\rtimes\Autsh H)
\]
and, if $\Lambda_{P}\colon\Sh_{\catC}^{G\rtimes\Autsh G}\arr\Sh_{\catC}^{H\rtimes\Autsh H}$
is the functor defined in \ref{prop:bitorsors and isomorphisms SHG - SHH},
we have a canonical isomorphism of sheaves of sets
\[
(X/\Autsh G)\simeq(\Lambda_{P}(X)/\Autsh H)\text{ for all }X\in\Sh_{\catC}^{G\rtimes\Autsh G}
\]
\end{prop}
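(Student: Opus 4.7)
The plan is to proceed in four steps corresponding to the four assertions of the statement. I would first check that both formulas define actions and that they commute. The associativity axioms reduce to semidirect-product calculations; for the right action,
\[((x,\phi)(y_1,\psi_1))(y_2,\psi_2) = (x\phi(y_1)(\phi\psi_1)(y_2),\phi\psi_1\psi_2) = (x,\phi)((y_1,\psi_1)(y_2,\psi_2)),\]
the last equality using that $\phi\colon H\to G$ is a homomorphism. The left action and the commutativity of the biaction are verified analogously, invoking instead that $\sigma\in\Autsh G$ is a homomorphism.

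For the bitorsor property, assuming $G$ and $H$ are locally isomorphic so that $\Isosh(H,G)$ admits local sections, I would work locally, pick $\phi_0\in\Isosh(H,G)$, and observe that the two orbit maps
\[(G\rtimes\Autsh G)\to P,\ (g,\sigma)\mapsto(g,\sigma\phi_0)\quad\text{and}\quad(H\rtimes\Autsh H)\to P,\ (y,\psi)\mapsto(\phi_0(y),\phi_0\psi)\]
are bijections. By Lemma~\ref{lem:criterion bitorsor} this suffices to conclude that $P$ is a $((G\rtimes\Autsh G),(H\rtimes\Autsh H))$-bitorsor. The equivalence $\Bi(G\rtimes\Autsh G)\simeq\Bi(H\rtimes\Autsh H)$ then follows immediately from Proposition~\ref{prop:bitorsors and isomorphisms SHG - SHH} applied to $\Lambda_P$.

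The main point, and what I expect to be the principal obstacle (mostly in bookkeeping), is the natural isomorphism $X/\Autsh G\simeq\Lambda_P(X)/\Autsh H$. My approach is to unwind $\Lambda_P(X)=(X\times P)/(G\rtimes\Autsh G)$ by first modding out by the normal subgroup $G$: the assignment $[x,(g,\phi)]\mapsto(xg,\phi)$ gives a canonical identification $(X\times P)/G\simeq X\times\Isosh(H,G)$, under which the residual $\Autsh G$-action becomes $(y,\phi)\cdot\sigma=(y\cdot\sigma,\sigma^{-1}\phi)$ and the right $\Autsh H$-action becomes $(y,\phi)\cdot\psi=(y,\phi\psi)$. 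The projection $(y,\phi)\mapsto[y]_{\Autsh G}$ is invariant under both, hence descends to a map $\Lambda_P(X)/\Autsh H\to X/\Autsh G$. I would verify this is an isomorphism locally, after picking $\phi_0\in\Isosh(H,G)$: every orbit has a representative of the form $(y,\phi_0)$, obtained by applying $\psi=\phi^{-1}\phi_0\in\Autsh H$ to $(y,\phi)$, and two such representatives $(y_1,\phi_0),(y_2,\phi_0)$ lie in a common orbit precisely when $y_2=y_1\cdot\sigma$ for some $\sigma\in\Autsh G$, the compensating element $\phi_0^{-1}\sigma\phi_0\in\Autsh H$ restoring the second coordinate. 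Canonicity of the descended map (independence of $\phi_0$) is built into the construction, which allows the local identifications to patch globally.
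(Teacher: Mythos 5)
Your proposal is correct and follows essentially the same route as the paper: a direct check of the compatible actions, local trivialization of $P$ via a section of $\Isosh(H,G)$ to get the bitorsor property, and a descended projection inducing $\Lambda_P(X)/\Autsh H\to X/\Autsh G$ which is checked to be an isomorphism locally. Your intermediate identification $(X\times P)/G\simeq X\times\Isosh(H,G)$ is only a mild repackaging of the paper's map $\pi(x,g,\phi)=x(g,\id_G)$, which induces the same morphism on quotients.
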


\begin{proof}
A direct computation shows that the maps in the statement yield compatible
actions. Moreover, if $\gamma\colon G\arr H$ is an isomorphism, it
is also straightforward to check that the maps $g\longmapsto g\cdot\gamma$
and $h\longmapsto\gamma\cdot h$ are equivariant isomorphisms $G\rtimes\Autsh G\arr P$
and $H\rtimes\Autsh H\arr P$ respectively. Finally consider the map
\[
\pi\colon X\times P=X\times G\times\Isosh(H,G)\arr X\text{ given by }\pi(x,g,\phi)=x(g,\id_{G})
\]
It is easy to check that $\pi(z(u,\psi))=\pi(z)(1_{G},\psi)$ and
$\pi(z(1_{H},\delta))=\pi(z)$ for all $z\in X\times P$, $(u,\psi)\in G\rtimes\Autsh G$
and $(1_{H},\delta)\in H\rtimes\Autsh H$. In particular $\pi$ yields
a map $(\Lambda_{P}(X)/\Autsh H)\arr(X/\Autsh G)$. This is an isomorphism
since it is so locally, i.e. when we have an isomorphism $H\arrdi{\phi}G$:
in this case the inverse is given by $x\arr(x,1_{G},\phi)$.
\end{proof}

\section{Sheaves and traces}

In this appendix we want to collect several constructions involving
quasi-coherent sheaves, especially locally free ones, that are used
throughout the paper. In particular we will introduce and discuss
the trace map associated with particular morphisms of sheaves. We
fix a base scheme $T$. All sheaves will be defined over this scheme.
\begin{rem}
\label{rem:F is Fdual tensor det F} If $\shF$ is a locally free
sheaf of rank $2$ then the canonical map $\shF\otimes\shF\arr\det\shF$
induces an isomorphism
\[
\shF\simeq\duale{\shF}\otimes\det\shF
\]
If $y,z$ is a basis of $\shF$ then the above map is given by $y\arr-z^{*}\otimes(y\wedge z)$,
$z\arr y^{*}\otimes(y\wedge z)$.
\end{rem}

\begin{defn}
\label{not: trace of a map} Let $\shF$ be a locally free sheaf,
$\shQ$ be an $\odi T$-module and $\zeta\colon\shQ\otimes\shF\to\shF$
be a morphism. We define $\tr\zeta\colon\shQ\to\odi T$, called the
\emph{trace }of $\zeta$, as
\[
\tr\zeta\colon\shQ\to\Endsh(\shF)\simeq\shF^{\vee}\otimes\shF\arrdi{\textup{ev}}\odi T
\]
where the last map is the evaluation. We denote by $\Homsh_{\tr=0}(\shQ\otimes\shF,\shF)$
the subsheaf of morphisms $\Homsh(\shQ\otimes\shF,\shF)$ of maps
whose trace is $0$. In particular
\[
\Homsh_{\tr=0}(\shQ\otimes\shF,\shF)\simeq\Homsh(\shQ,\Homsh_{\tr=0}(\shF,\shF))
\]

If $\alA$ is a locally free sheaf of algebras on $T$ then its trace
map is $\tr_{\alA}=\tr(\alA\otimes\alA\to\alA)\colon\alA\to\odi T$.
\end{defn}

\begin{rem}
If a basis of $\shF$ is fixed and $\rk\shF=n$ then $\Endsh(\shF)\simeq\shM_{n,n}$,
the quasi-coherent sheaf of $n\times n$-matrices and the composition
\[
\shM_{n,n}\simeq\Endsh(\shF)\simeq\shF^{\vee}\otimes\shF\arrdi{\text{ev}}\odi T
\]
is the usual trace of matrices. In particular if $\zeta\colon\shQ\otimes\shF\to\shF$
is a map then $(\tr\zeta)(q)=\tr(\zeta(q\otimes-)).$
\end{rem}

\begin{defn}
\label{def:locally multiple identity} Let $\shF$ be a locally free
sheaf, $\shQ$ be an $\odi T$-module and $\zeta\colon\shQ\otimes\shF\to\shF$
be a morphism. We say that $\zeta$ is locally a multiple of the identity
if all the morphisms in the image of $\shQ\to\Endsh(\shF)$ are fpqc
locally a multiple of the identity. We denote by $\Homsh_{\lambda\id}(\shQ\otimes\shF,\shF)$
the subsheaf of $\Homsh(\shQ\otimes\shF,\shF)$ of those morphisms.
\end{defn}

\begin{lem}
\label{lem:multiple identity} Let $\shF$ be a locally free sheaf
and $\shQ$ be an $\odi T$-module. Then $\odi T\simeq\Endsh_{\lambda\id}(\shF)$,
that is a morphism $\shF\to\shF$ is locally a multiple of the identity
if it is a multiple of the identity. More generally, applying $\Homsh(\shQ,-)$,
we obtain 
\[
\Homsh(\shQ,\odi T)\simeq\Homsh(\shQ,\Endsh_{\lambda\id}(\shF))=\Homsh_{\lambda\id}(\shQ\otimes\shF,\shF)\comma\mu\longmapsto\mu\otimes\id
\]
\end{lem}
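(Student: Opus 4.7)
The plan is to reduce the statement to the case where $\shF$ is free, exploit faithful flatness to descend the scalar from an fpqc cover, and then derive the ``more generally'' part by tensor-hom adjunction.

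First I would observe that the statement $\odi T\simeq\Endsh_{\lambda\id}(\shF)$ is an identity of sheaves on $T$, so it suffices to verify it fpqc locally. Choose an fpqc cover $T'\to T$ over which $\shF$ becomes free, so we may assume $\shF=\odi T^{n}$ and a morphism $\phi\colon\shF\to\shF$ is given by a matrix $M=(m_{ij})\in\shM_{n,n}(T)$. Assume $\phi$ is locally a multiple of the identity, meaning there exists a further faithfully flat $f\colon T''\to T$ and $\lambda\in\Gamma(T'',\odi{T''})$ with $f^{*}M=\lambda I$. Then for all $i\neq j$ we have $f^{*}m_{ij}=0$, which by faithful flatness of $f^{*}$ forces $m_{ij}=0$; similarly $f^{*}(m_{11}-m_{ii})=0$ forces $m_{ii}=m_{11}$ for all $i$. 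Hence $M=m_{11}I$ is a (globally defined) multiple of the identity. Conversely, any multiple of the identity is obviously locally a multiple of the identity. This identifies $\Endsh_{\lambda\id}(\shF)$ with the image of $\odi T\to\Endsh(\shF)$, $\lambda\mapsto\lambda\id$, and this map is injective since $1\mapsto\id\neq0$ (the case $\shF=0$ is trivial). This gives $\odi T\simeq\Endsh_{\lambda\id}(\shF)$.

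For the second assertion, applying $\Homsh(\shQ,-)$ to the already established isomorphism yields $\Homsh(\shQ,\odi T)\simeq\Homsh(\shQ,\Endsh_{\lambda\id}(\shF))$. The standard tensor-hom adjunction gives $\Homsh(\shQ,\Endsh(\shF))\simeq\Homsh(\shQ\otimes\shF,\shF)$, and by definition a morphism $\zeta\colon\shQ\otimes\shF\to\shF$ lies in $\Homsh_{\lambda\id}(\shQ\otimes\shF,\shF)$ precisely when the corresponding map $\shQ\to\Endsh(\shF)$ lands in $\Endsh_{\lambda\id}(\shF)$. Combining these identifications gives the required isomorphism and, tracing through the construction, sends $\mu\colon\shQ\to\odi T$ to $\mu\otimes\id_{\shF}\colon\shQ\otimes\shF\to\shF$.

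The only substantive step is the descent argument in the first paragraph; everything else is formal. The potential obstacle is taking care that ``locally a multiple of the identity'' is applied element-by-element rather than uniformly (different local sections of $\shQ$ may require different fpqc refinements), but the descent to a \emph{global} scalar for a single endomorphism, proved above, makes this immaterial: the subsheaf $\Endsh_{\lambda\id}(\shF)\subseteq\Endsh(\shF)$ is the honest $\odi T$-submodule of scalar endomorphisms, so membership of the image of $\shQ\to\Endsh(\shF)$ in this submodule is a condition that behaves well for arbitrary $\shQ$.
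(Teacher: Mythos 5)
Your proof is correct and rests on the same underlying idea as the paper's: the paper's entire argument is that $\odi T\to\Endsh_{\lambda\id}(\shF)$ is injective and an fpqc epimorphism, hence an isomorphism, and your matrix-level descent of the scalar along a faithfully flat cover is exactly that one-liner spelled out, while your reduction of the second claim via $\Homsh(\shQ,-)$ and tensor-hom adjunction matches the paper's ``follows from definition''. One small quibble: for $\shF=0$ the statement actually \emph{fails} (since $\Endsh(\shF)=0$ but $\odi T\neq 0$) rather than being trivial, so one must tacitly assume $\shF$ has positive rank everywhere, as the paper implicitly does.
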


\begin{proof}
The map $\odi T\to\Endsh_{\lambda\id}(\shF)$ is injective. Moreover
it is an epimorphism in the fpqc topology, hence it is an isomorphism.
The last claim follows from definition.
\end{proof}
\begin{lem}
\label{lem:decomposition trace identity} Let $\shF$ be a locally
free sheaf and $\shQ$ be an $\odi T$-module. Then there is an exact
sequence of $\odi T$-modules
\[
0\to\Homsh_{\tr=0}(\shQ\otimes\shF,\shF)\to\Homsh(\shQ\otimes\shF,\shF)\arrdi{\tr}\Homsh(\shQ,\odi T)\to0
\]
If $\rk\shF\in\odi T^{*}$ then $\tr$ restricts to an isomorphism
$\Homsh_{\lambda\id}(\shQ\otimes\shF,\shF)\to\Homsh(\shQ,\odi T)$,
whose inverse is 
\[
\Homsh(\shQ,\odi T)\to\Homsh_{\lambda\id}(\shQ\otimes\shF,\shF)\comma\mu\mapsto(\mu/\rk\shF)\otimes\id_{\shF}
\]
In particular in this case $\Homsh(\shQ\otimes\shF,\shF)=\Homsh_{\tr=0}(\shQ\otimes\shF,\shF)\oplus\Homsh_{\lambda\id}(\shQ\otimes\shF,\shF)$.
\end{lem}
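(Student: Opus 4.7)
The plan is to first establish the short exact sequence in full generality, then use the rank-invertible hypothesis to produce a canonical splitting. Left exactness of
\[
0\to\Homsh_{\tr=0}(\shQ\otimes\shF,\shF)\to\Homsh(\shQ\otimes\shF,\shF)\arrdi{\tr}\Homsh(\shQ,\odi T)
\]
is immediate from the definition of $\Homsh_{\tr=0}$ as the kernel of $\tr$. For surjectivity of $\tr$, I would argue that it is a local property of a morphism of $\odi T$-modules, so it suffices to work where $\shF\simeq\odi T^{n}$ is free. Using $\Homsh(\shQ\otimes\shF,\shF)\simeq\Homsh(\shQ,\Endsh\shF)\simeq\Homsh(\shQ,\shM_{n,n})$, the trace on $\Homsh$-sheaves is induced by the ordinary matrix trace $\shM_{n,n}\to\odi T$, which splits as a map of $\odi T$-modules via $a\mapsto aE_{11}$. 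Applying $\Homsh(\shQ,-)$ preserves this splitting, hence $\tr$ is surjective on sections locally.

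For the second assertion, assume $\rk\shF\in\odi T^{*}$. By Lemma \ref{lem:multiple identity} we have a canonical isomorphism
\[
\Homsh(\shQ,\odi T)\xrightarrow{\;\sim\;}\Homsh_{\lambda\id}(\shQ\otimes\shF,\shF),\qquad \mu\longmapsto \mu\otimes\id_{\shF}.
\]
A direct computation gives $\tr(\mu\otimes\id_{\shF})=(\rk\shF)\cdot\mu$, so the composition of this inclusion with $\tr$ is multiplication by $\rk\shF$ on $\Homsh(\shQ,\odi T)$, which is invertible by hypothesis. Therefore $\tr$ restricts to an isomorphism $\Homsh_{\lambda\id}(\shQ\otimes\shF,\shF)\to\Homsh(\shQ,\odi T)$ with inverse $\mu\mapsto(\mu/\rk\shF)\otimes\id_{\shF}$, as claimed.

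The final decomposition then follows formally: the map $\mu\mapsto(\mu/\rk\shF)\otimes\id_{\shF}$ is a canonical section of $\tr$, so it splits the short exact sequence, giving
\[
\Homsh(\shQ\otimes\shF,\shF)=\Homsh_{\tr=0}(\shQ\otimes\shF,\shF)\oplus\Homsh_{\lambda\id}(\shQ\otimes\shF,\shF).
\]
One should also note that $\Homsh_{\tr=0}\cap\Homsh_{\lambda\id}=0$ is automatic, since a section of the form $\mu\otimes\id_{\shF}$ has trace $(\rk\shF)\mu$, which vanishes only when $\mu=0$.

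There is no real obstacle here; the argument is essentially bookkeeping. The only mild subtlety is that surjectivity of $\tr$ in the first part must be treated as a sheaf-theoretic statement, which reduces to the local case where $\shF$ is free, whereas the splitting in the second part is genuinely global and relies on the canonical nature of $\id_{\shF}$.
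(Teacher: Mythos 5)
Your argument is correct and follows the same route as the paper's (very terse) proof: surjectivity of $\tr$ is checked locally where $\shF$ is free, and the second claim is deduced from Lemma \ref{lem:multiple identity} together with the computation $\tr(\mu\otimes\id_{\shF})=(\rk\shF)\mu$. You have simply filled in the details the paper leaves implicit.
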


\begin{proof}
The surjectivity of $\tr$ can be checked locally when $\shF$ is
free. The second claim instead follows easily from \ref{lem:multiple identity}.
\end{proof}
\begin{rem}
\label{rem:Lambda and duals} Let $R$ be a ring, $M,N$ be $R$-modules
and $m\in N$. Given $\eta\colon M\otimes N\to R$ we define
\[
\Lambda^{m}M\otimes\Lambda^{m}N\to R\comma(x_{1}\wedge\cdots\wedge x_{m})\otimes(y_{1}\wedge\cdots\wedge y_{m})\longmapsto\det((\eta(x_{i}\otimes y_{j})))
\]
Applying this construction to the evaluation $M^{\vee}\otimes M\to R$
we obtain a map
\[
\Lambda^{m}(M^{\vee})\to(\Lambda^{m}M)^{\vee}\comma(\phi_{1}\wedge\cdots\wedge\phi_{m})\longmapsto(y_{1}\wedge\cdots\wedge y_{n}\mapsto\det(\phi_{i}(y_{j})))
\]
A direct check shows that if $M$ is free with basis $e_{1},\dots,e_{n}$
then $(e_{i_{1}}^{*}\wedge\cdots\wedge e_{i_{m}}^{*})$ is mapped
to $(e_{i_{1}}\wedge\cdots\wedge e_{i_{m}})^{*}$ for $1\leq i_{1}<\cdots<i_{m}\leq n$.
In particular the above map is an isomorphism for locally free $R$-modules.

The map $\Lambda^{m}M\otimes\Lambda^{m}N\to R$ can also be obtained
as
\[
M\otimes N\to R\then M\to N^{\vee}\then\Lambda^{m}M\to\Lambda^{m}(N^{\vee})\to(\Lambda^{m}N)^{\vee}\then\Lambda^{m}M\otimes\Lambda^{m}N\to R
\]
\end{rem}

\begin{rem}
\label{rem:discriminant} If $\alA$ is a locally free sheaf of algebras
its discriminant $\Delta_{\alA}\colon(\det\alA)^{2}\to\odi T$ is
the map induced by $\tr_{\alA}(-\cdot-)\colon\alA\otimes\alA\to\odi T$
as in \ref{rem:Lambda and duals}. It defines an effective Cartier
divisor on $T$ whose complement coincides with the étale locus of
$\Spec\alA\to T$ (see \cite[Proposition 4.10]{SGA1}). Assume that
$\alA=\odi T\oplus\E$ with $(\tr_{\alA})_{|\E}=0$ and denote by
$\pi\colon\alA\to\odi T$ the projection. The map $\E\otimes\E\to\alA\arrdi{\pi}\odi T$
also defines a map $\Delta\colon((\det\alA)^{2}\simeq)(\det\E)^{2}\to\odi T$
via \ref{rem:Lambda and duals} and $\Delta_{\alA}=n^{n}\Delta$,
because $\tr_{\alA}=\tr_{\alA}\circ\pi=\tr_{\alA}(1)\pi=n\pi$.
\end{rem}

\subsection{\label{subsec:appendix delta Miranda} Trace zero maps of the form
\texorpdfstring{$\Sym^{2}\shF\to\shF$}{Sym^2 F --> F}}

In this subsection $\shF$ will denote a locally free sheaf of rank
$2$.
\begin{notation}
\label{not: trace of beta}Given $\beta\colon\Sym^{2}\shF\arr\shF$
we set
\[
\tr\beta=\tr(\shF\otimes\shF\arr\Sym^{2}\shF\arrdi{\beta}\shF)\colon\shF\arr\odi T
\]
If $y,z$ is a basis of $\shF$ and $\beta(y^{2})=ay+bz$, $\beta(yz)=cy+dz$,
$\beta(z^{2})=ey+fz$, then $(\tr\beta)(y)=a+d$, $(\tr\beta)(z)=c+f$.
\end{notation}

\begin{prop}
\label{prop:Miranda correspondence} \cite{Miranda1985,Bolognesi2009}
If $\beta\colon\Sym^{2}\shF\arr\shF$ is a map such that $\tr\beta=0$
then there exists a unique dashed map $\delta$ as in  \[   \begin{tikzpicture}[xscale=2.4,yscale=-1.0]     \node (A0_0) at (0, 0) {$\Sym^2\shF\otimes \shF$};     \node (A0_1) at (1, 0) {$\shF\otimes\shF$};     \node (A1_0) at (0, 1) {$\Sym^3 \shF$};     \node (A1_1) at (1, 1) {$\det \shF$};     \path (A0_0) edge [->]node [auto] {$\scriptstyle{\beta\otimes \id}$} (A0_1);     \path (A0_0) edge [->]node [auto] {$\scriptstyle{}$} (A1_0);     \path (A0_1) edge [->]node [auto] {$\scriptstyle{}$} (A1_1);     \path (A1_0) edge [->,dashed]node [auto] {$\scriptstyle{\delta}$} (A1_1);   \end{tikzpicture}   \] 
This association yields an isomorphism \begin{align}\label{iso:delta beta}
 \begin{tikzpicture}[xscale=5.0,yscale=-0.8]     \node (A0_0) at (0, 0) {$\Homsh_{\tr = 0}(\Sym^2 \shF,\shF)$};     \node (A0_1) at (1, 0) {$\Homsh(\Sym^3 \shF,\det\shF)$};     \node (A1_0) at (0, 1) {$\left(\begin{array}{ccc} a & c & e\\ b & -a & -c \end{array}\right)$};     \node (A1_1) at (1, 1) {$\left(\begin{array}{cccc} -b & a & c & e\end{array}\right)$};     \path (A0_0) edge [->]node [auto] {$\scriptstyle{}$} (A0_1);     \path (A1_0) edge [|->,gray]node [auto] {$\scriptstyle{}$} (A1_1);   \end{tikzpicture}
\end{align}where the last row describes how this map behaves if a basis $y,z$
of $\shF$ is chosen. Here the chosen basis for $\Sym^{2}\shF$ and
$\Sym^{3}\shF$ are $y^{2},yz,z^{2}$ and $y^{3},y^{2}z,yz^{2},z^{3}$
respectively.
\end{prop}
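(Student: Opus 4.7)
The plan is to prove this by constructing an exact sequence of sheaves that, after applying $\Homsh(-,\det\shF)$, yields exactly the desired isomorphism \eqref{iso:delta beta}.

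First I would establish the short exact sequence
\[
0 \to \det\shF\otimes\shF \to \Sym^2\shF\otimes\shF \to \Sym^3\shF \to 0,
\]
where the surjection is the obvious multiplication map. The kernel can be identified with $\det\shF\otimes\shF$ via the map sending $(x\wedge y)\otimes w$ to $xw\otimes y - yw\otimes x$; exactness is a rank count ($6 = 2 + 4$) combined with a local verification on a basis $y,z$ of $\shF$.

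Next I would apply $\Homsh(-,\det\shF)$ to this sequence and use the isomorphism $\shF \simeq \shF^\vee\otimes\det\shF$ from \ref{rem:F is Fdual tensor det F} to rewrite the resulting terms. We get
\[
\Homsh(\Sym^2\shF\otimes\shF,\det\shF) \simeq \Homsh(\Sym^2\shF,\shF^\vee\otimes\det\shF) \simeq \Homsh(\Sym^2\shF,\shF),
\]
and
\[
\Homsh(\det\shF\otimes\shF,\det\shF) \simeq \shF^\vee.
\]
Under the first identification, a map $\beta\colon\Sym^2\shF\to\shF$ corresponds exactly to the map $(xy)\otimes z \mapsto \beta(xy)\wedge z$ appearing in the commutative square of the statement. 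The factorization through $\Sym^3\shF$ (i.e.\ the existence of the dashed $\delta$) therefore amounts to $\beta$ lying in the kernel of the connecting map $\Homsh(\Sym^2\shF,\shF)\to\shF^\vee$, and uniqueness of $\delta$ is immediate from surjectivity of $\Sym^2\shF\otimes\shF\twoheadrightarrow\Sym^3\shF$.

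The core step — and where the hypothesis $\tr\beta=0$ enters — is to identify the connecting map $\Homsh(\Sym^2\shF,\shF)\to\shF^\vee$ with $\tr$ from \ref{not: trace of beta}. This is a purely local statement, so I would choose a basis $y,z$ of $\shF$ and the induced bases of $\Sym^2\shF$ and $\Sym^3\shF$. Writing $\beta$ as in \ref{not: trace of beta} and computing
\[
\beta(y^2)\wedge z - \beta(yz)\wedge y = (a+d)\,(y\wedge z),\qquad \beta(yz)\wedge z - \beta(z^2)\wedge y = (c+f)\,(y\wedge z),
\]
one sees directly that the connecting map sends $\beta$ to $(\tr\beta)$, viewed as an element of $\shF^\vee$. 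Combined with the long exact sequence, this yields the isomorphism $\Homsh_{\tr=0}(\Sym^2\shF,\shF) \simeq \Homsh(\Sym^3\shF,\det\shF)$, and the explicit local form of the isomorphism in \eqref{iso:delta beta} falls out of the same computation: the columns of the matrix representing $\delta$ are $\delta(y^3)=-b$, $\delta(y^2z)=a$, $\delta(yz^2)=c$, $\delta(z^3)=e$, read off from $\beta(uv)\wedge w$ for monomials $uvw$ of degree $3$. The main obstacle is just the bookkeeping in identifying the connecting map with the trace, which is routine once one writes out the local formulas.
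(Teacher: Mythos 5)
The paper does not prove this proposition; it is stated with references to \cite{Miranda1985,Bolognesi2009}. Your argument is a correct, self-contained proof. The short exact sequence $0\to\det\shF\otimes\shF\to\Sym^2\shF\otimes\shF\to\Sym^3\shF\to 0$ is exact (rank count $6=2+4$ plus the local check that $y^2\otimes z-yz\otimes y$ and $yz\otimes z-z^2\otimes y$ span the kernel of multiplication), applying $\Homsh(-,\det\shF)$ stays exact because the sequence of locally free sheaves is locally split, and your identification of the restriction map with $\tr$ checks out: $\beta(y^2)\wedge z-\beta(yz)\wedge y=(a+d)\,y\wedge z$ and $\beta(yz)\wedge z-\beta(z^2)\wedge y=(c+f)\,y\wedge z$, which are exactly $(\tr\beta)(y)$ and $(\tr\beta)(z)$ in the sense of \ref{not: trace of beta}. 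Existence and uniqueness of $\delta$ and the bijectivity of \eqref{iso:delta beta} then follow formally, and the local formula $\delta=(-b,\ a,\ c,\ e)$ agrees with $\delta(y^3)=\beta(y^2)\wedge y=-b$, etc. The only cosmetic point is that what you call the ``connecting map'' is just precomposition with the inclusion $\det\shF\otimes\shF\hookrightarrow\Sym^2\shF\otimes\shF$ (the sequence of $\Homsh$'s is already short exact), not a boundary map of a derived long exact sequence; this does not affect the argument.
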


\begin{notation}
\label{nota:Miranda} In the hypothesis of \ref{prop:Miranda correspondence}
we will denote the correspondence (\ref{iso:delta beta}) by $\beta\longmapsto\delta_{\beta}$
and $\delta\longmapsto\beta_{\delta}$. Given $\delta\colon\Sym^{3}\shF\to\det\shF$
we also define maps
\[
\eta_{\delta}\colon\Sym^{2}\shF\to\odi T\comma\alpha_{\delta}\colon\det\shF\otimes\shF\to\shF\text{ and }m_{\delta}\colon(\det\shF)^{2}\to\odi T
\]
 as follows. Define $\eta_{\delta}$ as the map
\begin{equation}
\Sym^{2}\shF\arrdi u\Lambda^{2}\Sym^{2}\shF\otimes\duale{\Lambda^{2}\shF}\arrdi v\odi S\label{eq:etabeta}
\end{equation}
where $v$ is induced by $\Lambda^{2}\beta_{\delta}\colon\Lambda^{2}\Sym^{2}\shF\arr\Lambda^{2}\shF$
and $u$ is induced by   \[   \begin{tikzpicture}[xscale=4.7,yscale=-0.6]     \node (A0_0) at (0, 0) {$\Lambda^2\shF\otimes \Sym^2 \shF$};     \node (A0_1) at (1, 0) {$\Lambda^2 \Sym^2 \shF$};     \node (A1_0) at (0, 1) {$(x_1\wedge x_2)\otimes x_3 x_4$};     \node (A1_1) at (1, 1) {$-x_1x_3\wedge  x_2 x_4-x_1 x_4\wedge x_2 x_3$};     \path (A0_0) edge [->]node [auto] {$\scriptstyle{}$} (A0_1);     \path (A1_0) edge [|->,gray]node [auto] {$\scriptstyle{}$} (A1_1);   \end{tikzpicture}   \] If
$2\in\odi T^{*}$, the map $\alpha_{\delta}$ 
\[
\det\shF\otimes\duale{\shF}\otimes\shF\arrdi{\simeq}\shF\otimes\shF\arrdi{\eta_{\delta}/2}\odi S
\]
where $\det\shF\otimes\duale{\shF}\simeq\shF$ is the canonical isomorphism
of \ref{rem:F is Fdual tensor det F} and $m_{\delta}$ is
\[
(\det\shF)^{2}\otimes\det\shF\simeq\det(\det\shF\otimes\shF)\arrdi{-\det\alpha_{\delta}}\det\shF
\]
\end{notation}

\begin{rem}
\label{rem:local Miranda} In the hypothesis of \ref{prop:Miranda correspondence},
if $y,z$ is a basis of $\shF$, we identify $\det\shF\simeq\odi T$
using the generator $y\wedge z\in\det\shF$ and we write $\delta$
as
\begin{equation}
\delta(y^{3})=-b\comma\delta(y^{2}z)=a\comma\delta(yz^{2})=c\comma\delta(z^{3})=e\label{eq:expression of delta}
\end{equation}
then we have expressions
\begin{equation}
\eta_{\delta}(y^{2})=2(a^{2}+bc)\comma\eta_{\delta}(yz)=ac+be\comma\eta_{\delta}(z^{2})=2(c^{2}-ae)\label{eq:expression of eta delta}
\end{equation}
\[
2\alpha_{\delta}(y)=\eta_{\delta}(yz)y-\eta_{\delta}(y^{2})z\comma2\alpha_{\delta}(z)=\eta_{\delta}(z^{2})y-\eta_{\delta}(yz)z
\]
\end{rem}

\subsection{\label{subsec:Trace zero alpha} Trace zero maps of the form \texorpdfstring{$\shQ\otimes\shF\to\shF$}{Q tensor F --> F}}

In this subsection $\shF$ will denote a locally free sheaf of rank
$2$ and $\shQ$ an $\odi T$-module. Moreover we assume $2\in\odi T^{*}$.
\begin{rem}
Given a map $\alpha\colon\shQ\otimes\shF\arr\shF$, we have a factorization
  \[   \begin{tikzpicture}[xscale=1.8,yscale=-0.5]     \node (A0_0) at (0, 0) {$s\otimes p \otimes q$};     \node (A0_2) at (2, 0) {$\alpha(s\otimes q)p-\alpha(s\otimes p)q$};     \node (A1_0) at (0, 1) {$\shQ\otimes\shF\otimes \shF$};     \node (A1_2) at (2, 1) {$\Sym^2 \shF$};     \node (A3_1) at (1, 3) {$\shQ\otimes \det \shF$};     \path (A1_0) edge [->]node [auto] {$\scriptstyle{}$} (A1_2);     \path (A3_1) edge [->,dashed]node [auto] {$\scriptstyle{}$} (A1_2);     \path (A1_0) edge [->]node [auto] {$\scriptstyle{}$} (A3_1);     \path (A0_0) edge [|->,gray]node [auto] {$\scriptstyle{}$} (A0_2);   \end{tikzpicture}   \] 
and this association defines a map   \begin{align}\label{iso:primo alpha}
  \begin{tikzpicture}[xscale=4.6,yscale=-0.7]     \node (A0_0) at (0, 0) {$\Homsh(\shQ\otimes\shF,\shF)$};     \node (A0_1) at (1, 0) {$\Homsh(\shQ\otimes \det \shF,\Sym^2\shF)$};     \node (A1_0) at (0, 1) {$\left(\begin{array}{cc} u & v\\ w & z \end{array}\right)$};     \node (A1_1) at (1, 1) {$(v\ \ \ \ \ z-u\ \   -w)$};     \path (A0_0) edge [->]node [auto] {$\scriptstyle{}$} (A0_1);     \path (A1_0) edge [|->,gray]node [auto] {$\scriptstyle{}$} (A1_1);   \end{tikzpicture}
\end{align}where the last row describes the behaviour if $\shQ=\odi T$ and a
basis $y,z$ of $\shF$ is given.
\end{rem}

If $\shQ=\odi T$ we obtain a map $\Endsh(\shF)\to\Homsh(\det\shF,\Sym^{2}\shF)$
and the general map (\ref{iso:primo alpha}) is obtained applying
$\Homsh(\shQ,-)$ to the previous map.
\begin{lem}
\label{lem:Trace zero map alpha description} The map (\ref{iso:primo alpha})
restricts to an isomorphism
\[
\Homsh_{\tr=0}(\shQ\otimes\shF,\shF)\to\Homsh(\shQ\otimes\det\shF,\Sym^{2}\shF)
\]
and its kernel is $\Homsh_{\lambda\id}(\shQ\otimes\shF,\shF)\simeq\Homsh(\shQ,\odi T).$
In particular
\[
\odi T\oplus\Homsh(\det\shF,\Sym^{2}\shF)\simeq\Endsh(\shF)
\]
\end{lem}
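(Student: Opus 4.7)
The plan is to prove everything by reducing to a direct local computation, using that the map (\ref{iso:primo alpha}) is $\odi T$-linear and its formation commutes with arbitrary base change (both facts are immediate from the definition $s\otimes p\otimes q\mapsto \alpha(s\otimes q)p-\alpha(s\otimes p)q$). In particular, to check that a morphism of quasi-coherent sheaves is an isomorphism, or that a subsheaf coincides with the kernel of a map, we may work fpqc-locally on $T$ and assume $\shF$ is free. Moreover, since (\ref{iso:primo alpha}) is obtained by applying $\Homsh(\shQ,-)$ to the case $\shQ=\odi T$, it is enough to treat $\shQ=\odi T$: indeed $\Homsh(\shQ\otimes\shF,\shF)\simeq\Homsh(\shQ,\Endsh\shF)$ and $\Homsh(\shQ\otimes\det\shF,\Sym^2\shF)\simeq\Homsh(\shQ,\Homsh(\det\shF,\Sym^2\shF))$, and both the trace-zero condition and the condition of being locally a multiple of the identity are detected at the level of $\Endsh\shF$.

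So fix a basis $y,z$ of $\shF$, which induces the basis $y\wedge z$ of $\det\shF$ and $y^2,yz,z^2$ of $\Sym^2\shF$. Using the local expression in (\ref{iso:primo alpha}), a map $\alpha=\bigl(\begin{smallmatrix}u&v\\ w&s\end{smallmatrix}\bigr)\in\Endsh\shF$ is sent to the element $v\cdot y^2+(s-u)\cdot yz+(-w)\cdot z^2\in\Sym^2\shF$. The trace of $\alpha$ is $u+s$, so the trace-zero subsheaf is parametrized by $(u,v,w)$ with $s=-u$, and the restriction of (\ref{iso:primo alpha}) has the form $(u,v,w)\mapsto(v,-2u,-w)$. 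Since $2\in\odi T^{*}$, this is an isomorphism onto $\Sym^2\shF$, giving the first claim. The kernel of the unrestricted map consists of matrices with $v=w=0$ and $u=s$, i.e.\ those of the form $u\cdot\id_{\shF}$; by \ref{lem:multiple identity} this is exactly $\Homsh_{\lambda\id}(\shF,\shF)\simeq\odi T$, and after applying $\Homsh(\shQ,-)$ we obtain $\Homsh_{\lambda\id}(\shQ\otimes\shF,\shF)\simeq\Homsh(\shQ,\odi T)$.

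Finally, the ``in particular'' statement follows by specializing to $\shQ=\odi T$ and combining the isomorphism $\Homsh_{\tr=0}(\shF,\shF)\simeq\Homsh(\det\shF,\Sym^2\shF)$ just proved with the splitting
\[
\Endsh(\shF)=\Homsh_{\tr=0}(\shF,\shF)\oplus\Homsh_{\lambda\id}(\shF,\shF)
\]
from \ref{lem:decomposition trace identity}, using that $\Homsh_{\lambda\id}(\shF,\shF)\simeq\odi T$ by \ref{lem:multiple identity}.

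There is no real obstacle here: the whole argument rests on a one-line local matrix computation together with the already-proved splitting of \ref{lem:decomposition trace identity}. The only mild care required is the bookkeeping to reduce the statement for a general quasi-coherent $\shQ$ to the case $\shQ=\odi T$, and the observation that $2\in\odi T^{*}$ is precisely what makes the middle coordinate $-2u$ fill up all of $\odi T$.
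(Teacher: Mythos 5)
Your proof is correct and follows essentially the same route as the paper, whose own proof is just the one-line observation that everything reduces to the case $\shQ=\odi T$, the local matrix description of (\ref{iso:primo alpha}), and \ref{lem:decomposition trace identity}. You have simply written out the local computation $(u,v,w)\mapsto(v,-2u,-w)$ and the identification of the kernel explicitly, which is exactly what the paper leaves implicit.
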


\begin{proof}
All the claims follows from the case $\shQ=\odi T$, the local description
of (\ref{iso:primo alpha}) and \ref{lem:decomposition trace identity}.
\end{proof}
\begin{rem}
If $\shL$ is an invertible sheaf the map   \[   \begin{tikzpicture}[xscale=4.1,yscale=-0.6]     \node (A0_0) at (0, 0) {$\Sym^2 (\Homsh(\shF,\shL))$};     \node (A0_1) at (1, 0) {$\Homsh(\Sym^2 \shF,\shL^2)$};     \node (A1_0) at (0, 1) {$\xi\eta$};     \node (A1_1) at (1, 1) {$(uv\longmapsto\xi(u)\eta(v)+\eta(u)\xi(v))$};     \path (A0_0) edge [->]node [auto] {$\scriptstyle{}$} (A0_1);     \path (A1_0) edge [|->,gray]node [auto] {$\scriptstyle{}$} (A1_1);   \end{tikzpicture}   \] is
an isomorphism: if $\shL=\odi T$ and a basis $y,z$ of $\shF$ is
given, the above association maps $(y^{*})^{2},y^{*}z^{*},(z^{*})^{2}$
to $2(y^{2})^{*},(yz)^{*},2(z^{2})^{*}$.

Using (\ref{rem:F is Fdual tensor det F}), the composition
\[
\Sym^{2}\shF\simeq\Sym^{2}(\Homsh(\shF,\det\shF))\simeq\Homsh(\Sym^{2}\shF,(\det\shF)^{2})
\]
yields an isomorphism \begin{align}\label{iso:terzo alpha}
\begin{tikzpicture}[xscale=5,yscale=-0.6]     \node (A0_0) at (0, 0) {$\Homsh((\det \shF)^2,\Sym^2\shF)$};     \node (A0_1) at (1, 0) {$\Homsh(\Sym^2\shF,\odi T)$};     \node (A1_0) at (0, 1) {$(u\ \ \ \ \ v\ \  \ \ \  w)$};     \node (A1_1) at (1, 1) {$(2w \ \ -v\ \ \ \ \   2u)$};     \path (A0_0) edge [->]node [auto] {$\scriptstyle{}$} (A0_1);     \path (A1_0) edge [|->,gray]node [auto] {$\scriptstyle{}$} (A1_1);   \end{tikzpicture}  
\end{align}where the last row describes its behaviour if a basis $y,z$ of $\shF$
is given.
\end{rem}

\begin{notation}
\label{not: associated check map}If $\shN$ is an invertible sheaf,
applying $\Homsh(\shN,-)$ to (\ref{iso:terzo alpha}) we obtain an
isomorphism
\[
\Homsh(\shN\otimes(\det\shF)^{2},\Sym^{2}\shF)\to\Homsh(\Sym^{2}\shF,\shN^{-1})
\]
This association will be denoted by $\zeta\longmapsto\check{\zeta}\text{ and }\hat{\eta}\longmapsfrom\eta$.
Notice that the above map has the same local description of (\ref{iso:terzo alpha}).
\end{notation}

\bibliographystyle{amsalpha}
\bibliography{biblio}

\end{document}